\DeclareMathOperator{\argmin}{argmin}
\newcommand{\R}{\mathbb{R}}
\newcommand{\Z}{\mathbb{Z}}
\newcommand{\I}{\mathbb{I}}
\newcommand{\E}{\mathbb{E}}
\newcommand{\X}{\mathcal{X}}
\newcommand{\M}{\mathcal{M}}
\newcommand{\A}{\mathcal{A}}
\newcommand{\F}{\mathcal{F}}
\newcommand{\V}{\mathcal{V}}
\newcommand{\B}{\mathcal{B}}
\newcommand{\T}{\mathcal{T}}
\newcommand{\LL}{\mathcal{L}}
\newcommand{\J}{\mathcal{J}}
\newcommand{\Prob}{\mathbb{P}}
\providecommand{\abs}[1]{\lvert#1\rvert}
\providecommand{\argmin}{\text{argmin}}
\providecommand{\norm}[1]{\lVert#1\rVert}
\newtheorem{lemma}{Lemma}
\newtheorem{theorem}{Theorem}
\newtheorem{corollary}{Corollary}
\newtheorem{remark}{Remark}
\theoremstyle{definition}
\numberwithin{equation}{section}
\newtheorem{example}{Example}
\definecolor{processblue}{cmyk}{0.96,0,0,0}
\tikzset{
  server/.style={circle, inner sep=0.0mm, minimum width=.8cm,
    draw=green,fill=green!10,thick},
  ellipseserver/.style={ellipse, inner sep=0.0mm, minimum width=.8cm,
    draw=green,fill=green!10,thick},
  buffer/.style={rectangle, rounded corners=3pt,
    inner sep=0.0mm, minimum width=.9cm, minimum height=.6cm,
    draw=orange,fill=blue!10,thick},
  vbuffer/.style={rectangle,rounded corners=3pt,
     inner sep=0.0mm,  minimum width=.6cm, minimum height=.8cm,
     draw=orange,fill=blue!10,thick},
   routing/.style={circle,inner sep=0pt,minimum size=.5cm,
     draw=red,minimum width=.1cm, fill=red!30},
   state/.style={inner sep=1.0mm, rounded corners=2pt,
    draw=green,fill=green!10,thick},
  dot/.style={circle, inner sep=0.0mm, minimum width=.6cm,
    draw=blue,fill=blue!10,thick},
  task/.style={circle, inner sep=0.0mm, minimum width=.6cm,
    draw=blue,fill=blue!10,thick},
  data/.style={rectangle, inner sep=0.0mm, minimum width=.5cm,
    minimum height=.4cm,
    draw=red,fill=red!10,thick}
}
\tikzset{
  pics/openrectangle/.style n args={3}{
    code = { %
      \pgfmathsetmacro\x{.5}
      \pgfmathsetmacro\y{4}
      \pgfmathsetmacro\z{.55}
      \coordinate (SN) at #1;
      \coordinate (SS) at ($(0,-.3)+#1$);
      \draw[thick,red] ([shift={(-\x,\y)}]SN)--([shift={(-\x, \z)}]SN)
      --([shift={(\x,\z)}]SN)--([shift={(\x,\y)}]SN);

      \foreach \i/\t in #2
      {
        \node[task] (\t) at ([shift={(0,\i)}]SN)  {\t};
      }

      \foreach \i/\d in #3
      {
        \node[data] (\d) at ([shift={(0,-\i)}]SS)  {C\d};
      }
    }
  }
}
\tikzset{
  pics/rack/.style n args={3}{
    code = { %
      \pgfmathsetmacro\x{1.7}
      \pgfmathsetmacro\y{4}
      \pgfmathsetmacro\z{-2}
      \pgfmathsetmacro\zz{-2.5}
      \coordinate (S) at ($#1!.5!#2$);
      \draw[thick,blue] ([shift={(-\x,\y)}]S)--([shift={(-\x, \z)}]S)
      --([shift={(\x,\z)}]S)--([shift={(\x,\y)}]S);
      \node at ([shift={(0,\zz)}]S) {Rack #3};
    }
  }
}
\renewcommand{\caption}[1]{\singlespacing\hangcaption{#1}\normalspacing}
\title {Processing Network Controls   via Deep Reinforcement Learning}
\author {Mark Gluzman}
\begin{document}

\maketitle
\makecopyright

\begin{abstract}

Novel   advanced policy gradient (APG) algorithms, such as proximal policy optimization (PPO), trust region policy
optimization,   and their variations, have become the dominant reinforcement learning (RL) algorithms because of their ease of implementation and good practical performance. This dissertation is concerned with theoretical justification and practical application of the APG algorithms for solving processing network control optimization problems.

Processing network control problems are typically formulated as Markov decision process (MDP) or semi-Markov decision process (SMDP) problems  that have several unconventional for RL  features: infinite state spaces,
unbounded costs,  long-run average cost objectives. Policy improvement bounds   play a crucial role in
the theoretical justification of the APG algorithms. In this thesis we refine existing bounds for MDPs with finite state spaces and prove novel policy improvement bounds for classes of MDPs and SMDPs used to model processing network operations.

We consider two examples of processing network control problems  and customize the PPO algorithm to solve them. First, we   consider   parallel-server and  multiclass queueing networks controls.  Second, we consider the drivers repositioning problem in a ride-hailing service system. For both examples the PPO algorithm with auxiliary modifications   consistently generates control policies   that outperform state-of-art heuristics.

\end{abstract}

\begin{biosketch}
Mark Gluzman received his Bachelors degree in System Analysis from National Technical University of Ukraine "Kyiv Polytechnic Institute"   in 2015. Mark received his Masters degree in Applied Mathematics from Columbia University in 2016.
\end{biosketch}

\begin{dedication}
Dedicated to a professor who inspired me to push the limit,\\ Yuri Bogdanski (1949-2021).
\end{dedication}

\begin{acknowledgements}

I want to express my special gratitude to my advisor,   Jim Dai, for making me a mature researcher and  person. He always was supportive and confident  about my intellectual and professional capabilities even when I  had some doubts about them myself.

Thank you to my committee members and coauthors, Alex Vladimirsky, Shane Henderson, Pengyi Shi, Jacob Scott, and Aurora Feng,
  for their time, helpful guidance,
and patience while working with me.

I am grateful to all my teachers who helped me to find my way in life and  get to the graduate school. Especially, I am thankful to my high school math and physics teachers, Oleg Nagel, Alexey Akimov, Alexandr Kuzmitsky; my mentor and advisor at the undergraduate level, Pavlo Kasyanov.

I want to thank all professors and staff members of ORIE and CAM departments at Cornell as well as of School of Data Science at the at the Chinese University of Hong Kong, Shenzhen.  My special gratitude to my friend, Chang Cao, who did everything he could to make my days in Shenzhen comfortable and enjoyable. Thanks to my Ithaca friends who made my grad school journey more memorable.

Finally, I thank to my parents, Nelya and Alexander Gluzman, who were always there to talk and mentally support me.

This work was supported by the Cornell Graduate School, School of Data Science and Shenzhen Resesearch Institute for Big Data at CUHK-Shenzhen, the National
Science Foundation through grant CMMI-1537795, and through teaching assistantships in
Cornell’s Departments of Mathematics, Operations Research and Information Engineering.

\end{acknowledgements}

\contentspage
\tablelistpage
\figurelistpage

\normalspacing \setcounter{page}{1} \pagenumbering{arabic}
\pagestyle{cornell} \addtolength{\parskip}{0.5\baselineskip}

\chapter{Introduction}

 Policy iteration is a classic dynamic programming method that is used to find an optimal policy of a Markov decision process (MDP) problem \cite{Puterman2005}. The policy iteration method computes  the exact state-action value function of a current policy  at the beginning of  each iteration, and then creates a new deterministic  policy that at each state selects an action with the largest   state-action value. In practice,  computation of the exact state-action value function  is only feasible  for MDPs with known underlying models,  also called model-based MDPs,   and with  moderate-size state and action spaces.

A \emph{reinforcement learning (RL) problem} often refers to a
(\emph{model-free})  MDP problem in which the underlying model
governing the dynamics is not known, but sequences of data (actions,
states, and rewards), called as episodes in this thesis, can be observed
under a given policy \cite{Sutton2018}.  One way to   solve a  RL problem is to compute approximate estimates of the exact state-action values. Unfortunately, the exact dynamic programming methods, such as the policy iteration, may suffer from  significant policy degradation, if inexact state-action values are directly used for  greedy policy updates. Moreover,   it is practically impossible to estimate values for each state-action pairs of an MDP with large state and/or action spaces.

Remarkably, it has been demonstrated that \emph{RL
  algorithms} designed for solving RL problems can successfully overcome the
curse of dimensionality in both the model-free    and model-based MDP
problems. Three factors are the keys to the success. First,  Monte Carlo sampling method  is used   to approximately evaluate
expectations. The sampling
method also naturally supports   the exploration  needed in RL algorithms.
Second,  some mechanism preventing  drastic policy changes is incorporated into the course of learning.  Third,  a parametric, low-dimensional representation of a
value function and/or a policy can be used.  In recent years, various \textit{deep} RL algorithms that
use neural networks as an architecture for value function approximation and policy
parametrization have shown state-of-art results \cite{Bellemare2013, Mnih2015,
  Silver2017, OpenAI2019a}.

In this thesis, we focus on  advanced policy gradient (APG) deep RL algorithms, such as proximal policy optimization (PPO)  \cite{Schulman2017}, trust region policy
optimization (TRPO) \cite{Schulman2015} and their variations.
Advanced policy gradient algorithms are iterative.  Each iteration a new policy is obtained   by minimizing a certain surrogate objective function that also regulates the size of allowed changes to the current policy. These step sizes are theoretically  defined by \textit{policy improvement bounds} on the difference of infinite-horizon discounted or long-run average cost returns.  Policy improvement bounds dictate the magnitude of policy changes that can guarantee monotonic improvement each policy iteration.

The authors of  the APG algorithms designed them to solve finite state space MDP problems with  the episodic and discounted
formulations. Inspired by applications in the \textit{stochastic processing networks} domain,  we explore how the APG algotithms can be generalized for more classes of control problems: MDPs with long-run average cost objectives,  MDPs with countable state spaces, semi-Markov decision processes (SMDPs).

 Stochastic processing networks is a broad class of mathematical models that are used to  represent operations of service systems, industrial processes, computing and communication digital systems, see \cite{DaiHarrison2020}. These models are characterized by having  capacity constrained
processing resources and being operated to satisfy the needs of externally generated jobs.

In this thesis we study how to generalize the use  of the APG algorithms for processing network controls from theoretical and practical perspectives. We refine existing policy improvement bounds for MDPs with finite state spaces,  derive novel bounds for MDPs with countable state spaces and for SMDPs. These new results are foundations for theoretical justification of  the use  of the APG algorithms for processing network control problems and beyond. We extend the theoretical
framework of   the APG algorithms for  MDP problems with countable state spaces and long-run average cost objectives.

 We customize and test   PPO  for several examples of processing networks:  multiclass queueing networks, parallel-server system, ride-hailing transportation system.  Each system has required additional auxiliary modifications to the original PPO algorithm to obtain state-of-art results.

For multiclass queueing networks and  parallel-server system we combine and incorporate three variance reduction techniques   to improve estimation of the relative value function.  First, we use a discounted relative value function as
an approximation of the relative value function. Second, we propose regenerative simulation to estimate
the discounted relative value function. Finally, we incorporate the approximating martingale-process
method, first proposed in \cite{Henderson2002}, into the regenerative estimator. We also suggest automatic adjustment of policy and value neural network architectures to the size of a multiclass queueing network. Moreover,  we propose a novel proportionally randomized  policy as an initial policy when PPO is applied for multiclass queueuing networks. 
The resulting PPO algorithm is tested on a parallel-server system and large-size multiclass queueing networks. The algorithm consistently generates control policies that outperform state-of-art heuristics in literature in a variety of load conditions from light to heavy traffic. These policies are demonstrated to be near-optimal when the optimal policy can be computed.

We consider a ride-hailing order dispatching and drivers repositioning model proposed in \cite{Feng2020}. In \cite{Feng2020} the author reformulated a ride-hailing service optimization problem from \cite{Braverman2019} as an MDP problem and   suggested to solve it using  PPO. Due to scalability issues caused by the large action space, the original PPO could not be applied directly and a special actions decomposition technique was used. In this thesis, we analyze the role of this    special actions decomposition in the PPO scalability and argue why PPO continues to be theoretically justified. We also conduct additional numerical experiments to test the scalability of the proposed PPO algorithm and to verify the importance of  the empty-car routing in achieving higher driver-passenger matching rate.

\section{Outline of dissertation}\label{sec:outline}

In Chapter \ref{ch:1} we consider  queueing network control optimization problems.  A conventional setup for  such
problems is an MDP that has three features: infinite state space,
unbounded cost function, and long-run average cost objective.
  We extend the theoretical framework of APG algorithms for such MDP problems. The resulting PPO algorithm is tested on a parallel-server system and large-size multiclass queueing networks. A key role of variance reduction techniques in estimating the relative
value function is discussed.

In Chapter \ref{ch:ride_hailing} we consider a ride-hailing service optimization problem. We justify the use of PPO algorithm to solve MDPs with incorporated actions decomposition. The proposed PPO is tested on a model of a  large-size transportation network. A role of  the empty-car routing     is discussed.

In Chapter \ref{ch:2} we focus on policy improvement bounds.
We refine the existing bound for MDPs with finite state spaces and propose novel  bounds for MDPs with countable state spaces and for SMDPs. These new policy improvement bounds are obtained by introducing a one-norm ergodicity coefficient.  Various obtained  bounds on the  one-norm ergodicity coefficient  help to uncover its dependency on the underlying system dynamics.

\section{Notation}\label{sec:notation}

   The set of real numbers is denoted by $\R$. The sets of nonnegative
   integers, nonnegative real numbers  are denoted by $\Z_+$, $\R_+$, respectively.
We use $\X$ to denote finite or countable discrete metric space.

For a vector $a$ and a matrix $A$,
   $a^T$ and $A^T$ denote their transposes.
For a vector $a$ from space $\X$, we use the following vector norms: $\|a\|_1 :=\sum\limits_{x\in \X}\abs{a(x)}$ and  $\|a\|_\infty:=\max\limits_{x\in \X}|a(x)|$. For a matrix $A$ from space $\X\times\X$, we define the following induced operator norms:
\begin{align*}
\|A\|_1 :&= \sup\limits_{x\in \X : \|x\|_1\neq 0} \frac{\|Ax\|_1}{\|x\|_1} = \max\limits_{y\in \X}\sum\limits_{x\in \X}\abs{A(x,y)}
\end{align*}
 and $\norm{A}_\infty := \max\limits_{x\in \X}\sum\limits_{y\in \X}\abs{A(x,y)}$. We note that $\norm{A^T}_1 = \|A\|_\infty$ and $\|A^T\|_\infty = \|A\|_1$.

\chapter{Queueing Network Controls via Deep Reinforcement Learning}\label{ch:1}

For more than 30 years, one of the most difficult problems in applied
probability and operations research is to find a scalable algorithm
for approximately solving the optimal control of stochastic processing
networks, particularly when they are heavily loaded. These optimal
control problems have many important applications including healthcare
\cite{Dai2019a} and communications networks \cite{SrikYing2014, Luong2019},
data centers \cite{McKeown1999,Maguluri2012}, and manufacturing systems
\cite{Perkins1989, Kumar1993}. Stochastic processing networks are a broad class of
models that were advanced in \cite{Harr2000} and \cite{Harr2002} and
recently recapitulated and extended in \cite{DaiHarrison2020}.

In this chapter, we demonstrate that a class of \emph{deep reinforcement
  learning} algorithms known as proximal policy optimization (PPO),
 generalized from \cite{Schulman2015, Schulman2017} to our
  setting, can generate control policies that consistently beat the
performance of all state-of-arts control policies known in the
literature. The superior performances of our control policies appear
to be robust as stochastic processing networks and their load
conditions vary, with little or no problem-specific configurations of
the algorithms.

Multiclass queueing networks (MQNs) are a special class of stochastic
processing networks.  They were introduced in \cite{Harr1988} and have
 been studied intensively  for more than 30 years for performance
analysis and controls; see, for example, \cite{Harrison1993,
  Kumar1994, Bertsimas1994, Bramson1998, Will1998,
  Bertsimas2011, Bertsimas2015, Chen1999, Henderson2003,
  Veatch2015}.  Our paper focuses primarily on MQNs  with long-run average  cost objectives for two reasons.
First, these stochastic control problems  are
notoriously difficult due to the size of the state space, particularly
in heavy traffic. Second,  a large body of research has motivated the development of  various algorithms and control policies that are based on
either heavy traffic asymptotic analysis or heuristics. See, for
example, fluid policies \cite{Chen1993}, BIGSTEP policies
\cite{Harr1996}, affine shift policies \cite{Meyn1997},
discrete-review policies \cite{Maglaras2000, Ata2005}, tracking
policies \cite{Bauerle2001}, and ``robust fluid''
policies~\cite{Bertsimas2015} in the former group and
\cite{Lu1994,Kumar1996} in the latter group.  We
demonstrate that our algorithms outperform the state-of-art algorithms
in \cite{Bertsimas2015}. In this section, we will also consider an
$N$-model that belongs to the family of parallel-server systems,
another special class of stochastic processing networks.  Unlike an MQN
in which routing probabilities of jobs are fixed, a  parallel-server
system allows dynamic routing of jobs in order to achieve
load-balancing among service stations.  We demonstrate that our
algorithm achieves near optimal performance in this setting, again
with little special configuration of  them.

For the queueing networks with Poisson arrival and exponential service
time distribution, the control problems can be modeled within the
framework of Markov decision processes (MDPs) \cite{Puterman2005} via
uniformization \cite{Serfozo1979}.  A typical algorithm for solving an MDP is via \emph{policy
  iteration} or \emph{value iteration}.  However, in our setting, the corresponding MDP suffers
from the usual curse of dimensionality: there are a large number of
job classes, and the buffer capacity for each class is assumed to be
infinite. Even with a truncation of the buffer capacity either as a
mathematical convenience or a practical control technique, the
resulting state space is huge when the network is large and heavily
loaded.

In recent years the Proximal Policy Optimization (PPO)
algorithm \cite{Schulman2017} has become a default algorithm \cite{Schulman2017a} for control optimization  in  new  challenging environments including robotics \cite{Akkaya2019}, multiplayer video games \cite{Vinyals2019, OpenAI2019a}, neural network architecture engineering \cite{Zoph2018}, molecular design \cite{Simm2020}.
In this section we extend the PPO algorithm to MDP problems with \emph{unbounded cost} functions and
\emph{long-run average cost} objectives. The original PPO algorithm
\cite{Schulman2017} was proposed for problems with \emph{bounded cost}
function and \emph{infinite-horizon discounted} objective. It was
based on the trust region policy optimization (TRPO) algorithm developed
in \cite{Schulman2015}. We use two separate feedforward neural
networks, one for parametrization of the control policy and the other
for value function approximation, a setup common to \emph{actor-critic}
algorithms \cite{Mnih2015}.  We propose an approximating martingale-process
(AMP) method for variance reduction to estimate policy value functions
and show that the AMP estimation  speeds up convergence of the PPO algorithm in the model-based
setting.  We provide a set of instructions for implementing the PPO
algorithm specifically for MQNs. The instructions include the choice of initial stable
randomized policy, methods for improving   policy value function estimation,  architecture of the value and policy neural
networks, and the choice of hyperparameters.  The proposed instructions can be potentially adapted to other advanced policy optimization RL algorithms, e.g. TRPO. Given the success of the PPO in various domains and its ease of use   we  focus on the PPO algorithm in this chapter to illustrate efficiency of the deep RL framework for queueing control optimization.

The actor-critic methods can be considered as a combination of value-based and policy based methods.  In a
\textit{value-based}  approximate dynamic programming (ADP) algorithm, we may assume a low-dimensional
approximation of the optimal value function (e.g.  the optimal value function is a linear combination of known
\textit{features} \cite{DeFarias2003a, Ramirez-Hernandez2007,
  Abbasi_Yadkori2014}).   Value-based ADP algorithm has dominated
in the stochastic control of queueing networks literature;
see, for example, \cite{Chen1999, MoalKumaVanR2008,
  Chenetal2009, Veatch2015}.  These algorithms however have not achieved
robust empirical success for a wide class of control problems. It is
now known that the optimal value function might have a complex
structure which is hard to decompose on features, especially if
decisions have effect over a long horizon \cite{Lehnert2018}.

In a \textit{policy-based} ADP algorithm, we aim to learn the
optimal policy directly. Policy gradient algorithms are used to optimize the objective value within a
parameterized family of policies via gradient descent; see, for example,
\cite{Marbach2001, Paschalidis2004, Peters2008}. Although they are particularly effective for problems with high-dimensional action
space, it might be difficult to  reliably estimate the gradient of
the value under the current policy.  A direct sample-based estimation
typically suffers from high variance in gradient estimation \cite[Section 3]{Peters2008}, \cite[Section 5]{Marbach2001}, \cite[Section 1.1.2]{Baxter2001}.
Thus, actor-critic methods have been proposed \cite{Konda2003} to estimate the value function and use it
as a baseline and bootstrap for gradient direction approximation. The
actor-critic method with Boltzmann parametrization of policies and
linear approximation of the value functions has been applied for
parallel-server system control in \cite{Bhatnagar2012}.
 The standard policy gradient methods typically perform one gradient update per data sample which yields poor data
efficiency, and robustness, and an attempt to use a finite batch of samples to estimate the gradient and perform multiple steps of optimization ``empirically ... leads to destructively large policy
updates'' \cite{Schulman2017}. In \cite{Schulman2017}, the authors also note that the deep Q-learning algorithm \cite{Mnih2015} ``fails on many simple problems''.

 In \cite{Schulman2015,Schulman2017}, the authors propose
  \textit{``advanced policy gradient''} methods to overcome the
  aforementioned problems by designing novel objective functions that constrain
the magnitude of policy updates to avoid performance collapse caused by
large changes in the policy.  In \cite{Schulman2015} the authors prove
that minimizing a certain surrogate objective function guarantees
decreasing the expected discounted cost. Unfortunately,  their
theoretically justified step-sizes of policy updates cannot be
computed from available information for the RL algorithm.  Trust
Region Policy Optimization (TRPO) \cite{Schulman2015} has been
proposed as a practical method to search for step-sizes of policy
updates, and  Proximal Policy Optimization (PPO) method \cite{Schulman2017} has been proposed to compute
these step-sizes based on a clipped, ``proximal'' objective function.

We summarize the major contributions of our study:
\begin{enumerate}
\item In Section \ref{sec:countable} we theoretically justify that the advanced policy gradient algorithms can
  be applied for long-run average cost MDP problems with countable state spaces and unbounded
  cost-to-go functions. We show that starting from a stable policy it
  is possible to improve long-run average performance with
  sufficiently small changes to the initial policy.

\item In Section \ref{sec:AMP} we discuss a new way to estimate relative value
  and advantage functions if transition probabilities are known. We
  adopt the approximating martingale-process method
  \cite{Henderson2002} which, to the best of our knowledge, has not
  been used in simulation-based approximate
    policy improvement setting.
  \item  In Section \ref{sec:ge} we introduce a biased estimator
      of the relative value function through discounting the future
      costs. We interpret the discounting as the modification to the
      transition dynamics that shortens the regenerative cycles.
      We propose a regenerative estimator of the   discounted relative value function.

 The discounting  combined with the AMP method and regenerative simulation significantly reduces the variance of the relative value function estimation at the cost of a tolerable bias. The use of the proposed variance reduction techniques  speeds up the learning process of the PPO algorithm that we demonstrate by  computational experiments in Section
  \ref{sec:cc}.

\item In Section   \ref{sec:experiments} we conduct extensive computational experiments for multiclass queueing networks and parallel
  servers systems.  We propose to choose architectures of neural
  networks automatically as the size of a queueing network varies. We
  demonstrate the effectiveness of these choices as well as other
  hyperparameter choices such as  the  learning rate used in gradient
  decent. We demonstrate that the performance of control policies resulting
  from the proposed PPO algorithm outperforms other heuristics.

   \end{enumerate}

This chapter is based on the research presented  in \cite{DaiGluzman2021}.

\section{Control of multiclass queueing networks}\label{sec:MQN}

In this section we formulate the   control problems
for multiclass processing networks. We first give a   formulation for the criss-cross network, which serves as an example, and then give a
formulation for a general multiclass queueing network.

\subsection{The criss-cross network}
The criss-cross network has been studied in
\cite{Harrison1990},  \cite{Avram1995}, and
  \cite{Martins1996} among others. The network  depicted in Figure
  \ref{fig:cc}, which is taken from Figure~1.2 in \cite{DaiHarrison2020}, consists of two stations that process three classes
of jobs. Each job class has its own dedicated buffer where jobs wait
to be processed. All buffers are assumed to have an infinite capacity.

\begin{figure}[H]
  \centering
\begin{tikzpicture}  [scale=1.0,inner sep=2.0mm]
%\draw[step=.5cm, help lines] (-2,0) grid (21,10);
\node (T0) at (14,4) {};
\node[server] (S1) [left=1in of T0] {$S_1$};
\node[buffer] (B1) [left=.1in of S1] {$B_1$};
\node[buffer] (B2) [right=.5in of S1] {$B_2$};
\node[server] (S2) [right=.1in of B2] {$S_2$};
\node[vbuffer] (B3) [above= .1in of S1] {$B_3$};
\node (source1) [left= .5in of B1] {};
\node (sink2) [right= .5in of S2] {};
\node (sink3) [below= .3in of S1] {};
\node (source2) [above= .3in of B3] {};

\draw[thick,->] (B1) -- (S1);
\draw[thick,->] (B3) -- (S1);
\draw[thick,->] (S1) -- (B2);
\draw[thick,->] (B2) -- (S2);

\draw[thick,->] (source1) --(B1) node [above, align=left, near start]{class 1\\ arrivals};
\draw[thick,->] (source2) --(B3) node [right, align=left, near start]{class 3\\ arrivals};
\draw[thick,->] (S2) --(sink2) node [above, align=left, near end]{\mbox{\ \ } class 2 \\  \mbox{\ \ }  departures};
\draw[thick,->] (S1) --(sink3) node [left, align=left,  near end]{\ class 3 \\ \  departures};
 \end{tikzpicture}
  \caption{The criss-cross network.}
  \label{fig:cc}
\end{figure}
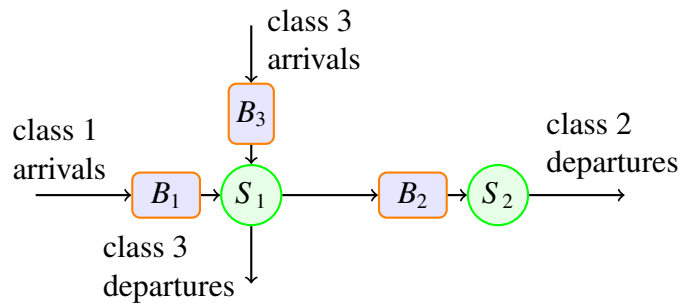

We assume that the jobs of class 1 and class 3 arrive to the
system following Poisson processes with rates $\lambda_1$ and
 $\lambda_3$, respectively. Server 1 processes both classes one job at a time. After being processed class 1 jobs become  class 2 jobs and wait in buffer $2$ for server 2 to process. Class 2 and class 3
jobs   leave the system after their processings
are completed.  We assume that the processing times for class j jobs are i.i.d.,  having exponential
distribution with mean $m_j$, $j=1, 2, 3$. We denote $\mu_j := 1/m_j$ as
the service rate of class $j$ jobs. We assume that the following load conditions are satisfied:
\begin{align}\label{eq:load_cc}
\lambda_1m_1+\lambda_3m_3<1  ~~\text{and}~~ \lambda_1m_2<1.
\end{align}
Again, we assume each server processes one job at a time. Therefore,
processor sharing among multiple jobs is not allowed for each
server. Jobs within a buffer are processed in the first-in--first-out
order.  A service policy dictates the order in which jobs from
different classes are processed. (See below for a precise definition of a stationary Markov policy.) We assume a decision
time occurs when a new job arrives  or a service is
completed.  For concreteness, we adopt a preemptive service policy --- suppose the service policy dictates that server $1$
  processes a class $3$ job next while it is in the middle of processing
  a class $1$ job, the server preempts the unfinished class $1$ job to start the processing of the leading class $3$ job from buffer $3$. Due to the memoryless property of  an exponential distribution, it does not matter whether the preempted job keeps its remaining processing time or is assigned a new service time sampled from the original exponential distribution.

  Under any service policy,
at each decision time, the system manager needs to simultaneously choose  action $a_1$ from set $\{0, 1, 3\}$ for server $1$ and  action $a_2$ from the set $\{0, 2\}$ for server 2; for server $1$ to choose action $j$, $j=1, 3$, means that   server 1 processes a class $j$ job next (if buffer $j$ is non-empty), and to choose action $0$ means   that server 1 idles. Similarly,
for server $2$ to choose action $2$ means that server $2$ processes a class $2$ job, and to choose action $0$ means  server $2$ idles.  Each server is allowed to choose  action $0$ even if there are waiting jobs at the associated buffers.
Therefore, our service policies are not necessarily non-idling. We define the action set as $\A =\left\{ (a_1, a_2)\in  \{0, 1, 3\}\times \{0, 2\}\right\}$.

The service policy  is assumed to be \textit{randomized}.  By
randomized, we mean each server takes a random action sampled from  a
certain distribution on the action set.  For a set $A$, we use
$\mathcal{P}(A)$ to denote the set of probability distributions on
$A$. Therefore, for a pair $(p_1,p_2)\in \mathcal{P}(\{0,1,3\})\times \mathcal{P}(\{0,2\})$, server $1$ takes a random action sampled from distribution $p_1$ and server $2$ takes  a random action sampled from distribution $p_2$.  For notational convenience, we note that a pair has a one-to-one correspondence to a vector $u$ in the following set
\begin{align*}
  \mathcal{U}=\left\{u= \left(u_1, u_2, u_3\right)\in \R^3_+: u_1+u_3\le 1 \text{ and } u_2\le 1\right\},
\end{align*}
where $p_1 = (1-u_1-u_3, u_1, u_3)\in \mathcal{P}(\{0, 1, 3\})$ is a
probability distribution on the action set $\{0, 1, 3\}$ and
$p_2 = (1-u_2, u_2)\in \mathcal{P}(\{0, 2\})$ is a probability
distribution on the action set $\{0,2\}$. Throughout this subsection, we use
$u\in \mathcal{U}$ to denote  pair $(p_1, p_2)$.

To define a randomized stationary Markovian service policy, let $x_j(t)$ be the number of class $j$ jobs (including possibly one job in service) in the system  at
time $t$, $j=1,2, 3$.  Then $x(t) = \left(x_1(t), x_2(t), x_3(t)\right )$ is
  the vector of jobcounts at time $t$. Clearly, $x(t)\in \Z_+^3$. By convention, we assume the sample path of $\{x(t), t\ge 0\}$ is right continuous, which implies that when $t$ is a decision time (triggered by an external arrival  or a service completion), $x(t)$ has taken into account the  arriving job or completed job at time $t$.

By a randomized stationary Markovian service policy we denote a map
\begin{align*}
  \pi: \Z_+^3 \to \mathcal{U}.
\end{align*}
Given this map $\pi$, at each decision time $t$, the system manager
observes jobcounts $x=x(t)$, computes $\pi(x)\in \mathcal{U}$ and the
corresponding pair
$(p_1(x), p_2(x))\in \mathcal{P}(\{0, 1, 3\})\times \mathcal{P}(\{0,
2\})$. Then server $1$ chooses a random action sampled from $p_1(x)$ and
server $2$ chooses a random action sampled from $p_2(x)$. When each
distribution is concentrated on a single action, the corresponding policy is
a \emph{deterministic} stationary Markovian service policy. Hereafter, we use the term  stationary Markovian service policies to mean  randomized
policies, which include deterministic service policies as special cases.

Under a stationary Markovian policy $\pi$, $\{x(t), t\ge 0 \}$ is a
continuous time Markov chain (CTMC). Hereafter, we call  jobcount
vector $x(t)$ the state at time $t$, and we denote the state space as
$\X = \Z^3_+$.  The objective of our optimal control problem is to find
a stationary Markovian policy that minimizes the long-run average
number of jobs in the network:
\begin{equation}\label{co}
\inf_{\pi} \lim_{T\rightarrow \infty} \frac{1}{T} {\E}_\pi \int\limits_0^T\Big(x_1(t)+x_2(t)+x_3(t)\Big)dt.
\end{equation}
 Because the interarrival and service times are exponentially
  distributed, the optimal control problem (\ref{co}) fits the
  framework of semi-Markov decision process (SMDP). See, for example,
  \cite[Chapter 11]{Puterman2005}.  Indeed, one can easily verify that
   at each state $x$, taking action $a$, the distribution of the
  time interval until next decision time is exponential with rate
  $\beta(x,a)$ to be specified below. We use $P(y|x,a)$ to denote the
  transition probabilities of the embedded Markov decision process,
  where $y\in \X$ is a state at the next decision time. For any
  state $x\in \X$ and any action $a\in \A$, the following transition
  probabilities always hold
\begin{align}
   P\big((x_1+1, x_2, x_3)|x, a\big)=\frac{\lambda_1}{\beta(x,a)}, \quad
   P\big((x_1, x_2, x_3+1)|x,a \big)=\frac{\lambda_3}{\beta(x,a)}. \label{eq:arrivals}
\end{align}
In the following, we specify $\beta(x,a)$ for each $(x,a)\in \X\times \A$ and additional
transition probabilities.  For action $a=(1,2)$ and state
$x=(x_1, x_2, x_2)$ with $x_1\ge 1$ and $x_2\ge 1$,
$\beta(x,a)=\lambda_1+\lambda_3+\mu_1+\mu_2$,
\begin{align*}
  &  P\big((x_1-1, x_2+1, x_3)|x,a \big)=\frac{\mu_1}{\beta(x,a)}, \quad
    P\big((x_1, x_2-1, x_3)|x,a\big)=\frac{\mu_2}{\beta(x,a)};
\end{align*}
for action $a=(3,2)$ and state $x=(x_1, x_2, x_2)$ with $x_3\ge 1$ and
$x_2\ge 1$, $\beta(x,a)=\lambda_1+\lambda_3+\mu_3+\mu_2$,
\begin{align*}
    P\big((x_1, x_2, x_3-1)|x,a \big)=\frac{\mu_3}{\beta(x,a)}, \quad
    P\big((x_1, x_2-1, x_3)|x,a\big)=\frac{\mu_2}{\beta(x,a)};
\end{align*}
for action $a=(0,2)$ and state $x=(x_1,x_2, x_3)$ with $x_2\ge 1$, $\beta(x,a)=\lambda_1+\lambda_3+\mu_2$,
\begin{align*}
    P\big((x_1, x_2-1, x_3)|x,a\big)=\frac{\mu_2}{\beta(x,a)};
\end{align*}
for action $a=(1,0)$ and state $x=(x_1, x_2, x_3)$ with $x_1\ge 1$, $\beta(x,a)=\lambda_1+\lambda_3+\mu_1$,
\begin{align*}
   P\big((x_1-1, x_2+1, x_3)|x,a\big)=\frac{\mu_1}{\beta(x,a)};
\end{align*}
for action $a=(3,0)$ and state $x=(x_1, x_2, x_3)$ with $x_3\ge 1$, $\beta(x,a)=\lambda_1+\lambda_3+\mu_3$,
\begin{align*}
  P\big((x_1, x_2, x_3-1)|x,a\big)=\frac{\mu_3}{\beta(x,a)};
\end{align*}
for action $a=(0,0)$ and state $x=(x_1, x_2, x_3)$,
$\beta(x,a)=\lambda_1+\lambda_3$. Also, $x_2=0$ implies that
$a_2=0$, $x_1=0$ implies that $a_1\neq 1$,  and  $x_3=0$ implies that $a_1\neq 3$.

Because the time between state transitions are exponentially
distributed, we   adopt the method of uniformization for solving the SMDP;
see, for example,  \cite{Serfozo1979} and \cite[Chapter 11]{Puterman2005}.
We denote
\begin{align}
  \label{eq:B}
B=\lambda_1+\lambda_3+\mu_1+\mu_2+\mu_3.
\end{align}
 For the new control
problem under uniformization, the decision times are determined by the
arrival times of a Poisson process with (uniform) rate $B$ that is independent of the underlying state. Given current state $x\in \X$
and action $a\in \A$, new transition probabilities into $y\in \X$ are given by
\begin{align}\label{eq:unif}
\tilde P(y|x, a) =
\begin{cases}
P(y|x, a) \beta(x, a) / B \quad \text{ if } x\neq y,\\
1 - \beta(x, a) / B  \quad \text{otherwise.}
\end{cases}
\end{align}
The transition probabilities $\tilde P$ in (\ref{eq:unif}) will define
a (discrete time) MDP. The  objective is given by
\begin{align}\label{eq:co1}
 \eta_\pi:= \inf\limits_{\pi} \lim\limits_{N\rightarrow \infty}\frac{1}{N}\E_{\pi}\left[ \sum\limits_{k=0}^{N-1}\left(x_1^{(k)}+x_2^{(k)}+x_3^{(k)}\right)\right],
\end{align}
where $\pi$ belongs to the family of stationary Markov policies,
and $x^{(k)} = \left(x_1^{(k)}, x_2^{(k)}, x_3^{(k)}\right)$ is the state (vector of jobcounts)
at the time of the $k$th decision (in the uniformized framework).
Under a stationary Markov policy $\pi$, $\{x^{(k)}:k=0, 1, 2, \ldots\}$ is a discrete time Markov chain (DTMC).

The existence of a stationary Markovian policy $\pi^*$ that minimizes
(\ref{eq:co1}) follows from \cite[Theorem 4.3]{Meyn1997} if the load conditions (\ref{eq:load_cc}) are satisfied. Under a mild
condition on $\pi^*$, which can be shown to be satisfied following the
argument in \cite[Theorem 4.3]{Meyn1997}, the policy $\pi^*$ is an
optimal Markovian stationary policy for (\ref{co}) \cite[Theorem
2.1]{Beutler1987}.  Moreover, under policy $\pi^*$, the objective in
(\ref{co}) is equal to  that in (\ref{eq:co1}); see
\cite[Theorem 3.6]{Beutler1987}.

\subsection{General formulation of  a  multiclass queueing network control problem}

We consider a multiclass queueing network that has $L$ stations  and $ J$  job classes.   For
notational convenience we denote $\LL = \{1, ..., L\}$ as  the set
  of stations, $\J = \{1, ...,J \}$ as  the  set of
job classes.
Each station has a single server that
processes jobs from the job classes that belong   to the station.  Each job class belongs to one  station. We use
$\ell= s(j)\in \LL$ to denote the station that class $j$ belongs to.
We assume the function $s: \J \to \LL$ satisfies $s(\J)=\LL$.
Jobs arrive externally  and are processed sequentially at various stations, moving from one class to  the next after each processing step until they exit the network.
Upon arrival if a class $j$ job finds the  associated server busy, the job waits in the corresponding buffer $j$.  We assume that every buffer has an infinite capacity.
For each station $\ell\in \LL$, we define
  \begin{align}\label{def:servers}
 \mathcal{B}(\ell) := \{j\in \J:~ s(j)  = \ell\}
 \end{align} as the set of job classes to be processed by server $\ell$.

Class $j\in \J$ jobs arrive externally to buffer $j$ following a Poisson
 process with rate $\lambda_j$; when $\lambda_j=0$, there are no external arrivals into buffer $j$.
   Class $j$ jobs are processed
 by server $s(j)$ following the service policy as specified below.
 We assume the service times for class $j$ jobs are i.i.d.  having exponential distribution with  mean $1/\mu_j$. Class $j$
 job, after being processed by server $s(j)$, becomes class $k\in \J$ job with probability $r_{jk}$ and leaves the network with probability
  $1 - \sum\limits_{k=1}^K r_{jk}$. We define $J\times J$ matrix $R := (r_{jk})_{j,k=1,..., J}$ as the routing matrix. We assume that the network is open, meaning that $I-R$ is invertible.
We let vector $q= (q_1, q_2, ..., q_J)^T$ satisfy the system of linear
equations
\begin{equation}\label{eq:traffic}
  q = \lambda +R^T q.
\end{equation}
Equation (\ref{eq:traffic}) is known as the traffic equation, and it has a unique solution under the open network assumption.
  For each class $j \in \J$,  $q_j$ is interpreted to be the total arrival rate
  into buffer $j$, considering both the  external arrivals
  and internal arrivals from service completions at stations.
We define \textit{the load} $\rho_\ell$ of station $\ell\in\LL$ as
\begin{equation*}\rho_\ell := \sum\limits_{j\in \B(\ell)} \frac{q_j}{\mu_j}.\end{equation*}
We assume that
\begin{align}\label{eq:load}
\rho_\ell<1 \quad \text{for each  station }\ell\in \LL.
\end{align}

We let $x(t) = \left( x_1(t), .., x_J(t) \right)$ be the vector of jobcounts at time $t$. A decision time occurs when a new job arrives at the system or a service is completed.   Under any service policy, at each decision time, the system manager needs to simultaneously choose an action for each server $\ell\in \LL$.
For each server $\ell\in \LL$ the system manager selects an action from set $\B(\ell)\cup \{0\}$: action $j\in \B(\ell)$ means that  the system manager gives priority to  job class $j$ at station $\ell$; action $0$ means server $\ell$ idles until the next decision time.

We define  set
\begin{align}\label{eq:setP}
  \mathcal{U}=\left\{u= \left(u_1, u_2,..., u_J\right)\in \R^J_+:     \sum\limits_{j\in \B(\ell)} u_j\le1 \text{ for each }\ell\in \LL\right\}.
\end{align}

For each station $\ell\in \LL$ vector $u\in \mathcal{U}$ defines a probability distribution $u_\ell$ on the action set  $\B(\ell)\cup \{0\}$: probability of action $j$ is equal to $u_j$ for $j\in \B(\ell)$, and  probability of action $0$ is equal to $\Big(1 - \sum\limits_{j\in \B(\ell)} u_j\Big)$.
We define a randomized stationary Markovian service policy as a map from a set of  jobcount vectors into set $\mathcal{U}$ defined in (\ref{eq:setP}):
\begin{align*}
\pi:\Z_+^J\rightarrow \mathcal{U}.
\end{align*}

Given this map $\pi$, at each decision time $t$, the system manager
observes jobcounts $x(t)$, chooses $\pi(x)\in\mathcal{U}$, and based on $\pi(x)$ computes probability distribution $u_\ell$ for each $\ell\in \LL$.
Then  the system manager  independently samples  one action from $u_\ell$  for each server $\ell\in \LL$.

 The objective is to find a stationary Markovian policy  $\pi$ that   minimizes the  long-run average number of jobs in the network:
\begin{align}\label{eq:obj3}
\inf\limits_{\pi} \lim\limits_{T\rightarrow \infty} \frac{1}{T} \E_\pi \int\limits_0^T\left(\sum\limits_{j=1}^J x_j(t)\right)dt.
\end{align}

Under a stationary Markovian policy $\pi$, we adopt the method of uniformization to obtain a uniformized discrete time Markov chain (DTMC) $\{x^{(k)}:k=0, 1, \ldots \}$. We abuse the notation and denote a system state as $x^{(k)} = \left( x_1^{( k)}, x_2^{(k)}, \dotsc, x_J^{(k)} \right)$ after $k$ transitions of the DTMC.

In this chapter, we   develop  algorithms to approximately solve the following (discrete-time) MDP problem:
% The objective (\ref{eq:obj3}) is equivalent to
\begin{align}\label{eq:obj4}
 \eta_\pi := \inf\limits_\pi \lim\limits_{N\rightarrow \infty} \frac{1}{N} \E_{\pi}\left[ \sum\limits_{k=0}^{N-1}\sum\limits_{j=1}^J x_j^{(k)} \right].
\end{align}
\begin{remark}

It has been proved in \cite{Meyn1997} that the MDP (\ref{eq:obj4}) has an optimal policy
that satisfies the conditions in \cite[Theorem 3.6]{Beutler1987} if the ``fluid limit model'' under
some policy is $L_2$-stable.  Under the load condition (\ref{eq:load}),
conditions in \cite{Meyn1997} can be verified as follows. First, we adopt the randomized
version of the head-of-line static processor sharing (HLSPS) as defined in
\cite[Section 4.6]{DaiHarrison2020}. We apply this  randomized  policy to the
discrete-time MDP to obtain the resulting DTMC.  The fluid limit path
of this DTMC can be shown to satisfy the fluid model defined in  \cite[Definition 8.17]{DaiHarrison2020} following a procedure that is similar to,
but much simpler than, the proof of \cite[Theorem 12.24]{DaiHarrison2020}. Finally, \cite[Theorem 8.18]{DaiHarrison2020} shows the fluid model is stable, which is
stronger than the $L_2$-stability needed.

\end{remark}

\section{Reinforcement learning approach for queueing network control}\label{sec:countable}

%	The undeniable success of RL has been reached for games \cite{Mnih2015}, \cite{Bellemare2013}, and physics engines \cite{Duan2016},  \cite{Todorov2012}. These problems are episodic in nature and have long but finite horizon. In the game domain each new episode (game) may require a different amount of time for the player to finish. To compare the effect of different strategies  the objective is
%	normalized by discounting future costs.

 Originally, policy gradient algorithms have been developed  to find
optimal policies which optimize the finite horizon total cost or
infinite horizon discounted total cost objectives. For
  stochastic processing networks and their applications, it is often
   useful  to optimize the long-run average cost. In this section
we develop a version of the  Proximal Policy Optimization algorithm  for the long-run average cost objective.  See Section \ref{sec:experiments}, which demonstrates the effectiveness of our proposed PPO for finding the  near optimal control policies for stochastic processing networks.

\subsection{Positive recurrence and $\V$-uniform ergodicity}\label{sec:MC}
As discussed in Section~\ref{sec:MQN}, operating under a fixed
randomized stationary control policy, the dynamics of a stochastic
processing network is a DTMC. We restrict policies so that the
resulting  DTMCs are  irreducible and aperiodic. Such a DTMC does not
always have a stationary distribution.  When the DTMC does not have a
stationary distribution, the long-run average cost of the corresponding policy is not well-defined, leading to necessarily poor performance. It is well known that an irreducible DTMC has a unique stationary distribution if and only if it is positive recurrent.
Hereafter, when the DTMC is positive recurrent, we call
the corresponding control policy \emph{stable}. Otherwise, we call it
\emph{unstable.}

A sufficient condition for an irreducible DTMC to be positive recurrent is
the Foster-Lyapunov drift condition.  The drift condition
(\ref{eq:drift}) in the following lemma is stronger than the classic
Foster-Lyapunov drift condition.
For a proof of the lemma, see  Theorem 11.3.4 and Theorem 14.3.7 in \cite{Meyn2009}.

\begin{lemma}\label{lem:drift}
Consider an irreducible Markov chain on a countable state space $\X$ with a transition matrix $P$ on $\X\times \X$.  Assume   there exists a vector $\V:\X\rightarrow [1, \infty)$ such that the following drift condition holds for some constants  $\varepsilon\in (0,1)$ and $b\geq0$, and a finite subset $C\subset \X$:
\begin{align}\label{eq:drift}
\sum\limits_{y\in \X}P(y|x)\V(y)\leq \varepsilon  \V(x) + b \I_{C}(x), \quad \text{for each }x\in \X,
\end{align}
where $\I_{C}(x)=1$ if $x\in C$ and $\I_{C}(x)=0$ otherwise. Here,
$P(y|x)=P(x,y)$ is the transition probability from state $x\in \X$ to
state $y\in \X$. Then
(a)  the Markov chain with the transition matrix  $P$ is positive recurrent with a unique stationary distribution  $d$;  and  (b)
$d^T\V <\infty,
$
where  for any function $f: \X \rightarrow \R$  we define $d^T f$ as
\begin{align*}
d^Tf:=\sum\limits_{x\in \X} d(x)f(x).
\end{align*}

\end{lemma}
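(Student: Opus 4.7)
The plan is to treat the two conclusions separately, leveraging the geometric-drift form of the hypothesis.

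For part (a), I would first reduce the stated inequality to the classical Foster-Lyapunov condition. Rewriting (\ref{eq:drift}) as
\begin{align*}
(P\V)(x) - \V(x) \le -(1-\varepsilon)\V(x) + b\,\I_{C}(x) \le -(1-\varepsilon) + b\,\I_{C}(x),
\end{align*}
where the second inequality uses $\V \ge 1$. Since $C$ is a finite subset of the countable state space $\X$ and the chain is irreducible, $C$ is petite, so Foster's theorem (as in Theorem 11.3.4 of Meyn-Tweedie) yields positive recurrence. Irreducibility together with positive recurrence delivers the existence and uniqueness of a stationary distribution $d$.

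For part (b), the key is to show $d^T\V < \infty$ \emph{before} attempting to integrate the drift inequality against $d$, because the naive manipulation $d^T P \V \le \varepsilon d^T \V + b\,d(C)$ only rearranges to the desired bound if we already know the left side is finite. My plan is to iterate (\ref{eq:drift}) pathwise: since $\I_C \le 1$, one iteration gives $(P\V)(x) \le \varepsilon \V(x) + b$, and induction yields
\begin{align*}
(P^n \V)(x) \le \varepsilon^n \V(x) + \frac{b}{1-\varepsilon}, \qquad n \ge 0,\ x \in \X.
\end{align*}
In particular the Cesaro averages $\tfrac1n\sum_{k=1}^n (P^k\V)(x_0)$ are bounded by $\V(x_0) + b/(1-\varepsilon)$. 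For an irreducible positive recurrent chain on a countable state space the Cesaro averages $\tfrac1n\sum_{k=1}^n P^k(y|x_0)$ converge to $d(y)$ for each $y$, so Fatou's lemma gives
\begin{align*}
d^T \V \le \liminf_{n\to\infty} \frac{1}{n}\sum_{k=1}^n (P^k\V)(x_0) \le \V(x_0) + \frac{b}{1-\varepsilon} < \infty.
\end{align*}

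Finally, armed with $d^T\V<\infty$, I would integrate (\ref{eq:drift}) against $d$ and use stationarity $d^T P = d^T$ to obtain $d^T\V \le \varepsilon\,d^T\V + b\,d(C)$, which rearranges to the sharper bound $d^T\V \le b\,d(C)/(1-\varepsilon) \le b/(1-\varepsilon)$. The main obstacle in this plan is the finiteness step: the Foster inequality alone only shows $\sum_{x\in C^c} d(x)f(x) <\infty$ for the margin function $f = (1-\varepsilon)\V - b\I_C$, so one really must exploit the \emph{geometric} nature of the drift (via the iterate bound and Fatou) rather than the linear Foster form to conclude $d^T\V <\infty$.
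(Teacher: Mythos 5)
Your proof is correct. Note, however, that the paper does not actually prove this lemma itself: it simply cites Theorems 11.3.4 and 14.3.7 of Meyn and Tweedie, i.e.\ Foster's criterion for positive recurrence and the $f$-norm ergodic theorem (the Comparison Theorem applied to the drift $\Delta\V\le -(1-\varepsilon)\V+b\I_C$, which yields $d^T\big((1-\varepsilon)\V\big)\le b\,d(C)<\infty$ directly). Your argument reaches the same conclusions by an elementary, self-contained route: part (a) by weakening the geometric drift to the constant-margin Foster form, and part (b) by iterating $P^n\V\le \varepsilon^n\V+b/(1-\varepsilon)$, passing to Cesàro averages, and applying Fatou's lemma against the ergodic limit $\tfrac1n\sum_{k\le n}P^k(x_0,\cdot)\to d$. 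This avoids the general petite-set/$f$-regularity machinery (which is what the citation buys in general state spaces) at the cost of using the countability and irreducibility of $\X$ explicitly; your closing integration step $d^T\V\le b\,d(C)/(1-\varepsilon)$ even recovers the same quantitative bound the Comparison Theorem gives. Two small remarks: the induction for $P^n\V$ should be run as $P^{n+1}\V=P^n(P\V)\le \varepsilon P^n\V+be$ with all quantities nonnegative so that Tonelli justifies the interchange, and your final aside slightly undersells the Foster machinery --- the $\V$-weighted Comparison Theorem does give $d^T\V<\infty$ directly (that is precisely Theorem 14.3.7); it is only the constant-margin form that loses this information. Neither point affects the validity of your proof.
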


Vector $\V$ in the drift condition (\ref{eq:drift}) is called a
\textit{Lyapunov function} for the Markov chain.  We define $\V$-norm of a vector $\nu$ on $\X$ as
\begin{align}\label{eq:Vnorm}
 \|\nu\|_{\infty, \V} := \sup\limits_{x\in \X} \frac{| \nu(x)|}{\V(x)},
\end{align}
where $\V:\X\rightarrow [1, \infty)$.
For any matrix $M$ on $\X\times \X$, its induced $\V$-norm is defined to be
 \begin{align}\label{eq:norn_equv}
   \|M\|_\V^{ }: &= \sup\limits_{\nu:\|\nu\|_{\infty, \V}=1} \|M\nu\|_{\infty, \V}\nonumber\\
&= \sup\limits_{x\in \X} \frac{1}{\V(x)}\sum\limits_{y\in \X}|M(x, y)| \V(y).
\end{align}
The proof of equality (\ref{eq:norn_equv}) can be found in Lemma \ref{lem:norms} in Appendix \ref{sec:proofs}.

An irreducible, aperiodic Markov chain with transition matrix $P$ is
called \textit{$\V$-uniformly ergodic} if
\begin{align*}\|P^n - \Pi\|_\V^{ }\rightarrow 0 \text{ as } n\rightarrow
  \infty,\end{align*} where every row of $ \Pi$  is equal  to the
stationary distribution $d$, i.e. $\Pi(x, y): = d(y),$ for any
$x, y\in \X$.  The drift condition (\ref{eq:drift}) is sufficient and
necessary for an irreducible, aperiodic Markov chain to be
$\V$-uniformly ergodic \cite[Theorem 16.0.1]{Meyn2009}.  For an
irreducible, aperiodic Markov chain that satisfies (\ref{eq:drift}),
for any $g:\X\to \R$ with $\abs{g(x)}\le \V(x)$ for $x\in \X$, there
exist constants $R<\infty$ and $r<1$ such that
\begin{align}\label{eq:geo}
\left| \sum\limits_{y\in \X} P^n(y|x) g(y) - d^T g  \right| \leq R\V(x) r^n
\end{align}
for any $x\in \X$ and $n\geq 0$; see   \cite[Theorem 15.4.1]{Meyn2009}.

\subsection{Poisson equation}\label{sec:PO}
For an irreducible DTMC on state space $\X$ (possibly infinite) with transition matrix $P$, we
assume that there exists a Lyapunov function $\V$ satisfying (\ref{eq:drift}).
For any cost function $g:\X \rightarrow \R$ satisfying $\abs{g(x)}\le \V(x)$ for each $x\in \X$, it follows from Lemma~\ref{lem:drift} that $d^T |g|<\infty$.
Lemma~\ref{lem:poisson_sol} below asserts that the following equation
has a solution $h:\X\to\R$:
\begin{align}\label{eq:Poisson}
g(x) - d^T g + \sum\limits_{y\in \X}P(y|x) h(y) - h(x) =0 \quad \text{ for each }x\in \X.
\end{align}
Equation (\ref{eq:Poisson}) is called  a \textit{Poisson equation} of the Markov chain with transition matrix $P$, stationary distribution $d$, and cost function $g $. Function $h:\X\rightarrow \R$ that satisfies (\ref{eq:Poisson}) is called a \textit{solution} to the Poisson equation.  The solution is unique up to a constant shift, namely,
if $h_1$ and $h_2$ are two solutions to Poisson equation (\ref{eq:Poisson}) with $d^T(|h_1|+|h_2|)<\infty$, then there exists a constant $b\in \R$ such that $h_1(x) = h_2(x) +b$ for each $x\in \X$, see \cite[Proposition 17.4.1]{Meyn2009}.

A solution $h$ to the Poisson equation is called a \textit{fundamental solution} if   $ d^T h =0.$    The proof of the following lemma is provided in \cite[Proposition A.3.11]{Meyn2007}.

\begin{lemma}\label{lem:poisson_sol}

Consider a $\V$-uniformly ergodic  Markov chain with transition matrix $P$ and the stationary  distribution $d$.
 For any cost function $g:\X \rightarrow \R$ satisfying $|g|\leq \V$,  Poisson equation (\ref{eq:Poisson}) admits a fundamental  solution
\begin{align}\label{eq:h}
h^{(f)}(x) : = \E \left[\sum\limits_{k=0}^\infty \left(g(x^{(k)}) - d^Tg\right)~|~x^{(0)} = x\right] ~\text{for each }x\in \X,
\end{align}
where $x^{(k)}$ is the state of the Markov chain after $k$ timesteps.

\end{lemma}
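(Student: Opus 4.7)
The plan is to use the geometric ergodicity bound (\ref{eq:geo}) to establish absolute convergence of the series defining $h^{(f)}$, then verify the Poisson equation by reindexing and an application of the Markov property, and finally verify the fundamental-solution property using stationarity of $d$.

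First I would establish absolute convergence with a $\V$-bounded sum. Since $|g|\le \V$ and the chain is $\V$-uniformly ergodic, (\ref{eq:geo}) supplies constants $R<\infty$ and $r<1$ such that
\begin{align*}
\left|\E[g(x^{(k)})\mid x^{(0)}=x] - d^T g\right| \;=\; \left|\sum_{y\in\X} P^k(y|x)g(y)-d^Tg\right| \;\le\; R\,\V(x)\, r^k
\end{align*}
for every $x\in\X$ and $k\ge 0$. Summing the geometric series in $k$ yields both absolute convergence of the series in (\ref{eq:h}) and the bound $|h^{(f)}(x)|\le R\V(x)/(1-r)$. In particular $h^{(f)}$ is finite everywhere, $\|h^{(f)}\|_{\infty,\V}<\infty$, and by Lemma~\ref{lem:drift}(b) we have $d^T|h^{(f)}|<\infty$.

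Second, I would verify the Poisson equation. Splitting off the $k=0$ term and using the Markov property $\E[g(x^{(k)})\mid x^{(0)}=x] = \sum_{y\in\X} P(y|x)\,\E[g(x^{(k-1)})\mid x^{(0)}=y]$ for $k\ge 1$, I would interchange the summation over $k$ with the summation over $y$. This interchange is justified by the bound from the first step together with the drift condition (\ref{eq:drift}), which gives $\sum_{y}P(y|x)\V(y)\le \varepsilon \V(x)+b\I_C(x)<\infty$, so $\sum_{y}P(y|x)\sum_{k\ge 0}R\V(y)r^k$ is finite. The interchange produces
\begin{align*}
h^{(f)}(x)=\bigl(g(x)-d^Tg\bigr)+\sum_{y\in\X}P(y|x)\,h^{(f)}(y),
\end{align*}
which is exactly (\ref{eq:Poisson}).

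Third, I would check $d^T h^{(f)}=0$. Since $|h^{(f)}|\le C\V$ and $d^T\V<\infty$, Fubini's theorem permits exchanging $\sum_x d(x)$ with $\sum_{k=0}^\infty$, giving $d^T h^{(f)} = \sum_{k=0}^\infty\bigl(d^T P^k g - d^T g\bigr)$. Stationarity of $d$ yields $d^T P^k = d^T$ for every $k$, so every summand vanishes.

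The main obstacle is only the careful bookkeeping of the Fubini-type interchanges on the countable state space; once the $\V$-uniform geometric bound (\ref{eq:geo}) is in hand, everything else reduces to summing a geometric series and invoking stationarity.
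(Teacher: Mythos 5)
Your proof is correct and is essentially the argument the paper outsources to \cite[Proposition A.3.11]{Meyn2007}: the geometric bound (\ref{eq:geo}) gives absolute convergence of the series and $\|h^{(f)}\|_{\infty,\V}<\infty$, the Markov property together with the drift condition justifies the interchange of sums yielding the Poisson equation, and stationarity plus $d^T\V<\infty$ gives $d^Th^{(f)}=0$. The only point worth making explicit is that you silently read (\ref{eq:h}) as the convergent sum of expectations $\sum_{k=0}^\infty\left(\sum_{y}P^k(y|x)g(y)-d^Tg\right)$ rather than as the expectation of the random series (whose partial sums fluctuate on the order of $\sqrt{n}$ and need not converge pathwise); this is the intended reading, consistent with the paper's own Lemma~\ref{lem:Zeq}.
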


We  define   \textit{fundamental matrix} $Z$ of the Markov chain with transition kernel $P$ as %that maps any cost function $|g|\leq \V$ into a corresponding fundamental solution $h^{(f)}$ as
 \begin{align}\label{eq:Zs}Z :=\sum\limits_{k=0}^\infty \left( P-\Pi \right)^k. \end{align}
 It follows from  \cite[Theorem 16.1.2]{Meyn2009} that the series (\ref{eq:Zs}) converges in $\V$-norm and, moreover,  $\|Z\|^{ }_\V<\infty$.
Then, it is easy to see that fundamental matrix $Z$ is an inverse matrix of $(I-P+\Pi)$, i.e. $Z(I-P+\Pi) = (I-P+\Pi)Z = I$.
See Appendix Section \ref{sec:proofs} for the proof of the following lemma.
 \begin{lemma}\label{lem:Zeq}
Fundamental matrix $Z$   maps any cost function $|g|\leq \V$ into a corresponding fundamental solution $h^{(f)}$ defined by (\ref{eq:h}):
 \begin{align}h^{(f)} = Z\left(g -(d^T g) e\right) ,\end{align}
 where $e = (1, 1,.., 1, ...)^T$ is the unit vector.
 \end{lemma}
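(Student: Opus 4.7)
The plan is to exploit the algebraic identity $(P-\Pi)^k = P^k - \Pi$ for $k\geq 1$, which turns the defining series for $Z$ into an expression that matches the probabilistic formula (\ref{eq:h}) term by term.

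First I would establish three algebraic facts about $\Pi$, using only that each row of $\Pi$ equals the stationary distribution $d$: (i) $\Pi P = \Pi$ (since $d^T P = d^T$), (ii) $P\Pi = \Pi$ (since row sums of $P$ are 1 and every row of $\Pi$ is $d$), and (iii) $\Pi^2 = \Pi$ (since $d^T e = 1$). These identities imply, by a short induction, that $(P-\Pi)^k = P^k - \Pi$ for every $k\geq 1$, while $(P-\Pi)^0 = I$. This is the cleanest way to rewrite the fundamental matrix.

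Second, I would observe that $\Pi$ annihilates the centered cost vector $g - (d^T g) e$: indeed $\Pi g = (d^T g) e$ and $\Pi e = e$, so $\Pi\bigl(g - (d^T g) e\bigr) = 0$. Combined with the identity from the first step, this yields, for each $k\geq 1$,
\begin{align*}
(P-\Pi)^k\bigl(g - (d^T g) e\bigr) = P^k g - (d^T g) e,
\end{align*}
while the $k=0$ term equals $g - (d^T g) e$. Summing over $k$ gives
\begin{align*}
Z\bigl(g - (d^T g) e\bigr)(x) = \sum_{k=0}^\infty \Bigl(\bigl(P^k g\bigr)(x) - d^T g\Bigr) = \sum_{k=0}^\infty \E\bigl[g(x^{(k)}) - d^T g \mid x^{(0)}=x\bigr],
\end{align*}
which is precisely $h^{(f)}(x)$ as defined in (\ref{eq:h}).

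The one technical point that needs care is convergence, and this is where I expect the minor obstacle to lie. The series defining $Z$ converges in $\V$-norm by \cite[Theorem 16.1.2]{Meyn2009}, as cited just before the lemma, and $\|Z\|_\V<\infty$. Since $|g-(d^T g)e|\leq 2\V$ (because $|g|\leq \V$ and $|d^T g|\leq d^T\V<\infty\leq C\V$ up to a uniform bound, using $\V\geq 1$), the vector $g - (d^T g)e$ lies in the $\V$-norm ball and the operator $Z$ applies to it, producing a vector with finite $\V$-norm; this justifies interchanging the sum with the evaluation at $x$ and shows the probabilistic series in (\ref{eq:h}) converges absolutely. Finally, to confirm that the resulting $h^{(f)}$ is a \emph{fundamental} solution, I would verify $d^T h^{(f)} = 0$: applying $d^T$ to $Z(g - (d^T g)e)$ and using $d^T \Pi = d^T$, $d^T(P - \Pi) = 0$, one sees that all terms with $k\geq 1$ vanish, while the $k=0$ term contributes $d^T g - (d^T g)(d^T e) = 0$.
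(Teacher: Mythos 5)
Your proof is correct, but it follows a genuinely different route from the paper's. The paper argues indirectly: it sets $h := Z\left(g-(d^Tg)e\right)$, invokes the fact stated just before the lemma that $Z$ has finite $\V$-norm and is the inverse of $I-P+\Pi$, so that $(I-P+\Pi)h = g-(d^Tg)e$; left-multiplying by $d^T$ gives $d^Th=0$, hence $\Pi h=0$, so $h$ solves Poisson's equation (\ref{eq:Poisson}) with zero stationary mean, and the uniqueness-up-to-constants of solutions together with Lemma \ref{lem:poisson_sol} forces $h=h^{(f)}$. You instead compute the series (\ref{eq:Zs}) term by term via $(P-\Pi)^k = P^k-\Pi$ and $\Pi\left(g-(d^Tg)e\right)=0$, landing directly on $\sum_{k}\left(P^kg - (d^Tg)e\right)$, which is the representation (\ref{eq:h}). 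Your route is more constructive and avoids appealing to uniqueness of Poisson solutions, at the price of two points that deserve explicit care on a countable state space: first, the induction $(P-\Pi)^{k+1}=(P-\Pi)^k(P-\Pi)=(P^k-\Pi)(P-\Pi)$ silently re-associates infinite matrix products, which is legitimate here only because all factors have finite $\V$-norm (exactly the issue flagged in the remark following the lemma — you should cite it); second, identifying $\sum_k \E\bigl[g(x^{(k)})-d^Tg \mid x^{(0)}=x\bigr]$ with the expectation-of-a-sum in (\ref{eq:h}) involves an interchange that is not literally Fubini (the pathwise sum need not converge absolutely), and is really a matter of reading (\ref{eq:h}) in the standard Meyn--Tweedie sense as the sum of expectations. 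Your convergence discussion for applying $Z$ to $g-(d^Tg)e$ and your verification that $d^Th^{(f)}=0$ are both sound, the latter being redundant once the series is matched to (\ref{eq:h}).
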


 \begin{remark}
  Consider matrices $A, B,C$ on $\X\times \X$.
The   associativity property,
\begin{align*}
 ABC =(AB)C= A(BC),
 \end{align*}
does not always hold for matrices defined on  a  countable state space; see a counterexample  in \cite[Section 1.1]{Kemeny1976}. However, if  $\|A\|^{ }_\V<\infty, \|B\|^{ }_\V<\infty, \|C\|^{ }_\V<\infty$ then matrices $A, B, C$ associate, see \cite[Lemma 2.1]{Jiang2017}.
Hence,  there is no ambiguity in the definition of the fundamental matrix (\ref{eq:Zs}):
\begin{align*}
\left( P-\Pi \right)^k =\left( P-\Pi \right) \left( P-\Pi \right)^{k-1}=\left( P-\Pi \right)^{k-1}\left( P-\Pi \right),~~\text{for }k\geq 1,
 \end{align*}
where $\|P-\Pi\|^{ }_\V<\infty$ holds due to the drift condition (\ref{eq:drift}).
\end{remark}

 \subsection{Improvement guarantee for average cost objective}\label{sec:TRPOforAC}

 We consider an  MDP problem with a countable state space $\X$, finite action space $\A$, one-step cost function $g(x)$, and  transition function $P(\cdot|x, a)$. For each  state-action pair $(x, a)$, we assume  that the chain can transit to a finite number of distinguished states, i.e. set $\{y\in \X: P(y| x, a)>0\}$ is finite for each $(x, a)\in \X\times \A.$

 We consider  $\Theta\subset \R^m$ for some integer $m>0$ and
  $\Theta$ is open.  With every $\phi\in \Theta,$ we associate a
randomized Markovian policy $\pi_{\phi}$, which at any state
$x\in \X$ chooses action $a\in \A$ with probability
$\pi_\phi(a|x)$. Under   policy $\pi_{\phi}$, the corresponding DTMC has transition matrix $P_{\phi}$ given by
% The corresponding family of transition kernels is
% given by
% \begin{align*}\left\{P_\theta(\cdot|x) :~P_\theta(\cdot|x) = \sum\limits_{a\in A} \pi(a|x)P(\cdot|x, a),~ \theta\in \Theta\right\}.\end{align*}
\begin{align*} P_\phi(x, y) = \sum\limits_{a\in \A} \pi_\phi(a|x)P(y|x, a) \text{ for } x, y\in \X.
\end{align*}
For each $\phi\in \Theta$ we assume that the resulting Markov chain with transition probabilities $P_{\phi}$ is irreducible and aperiodic.

We  assume there exists $\phi \in \Theta$ such that the drift
condition (\ref{eq:drift}) is satisfied for the transition matrix
$P_{\phi}$ with a Lyapunov function $\V:\X\rightarrow [1, \infty).$ By
Lemma \ref{lem:poisson_sol} the corresponding fundamental matrix
$Z_{\phi}$ is well-defined. The following lemma says that if $P_{\phi}$ is positive
recurrent and $P_{\theta}$ is ``close'' to $P_{\phi}$, then $P_{ \theta}$ is
also positive recurrent.
See Appendix Section \ref{sec:proofs} for the proof.

\begin{lemma}\label{lem:st}
  Fix a $\phi \in \Theta$.
  We assume  that drift condition (\ref{eq:drift}) holds for $P_{\phi}$. Let some $  \theta\in \Theta$ satisfies,
\begin{align*}
\| (P_{  \theta} - P_{\phi}) Z_{\phi}\|_\V^{ } < 1 , \end{align*}
then the Markov chain with transition matrix $P_{  \theta}$ has a unique stationary distribution $d_{  \theta}$.
\end{lemma}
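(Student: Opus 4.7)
The plan is to construct a candidate $d_\theta$ algebraically from $d_\phi$ and $Z_\phi$, verify directly that it satisfies the stationarity equation and normalizes to one, and then establish positive recurrence of $P_\theta$ via a Foster--Lyapunov drift derived from the Poisson equation. Write $\Delta := P_\theta - P_\phi$; both matrices are stochastic so $\Delta e = 0$. The algebraic cornerstone is the operator identity
\begin{align*}
I - P_\theta + \Pi_\phi \;=\; (I - \Delta Z_\phi)(I - P_\phi + \Pi_\phi),
\end{align*}
which is immediate from $Z_\phi(I - P_\phi + \Pi_\phi) = I$. Because $\|\Delta Z_\phi\|_\V < 1$, the Neumann series $(I - \Delta Z_\phi)^{-1} = \sum_{k=0}^{\infty}(\Delta Z_\phi)^k$ converges in $\V$-operator norm, so $I - P_\theta + \Pi_\phi$ is invertible on the Banach space of $\V$-bounded functions, with inverse $Z_\phi(I - \Delta Z_\phi)^{-1}$.

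I would then set $d_\theta^T := d_\phi^T(I - \Delta Z_\phi)^{-1}$ (equivalently $d_\phi^T(I - P_\theta + \Pi_\phi)^{-1}$, using $d_\phi^T Z_\phi = d_\phi^T$) and verify its defining properties. For normalization, $\Delta e = 0$ and $Z_\phi e = e$ force $(\Delta Z_\phi)^k e = 0$ for every $k \geq 1$, so $d_\theta^T e = d_\phi^T e = 1$. For stationarity, the operator identity gives
\begin{align*}
d_\theta^T(I - P_\theta + \Pi_\phi) \;=\; d_\phi^T(I - P_\phi + \Pi_\phi) \;=\; d_\phi^T,
\end{align*}
using $d_\phi^T P_\phi = d_\phi^T$ and $d_\phi^T \Pi_\phi = d_\phi^T$; subtracting $d_\theta^T \Pi_\phi = (d_\theta^T e)\,d_\phi^T = d_\phi^T$ then yields $d_\theta^T P_\theta = d_\theta^T$. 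Uniqueness will follow from the irreducibility of $P_\theta$ and the standard fact that an irreducible DTMC has at most one stationary distribution.

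The hard part will be upgrading the signed algebraic object $d_\theta$ to a genuine nonnegative probability measure, equivalently, showing $P_\theta$ is positive recurrent. Setting $h := Z_\phi \V$ and using Lemma~\ref{lem:Zeq} gives $P_\phi h = h - \V + (d_\phi^T \V)\,e$; adding the perturbation $\Delta h = \Delta Z_\phi \V$ and bounding $|\Delta Z_\phi \V(x)| \leq \|\Delta Z_\phi\|_\V \, \V(x)$ produces the drift-like inequality
\begin{align*}
P_\theta h(x) \;\leq\; h(x) - \bigl(1 - \|\Delta Z_\phi\|_\V\bigr)\V(x) + d_\phi^T \V, \qquad x \in \X.
\end{align*}
Because $h$ is signed with $|h| \leq \|Z_\phi\|_\V\,\V$, I would form the nonnegative Lyapunov function $W := h + A\V$ with $A > \|Z_\phi\|_\V$ (so that $W \geq (A - \|Z_\phi\|_\V)\V \geq 1$), control $P_\theta \V = P_\phi \V + \Delta \V$ by writing $\Delta \V = \Delta Z_\phi(I - P_\phi + \Pi_\phi)\V$ to bound $|\Delta \V|$ pointwise by a multiple of $\V$, and combine the two drifts to reach $P_\theta W \leq \beta W + b' \I_{C'}$ for some $\beta < 1$ and finite $C'$. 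Lemma~\ref{lem:drift} applied to $W$ then yields a unique stationary probability measure for $P_\theta$, which by the algebraic verification above must coincide with our $d_\theta$. The delicate technical step will be choosing $A$ (and perhaps enlarging the finite exceptional set from the original drift) so that the combined coefficient on $\V$ is strictly negative; this is exactly where the full strength of $\|\Delta Z_\phi\|_\V < 1$, rather than a weaker bound on $\|\Delta\|_\V$, is used.
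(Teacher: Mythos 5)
The first half of your proposal --- constructing $d_\theta^T = d_\phi^T\sum_{k\ge 0}(\Delta Z_\phi)^k$ and verifying $d_\theta^Te=1$ and $d_\theta^TP_\theta=d_\theta^T$ --- is exactly the paper's proof; the paper organizes the stationarity check through the identity $P_\theta-P_\phi+\Delta Z_\phi P_\phi=\Delta Z_\phi$ rather than your factorization $I-P_\theta+\Pi_\phi=(I-\Delta Z_\phi)(I-P_\phi+\Pi_\phi)$, but the content is the same (and both need the associativity remark for infinite matrices with finite $\V$-norm). You are also right to flag that this only produces a \emph{signed} invariant measure with unit mass and finite $\|\cdot\|_{1,\V}$-norm, a point the paper glosses over. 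However, your proposed Foster--Lyapunov repair has a genuine gap. The hypothesis $\|\Delta Z_\phi\|_\V<1$ does not give useful control of $\|\Delta\|_\V$ itself: the best available bound is $|\Delta\V|\le \|\Delta Z_\phi\|_\V\,\|(I-P_\phi+\Pi_\phi)\V\|_{\infty,\V}\,\V$, and the factor $\|(I-P_\phi+\Pi_\phi)\V\|_{\infty,\V}$ is of order $1+b+d_\phi^T\V$, which can be large. Consequently $P_\theta\V\le(\varepsilon+c)\V+b\I_C$ with $\varepsilon+c$ possibly well above $1$. For $W=h+A\V$ you then need $\delta+A(1-\varepsilon-c)>0$, i.e.\ an \emph{upper} bound $A<\delta/(\varepsilon+c-1)$, while nonnegativity of $W$ forces the \emph{lower} bound $A\ge\|Z_\phi\|_\V$ (and $\|Z_\phi\|_\V\ge 1$ since $Z_\phi e=e$). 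With, say, $\varepsilon=1/2$, $\|\Delta Z_\phi\|_\V=0.9$ and $c$ of order $3$, the upper bound is below $0.05$ and the two constraints are incompatible. So the ``delicate technical step'' you defer is not merely delicate --- with this choice of Lyapunov candidate it can be impossible.

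The missing step can instead be closed without any drift condition for $P_\theta$. Your $d_\theta$ satisfies $\sum_x|d_\theta(x)|\le\sum_x|d_\theta(x)|\V(x)\le\|d_\phi\|_{1,\V}\,\|\sum_k(\Delta Z_\phi)^k\|_\V<\infty$, so it is a finite signed measure, and $d_\theta^TP_\theta^n=d_\theta^T$ for all $n$. Since $P_\theta$ is assumed irreducible and aperiodic, if it were transient or null recurrent one would have $P_\theta^n(x,y)\to 0$ for all $x,y$, and dominated convergence (the terms $d_\theta(x)P_\theta^n(x,y)$ are dominated by the summable $|d_\theta(x)|$) would give $d_\theta(y)=\lim_n\sum_x d_\theta(x)P_\theta^n(x,y)=0$ for every $y$, contradicting $d_\theta^Te=1$. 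Hence $P_\theta$ is positive recurrent, its unique stationary distribution exists by irreducibility, and the same limiting argument with $P_\theta^n\to e\,\tilde d^T$ identifies it with your $d_\theta$ (in particular proving nonnegativity a posteriori). I recommend replacing your drift argument with this observation.
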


We assume  that drift condition (\ref{eq:drift}) holds for $P_{\phi}$.
For any cost function $|g|\le \V$, we denote
the corresponding fundamental solution to the Poisson equation as
$h_{\phi}$ and the long-run average cost
\begin{align}\label{eq:ac}
\eta_{\phi} := d_\phi^Tg  = \sum\limits_{x\in \X} d_\phi(x) g(x).
\end{align}

 The following theorem provides a bound on the difference of long-run average performance of policies $\pi_{\phi}$ and $\pi_{\theta}.$ See  Appendix Section \ref{sec:proofs} for the proof.

\begin{theorem}
\label{thm:main}
Suppose that the Markov chain with transition matrix $P_{\phi}$ is  an irreducible  chain such that the drift condition (\ref{eq:drift}) holds for some function $\V\geq1$ and the cost function satisfies $|g|\leq\V$.

 For any $\theta\in \Theta$ such that
\begin{align}\label{eq:D}
 D_{\theta,\phi} : = \|  (P_{\theta} - P_{\phi}) Z_{\phi}\|_\V^{ } < 1
\end{align}
the difference of long-run average costs of policies $\pi_{\phi}$ and $\pi_{\theta}$ is bounded by:
   \begin{align}\label{eq:ineq2}
\eta_{\theta} - \eta_{\phi} ~\leq~ &N_1(\theta, \phi)+  N_2(\theta, \phi),
\end{align}
where $N_1(\theta, \phi)$, $N_2(\theta, \phi)$ are finite and equal to
\begin{align}
N_1(\theta, \phi) &:= d_{\phi}^T( g -\eta_{\phi} e   +P_{\theta}h_{\phi} - h_{\phi} ), \label{eq:M1} \\
N_2(\theta, \phi) &:= \frac{ D_{\theta,\phi}^2}{1- D_{\theta, \phi}}  \left\| g - \eta_{\phi} e  \right\|^{ }_{\infty, \V}(d_{\phi}^T\V). \label{eq:M2}
\end{align}

\end{theorem}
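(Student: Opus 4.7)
The plan is to turn the long-run average cost difference into an expression involving the fundamental matrix $Z_\phi$ by means of the Poisson equation, and then expand $(d_\theta - d_\phi)$ as a Neumann series in the operator $A := (P_\theta - P_\phi) Z_\phi$, whose induced $\V$-norm equals $D_{\theta,\phi} < 1$ by hypothesis. The zeroth-order term of this series will recover $N_1(\theta,\phi)$ exactly, while the geometric tail will be bounded by $N_2(\theta,\phi)$.

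First, rewriting the Poisson equation (\ref{eq:Poisson}) as $g - \eta_\phi e = h_\phi - P_\phi h_\phi$ simplifies the definition of $N_1$ to $N_1(\theta,\phi) = d_\phi^T (P_\theta - P_\phi) h_\phi$. Using invariance $d_\theta^T P_\theta = d_\theta^T$, which gives $d_\theta^T (h_\phi - P_\theta h_\phi) = 0$, together with the same Poisson equation one obtains
\[
\eta_\theta - \eta_\phi = d_\theta^T (g - \eta_\phi e) = d_\theta^T (h_\phi - P_\phi h_\phi) = d_\theta^T (P_\theta - P_\phi) h_\phi.
\]
Subtracting produces the central identity
\[
\eta_\theta - \eta_\phi - N_1(\theta,\phi) = (d_\theta - d_\phi)^T (P_\theta - P_\phi) h_\phi,
\]
and the job reduces to bounding the right-hand side by $N_2(\theta,\phi)$.

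Next I would relate $d_\theta - d_\phi$ to $d_\phi$ through $Z_\phi$. Starting from $d_\theta^T(I-P_\theta)=0$ and $d_\phi^T(I-P_\phi)=0$, and using $d_\theta^T \Pi_\phi = d_\phi^T = d_\phi^T \Pi_\phi$ (so that $(d_\theta - d_\phi)^T \Pi_\phi = 0$), a direct manipulation gives $(d_\theta - d_\phi)^T(I - P_\phi + \Pi_\phi) = d_\theta^T(P_\theta - P_\phi)$. Right-multiplying by $Z_\phi = (I - P_\phi + \Pi_\phi)^{-1}$ yields $(d_\theta - d_\phi)^T = d_\theta^T A$. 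Since $\|A\|_\V = D_{\theta,\phi} < 1$, the Neumann series $(I - A)^{-1} = \sum_{n \geq 0} A^n$ converges in the induced $\V$-norm, so iteratively $d_\theta^T = d_\phi^T(I - A)^{-1}$ and consequently $(d_\theta - d_\phi)^T = d_\phi^T A (I - A)^{-1}$. Lemma \ref{lem:Zeq} gives $h_\phi = Z_\phi(g - \eta_\phi e)$, hence $(P_\theta - P_\phi) h_\phi = A(g - \eta_\phi e)$, and combining:
\[
\eta_\theta - \eta_\phi - N_1(\theta,\phi) = d_\phi^T A^2 (I - A)^{-1}(g - \eta_\phi e).
\]
The pointwise bound $|d_\phi^T v| \leq (d_\phi^T \V)\,\|v\|_{\infty,\V}$ together with submultiplicativity of $\|\cdot\|_\V$ and $\|(I-A)^{-1}\|_\V \leq (1 - D_{\theta,\phi})^{-1}$ then gives
\[
\bigl| d_\phi^T A^2 (I - A)^{-1}(g - \eta_\phi e) \bigr| \leq (d_\phi^T \V) \cdot \frac{D_{\theta,\phi}^2}{1 - D_{\theta,\phi}} \cdot \|g - \eta_\phi e\|_{\infty,\V},
\]
which is exactly $N_2(\theta,\phi)$.

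I expect the main obstacle to be a careful bookkeeping of convergence and associativity of these infinite operator products on the countable state space $\X$. The drift condition together with Lemma \ref{lem:drift} delivers $d_\phi^T \V < \infty$; the paragraph surrounding (\ref{eq:Zs}) supplies $\|Z_\phi\|_\V < \infty$, so via Lemma \ref{lem:poisson_sol} the fundamental solution satisfies $\|h_\phi\|_{\infty,\V} < \infty$; and Lemma \ref{lem:st} guarantees that $d_\theta$ exists as the unique stationary distribution of $P_\theta$ whenever $D_{\theta,\phi} < 1$. One must also verify $d_\theta^T \V < \infty$ to make sense of the identities involving $d_\theta$; this should follow from the Neumann representation $d_\theta^T = d_\phi^T (I-A)^{-1}$ applied to $\V$, together with $\|A\V\|_{\infty,\V} \leq \|A\|_\V < \infty$. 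Once these finiteness checks are in hand, every composition above lives in the Banach space defined by $\|\cdot\|_{\infty,\V}$, associativity as described in the remark following Lemma \ref{lem:Zeq} applies, and the algebraic manipulations proceed exactly as in the finite-state Kakade--Langford/Schulman derivation; only the $\V$-norm functional-analytic justification is novel.
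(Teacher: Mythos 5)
Your proposal is correct and follows essentially the same route as the paper: the same decomposition $\eta_\theta-\eta_\phi = N_1(\theta,\phi) + (d_\theta-d_\phi)^T(P_\theta-P_\phi)h_\phi$, the same Neumann-series representation $d_\theta^T = d_\phi^T\sum_{k\ge 0}U_{\theta,\phi}^k$ (equivalently $(d_\theta-d_\phi)^T = d_\theta^T U_{\theta,\phi}$) established in Lemma~\ref{lem:st}, and the same $\V$-norm submultiplicativity yielding the factor $D_{\theta,\phi}^2/(1-D_{\theta,\phi})$. The only cosmetic difference is that you collect the error term into $d_\phi^T A^2(I-A)^{-1}(g-\eta_\phi e)$ before bounding, whereas the paper bounds $\|d_\theta-d_\phi\|_{1,\V}$ and $\|(P_\theta-P_\phi)h_\phi\|_{\infty,\V}$ separately; the finiteness and associativity checks you flag are handled exactly as you anticipate via Lemmas~\ref{lem:drift}, \ref{lem:poisson_sol}, \ref{lem:Zeq}, and \ref{lem:norms}.
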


It follows from Theorem \ref{thm:main} that the negativity of the right side of inequality (\ref{eq:ineq2}) guarantees   that policy $\pi_{\theta}$  yields an improved performance compared  with the initial policy $\pi_{\phi}.$
Since
\begin{align}\label{eq:min0}
 \min\limits_{\theta \in \Theta: ~D^{ }_{\theta, \phi}<1}[N_1(\theta, \phi) +N_2(\theta, \phi)] \leq N_1(\phi, \phi) +N_2(\phi, \phi) = 0,
\end{align}
we want to find  $\theta=\theta^*$:
\begin{align}
  \label{eq:argmin}
  \theta^* =\argmin \limits_{\theta \in \Theta: ~D_{\theta, \phi}<1}[N_1(\theta, \phi) +N_2(\theta, \phi)]
\end{align}
 to achieve the maximum improvement in the upper bound  (\ref{eq:ineq2}).    In the setting
of finite horizon and infinite discounted RL problems,
\cite{Kakade2002, Schulman2015} propose to  fix the maximum change
between policies $\pi_{\theta}$ and $\pi_{\phi}$ by bounding  the
$N_2(\theta, \phi)$ term and  to minimize  $N_1(\theta, \phi)$. Below, we discuss the motivation for developing the PPO algorithm proposed in Section \ref{sec:ppo},
leading to a practical algorithm to approximately solve optimization
(\ref{eq:argmin}).

 It follows from property (\ref{eq:min0}) that solution $\theta^*$ to ({\ref{eq:argmin}}) leads to policy $\pi_{\theta^*}$ which performance, at least, as good as performance of policy $\pi_\phi$, i.e. $\eta_{\theta^*}\leq \eta_\phi$.  

It is an open problem if a \textit{strict} improvement can be guaranteed for any suboptimal policy $\pi_\phi$ solving ({\ref{eq:argmin}}). 
Our conjecture is  there exists  a constant $C_\phi>0$ independent of $\theta$ such that for any $\phi,\theta\in \Theta$:
\begin{align}\label{eq:lower_b}
|N_1(\theta, \phi)|\geq C_\phi D_{\theta, \phi}. 
\end{align}
Since $N_2(\theta, \phi) = O(D^2_{\theta, \phi})$, this conjecture implies that, if $N_1({\theta, \phi})<0$ and $D_{\theta, \phi}$ is small enough,
$\pi_{\theta}$ is a strict improvement over $\pi_{\phi}$. While bound (\ref{eq:lower_b}) has not been verified, we note that the opposite bound holds
\begin{align*}
|N_1(\theta, \phi)|:&= \left|d_{\phi}^T( g -\eta_{\phi} e   +P_{\theta}h_{\phi} - h_{\phi} ) \right|\nonumber\\
 &\leq (d_\phi^T\V)\| g - \eta_{\phi} e  +P_{\theta}h_{\phi} - h_{\phi } \|^{ }_{\infty, \V}\nonumber\\
& =  (d_\phi^T\V)\| (P_{\theta} - P_{\phi})h_{\phi} \|^{ }_{\infty, \V}\nonumber\\
&= (d_\phi^T\V)\| (P_{\theta} - P_{\phi}) Z_{\phi}\left (g- \eta_{\phi} e\right)\|^{ }_{\infty, \V}\nonumber\\
&\leq (d_\phi^T\V )\| g -\eta_{\phi} e \|_{\infty, \V}^{ } D_{\theta, \phi}.
\end{align*}

 Lemma \ref{lem:policies}  shows
that the distance $D_{\theta, \phi}$ can be controlled by the
probability ratio
\begin{align}
  \label{eq:ratio}
 r_{\theta, \phi}(a|x): =\frac{\pi_{\theta}(a|x)}{ \pi_{\phi}(a|x)}
\end{align}
between the two policies.
\begin{lemma}\label{lem:policies}

Suppose that the Markov chain with transition matrix $P_{\phi}$ is  an irreducible  chain such that the drift condition (\ref{eq:drift}) holds for some function $\V\geq1$ and the cost function satisfies $|g|\leq\V$. Then for any $\theta\in \Theta$
   \begin{align*}
D_{\theta, \phi} \leq   \|Z_{\phi}\|^{ }_\V\sup\limits_{x\in \X}    \sum\limits_{a\in \A} \left| r_{\theta, \phi}(a|x)-  1 \right|    G_{\phi}(x, a),
\end{align*}
where $G_{\phi}(x, a): =   \frac{1}{\V(x)} \sum\limits_{y\in \X}  \pi_{\phi}(a|x) P(y|x, a) \V(y)$.
\end{lemma}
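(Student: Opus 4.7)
The plan is to exploit submultiplicativity of the induced $\V$-norm together with a direct expansion of $P_\theta - P_\phi$ in terms of the probability ratio $r_{\theta,\phi}$. Specifically, I would first observe that from the definition of the induced operator norm, $\|AB\|_\V \le \|A\|_\V\,\|B\|_\V$ whenever the norms are finite (this is the standard operator-norm argument applied to the $\V$-norm on the countable state space, using that $\|ABf\|_{\infty,\V}\le\|A\|_\V\|Bf\|_{\infty,\V}\le\|A\|_\V\|B\|_\V\|f\|_{\infty,\V}$, and noting that for matrices whose $\V$-norm is finite the associativity issue flagged in the remark after Lemma~\ref{lem:Zeq} does not arise). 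Applying this to $A = P_\theta - P_\phi$ and $B = Z_\phi$ immediately gives
\begin{align*}
 D_{\theta,\phi} = \|(P_\theta - P_\phi)Z_\phi\|_\V \;\le\; \|P_\theta - P_\phi\|_\V \cdot \|Z_\phi\|_\V .
\end{align*}

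Next I would expand the difference of transition matrices. From the definition $P_\phi(x,y)=\sum_a \pi_\phi(a|x)P(y|x,a)$ and similarly for $P_\theta$, we can factor out $\pi_\phi(a|x)$:
\begin{align*}
 (P_\theta - P_\phi)(x,y)
 &= \sum_{a\in\A}\bigl(\pi_\theta(a|x)-\pi_\phi(a|x)\bigr) P(y|x,a) \\
 &= \sum_{a\in\A}\pi_\phi(a|x)\bigl(r_{\theta,\phi}(a|x)-1\bigr) P(y|x,a).
\end{align*}
Plugging this into formula (\ref{eq:norn_equv}) for the induced $\V$-norm, applying the triangle inequality term by term, and interchanging the two (nonnegative) sums over $y$ and $a$ (which is justified by Tonelli's theorem), I obtain
\begin{align*}
 \|P_\theta-P_\phi\|_\V
 &\le \sup_{x\in\X} \frac{1}{\V(x)} \sum_{a\in\A} \pi_\phi(a|x) \bigl|r_{\theta,\phi}(a|x)-1\bigr| \sum_{y\in\X} P(y|x,a)\V(y) \\
 &= \sup_{x\in\X} \sum_{a\in\A} \bigl|r_{\theta,\phi}(a|x)-1\bigr|\, G_\phi(x,a),
\end{align*}
where in the last step I recognize the factor $\V(x)^{-1}\pi_\phi(a|x)\sum_y P(y|x,a)\V(y)$ as exactly the definition of $G_\phi(x,a)$. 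Combining with the submultiplicativity step yields the claim.

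The main obstacle I expect is the justification of submultiplicativity in the countable-state setting, since matrix products on $\X\times\X$ are not automatically associative (see the remark after Lemma~\ref{lem:Zeq}); here I would invoke \cite[Lemma~2.1]{Jiang2017} to ensure the products $P_\phi Z_\phi$ and $P_\theta Z_\phi$ are well defined, which uses $\|Z_\phi\|_\V<\infty$ (guaranteed by the drift condition via \cite[Theorem~16.1.2]{Meyn2009}) together with finiteness of $\|P_\phi\|_\V$ (which follows from the drift condition for $P_\phi$) and of $\|P_\theta\|_\V$ on the set where the right-hand side of the lemma is finite; outside that set the inequality is trivial. Once this technicality is dispatched, the rest is a routine expansion.
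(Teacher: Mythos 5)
Your proposal is correct and takes essentially the same route as the paper's proof: bound $D_{\theta,\phi}$ by $\|P_\theta-P_\phi\|_\V\,\|Z_\phi\|_\V$ using the submultiplicativity established in Lemma~\ref{lem:norms}, then expand $P_\theta-P_\phi$ over actions, apply the triangle inequality, interchange the sums over $y$ and $a$, and identify $G_\phi(x,a)$. Your extra attention to associativity of countable-state matrix products is a reasonable precaution but changes nothing of substance; the paper relies on the same finiteness of $\|Z_\phi\|_\V$ noted in the remark after Lemma~\ref{lem:Zeq}.
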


 Lemma \ref{lem:policies} implies that
$D_{\theta, \phi}$ is small when the ration  $r_{\theta, \phi}(a|x)$  in (\ref{eq:ratio}) is close to 1
for each state-action pair $(x, a)$. Note that
$r_{\theta, \phi}(a|x) = 1$ and $D_{\theta, \phi}=0$ when
$\theta=\phi.$ See Appendix Section \ref{sec:proofs} for the proof.

\subsection{Proximal policy optimization}\label{sec:ppo}

We rewrite the first term of the right-hand side of (\ref{eq:ineq2}) as:
\begin{align}\label{eq:MA}
N_1(\theta, \phi) & =  d_{\phi}^T(g -\eta_{\phi} e   +P_{\theta}h_{\phi} - h_{\phi} ) \nonumber \\
 & =  \underset{\substack{ x\sim d_{\phi}\\ a\sim \pi_{\theta}(\cdot|x) \\ y\sim P(\cdot|x, a)}  }{\E} \left[    g(x ) -\eta_{\phi} e   + h_{\phi}(y) - h_{\phi}(x)   \right]  \nonumber\\                  &
                    =  \underset{\substack{ x\sim d_{\phi}\\ a\sim \pi_{\theta}(\cdot|x)  }  }{\E} A_{\phi} (x, a)\nonumber\\
                  & =  \underset{\substack{ x\sim d_{\phi}\\ a\sim \pi_{\phi}(\cdot|x)  }  }{\E} \left[ \frac{\pi_{\theta}(a|x)}{\pi_{\phi}(a|x)}A_{ \phi} (x, a) \right]
 =  \underset{\substack{ x\sim d_{\phi}\\ a\sim \pi_{\phi}(\cdot|x)  }  }{\E}\Big[  r_{\theta, \phi}(a|x)A_{\phi} (x, a)\Big],
\end{align}
where we define an advantage function $A_{\phi}:\X\times \A\rightarrow \R$ of policy $\pi_{\phi}, $ $\phi\in \Theta$ as:
\begin{align}\label{eq:A}
	A_{\phi}(x, a): =   \underset{\substack{ y\sim P(\cdot|x, a)}  }{\E} \left[g(x) - \eta_{\phi} e + h_{\phi} (y) -h_{\phi} (x) \right].
\end{align}

Equation (\ref{eq:MA}) implies that if we want to minimize $N_1(\theta, \phi)$, then the ratio $r_{\theta, \phi}(a|x)$ should be minimized (w.r.t. $\theta$) when $A_{\phi}(x,a)>0$, and maximized when $A_{\phi}(x,a)<0$  for each $x\in \X$ .

The end of Section~\ref{sec:TRPOforAC} suggests
that we should strive to (a)
\begin{align}\label{eq:minM1}
\text{minimize } \quad N_1(\theta, \phi),
\end{align}
w.r.t. $\theta\in \Theta$ and (b) keep the ratio $r_{\theta, \phi}(a|x)$ in (\ref{eq:ratio})
close to 1. In \cite{Schulman2017} the authors propose to minimize
(w.r.t. $\theta\in \Theta$) the following clipped surrogate objective
\begin{align}\label{eq:PO}
L(\theta, \phi):=\underset{\substack{ x\sim d_{\phi}\\ a\sim \pi_{\phi}(\cdot|x)  }  }{\E}   \max \left[  r_{\theta, \phi}(a|x) A_{\phi} (x, a) ,  ~ \text{clip} (r_{\theta, \phi}(a|x),  1-\epsilon, 1+\epsilon)  A_{\phi} (x, a)  \right],
\end{align}
where  $\epsilon\in(0, 1)$ is a hyperparameter, and clipping function is defined as
\begin{align*}
  \text{clip}(c,  1-\epsilon, 1+\epsilon):=
\begin{cases}
1-\epsilon,~\text{if } c<1-\epsilon,\\
c, ~\text{if } c\in [1-\epsilon, 1+\epsilon],\\
 1+\epsilon,~\text{otherwise.}
\end{cases}
\end{align*}

 In \cite{Schulman2017} the authors  coined the term,
proximal policy optimization (PPO), for their algorithm, and
demonstrated its ease of implementation and its ability to find good control
policies.

The objective term
$\text{clip} (r_{\theta, \phi}(a|x), 1-\epsilon, 1+\epsilon) A_{\phi}
(x, a) $ in (\ref{eq:PO}) prevents changes to the policy that
move $ r_{\theta, \phi}(a|x) $ far from 1. Then the objective function
(\ref{eq:PO}) is a upper bound (i.e. a pessimistic bound) on the
unclipped objective (\ref{eq:minM1}).  Thus, an improvement on the
objective (\ref{eq:PO}) translates to an improvement on
$N_1(\theta, \phi)$ only when $\theta\in \Theta$ satisfies
$r_{\theta, \phi}\in (1-\epsilon, 1+\epsilon)$. The alternative
heuristics proposed in \cite{Schulman2015, Wang2016, Schulman2017, Wu2017} to solve
optimization problem (\ref{eq:argmin}): each defines a loss function that controls $N_2(\theta, \phi)$ and
minimizes the $N_1(\theta, \phi)$ term.
Following \cite{Schulman2017}, we use loss function
(\ref{eq:PO})  because of its implementation simplicity.

To compute objective function in (\ref{eq:PO}) we first evaluate the expectation and precompute advantage functions in (\ref{eq:PO}). We assume that an approximation $\hat A_\phi:\X\times \A\rightarrow \R$ of the advantage function (\ref{eq:A}) is available and focus on estimating the objective from simulations.
See Section \ref{sec:M1} below for estimating $\hat A_\phi$.

 %We  use Monte-Carlo simulation method and replace the expectations in (\ref{eq:PO}) and (\ref{eq:A}) by sample averages.

% With available function approximation of $h_\phi$, the advantage function (\ref{eq:A}) can be estimated as
% \begin{align}\label{eq:Aes1}
% 	\hat A_{\phi}(x_k, a_k): =   g(x_k) - \hat\eta_\phi + \sum\limits_{y\in \X} P(y|x_k, a_k) f_{\psi} (y) -  f_{\psi}(x_k).
% \end{align}

 Given an episode with length $N$ generated  under policy $\pi_\phi$  we compute  the advantage function estimates $\hat A_{\phi}\left(x^{({k})}, a^{({k})}\right)$   at the observed state-action pairs:
\begin{align*}
D^{(0:N-1)}: = \Big \{\left(  x^{(0)}, a^{(0)}, \hat A_{\phi}(x^{(0)}, a^{(0)})  \right),~
&\left (   x^{(1)}, a^{(1)}, \hat A_{\phi}(x^{(1)}, a^{(1)} )\right) ,\cdots,\\
&\left(x^{({N-1})}, a^{({N-1})}, \hat A_{\phi}(x^{({N-1})}, a^{({N-1})}) \right)\Big\},
   \end{align*}
   and estimate the loss function (\ref{eq:PO}) as a sample average over the state-action pairs from the episode:
 \begin{align}\label{eq:popt}
   \hat L\left(\theta, \phi,D^{(0:N-1)} \right) = \sum\limits_{ k=0}^{N-1} \max\Big[ &\frac{\pi_{\theta}\left( a^{({k})}| x^{({k})} \right)}{\pi_{\phi}\left( a^{({k})}|x^{({k})} \right)}  \hat A_{\phi}\left(x^{(k)}, a^{(k)}\right) ,\\
&\text{clip}\left(\frac{\pi_{\theta}\left(a^{(k)}| x^{(k)}\right)}{\pi_{\phi}\left(a^{(k)}|x^{(k)}\right)},     1-\epsilon, 1+\epsilon \right  ) \hat  A_{\phi}\left(x^{(k)}, a^{(k)}\right)  \Big].\nonumber
 \end{align}
 In theory, one long episode  under policy $\pi_\phi$  starting from any initial state $x^{(0)}$
 is sufficient because the following SLLN for Markov chains holds:
 with probability $1$,
 \begin{align*}
   \lim_{N\to\infty} \frac{1}{N}   \hat L\left(\theta, \phi,D^{(0:N-1)} \right)
   = L(\theta, \phi).
 \end{align*}

\section{Advantage function estimation}\label{sec:M1}

The computation of objective function (\ref{eq:PO}) relies on the availability of  an estimate of  advantage function
$A_{\phi} (x, a)$ in (\ref{eq:A}).  We assume our MDP model is
known. So the expectation on (\ref{eq:A}) can be computed
exactly, and we can perform the computation in a timely manner. In this section, we explain how to
estimate $h_\phi$, a solution to the Poisson equation
\eqref{eq:Poisson} with $P=P_\phi$ and
$d=d_\phi$.

%Algorithm~\ref{alg1} relies on the availability of advantage function
%$A_{\phi} (x, a)$ in (\ref{eq:A}).  We assume our MDP model is
%known. So the expectation on (\ref{eq:A}) can be computed
%exactly. Furthermore, we assume this expectation computation can be
%done in a time-efficient manner. Therefore, our focus is on how to
%estimate $h_\phi$, a solution to the Poisson equation
%\eqref{eq:Poisson} with $P=P_\phi$ and
%$\mu=d_\phi$. Lemma~\ref{lem:poisson_sol} provides a representation
%\eqref{eq:h} for $h_\phi$.

To compute expectation $A_\phi\left(x^{(k)}, a^{(k)}\right)$ in (\ref{eq:A}) for a given
state-action pair $\left(x^{(k)},a^{(k)}\right)$, we need to evaluate $h_\phi(y)$ for each $y$ that
is reachable from $x^{(k)}$. This requires one to estimate
$h_\phi(y)$ for some states $y$ that have not been visited in the simulation. Our strategy is to
use Monte Carlo method to estimate $h_\phi(y)$ at a  selected
subset of $y$'s, and then use an approximator $f_\psi(y)$ to
replace $h_\phi(y)$ for an arbitrary $y\in \X$. The latter is standard
in deep learning. Therefore, we focus on finding a good estimator
$
  \hat h(y)
$
for $h_\phi(y)$.

\subsection{Regenerative estimation}

 Lemma~\ref{lem:poisson_sol} provides a representation of the fundamental solution
\eqref{eq:h} for $h_\phi$. Unfortunately, the known unbiased Monte Carlo estimators of the fundamental solution rely on obtaining  samples from the stationary distribution of the Markov chain \cite[Section 5.1]{Cooper2003}.

We define the following solution to the Poisson equation (\ref{eq:Poisson}).
\begin{lemma}\label{lem:poisson_sol2}

Consider the $\V$-uniformly ergodic  Markov chain with transition matrix $P$ and the stationary  distribution $d$. Let $x^*\in X$ be an arbitrary state of the positive recurrent Markov chain.
 For any cost function $g:\X \rightarrow \R$ such that $|g|\leq \V$, the Poisson's equation (\ref{eq:Poisson}) admits  a  solution
\begin{align}\label{eq:h2}
h^{(x^*)}(x) : = \E \left[\sum\limits_{k=0}^{\sigma(x^*)-1} \left(g(x^{(k)}) - d^Tg\right)\Big|~x^{(0)} = x\right] ~\text{for each }x\in \X,
\end{align}
where $\sigma(x^*) = \min\left\{k>0~|~x^{(k)} = x^*\right\}$ is the first future time when state $x^*$ is visited.
 Furthermore, the solution has a finite $\V$-norm: $\|h^{(x^*)}\|_{\infty, \V}<\infty.$
\end{lemma}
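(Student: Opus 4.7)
The plan is to prove both claims by first establishing the $\V$-norm bound on $h^{(x^*)}$ (which simultaneously certifies that the series in \eqref{eq:h2} is absolutely convergent), and then verifying the Poisson equation via a one-step conditioning argument in which the regenerative identity $h^{(x^*)}(x^*)=0$ plays the decisive role.

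For the $\V$-norm bound, I rewrite the drift condition \eqref{eq:drift} as $\sum_{y}P_{\phi}(y|x)\V(y)\le \V(x)-(1-\varepsilon)\V(x)+b\,\I_{C}(x)$ and invoke a comparison-type result (such as \cite[Theorem~14.2.3]{Meyn2009}) to obtain
\[
\E\!\left[\sum_{k=0}^{\tau_C-1}\V(x^{(k)}) \,\Big|\, x^{(0)}=x\right] \;\le\; \frac{\V(x)+b\,\I_{C}(x)}{1-\varepsilon},
\]
where $\tau_C$ denotes the first hitting time of $C$. Since $C$ is finite and the chain is irreducible, an excursion argument based on iterated use of the strong Markov property (restarting at each return to $C$ before $x^*$ has been visited) upgrades this to $\E\bigl[\sum_{k=0}^{\sigma(x^*)-1}\V(x^{(k)}) \mid x^{(0)}=x\bigr]\le c\,\V(x)$ for a constant $c$ independent of $x$, and analogously $\E[\sigma(x^*) \mid x^{(0)}=x]\le c'\V(x)$. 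Combining with $|g|\le\V$ and $d^T\V<\infty$ from Lemma~\ref{lem:drift}(b),
\[
\bigl|h^{(x^*)}(x)\bigr| \;\le\; \E\!\left[\sum_{k=0}^{\sigma(x^*)-1}\bigl(\V(x^{(k)})+d^T\V\bigr) \,\Big|\, x^{(0)}=x\right] \;\le\; (c+c'\,d^T\V)\,\V(x),
\]
which yields $\|h^{(x^*)}\|_{\infty,\V}<\infty$.

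With finiteness in hand, the Poisson equation follows by a first-step decomposition. The regenerative representation of the stationary distribution, $d(y)=\E\bigl[\sum_{k=0}^{\sigma(x^*)-1}\I\{x^{(k)}=y\} \mid x^{(0)}=x^*\bigr]\big/\E[\sigma(x^*)\mid x^{(0)}=x^*]$, implies $\E\bigl[\sum_{k=0}^{\sigma(x^*)-1} g(x^{(k)}) \mid x^{(0)}=x^*\bigr]=(d^Tg)\,\E[\sigma(x^*)\mid x^{(0)}=x^*]$, so $h^{(x^*)}(x^*)=0$. For arbitrary $x\in\X$, conditioning on $x^{(1)}=y$ in \eqref{eq:h2} separates the $k=0$ term $g(x)-d^Tg$ from a residual $\E\bigl[\sum_{k=1}^{\sigma(x^*)-1}(g(x^{(k)})-d^Tg) \mid x^{(0)}=x,\,x^{(1)}=y\bigr]$. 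If $y\neq x^*$, the strong Markov property identifies this residual with $h^{(x^*)}(y)$; if $y=x^*$, the sum is empty and the residual is $0=h^{(x^*)}(x^*)$. In either case the contribution is $h^{(x^*)}(y)$, so integrating against $P(y|x)$ recovers \eqref{eq:Poisson}.

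The main obstacle is the step from drift-implied bounds on $\tau_C$ to the $\V$-weighted control on $\sigma(x^*)$ needed here: the Comparison Theorem naturally bounds excursions away from the set $C$, not from a specific distinguished state $x^*$. Bridging this gap exploits the finiteness of $C$ together with irreducibility, but requires careful bookkeeping to propagate the linear-in-$\V$ estimate across possibly many failed attempts to reach $x^*$ after reentering $C$. The verification of the Poisson equation itself is standard once this finiteness is secured, provided one remembers to use the regenerative identity $h^{(x^*)}(x^*)=0$ to reconcile the boundary case $y=x^*$ in the first-step analysis.
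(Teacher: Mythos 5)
Your proposal is correct, but note that the paper does not actually prove this lemma itself: it simply cites \cite[Proposition~A.3.11]{Meyn2007} for Lemma~\ref{lem:poisson_sol} and \cite[Proposition~A.3.1]{Meyn2007} for this one. What you have written is a sound self-contained reconstruction along the same lines as the cited result: the drift condition (\ref{eq:drift}) plus the Comparison Theorem gives the $\V$-modulated bound on excursions to $C$, the finiteness of $C$ and irreducibility upgrade this to a bound linear in $\V(x)$ on $\E_x\bigl[\sum_{k=0}^{\sigma(x^*)-1}\V(x^{(k)})\bigr]$, and once absolute convergence is secured the Poisson equation follows from the first-step decomposition together with the Kac-formula identity $h^{(x^*)}(x^*)=0$. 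The one place where your argument is genuinely thin is the step you yourself flag: passing from the return time to the set $C$ to the hitting time of the single state $x^*$. The standard way to close it is to observe that, started from any state of the finite set $C$, the chain hits $x^*$ before returning to $C$ with probability bounded below by some $\delta>0$ (uniformly over $C$, by irreducibility and finiteness of $C$), so the number of $C$-to-$C$ excursions before reaching $x^*$ is stochastically dominated by a geometric random variable, while each such excursion has $\V$-weighted cost at most $\sup_{z\in C}(\V(z)+b)/(1-\varepsilon)<\infty$; Wald's identity then yields the claimed $c\,\V(x)$ bound (using $\V\ge 1$ to absorb the additive constant). Making that explicit would turn your sketch into a complete proof.
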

See \cite[Proposition A.3.1]{Meyn2007} for the proof. Here, we refer to state $x^*$ as a \textit{regeneration state}, and to the times $\sigma(x^*)$ when the regeneration state is visited   as \textit{regeneration times}.

The value of  advantage function (\ref{eq:A}) does not depend on a particular choice of a solution of the Poisson equation (\ref{eq:Poisson}) since if $h_1$ and $h_2$ are two solutions  such that  $d^T(|h_1|+|h_2|)<\infty$, then there exists a constant $b\in \R$ such that $h_1(x) = h_2(x) +b$ for each $x\in \X$,  \cite[Proposition 17.4.1]{Meyn2009}. Therefore, we use representation (\ref{eq:h2})  for $h_\phi$ in computing (\ref{eq:A}).

We assume that an episode consisting  of $N$ regenerative cycles
\begin{align*}
  \left\{x^{(0)}, x^{(1)},\cdots, x^{(\sigma_1)},\cdots,   x^{(\sigma_{N}-1)}   \right \}
\end{align*}
has been generated  under policy $\pi_\phi$, where $x^{(0)}=x^*.$

We compute an estimate of the long-run average cost based on   $N$ regenerative cycles  as
\begin{align}\label{es_av}
\hat{\eta}_\phi  : =\frac{1}{\sigma(N)}  \sum\limits_{k=0}^{\sigma(N)-1} g(x^{(k)}),
\end{align}
where $\sigma(n)$ is the $n$th time when  regeneration state $x^*$ is visited.
Next, we consider an arbitrary state $x^{(k)}$ from the generated episode.  We define a one-replication estimate of the solution  to the Poisson equation (\ref{eq:h2}) for a state $x^{(k)}$ visited at time $k$  as:
\begin{align}\label{eq:es1}
\hat h_k: =  \sum\limits_{t=k}^{\sigma_k-1} \left(g(x^{(t) }) - \hat \eta_\phi  \right) ,
\end{align}
where $\sigma_k = \min\left\{t>k~|~x^{(t)}=x^*\right\}$  is the first time when the regeneration state $x^*$ is visited after time $k$.
We note that the one-replication estimate (\ref{eq:es1}) is computed for every timestep.
   The estimator (\ref{eq:es1}) was    proposed in  \cite[Section 5.3]{Cooper2003}.

We use   function $f_\psi:\X\rightarrow \R$ from  a family of function approximators $\{f_\psi, \psi\in \Psi \}$ to represent function $h_{  \phi}$ and choose function $f_\psi$  from $\{f_\psi, \psi\in \Psi \}$ to minimize the mean square distance to the one-replication estimates  $\{\hat  h_k\}_{k=0}^{\sigma(N)-1}:$
\begin{align}\label{eq:Vappr}
\psi^* = \arg\min\limits_{\psi \in \Psi} \sum\limits_{k=0}^{\sigma(N)-1} \left ( f_{\psi}(x^{(k)}) - \hat h_k   \right)^2.
\end{align}

With available function approximation $f_{\psi^*}$ for $h_\phi$,  we estimate the advantage function (\ref{eq:A})  as:
 \begin{align}\label{eq:Aes1}
 	\hat A_{\phi}(x^{(k)}, a^{(k)}): =   g(x^{(k)}) - \hat\eta_\phi + \sum\limits_{y\in \X} P\left(y|x^{(k)}, a^{(k)}\right) f_{\psi^*} (y) -  f_{\psi^*}(x^{(k)}).
 \end{align}

  We assume that $Q$ episodes, $Q\geq 1$, can be simulated in parallel, and each of $q=1, \dotsc, Q$ (parallel) actors collect an episode
\begin{align}\label{eq:episodes}
\left\{x^{(0, q)}, a^{(0, q)}, x^{(1 , q)}, a^{(1, q)}, \cdots, x^{(k , q)}, a^{(k, q)},\cdots, x^{(\sigma^q(N)-1, q)}, a^{(\sigma^q(N)-1, q)}\right\}
\end{align}
with $N$ regenerative cycles, where $\sigma^q(N)$  is the $N$th regeneration time in the simulation of $q$th actor and $x^{(0, q)} = x^*$ for each $q=1, \dotsc, Q$.
 Given  the episodes (\ref{eq:episodes})  generated under policy $\pi_\phi$,  we compute  the advantage function estimates $\hat A_{\phi}(x^{(k, q)}, a^{(k, q)})$   by (\ref{eq:Aes1}):
\begin{align*}
D^{ (0: \sigma^q(N)-1)_{q=1}^Q } = \Big\{\Big( x^{(0, q)}, a^{(0, q)},&  \hat A_{\phi}(x^{(0, q)}, a^{(0, q)}) \Big), \cdots, \\
 &\Big( x^{(\sigma^q(N)-1, q)}, a^{(\sigma^q(N)-1, q)}, \hat A_{\phi}(x^{(\sigma^q(N)-1, q)}, a^{(\sigma^q(N)-1, q)})  \Big)\Big\}_{q=1} ^{Q }.
\end{align*}
  We estimate the loss function (\ref{eq:PO})  as a sample average over these $\sum\limits_{q=1}^Q \sigma^q(N)$  data-points:
   \begin{align}\label{eq:popt}
    \hat L\left(\theta, \phi, D^{(1:Q), (0:\sigma^q(N)-1) }\right)  =\sum\limits_{q=1}^Q \sum\limits_{k=0}^{\sigma^q(N)-1} &\max\Big[ \frac{\pi_{\theta}(a^{(k, q)}| x^{(k, q)})}{\pi_{\phi}(a^{(k, q)}|x^{(k, q)})}  \hat A_{\phi}(x^{(k, q)}, a^{(k, q)}) ,\\ &\text{clip}\left(\frac{\pi_{\theta}(a^{(k,q)}| x^{(k, q)})}{\pi_{\phi}(a^{(k, q)}|x^{(k, q)})},     1-\epsilon, 1+\epsilon  \right) \hat A _{\phi}(x^{(k, q)}, a^{(k,q)})  \Big] \nonumber
   \end{align}
  Optimization of the loss function yields a new policy for the next iteration, see Algorithm \ref{alg1}.

\begin{algorithm}
\SetAlgoLined
\KwResult{policy $\pi_{\theta_I}$ }
 Initialize  policy $\pi_{\theta_0}$ \;
 \For{ policy iteration $i= 0, 1, \dotsc, I-1$}{
  \For{ actor  $q= 1, 2, \dotsc, Q$}{
  Run policy $\pi_{\theta_{i}}$  until it reaches $N$th regeneration time on $\sigma^q(N)$ step: collect an episode
   $\left\{x^{(0, q)}, a^{(0, q)}, x^{(1 , q)}, a^{(1, q)}, \cdots, x^{(\sigma^q(N)-1, q)}, a^{(\sigma^q(N)-1, q)}, x^{(\sigma^q(N), q)}\right\} $\;
  }

  Compute the average cost estimate $\hat\eta_{\theta_i}$ by (\ref{es_av})\  (utilizing $Q$ episodes) ;

        Compute  $\hat h_{k, q}$,  the estimate of $h_{\theta_i}(x^{(k, q)})$, by (\ref{eq:es1})  for each $q = 1,\dotsc , Q$, $k=0, \dotsc, \sigma^q(N)-1$\;
        Update $\psi_{i}: = \psi$, where $\psi\in \Psi$ minimizes $  \sum\limits_{q=1}^Q \sum\limits_{k=0}^{\sigma^q(N)-1 } \left ( f_{\psi}(x^{(k, q)}) -\hat h_{k, q}   \right)^2$ following (\ref{eq:Vappr}) \;
  Estimate the advantage functions $\hat A_{\theta_{i}}\left(x^{(k, q)}, a^{(k, q)}\right)$ using (\ref{eq:Aes1}) for each $q = 1,\dotsc, Q$, $k=0,\dotsc, \sigma^q(N)-1$:
 \begin{align*}
D^{(1:Q), (0:\sigma^q(N)-1) } = \left\{\left( x^{(0, q)}, a^{(0, q)}, \hat A_{0,q}\right), \cdots,  \left( x^{(\sigma^q(N)-1, q)}, a^{(\sigma^q(N)-1, q)}, \hat A_{\sigma^q(N)-1,q}\right)\right\}_{q=1} ^{Q }.
\end{align*}\\
 Minimize the surrogate objective function w.r.t. $\theta\in \Theta$:
 \begin{align*}
    \hat L\left(\theta, \theta_i, D^{(1:Q), (0:\sigma^q(N)-1) }\right)  =\sum\limits_{q=1}^Q &\sum\limits_{k=0}^{\sigma^q(N)-1} \max\Big[ \frac{\pi_{\theta}(a^{(k, q)}| x^{(k, q)})}{\pi_{\theta_{i}}(a^{(k, q)}|x^{(k, q)})}  \hat A_{\theta_{i}}(x^{(k, q)}, a^{(k, q)}) ,\\ &\text{clip}\left(\frac{\pi_{\theta}(a^{(k,q)}| x^{(k, q)})}{\pi_{\theta_{i}}(a^{(k, q)}|x^{(k, q)})},     1-\epsilon, 1+\epsilon  \right) \hat A _{\theta_{i}}(x^{(k, q)}, a^{(k,q)})  \Big]
   \end{align*}\\
 Update $\theta_{i+1}: = \theta$.
 }
 \caption{Base proximal policy optimization algorithm  for long-run average cost problems}\label{alg1}
\end{algorithm}

%
%\begin{remark}
%Note that the advantage function $\hat A_{\theta_{i+1}}$ is computed using $h_{\psi_{i+1}}$ which is an approximation of the value function of policy $\pi_{\theta_i}$, not $\pi_{\theta_{i+1}}$.
%Additional bias might be introduced if the value function is updated
%first, see \cite[Section 6.1]{Schulman2016}. For example, if   $N=1$ and approximation in (\ref{eq:Vappr}) overfits the data, i.e.  $h_{\psi}(x_{k}) =\hat h_\phi(x_k)$ for each $x_k$,  then estimates of the advantage function   $\hat A_{\theta_{i+1}}$ become zero for all generated state-action pairs $(x_k, a_k)$.
%\end{remark}

% Let $x^*$ be a regeneration state. We define a   state-action relative value function $Q _{\phi} :\X\times \A\rightarrow \R$ of policy $\pi_\theta, $ $\theta\in \Theta$ as
%
%\begin{align*}
%Q(x, a) : = \E \left[\sum\limits_{k=0}^{\sigma(x^*)-1} \left(g(x_k) - (d^Tg)\right)\Big|x_0 = x\right], ~\text{for each }x\in \X
%\end{align*}
%where $x=x_0$ and action $a = a_0$ is taken at time $k=0$;  the following actions starting from time $k=1$ are taken according to policy $\pi_\theta$;  $\sigma(x^*) = \min\{k>0|x_k = x^*\}$ is the first future time when state $x^*$ is visited.
%
%We note that  $A_{\phi}(x, a) =Q_\phi(x, a)  - h_\phi(x )  $  for each $(x, a)\in \X\times \A$. We also note that the Poisson's solution function  $h_\phi(x ) $ does not depend on  actions and serves as a normalization \textit{baseline} for the state-action value functions \cite[Section 13.4]{Sutton2018}.

In practice a naive (standard) Monte Carlo estimator  (\ref{eq:es1}) fails to improve in PPO policy iteration
  Algorithm~\ref{alg1}  because of the large variance (i.e. the estimator is unreliable).  Therefore, we progressively develop a sequence of estimators in the next subsections. We end this section with   two remarks.

\begin{remark}
For any state $x\in \X$ the one-replication estimate (\ref{eq:es1}) is computed each time  the state is visited (the \textit{every-visit} Monte-Carlo method). It is also possible to implement  a \textit{first-visit}  Monte Carlo method which implies that
a one-replication estimate is computed when state $x$ is visited for the first time within a cycle
 and that the next visits to state $x$ within the same cycle are ignored. See \cite[Section 5.1]{Sutton2018} for more details of  every-visit and first-visit Monte-Carlo methods.
\end{remark}
\begin{remark}
  In regression problem (\ref{eq:Vappr}), each data point
    $(x^{(k)}, \hat h_k)$ is used  in the quadratic loss function, despite
    that many of the $x^{(k)}$'s represent the same state. It is possible to  restrict that
    \emph{only distinct} $x^{(k)}$'s are used in the loss function, with
    corresponding $\hat h_k$'s properly averaged. It turns out that this
    new optimization problem yields the same optimal solution as the one in (\ref{eq:Vappr}).
The equivalence of the optimization problems follows from the fact that for an arbitrary sequence of real numbers $a_1, \dotsc,a_n\in \R$:
\begin{align*}%\label{eq:opt}
 \arg\min_{x\in B}\left[ \sum\limits_{i=1}^n (x-a_i)^2 \right]= \arg\min_{x\in B}\left[ \left( x-\frac{1}{n}\sum\limits_{i=1}^n a_i\right)^2\right],
\end{align*}
where $B$ is an arbitrary subset of $\R$.
 \end{remark}

\subsection{Approximating martingale-process method}\label{sec:AMP}

%The main goal of the advantage function $A_{\phi} (x, a)$ is to predict the effect of action $a$ realization at state $x$. If approximation  $f_\psi$ is not sufficiently close to a true solution of  Poisson's equation, the effect of the action on the future can be evaluated incorrectly by estimator (\ref{eq:Aes1}) that can be a cause of suboptimal policy update.

%One can estimate state-action function considering
%\begin{align}\label{eq:es2}
%\hat Q (x_k, a_k): =  \sum\limits_{t=0}^{\sigma_k(x^*)-1} \left(g(x_{k+t }) - \widehat {d^Tg}  \right),
%\end{align}
%where $\sigma_k(x^*)$ is the first regeneration time after time $k$.

Estimator (\ref{eq:es1}) of  the solution to the Poisson equation suffers from the high variance when the regenerative cycles are long  (i.e. the estimator is a sum of many random terms $g(x^{(k)}) - \eta_{\phi}$).  In this section we explain how to  decrease  the variance by reducing the magnitude of summands in (\ref{eq:es1}) if  an approximation $\zeta$  of the solution to Poisson's equation $ h_\phi$ is available.

We assume  an episode $\left\{x^{(0)}, a^{(1)}, x^{(1)}, a^{(2)}, \cdots, x^{(K-1)}, a^{(K-1)}, x^{(\sigma(N))}\right\}$ has been generated under policy $\pi_\phi$. From the definition of a solution to the Poisson equation (\ref{eq:Poisson}):
\begin{align*}
g(x^{(k) }) - \eta_{\phi} =  h_\phi(x^{(k)})-\sum\limits_{y\in \X}P_\phi (y| x^{(k)} ) h_\phi(y) \text{ for each state } x^{(k)}  \text{ in the simulated episode.}
 \end{align*}
If the approximation $\zeta$ is sufficiently close to $ h_\phi$, then the correlation between
\begin{align*}
g(x^{(k) }) -  \hat\eta_{\phi}  \quad \text{ and }\quad  \zeta(x^{(k)})-\sum\limits_{y\in X} P_\phi\left(y| x^{(k)}\right) \zeta(y)
\end{align*}
is positive and we can use the control variate  to reduce the variance. This idea gives rise to the approximating martingale-process (AMP) method proposed in \cite{Henderson2002}; also see \cite{Andradottir1993}.

Following \cite[Proposition 7]{Henderson2002}, for some approximation $\zeta$ such that   $d_\phi^T \zeta<\infty$ and $\zeta(x^*)=0$,
we consider the martingale process starting from an arbitrary state $x^{(k)}$  until the first regeneration time:
\begin{align}\label{eq:M}
M_{\sigma_k} (x^{(k)})=\zeta(x^{(k)}) +\sum\limits_{t=k}^{\sigma_k-1} \left[\sum\limits_{y\in \X} P_\phi\left(y|x^{(t)}\right)\zeta(y)  - \zeta(x^{(t)})\right],
\end{align}
where $\sigma_k = \min\left\{t>k~|~x^{(t)}=x^*\right\}$  is the first time when the regeneration state $x^*$ is visited after time $k$.
% we define a martingale process $M = (M_n:n\geq 0)$ , where for $x = x_0$
% \begin{align}\label{eq:M}
% M_n(x) :=  \zeta(x_0)-\zeta(x_n) +\sum\limits_{k=0}^{n-1} \left[\sum\limits_{y\in \X} P_\phi(y|x_k) \zeta(y)  - \zeta(x_k)\right].
% \end{align}
The martingale process (\ref{eq:M}) has zero expectation $\E M_n = 0$ for all $n\geq 0$; therefore we use it as   a control variate to define a new estimator. %From Lemma \ref{eq:h2}  $h_\phi(x^*)=0$, and we assume that $\zeta(x^*)=0$.
%Consider the martingale process starting from an arbitrary state $x_k$  until the first regeneration time:
% \begin{align*}
% M_{\sigma_k} (x_k)=\zeta(x_k) +\sum\limits_{t=k}^{\sigma_k-1} \left[\sum\limits_{y\in \X} P_\phi(y|x_{t})\zeta(y)  - \zeta(x_{t})\right],
% \end{align*}
% where $\sigma_k = \min\{t>k~|~x_t=x^*\}$  is the first time when the regeneration state $x^*$ is visited after time $k$.
Adding $M_{\sigma_k}$ to estimator (\ref{eq:es1}) we get the AMP estimator of the solution to the Poisson equation:
\begin{align}\label{eq:es2}
\hat h_\phi^{AMP(\zeta)} (x^{(k)} )&: =\zeta(x^{(k)}) +  \sum\limits_{t=k}^{\sigma_k-1} \left(g(x^{(t) }) - \hat\eta_{\phi} +\sum\limits_{y\in \X} P_\phi\left(y|x^{(t)}\right)\zeta(y) -\zeta(x^{(t)})  \right) .
\end{align}
We  assume that the estimation of the average cost is accurate (i.e. $ \hat \eta_{\phi}  = \eta_{\phi}$). In this case estimator (\ref{eq:es2}) has zero variance if the approximation is exact $\zeta = h_\phi$.

% We get the AMP  estimator of the advantage function
%\begin{align}\label{eq:es3}
%\hat A^{AMP(\tilde h)} (x_k, a_k) &:=\hat Q^{AMP(\tilde h)}(x_k, a_k) -\tilde h(x_k) \nonumber\\
%&=  \sum\limits_{t=0}^{\sigma_k(x^*)-1} \left(g(x_{k+t }) - \widehat {d^Tg} +\sum\limits_{x'\in \X} P_\phi(x'|x) \tilde h(x') -\tilde  h(x_k)  \right).
%\end{align}

 %The empirical evidence  of sample complexity improvement with AMP estimator is provided in Section \ref{sec:AMPexp}.

Now we want to replace the standard regenerative estimator (\ref{eq:es1})  used in line 7 of Algorithm \ref{alg1} with AMP estimator (\ref{eq:es2}). As the approximation $\zeta$ needed in (\ref{eq:es2}), we use   $f_{\psi_{i-1}}$ that approximates a solution to the Poisson equation corresponding to previous policy $\pi_{\theta_{i-1}}$. In line 7  of Algorithm \ref{alg1} we replace $\hat h (x^{(k)})$ with the estimates $\hat h^{AMP(f_{\psi_{i-1}} )} (x^{(k)})$ that are computed by (\ref{eq:es2}).

 \begin{algorithm}
\SetAlgoLined
\KwResult{policy $\pi_{\theta_I}$ }
 Initialize  policy $\pi_{\theta_0}$ and value function $f_{\psi_{-1}}\equiv 0$ approximators \;
 \For{ policy iteration $i= 0, 1, \dotsc, I-1$}{
  \For{ actor  $q= 1, 2, \dotsc, Q$}{
  Run policy $\pi_{\theta_{i}}$  until it reaches $N$th regeneration time on $\sigma^q(N)$ step: collect an episode
   $\left\{x^{(0, q)}, a^{(0, q)}, x^{(1 , q)}, a^{(1, q)}, \cdots, x^{(\sigma^q( N)-1, q)}, a^{(\sigma^q(N)-1, q)}, x^{(\sigma^q(N), q)}\right\} $\;
  }

  Compute the average cost estimate $\hat\eta_{\theta_i}$ by (\ref{es_av})\;
        Compute  $\hat h^{AMP(f_{\psi_{i-1}})}_{k, q}$,  the estimate of $h_{\theta_i}(x^{(k, q)})$, by (\ref{eq:es2})  for each $q = 1, \dotsc, Q$, $k=0, \dotsc, \sigma^q(N)-1$\;
        Update $\psi_{i}: = \psi$, where $\psi\in\Psi$ minimizes $  \sum\limits_{q=1}^Q \sum\limits_{k=0}^{\sigma^q(N)-1 } \left ( f_{\psi}(x^{(k, q)}) -\hat h^{AMP(f_{\psi_{i-1}})}_{k, q}   \right)^2$ following (\ref{eq:Vappr})\;
  Estimate the advantage functions $\hat A_{\theta_{i}}(x^{(k, q)}, a^{(k, q)})$ using (\ref{eq:Aes1}) for each $q = 1, \dotsc, Q$, $k=0,\dotsc, \sigma^q(N)-1$:
 \begin{align*}
D^{ (0:\sigma^q(N)-1)_{q=1}^Q } = \left\{\left( x^{(0, q)}, a^{(0, q)}, \hat A_{0,q}\right), \cdots,  \left( x^{(\sigma^q(N)-1, q)}, a^{(\sigma^q(N)-1, q)}, \hat A_{\sigma^q(N)-1,q}\right)\right\}_{q=1} ^{Q }.
\end{align*}\\
 Minimize the surrogate objective function w.r.t. $\theta\in \Theta$:
 \begin{align*}
  \hat L\left(\theta, \theta_i, D^{ (0:\sigma^q(N)-1)_{q=1}^Q}\right) =\sum\limits_{q=1}^Q &\sum\limits_{k=0}^{\sigma^q(N)-1} \max\Big[ \frac{\pi_{\theta}(a^{(k, q)}| x^{(k, q)})}{\pi_{\theta_{i}}(a^{(k, q)}|x^{(k, q)})}  \hat A_{\theta_{i}}(x^{(k, q)}, a^{(k, q)}) ,\\ &\text{clip}\left(\frac{\pi_{\theta}(a^{(k,q)}| x^{(k, q)})}{\pi_{\theta_{i}}(a^{(k, q)}|x^{(k, q)})},     1-\epsilon, 1+\epsilon  \right) \hat A _{\theta_{i}}(x^{(k, q)}, a^{(k,q)})  \Big]
   \end{align*}\\
 Update $\theta_{i+1}: = \theta$.

 }
 \caption{Proximal policy optimization with AMP method}\label{alg1amp}
\end{algorithm}

\subsection{Variance reduction through discounting}\label{sec:ge}

 Unless an approximation $\zeta$ is exact, each term in  the summation in (\ref{eq:es2}) is random with nonzero variance.  When the expected length of a regeneration cycle is large, the   cumulative variance  of estimator (\ref{eq:es2})   can be devastating.

In this subsection, we describe a commonly used solution:  introduce a forgetting   factor $\gamma\in (0,1)$
 to discount the future relative costs,  \cite{Jaakkola1994, Baxter2001, Marbach2001, Kakade2001, Thomas2014, Schulman2016}. % When a regeneration cycle are long and in average its length $\E G$ is large   one may sum relative costs $g(x_k) - d^Tg$ up to some fixed timestep $N$ in estimator (\ref{eq:es1})  s.t. $N<\E G$. The choice of $N$ can be generalized assuming that the
 % number of summands $N$  follows geometrical distribution with parameter $\gamma<1$ as in TD($\lambda$) method \cite[Section 12]{Sutton2018}.   }

%\begin{align}\label{eq:J}
%J_{\phi}^{(\gamma)}(x): = \E \left[ \sum\limits_{t=0}^{\infty} \gamma^{t}  \left(g(x_t) -  d^Tg \right) ~\Big|~ x_0 = x\right] \text{ for each }x\in \X,
%\end{align}
% where  $x_k$ is the state of the Markov chain with transition matrix $P_\phi$ at time $k$, $\gamma\in (0,1]$ is a discount factor.
% We note that $J^{(\gamma)}_{\phi} \rightarrow h_\phi$  in $\V$-norm as $\gamma\rightarrow 1$  for $\V$-uniformly ergodic Markov chain, see Lemma \ref{lem:disc}.
We let
\begin{align}\label{eq:r}
r(x^*):= (1-\gamma)\E \left[\sum\limits_{t=0}^\infty \gamma^t g(x^{(t)}) ~|~x^{(0)}=x^*\right]
\end{align}
be a \textit{present discounted value} at state $x^*$; the term
``present discounted value'' was proposed in \cite[Section
11.2]{Wagner1975}. We define the \textit{regenerative
  discounted relative value function}   as:
\begin{align}\label{eq:Vreg}
V^{(\gamma)}(x): = \E \left[ \sum\limits_{t=0}^{\sigma(x^*)-1} \gamma^{t}  \left(g(x^{(t)}) -  r(x^*) \right) ~\Big| ~x^{(0)} = x\right] \text{ for each }x\in \X,
\end{align}
where $x^{(k)}$ is the state of the Markov chain with transition matrix
$P$ at time $k$, $x^*$ is the prespecified regeneration
state, and $\gamma\in (0,1]$ is a discount factor.  We note that
$V^{(\gamma)}(x^*) = 0$ by definition.  It follows from
  \cite[Corollary 8.2.5.]{Puterman2005} that under the drift condition,
  $r(x^*)\to \eta$ as $\gamma\uparrow 1$.  Furthermore, by
  Lemma \ref{lem:disc}, for each $x\in \X$,
  \begin{align*}
   V^{(\gamma)}(x) \to h^{(x^*)}(x)  \text{ as } \gamma\uparrow 1,
  \end{align*}
  where $h^{(x^*)}$ is a solution to the Poisson equation given in (\ref{eq:h2}).

  The proof of Lemma \ref{lem:disc} can be found in Appendix Section \ref{sec:disc}.

  \begin{lemma}\label{lem:disc}
  We consider  irreducible, aperiodic   Markov chain  with transition matrix $P$ that satisfies drift condition (\ref{eq:drift}).
We let (\ref{eq:Vreg})  be a regenerative
  discounted relative value function  for discount factor $\gamma$ and one-step cost function $g$,  such that  $|g(x)|\leq\V(x)$ for each $x\in \X$. We let $h^{(x^*)} $ be a solution of the Poisson equation (\ref{eq:Poisson}) defined by (\ref{eq:h2}).

 Then for some constants $R<\infty$ and $r\in(0,1)$  we have
\begin{align*}
\left|V^{(\gamma)}(x) - h^{(x^*)}(x) \right|\leq \frac{rR(1-\gamma)}{(1-r)(1-\gamma r)}(\V(x)+\V(x^*))
\end{align*}
for each $x\in \X.$
\end{lemma}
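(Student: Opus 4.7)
The strategy is to rewrite both $V^{(\gamma)}(x)$ and $h^{(x^*)}(x)$ as differences of a single "potential" evaluated at $x$ and at the regeneration state $x^*$, then control the discrepancy via the geometric ergodicity bound \eqref{eq:geo}.

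First I would establish the clean regenerative identity
\begin{align*}
V^{(\gamma)}(x) \;=\; J^{(\gamma)}(x) - J^{(\gamma)}(x^*), \quad\text{where}\quad J^{(\gamma)}(x) := \E\!\left[\sum_{t=0}^\infty \gamma^t g(x^{(t)})\,\big|\,x^{(0)}=x\right].
\end{align*}
This follows from the strong Markov property applied at $\sigma(x^*)$: decomposing
$J^{(\gamma)}(x) = \E\!\left[\sum_{t=0}^{\sigma-1}\gamma^t g(x^{(t)})\,\big|\,x^{(0)}=x\right] + \E[\gamma^{\sigma}\,|\,x^{(0)}=x]\,J^{(\gamma)}(x^*)$, and using the two identities $r(x^*) = (1-\gamma)J^{(\gamma)}(x^*)$ and $T^{(\gamma)}(x) := \E\!\left[\sum_{t=0}^{\sigma-1}\gamma^t\,|\,x^{(0)}=x\right] = (1-\E[\gamma^\sigma\,|\,x^{(0)}=x])/(1-\gamma)$, the two renewal terms collapse. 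Analogously for the Poisson solution: since the fundamental solution $h^{(f)}$ of Lemma \ref{lem:poisson_sol} and $h^{(x^*)}$ of Lemma \ref{lem:poisson_sol2} are both solutions to \eqref{eq:Poisson} and thus differ by a constant, and since $h^{(x^*)}(x^*)=0$, I get $h^{(x^*)}(x) = h^{(f)}(x) - h^{(f)}(x^*)$.

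Next I introduce the centered infinite-horizon quantity
\begin{align*}
\tilde J^{(\gamma)}(x) \;:=\; \sum_{t=0}^\infty \gamma^t\bigl(\E[g(x^{(t)})\,|\,x^{(0)}=x] - \eta\bigr),
\end{align*}
which is absolutely convergent under the drift condition (by \eqref{eq:geo} with $|g|\leq \V$, the summand is bounded by $R\gamma^t r^t\V(x)$). Since the constant $\eta/(1-\gamma)$ cancels in any difference of $J^{(\gamma)}$ values, step one yields $V^{(\gamma)}(x) = \tilde J^{(\gamma)}(x) - \tilde J^{(\gamma)}(x^*)$. Therefore
\begin{align*}
V^{(\gamma)}(x) - h^{(x^*)}(x) \;=\; \bigl(\tilde J^{(\gamma)}(x) - h^{(f)}(x)\bigr) - \bigl(\tilde J^{(\gamma)}(x^*) - h^{(f)}(x^*)\bigr).
\end{align*}

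The remaining task is to bound $|\tilde J^{(\gamma)}(x) - h^{(f)}(x)|$. Writing
\begin{align*}
\tilde J^{(\gamma)}(x) - h^{(f)}(x) \;=\; \sum_{t=0}^\infty (\gamma^t - 1)\bigl(\E[g(x^{(t)})\,|\,x^{(0)}=x] - \eta\bigr)
\end{align*}
and invoking \eqref{eq:geo}, I obtain
\begin{align*}
\bigl|\tilde J^{(\gamma)}(x) - h^{(f)}(x)\bigr| \;\leq\; R\V(x)\sum_{t=0}^\infty (1-\gamma^t) r^t \;=\; R\V(x)\left[\frac{1}{1-r} - \frac{1}{1-\gamma r}\right] \;=\; \frac{rR(1-\gamma)\V(x)}{(1-r)(1-\gamma r)}.
\end{align*}
The triangle inequality applied to the difference from step two then gives the desired bound with factor $\V(x) + \V(x^*)$.

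The main obstacle is step one: correctly splitting $J^{(\gamma)}(x)$ at the regenerative time via strong Markov and verifying that all series are absolutely convergent (this ultimately rests on the geometric ergodicity of the chain, which follows from the drift condition). Once that identity is in hand, the rest is a clean Tauberian-type estimate driven entirely by \eqref{eq:geo}.
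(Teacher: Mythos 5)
Your proof is correct and follows essentially the same route as the paper's: both arguments reduce the claim to bounding $|J^{(\gamma)}(y)-h^{(f)}(y)|$ at the two points $y=x$ and $y=x^*$, where $J^{(\gamma)}$ is the centered discounted value function, and both obtain the key estimate from (\ref{eq:geo}) together with the sum $\sum_{t\geq 0}(1-\gamma^t)r^t = \frac{r(1-\gamma)}{(1-r)(1-\gamma r)}$. The only minor difference is how the constant-shift identity $V^{(\gamma)}(x)=J^{(\gamma)}(x)-J^{(\gamma)}(x^*)$ is justified: you derive it directly by a strong-Markov/renewal decomposition at $\sigma(x^*)$, while the paper deduces it from Lemma \ref{lem:2sol}, which shows $V^{(\gamma)}$ and $J^{(\gamma)}$ both solve the Poisson equation for the modified kernel $\tilde P^{(\gamma)}$ and hence differ by a constant.
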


  We let
\begin{align}\label{eq:VdiscVarEst}
 \hat V^{(\gamma )}(x):=  \sum\limits_{k=0}^{\sigma(x^*)-1} \gamma^k\left(g (x^{(k)} )- r(x^*) \right )
\end{align}
   where $x^{(0)} = x$ and $x^{(k)}$ is the $k$th step of the Markov chain with transition matrix $P$, be a one-replication estimate of (\ref{eq:Vreg}).
    By Lemma \ref{lem:var} the variance of this estimator $Var[ \hat V^{(\gamma )}(x)] $
converges to zero with rate $\gamma^2$ as $\gamma\downarrow 0$ for each $x\in \X$. See Appendix Section \ref{sec:disc} for the proof.

   \begin{lemma}\label{lem:var}
   We consider  irreducible, aperiodic   Markov chain  with transition matrix $P$ that satisfies drift condition (\ref{eq:drift})
and assume that one-step cost function $g:\X\rightarrow \R$ satisfies $g^2(x)\leq\V(x)$ for each $x\in \X$.
We let (\ref{eq:Vreg})  be a regenerative
  discounted relative value function for discount factor $\gamma$, and assume that   regeneration state  $x^*$ is such that set \begin{align*}
\Big\{x\in \X:\varepsilon\V(x)\leq \V(x^*)\Big\}
\end{align*} is a finite set, where function $\V$ and constant $\varepsilon$ are from (\ref{eq:drift}). We consider an arbitrary $x\in \X$ and let (\ref{eq:VdiscVarEst})
 be an one-replication estimate of   (\ref{eq:Vreg}).

If $\gamma<1$, then there exist   constants $R< \infty$, $B<\infty$, and $r\in (0,1)$ independent of $\gamma$ such that the variance of estimate (\ref{eq:VdiscVarEst}) is bounded as
 \begin{align*}
 Var[ \hat V^{(\gamma )}(x)] \leq \gamma^2\left( R\V(x)\frac{1}{1-\gamma^2r} + (d^T\V)B \frac{1}{1-\gamma^2}  \right), \text{ for each }x\in \X,
\end{align*}
where   $d$ is a stationary distribution of transition matrix $P$.
\end{lemma}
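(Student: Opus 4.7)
The target bound $\gamma^{2}\bigl(R\V(x)/(1-\gamma^{2}r) + B(d^{T}\V)/(1-\gamma^{2})\bigr)$ strongly suggests a decomposition of the variance into a geometrically decaying initial-condition term and a stationary term, with the overall $\gamma^{2}$ prefactor arising because the $k=0$ summand $g(x)-r(x^{*})$ in (\ref{eq:VdiscVarEst}) is deterministic. My plan is to (a) produce a martingale decomposition of $\hat V^{(\gamma)}-V^{(\gamma)}$ up to the regeneration time using a Poisson-like equation for $V^{(\gamma)}$; (b) reduce the variance via martingale-difference orthogonality and optional stopping to a discounted iterated-kernel sum of a one-step conditional variance $S(y)$; and (c) invoke the $\V$-uniform ergodicity (\ref{eq:geo}) to split this sum into the desired two pieces.

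From (\ref{eq:Vreg}) and the strong Markov property, $V^{(\gamma)}$ satisfies $V^{(\gamma)}(y)=(g(y)-r(x^{*}))+\gamma(PV^{(\gamma)})(y)$ for every $y\in\X$, together with $V^{(\gamma)}(x^{*})=0$. Substituting $g(x^{(k)})-r(x^{*})=V^{(\gamma)}(x^{(k)})-\gamma(PV^{(\gamma)})(x^{(k)})$ into (\ref{eq:VdiscVarEst}) and telescoping along the trajectory up to $\sigma(x^{*})$ gives
\begin{align*}
\hat V^{(\gamma)}(x)-V^{(\gamma)}(x)=\sum_{k=1}^{\sigma(x^{*})}\gamma^{k}D_{k},
\end{align*}
where $D_{k}:=V^{(\gamma)}(x^{(k)})-\E[V^{(\gamma)}(x^{(k)})\mid x^{(k-1)}]$ are martingale differences with respect to the natural filtration $\{\mathcal{F}_{k}\}$. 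Because $\sigma(x^{*})$ is almost surely finite and the drift condition together with $g^{2}\leq\V$ provides enough integrability, optional stopping applied to the squared martingale and the orthogonality $\E[D_{j}D_{k}]=0$ for $j\neq k$ yield
\begin{align*}
Var[\hat V^{(\gamma)}(x)]=\E\Big[\sum_{k=1}^{\sigma(x^{*})}\gamma^{2k}D_{k}^{2}\Big]=\gamma^{2}\sum_{k=0}^{\infty}\gamma^{2k}(\bar P^{k}S)(x),
\end{align*}
where $S(y):=Var[V^{(\gamma)}(x^{(1)})\mid x^{(0)}=y]$ and $\bar P(z|y):=P(z|y)\I(z\neq x^{*})$ is the taboo kernel forbidding visits to $x^{*}$; the factor $\bar P^{k}$ appears because $\I(\sigma(x^{*})>k)$ is $\mathcal{F}_{k-1}$-measurable.

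In step (c) I would establish a pointwise bound $S(y)\leq C_{S}\V(y)$ with $C_{S}$ independent of $\gamma$, then combine $\bar P^{k}S\leq P^{k}S$ with (\ref{eq:geo}) to write $(\bar P^{k}S)(x)\leq d^{T}S+R'\V(x)r^{k}$ for some $r\in(0,1)$ and $R'<\infty$, together with $d^{T}S\leq B'(d^{T}\V)$. Summing over $k$ weighted by $\gamma^{2k}$ then contributes the two geometric series $1/(1-\gamma^{2})$ and $1/(1-\gamma^{2}r)$, and multiplying by the prefactor $\gamma^{2}$ delivers the stated bound with constants $R=R'C_{S}$ and $B=B'C_{S}$.

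The principal obstacle will be the uniform-in-$\gamma$ estimate $S(y)\leq C_{S}\V(y)$. Lemma~\ref{lem:disc} combined with Lemma~\ref{lem:poisson_sol2} delivers $|V^{(\gamma)}(y)|\leq C_{V}(\V(y)+\V(x^{*}))$ uniformly in $\gamma$, but naively squaring produces a $\V(y)^{2}$ dependence that the drift condition on $\V$ alone cannot control. The hypothesis $g^{2}\leq\V$ is the key new ingredient: via the series representation $V^{(\gamma)}(y)=\sum_{k\geq0}\gamma^{k}\bar P^{k}(g-r(x^{*})e)(y)$ together with the finite-level-set assumption on $\{x:\varepsilon\V(x)\leq\V(x^{*})\}$ (which forces the chain to return to $x^{*}$ quickly whenever $\V$ is small), a careful regenerative argument on $\bar P$ should allow one to trade the apparent $\V^{2}$ factor for a linear-in-$\V$ factor. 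Once this bound on $S$ is in hand, the remainder of the proof reduces to an elementary geometric-series computation.
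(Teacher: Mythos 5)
Your skeleton is the paper's: both arguments use the Poisson-type equation $V^{(\gamma)}(y)=g(y)-r(x^*)+\gamma (PV^{(\gamma)})(y)$ to reduce the variance of the one-replication estimator to
$\gamma^2\,\E\bigl[\sum_{k=0}^{\sigma(x^*)-1}\gamma^{2k}\,\mathrm{Var}[V^{(\gamma)}(x^{(k+1)})\mid x^{(k)}]\bigr]$
(the paper does this by expanding the square, conditioning, and subtracting a zero-mean martingale, which is the same computation as your martingale-difference orthogonality), and both then split the resulting discounted iterated-kernel sum into a $\V(x)r^k$ term and a stationary term via the geometric ergodicity bound (\ref{eq:geo}), yielding the two geometric series $1/(1-\gamma^2 r)$ and $1/(1-\gamma^2)$. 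So steps (a)–(c) of your plan are sound and match the paper.

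The genuine gap is exactly the step you flag as the principal obstacle, and the route you sketch for it does not close it. The paper's mechanism for the uniform-in-$\gamma$ bound $S(y)\le C_S\V(y)$ is not a regenerative argument on the taboo kernel $\bar P$: it is the observation that $g^2\le\V$ gives $|g|\le\sqrt{\V}$, and that $\sqrt{\V}$ is \emph{itself} a Lyapunov function — applying Jensen's inequality to the drift inequality for the $\gamma$-modified chain $\tilde P^{(\gamma)}$ of (\ref{eq:newP}) gives $\tilde P^{(\gamma)}\sqrt{\V}\le\sqrt{\varepsilon\V}+\sqrt{\V(x^*)+b}\,\I_{C\cup\{x:\varepsilon\V(x)\le\V(x^*)\}}$, with constants independent of $\gamma$. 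Then \cite[Theorem 17.7.1]{Meyn2009} applied with cost $|g|\le\sqrt{\V}$ yields $|V^{(\gamma)}(y)|\le c_0(\sqrt{\V(y)}+\sqrt{\V(x^*)}+2)\le\sqrt{B_1\V(y)}$ uniformly in $\gamma$, so squaring costs nothing: $(V^{(\gamma)})^2\le B_1\V$, and one more application of the drift condition gives $S(y)\le \sum_z P(z|y)(V^{(\gamma)}(z))^2\le B\V(y)$. Note also that the finite-level-set hypothesis on $\{x:\varepsilon\V(x)\le\V(x^*)\}$ enters precisely to make the small set in this modified drift condition finite (so that the Meyn–Tweedie machinery applies), not through a "fast return to $x^*$" argument as you suggest. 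Without supplying this Jensen step, your proof of the only non-routine estimate remains a conjecture, so the proposal as written is incomplete.
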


For a fixed  $\gamma\in (0,1)$ and state $x\in \X$  any unbiased
  estimator of $V^{(\gamma)}(x)$ is a biased estimator of
  $h^{(x^*)}(x)$. It turns out that the discount counterparts of the
  estimators (\ref{eq:es1}) and (\ref{eq:es2}) for
  $V^{(\gamma)}(x)$ have smaller variances than the two estimators for $h^{(x^*)}(x)$. This variance
  reduction can be explained intuitively as follows.  Introducing the
discount factor $\gamma$ can be interpreted as a modification of the
original transition dynamics; under the modified dynamics, any action
produces a transition into a regeneration state with probability at
least $1 - \gamma$, thus shortening the length of   regenerative
cycles. See Appendix Section \ref{sec:disc} for details.

%Both function $J_{\phi}^{(\gamma)}$ and $V_{\phi}^{(\gamma)}$ are solutions to the same Poisson's equation, see Section \ref{sec:disc}. Therefore, there exists a constant $b\in \R$ such that $J_{\phi}^{(\gamma)}(x) = V_{\phi}^{(\gamma)}(x)+b$ for each $x\in \X$.

We define a \textit{discounted} advantage function for policy $\pi_\phi$ as:
\begin{align}\label{eq:Adisc}
A^{(\gamma)}_{\phi}(x, a):& = \underset{\substack{ y\sim P(\cdot|x, a)}  }{\E} \left[g(x) - \eta_\phi + V_{\phi}^{(\gamma)}(y) -V_{\phi}^{(\gamma)} (x)\right].
\end{align}

%For two policies $\pi_\theta$ and  $\pi_\phi$, $\phi, \theta \in \Theta$, we define an approximation of $N_1(\theta, \phi)$ (\ref{eq:MA}) as:
%\begin{align}\label{eq:Ndisc}
%N_1^{(\gamma)}(\theta, \phi):=   \underset{\substack{ x\sim d_{\phi} \\ a\sim \pi_{\phi}(\cdot|x)  }  }{\E}  \Big[ r_{\theta, \phi}(a|x) A^{(\gamma)}_{\phi} (x, a)\Big].
%\end{align}

%We note that if drift condition (\ref{eq:drift}) holds for $P_\phi$, and $P_\theta$ satisfies (\ref{eq:D})
%then  $N_1^{(\gamma)}(\theta, \phi) \rightarrow N_1(\theta, \phi)$ as $\gamma\rightarrow 1$, see Lemma \ref{lem:M1disc}.

We use the function approximation   $f_\psi$ of $V_{\phi}^{(\gamma)}$ to   estimate the advantage function (\ref{eq:Adisc})  as (\ref{eq:Aes1}).

We now present the discounted version of the AMP estimator (\ref{eq:es2}).
We let $\zeta$ be an approximation of the discounted value function $V_{\phi}^{(\gamma)}$ such that   $ d_\phi^T \zeta <\infty$ and $\zeta(x^*)=0$.  We define the sequence $(M_{\phi}^{(n)}:n\geq 0)$:
\begin{align}\label{eq:mart}
M_{\phi}^{(n)}(x):=\sum\limits_{t=k}^{n-1} \gamma ^{t-k+1}\left[ \zeta   (x^{(t+1)}) - \sum\limits_{y\in \X} P_{\phi}\left(y|x^{(t)}\right)  \zeta(y)  \right],
\end{align}
where   $x = x^{(k)}$ and $x^{( t)}$ is a state of the Markov chain   after $ t$ steps.

We define a one-replication of the AMP estimator for the discounted value function:
\begin{align}\label{eq:es4}
\hat V^{ AMP( \zeta), (\gamma)}_{\phi}(x^{(k)} ) :&=  \sum\limits_{t=k}^{\sigma_k-1} \gamma^{t-k}  \left(g(x^{(t)}) - \widehat{ r_\phi(x^*)} \right)  - M^{(\sigma_k )}_\phi(x^{(k)})\\
&=    \zeta(x^{(k)}) +    \sum\limits_{t=k}^{\sigma_k - 1}\gamma^{t-k} \left(g(x^{(t)} ) - \widehat{r_\phi(x^*)}+  \gamma \sum\limits_{y\in \X} P_{\phi}\left(y|x^{(t)}\right)  \zeta(y)   -   \zeta(x^{(t)} )  \right)  \nonumber\\
&\quad\quad- \gamma^{\sigma_k-k} \zeta(x^*) \nonumber\\
&=    \zeta(x^{(k)}) +    \sum\limits_{t=k}^{\sigma_k - 1}\gamma^{t-k} \left(g(x^{(t)} ) -\widehat{r_\phi(x^*)}+  \gamma \sum\limits_{y\in \X} P_{\phi}\left(y|x^{(t)}\right)  \zeta(y)   -   \zeta(x^{(t)} )  \right),  \nonumber
\end{align}
where $ \widehat{r_\phi(x^*)}$ is an estimation of $r(x^*),$ and $\sigma_k = \min\left\{t>k~|~x^{(t)}=x^*\right\}$ is the first time  the regeneration state $x^*$ is visited after time $k$.

The AMP estimator (\ref{eq:es4}) does not introduce any bias subtracting $M_\phi$ from $\hat V^{(\gamma)}_\phi$ since    $\E M_{\phi}^{(n)} =0$ for any $n>0$ by \cite{Henderson2002}.
Function $V_{\phi}^{(\gamma)}$ is a solution of the following equation (see Lemma  \ref{lem:2sol}):
\begin{align}\label{eq:Poiss_reg}
g(x) - r_\phi(x^*) + \gamma\sum\limits_{y\in \X}P_\phi (y|x) h(y) - h(x) =0 \quad \text{ for each }x\in \X.
\end{align}
Therefore, similar to (\ref{eq:es2}), estimator (\ref{eq:es4}) has zero variance if approximation is exact  $\widehat{r_\phi(x^*)} = r_\phi(x^*)$ and $\zeta = V^{(\gamma)}_\phi$,  see Poisson equation (\ref{eq:Poiss_reg}).

Further variance reduction is possible via \textit{$T$-step truncation} \cite[Section 6]{Sutton2018}. We consider an   estimate of  the value function  (\ref{eq:Vreg})  at a state $x\in \X$ as   the sum of the discounted costs before time $T$, where $T<\sigma(x^*)$,   and the discounted costs after time $T$:
\begin{align}\label{eq:Vst}
\hat V^{(\gamma)}(x) =  \sum\limits_{t=0}^{T-1} \gamma^t \left(g(x^{(t)})-\widehat{r(x^*)}\right) + \gamma^T\sum\limits_{t=0}^{\sigma(x^*)-1} \gamma^t \left(g(x^{(T+t)})-\widehat{r(x^*)}\right),
\end{align}
where   $ x^{(0)}=x$,  $x^{(t)}$ is a state of the Markov chain  after   $t$  steps and $\sum\limits_{t=0}^{\sigma(x^*)-1} \gamma^t g(x^{(T+t)})$ is a standard one-replication estimation of the value function at state $x^{(T)}$.
Instead of estimating the value at state $x^{(T)}$ by a random roll-out (second term in (\ref{eq:Vst})), we can use the value of deterministic approximation function $\zeta$ at state $x^{(T)}.$ The $T$-step truncation reduces the variance of the standard estimator but introduces bias unless the approximation is exact $\zeta(x^{(T)}) =  V^{(\gamma)}(x^{(T)})$.

A \textit{$T$-truncated} version of the AMP estimator is
\begin{align}\label{eq:esT}
\hat V^{AMP( \zeta), (\gamma, T)}_k :&= \sum\limits_{t=k}^{T\wedge \sigma_k-1} \gamma^{t-k} \left(g(x^{(t)})-\widehat{r_\phi(x^*)}\right) + \gamma^{T\wedge \sigma_k -k }\zeta(x^{(T\wedge \sigma_k) }) - M_\phi^{(T\wedge \sigma_k )}(x^{(k)}) \\
                                     &=\sum\limits_{t=k}^{T\wedge \sigma_k -1} \gamma^{t-k} \left(g(x^{(t)})-\widehat{r_\phi(x^*)}\right) - \sum\limits_{t=k}^{T\wedge \sigma_k -1} \gamma^{t-k+1}\left(\zeta(x^{(t+1) }) - \sum\limits_{y\in \X} P_\phi\left(y|x^{(t)} \right)\zeta(y) \right) \nonumber \\
  & \quad { } + \gamma^{T\wedge \sigma_k -k}\zeta(x^{(T\wedge \sigma_k)}) \nonumber \\
&=   \zeta(x^{(k)}) + \sum\limits_{t=k}^{T\wedge \sigma_k -1}  \gamma ^{t-k} \left(g(x^{(t)}) -\widehat{r_\phi(x^*)}+  \gamma \sum\limits_{y\in \X} P_{\phi}\left(y|x^{(t)}\right)  \zeta(y)   -   \zeta(x^{(t)})  \right),\nonumber
\end{align}
where $T\wedge \sigma_k = \min(T, \sigma_k)$.
We note that if the value function approximation and present discounted value approximation  are exact, estimator (\ref{eq:esT}) is unbiased for $V^{(\gamma)}(x^{(k)})$ and has zero variance.  We generalize the $T$-truncated estimator by taking  the
  number of summands $T$  to follow the geometrical distribution with parameter $\lambda<1$ as in the TD($\lambda$) method \cite[Section 12]{Sutton2018}, \cite[Section 3]{Schulman2016}:
\begin{align}\label{eq:es5}
\hat V^{AMP( \zeta), (\gamma, \lambda)}_k  :&=  \E_{T\sim Geom(1-\lambda)} \hat V^{AMP( \zeta), (\gamma, T)}_k \\
&=  (1-\lambda)\Big( \hat V^{AMP( \zeta), (\gamma, 1)}_k +\lambda  \hat V^{AMP( \zeta), (\gamma, 2)}_k  + \lambda^2 \hat V^{AMP( \zeta), (\gamma, 3)}_k+\cdots\nonumber\\
&\quad\quad +\lambda^{\sigma_k}V^{AMP( \zeta), (\gamma, \sigma_k)}_k  + \lambda^{\sigma_k+1}V^{AMP( \zeta), (\gamma, \sigma_k)}_k  +\cdots\Big)\nonumber\\
&= \zeta(x^{(k)}) + \sum\limits_{t=k}^{\sigma_k-1} (\gamma\lambda)^{t-k} \left(g(x^{(t)}) -\widehat{r_\phi(x^*)}+  \gamma \sum\limits_{y\in \X} P_{\phi}\left(y|x^{(t)}\right)  \zeta(y)   -   \zeta(x^{(t)})  \right).\nonumber
\end{align}

The regenerative cycles can be very long.  In practice we want to control/predict the  time and memory amount  allocated for the algorithm execution. Therefore, the simulated episodes should have  finite lengths. We use the following estimation for the first $N$ timesteps if an episode with finite length $N+L$ is generated:
\begin{align}\label{eq:esf}
\hat V^{AMP( \zeta), (\gamma, \lambda, N+L)}_k  := \zeta(x^{(k)}) + \sum\limits_{t=k}^{(N+L) \wedge \sigma_k -1} (\gamma\lambda)^{t-k} \left(g(x^{(t)})  - \widehat{r_\phi(x^*)}+  \gamma \sum\limits_{y\in \X} P_{\phi}\left(y|x^{(t)}\right)  \zeta(y)   -   \zeta(x^{(t)})  \right),
\end{align}
 where $k=0,\dotsc, N-1$, and integer $L$ is large enough. We note that if an episode has a finite length $N+L$, regeneration $\sigma_k$  may have not been observed in the generated episode (i.e. $\sigma_k>N+L$).  In this case, we summarize  (\ref{eq:esf}) up to the end of the episode. %As a result, each episode generates $N$ estimates (\ref{eq:esf}). \red{ do you mean at $x_0$, $\ldots$, $x_{N-1}$?}

We provide the PPO algorithm where each of $q=1, \dotsc, Q$ parallel actors simulates an episode with length $N+L$: $\left\{x^{(0, q)}, a^{(0,q)}, x^{(1, q)}, a^{(1, q)}, \cdots, x^{(N+L-1, q)}, a^{(N+L-1, q)} \right\}.$   Since we need the approximation $\zeta$ in (\ref{eq:esf}), we use   $f_{\psi_{i-1}}$ that approximates a regenerative discounted value function corresponding to previous policy $\pi_{\theta_{i-1}}$. See Algorithm \ref{alg2}.

\begin{algorithm}[h!]
\SetAlgoLined
\KwResult{policy $\pi_{\theta_I}$ }
 Initialize  policy $\pi_{\theta_0}$ and value function $f_{\psi_{-1}}\equiv 0$ approximators \;
 \For{ policy iteration $i=0, 1,  \dotsc, I-1$}{
  \For{ actor  $q= 1, 2, \dotsc, Q$}{
  Run policy $\pi_{\theta_{i}}$  for $N+L$ timesteps: collect an episode
   $\left\{x^{({0, q})}, a^{({0, q})}, x^{({1 , q})}, a^{({1, q})}, \dotsc, x^{({ N+L-1, q})}, a^{({N+L-1, q})} \right\} $\;
  }
  Estimate the average cost   $\hat\eta_{\theta_i}$ by (\ref{es_av}), the present discounted value $\widehat{r_{\theta_i}(x^*)}$ by (\ref{eq:es_r}) below\;
        Compute  $\hat V^{AMP( f_{\psi_{i-1}}), (\gamma, \lambda)}_{k, q}  $ estimates by (\ref{eq:esf})  for each $q = 1, \dotsc, Q$, $k=0, \dotsc, N-1$\;
        Update $\psi_{i}: = \psi$, where $\psi\in \Psi$ minimizes $\sum\limits_{q=1}^Q \sum\limits_{k=0}^{N-1 } \left ( f_{\psi}\left(x^{({k, q})}\right) -\hat V^{AMP(  f_{\psi_{i-1}}), (\gamma, \lambda)}_{k, q} \right)^2$ following (\ref{eq:Vappr}) \;
  Estimate the advantage functions $\hat A_{\theta_{i}}\left(x^{({k, q})}, a^{({k, q})}\right)$ using (\ref{eq:Aes1}) for each $q = 1, \dotsc, Q$, $k=0, \dotsc,N-1$:\
  \begin{align*}
  D^{(0:N-1)_{q=1}^Q} = \Big\{&\left(x^{({0, q})}, a^{({0, q})}, \hat A^{(\gamma)}_{\theta_{i}} (x^{({0, q})}, a^{({0, q})})\right),  \cdots, \\
&\left(x^{({N-1, q})}, a^{({N-1, q})}, \hat A^{(\gamma)}_{\theta_{i}} (x^{({N-1, q})}, a^{({N-1, q})})\right) \Big\}_{q=1}^Q
   \end{align*}\\
 Minimize the surrogate objective function w.r.t. $\theta\in \Theta$:\
 \begin{align*}
   \hat L^{(\gamma)}\left(\theta, \theta_i,  D^{(0:N-1)_{q=1}^Q} \right) =\sum\limits_{q=1}^Q \sum\limits_{k=0}^{ N-1} &\max\Big[ \frac{\pi_{\theta}\left(a^{({k, q})}| x^{({k, q})}\right)}{\pi_{\theta_{i}}\left(a^{({k, q})}|x^{({k, q})}\right)}  \hat A^{(\gamma)}_{\theta_{i}}\left(x^{({k, q})}, a^{({k, q})}\right) ,\\ &\text{clip}\left(\frac{\pi_{\theta}\left(a^{({k,q})}| x^{({k, q})}\right)}{\pi_{\theta_{i}}\left(a^{({k, q})}|x^{({k, q})}\right)},     1-\epsilon, 1+\epsilon  \right) \hat A^{(\gamma)}_{\theta_{i}}\left(x^{({k, q})}, a^{({k,q})}\right)  \Big];
   \end{align*}\\
 Update $\theta_{i+1}: = \theta$.
 }
 \caption{Proximal policy optimization with discounting}\label{alg2}
\end{algorithm}

In Algorithm \ref{alg2}, we assume that state $x^*$ has been visited $N_q$ times in the $q$th generated episode, when $q=1,\dotsc, Q$ parallel actors are available. Therefore, we estimate the present discounted value $r(x^*)$ as:
\begin{align}\label{eq:es_r}
\widehat{r(x^*)} = (1-\gamma)\frac{1}{\sum_{q=1}^Q N_q} \sum\limits_{q=1}^Q\sum\limits_{n=1}^{N_q} \sum\limits_{k=\sigma^q(n)}^{\sigma^q(n)+L} \gamma^{k-\sigma^q(n)} g(x^{({k,q})}),
\end{align}
where $\sigma^q(n)$ is the $n$th time when state $x^*$ is visited in the $q$th episode, and integer $L$ is a large enough.
If many parallel actors are available, we recommend starting the episodes from state $x^*$ to ensure that  state $x^*$ appears in the generated episodes  a sufficient number of times.

 \begin{remark}\label{rem:regVSinf}
We use the following \textit{discounted value function} as an  approximation of the solution $h$ of the Poisson equation:
\begin{align}\label{eq:disc_inf}
J^{(\gamma)}(x):=\E\left[ \sum\limits_{k=0}^{\infty} \gamma^k\left(g(x^{(k)})- d^Tg \right)~|~x^{(0)}=x \right] \text{ for each }x\in \X.
\end{align}
 We note that  the discounted value function  (\ref{eq:disc_inf}) and the regenerative discounted value function (\ref{eq:Vreg}) are solutions of the same Poisson equation (Lemma \ref{eq:newPoisson}). Therefore, the bias of advantage function estimator (\ref{eq:Adisc}) does not change when $h_\phi$ in (\ref{eq:A}) is replaced   either by $J^{(\gamma)}$ or by $V^{(\gamma)}$.
  The variance of the  regenerative discounted value function estimator (\ref{eq:esf}) can be \textit{potentially} smaller than the variance of the analogous $J^{AMP(\zeta), (\gamma, \lambda, N+L)}$ estimator:
  \begin{align}\label{eq:Jesf}
\hat J^{AMP( \zeta), (\gamma, \lambda, N+L)}_k  := \zeta(x^{(k)}) + \sum\limits_{t=k}^{N+L-1} (\gamma\lambda)^{t-k} \left(g(x^{({t})})  -\hat\eta_{\phi}+  \gamma \sum\limits_{y\in \X} P_{\phi}\left(y|x^{({t})}\right)  \zeta(y)   -   \zeta(x^{({t})})  \right).
\end{align}
Since the upper bound of summation in  (\ref{eq:esf}) is $\min(\sigma_k, N+L)-1$, it includes fewer summands than the summation in (\ref{eq:Jesf}) if the regeneration  frequently occurs.
See Appendix Section \ref{sec:regVSinf}, which describes a numerical experiment for the criss-cross network implying that the choice of  $J^{AMP(\zeta), (\gamma, \lambda, N+L)}$ and  $V^{AMP(\zeta), (\gamma, \lambda,N+ L)}$  estimators can affect the PPO algorithm convergence rate  to the optimal policy.
\end{remark}

%\subsection{Comparison of AMP and GAE estimation methods}\label{sec:AMPvsGAE}

The connection between our proposed AMP estimator and the GAE estimator \cite{Schulman2016} suggests another motivation for introducing the GAE estimator.
An accurate estimation of the value function $V^{(\gamma)}_\phi$  by the standard estimator  may require  a large number of state-action pairs samples from the current policy $\pi_\phi$ \cite[Section 13]{Sutton2018}, \cite{Schulman2016, Ilyas2020}. In this chapter we apply the  AMP method to propose the discounted AMP estimator in (\ref{eq:esf}) that has  a smaller variance than the estimators   (\ref{eq:Vst}). %The AMP estimator relies on the construction of
  %an internal control variate  incorporating an approximation $\zeta$ of the value function.
  The AMP method, however, requires knowledge  of transition probabilities $P(y|x, a)$ in order to exactly compute the expected values for each $(x,~a,~y)~\in~\X~\times~\A~\times~\X$:
  \begin{align*}\underset{\substack{ a\sim \pi_{\phi}(\cdot|x)  \\ y\sim P(\cdot|x, a)}}{\E}  \zeta(y) =  \sum\limits_{y\in \X} \sum\limits_{a\in \A}P(y|x , a)\pi_\phi(a|x )\zeta(y). \end{align*}

One can relax the  requirement by replacing each expected value
 $\underset{\substack{ a\sim \pi_{\phi}\left(\cdot|x^{(t)}\right)  \\ y\sim P(\cdot|x, a)}}{\E}  \zeta(y)$  by its \textit{one-replication estimate}  $\zeta\left(x^{(t+1)}\right)$, where $\left(x^{(t)}, x^{(t+1)}\right)$ are two sequential states from an episode \begin{align*}
 \left(x^{(0)}, \dotsc, x^{(t)}, x^{(t+1)}, \dotsc\right)
 \end{align*} generated under policy $\pi_\phi$.

%\begin{remark}
%If a \textit{generative model}   of the environment is available, see definition in \cite[Section 1.1]{Kakade2003}, one can compute $n$-replications
% estimate $\frac{1}{n}\sum\limits_{i=1}^n \zeta(x_{t+1}^{(i)})$ for each state $x_t$ from a trajectory $(x_0, ..., x_t, x_{t+1}, ...)$ generated under
%  policy $\pi_\phi$. We denote $x_{t+1}^{(i)}$  as a state generated in the $i$th replication of executing action $a_i\sim\pi_\phi(\cdot|x_t)$ at state $x_t$.
%   The need of additional sampling at each state $x_t$ and the requirement of the generative model  make the \textit{n-replications AMP estimator}
%\begin{align*}
%\hat V_\phi^{\text{n-replications AMP}(\zeta), (\gamma)} &(x_k, a_k):= \zeta(x_k) +  \sum\limits_{t=0}^{\infty} \left(g(x_{k+t })  + \gamma \frac{1}{n}\sum\limits_{i=1}^n \zeta(x_{t+k+1}^{(i)}) -\zeta(x_{k+t})  \right)
%\end{align*}
%less attractive in comparison  with the standard estimator (\ref{eq:Vst2}).
%\end{remark}

%We conclude that  there is no unbiased estimation method, that we are aware of,  that can efficiently use  availability of an approximation function $\zeta$ to improve the estimation of the solution to the Poisson's equation  \textit{and does not require the knowledge of the transition probabilities}. If the transition probabilities are available the AMP method incorporates the approximation function into the control variate which  can potentially reduce the variance of the standard estimator.

In Section \ref{sec:ge} we propose  the   AMP estimator of the  regenerative discounted value function (\ref{eq:es5}):
\begin{align}\label{eq:Ves2}
\hat V^{AMP( \zeta), (\gamma, \lambda)}_k =   \zeta(x^{(k)}) + \sum\limits_{t=k}^{\sigma_k-1} (\gamma\lambda)^{t-k} \left(g(x^{(t)})  - \widehat{ r(x^*)}+  \gamma \sum\limits_{y\in\X} P_{\phi}(y|x^{(t)})  \zeta(y)   -   \zeta(x^{(t)})  \right).
\end{align}
%We want to remind the reader that (\ref{eq:Ves2}) is the unbiased AMP estimator (\ref{eq:es4}) of the  infinite-horizon discounted value function when $\lambda=1$ .

Replacing expectations $\sum\limits_{y\in \X} P_{\phi}\left(y|x^{(t)}\right)  \zeta(y)$ by the one-replication estimates $\zeta\left(x^{(t+1)}\right)$  we obtain the following estimator for the value function:
\begin{align}\label{eq:GAE}
\hat V^{GAE( \zeta), (\gamma, \lambda)}_k :=  \zeta(x^{(k) }) + \sum\limits_{t=k}^{\sigma_k-1} (\gamma\lambda)^{t-k} \left(g(x^{(k+t)})  - \widehat{ r(x^*)}+  \gamma \zeta(x^{(t+1)})   -   \zeta(x^{(t)})  \right),
\end{align}
which is a part of  the general advantage estimation (GAE) method \cite{Schulman2016}. We note that the advantage function   is estimated as $
\hat A^{GAE( \zeta), (\gamma, \lambda)}_k : = \hat V^{GAE( \zeta), (\gamma, \lambda)}_k - \zeta\left(x^{(k)}\right)$ in  \cite{Schulman2016}.

If $\lambda=1$, the estimator (\ref{eq:es5}) is transformed into  the standard estimator and does not depend on $\zeta$:
\begin{align*}%\label{eq:Vst2}
\hat V_\phi^{(\zeta), (\gamma)} (x^{(k)}):&= \zeta(x^{(k)}) \nonumber
 +  \sum\limits_{t=k}^{\sigma_k-1} \gamma^{t-k} \left(g(x^{(t) }) -\widehat{ r(x^*)}  +  \gamma \zeta(x^{(t+1)}) -\zeta(x^{(t)})  \right) \\
 &=\sum\limits_{t=k}^{\sigma_k-1}\gamma^{t-k} \left(g(x^{(t)})-\widehat{ r(x^*)}\right) .
\end{align*}
 The martingale-based control variate has no effect on the
  GAE estimator when $\lambda=1$, but it can produce significant variance reductions in the AMP estimator. See Section \ref{sec:cc} for numerical experiments.

%Therefore, the GAE estimator with $\lambda <1$ not only 'downweightes'   td-errors corresponding to  long delays but also allows to incorporate approximation of the value function $\zeta$, unlike
%the GAE estimator   with $\lambda = 1$, see (\ref{eq:Vst2}).
%We believe that  further     theoretical and empirical studies of the GAE method are required to disclose a major reason   of the GAE method efficiency. We hope that its analysis in conjunction with the AMP method can be helpful in further research.

\section{Experimental results for multiclass queueing networks}\label{sec:experiments}
In this section we evaluate the performance of the proposed proximal policy optimization Algorithms \ref{alg1},  \ref{alg1amp}, and \ref{alg2} for the multiclass queueing networks control optimization task discussed in Section \ref{sec:MQN}.

We use two separate fully connected feed-forward neural networks to represent  policies $\pi_\theta,$ $\theta\in \Theta$ and value functions $f_\psi$, $\psi\in \Psi$ with the architecture details given in Appendix Section \ref{sec:nn}.  We  refer to the neural network used to represent a policy as \textit{the  policy NN} and to  the neural network used to approximate a value function as \textit{the value function NN}. We  run the algorithm for $I=200$ policy iterations for each experiment. The algorithm uses  $Q=50$ actors to simulate data in parallel for each iteration.    See Appendix Section \ref{sec:par} for the details.

\subsection{Criss-cross network}\label{sec:cc}

 We first study the PPO algorithm and compare its base version Algorithm \ref{alg1} and its modification Algorithm \ref{alg1amp} that incorporates the  AMP method.  We check the robustness of the algorithms for the criss-cross system  with  various load (traffic) intensity regimes, including    I.L. (imbalanced light), B.L. (balanced light), I.M. (imbalanced medium), B.M. (balanced medium), I.H. (imbalanced heavy), and B.H. (balanced heavy) regimes.   Table \ref{t:lp} lists the corresponding arrival and service rates. The criss-cross network in any of these traffic regimes is stable under any work-conserving policy  \cite{Dai1996}. Since we want an initial policy to be stable, we forbid each server in the network to idle unless all its associated buffers are empty.

Table \ref{tab:cc}  summarizes the control policies proposed in the literature. Column 1 reports the load regimes, column 2 reports the
optimal performance obtained by dynamic programming (DP), column 3 reports the performance of a target-pursuing
policy (TP) \cite{Paschalidis2004} , column 4 reports the performance of a threshold policy \cite{Harrison1990}, columns 5 and 6 report the performance of fluid (FP) and robust fluid (RFP) policies respectively \cite{Bertsimas2015}, and column 7 reports the performance and the half width of the $95\%$ confidence intervals (CIs) of the PPO policy  $\pi_{\theta_I}$  resulting from the last iteration of Algorithm  \ref{alg1amp}.

We initialize the policy NN parameters $\theta_0$  using standard Xavier initialization \cite{Glorot2010}.  The resulting policy $\pi_{\theta_0}$ is close to the policy that chooses actions uniformly at random.  We take the empty system state $x^* = (0,0,0)$ as a regeneration state and simulate $N=5,000$ independent regenerative cycles  per actor in each iteration of the algorithm. Although the number of generated cycles is fixed for all traffic regimes, the length of the regenerative cycles varies  and highly depends on the load.

To show the learning curves in Figure \ref{fig:cc_opt},  we  save policy parameters $\{\theta_i\}_{i=0, 10, \dotsc, 200}$ every 10th iteration over the course of learning. After the algorithm terminates we independently
 simulate   policies $\{    \pi_{\theta_i}: i = 0, 10, \dotsc, 200 \}$ (in parallel) starting from the regeneration state $x = (0,\dotsc,0)$ until a fixed number of regenerative events occurs. For light, medium, and heavy  traffic regimes, we run the simulations  for $5\times10^7$, $5\times 10^6$, and  $10^6$ regenerative cycles respectively.
 We compute the $95\%-$confidence intervals   using the strongly consistent estimator of asymptotic variance. See  \cite[Section VI.2d]{Asmussen2003}.

  \begin{figure}[H]
    \subfloat[Imbalanced low (IL) traffic \label{subfig-6:IL}]{%
       \includegraphics[ height=0.19\textwidth, width=0.33\textwidth]{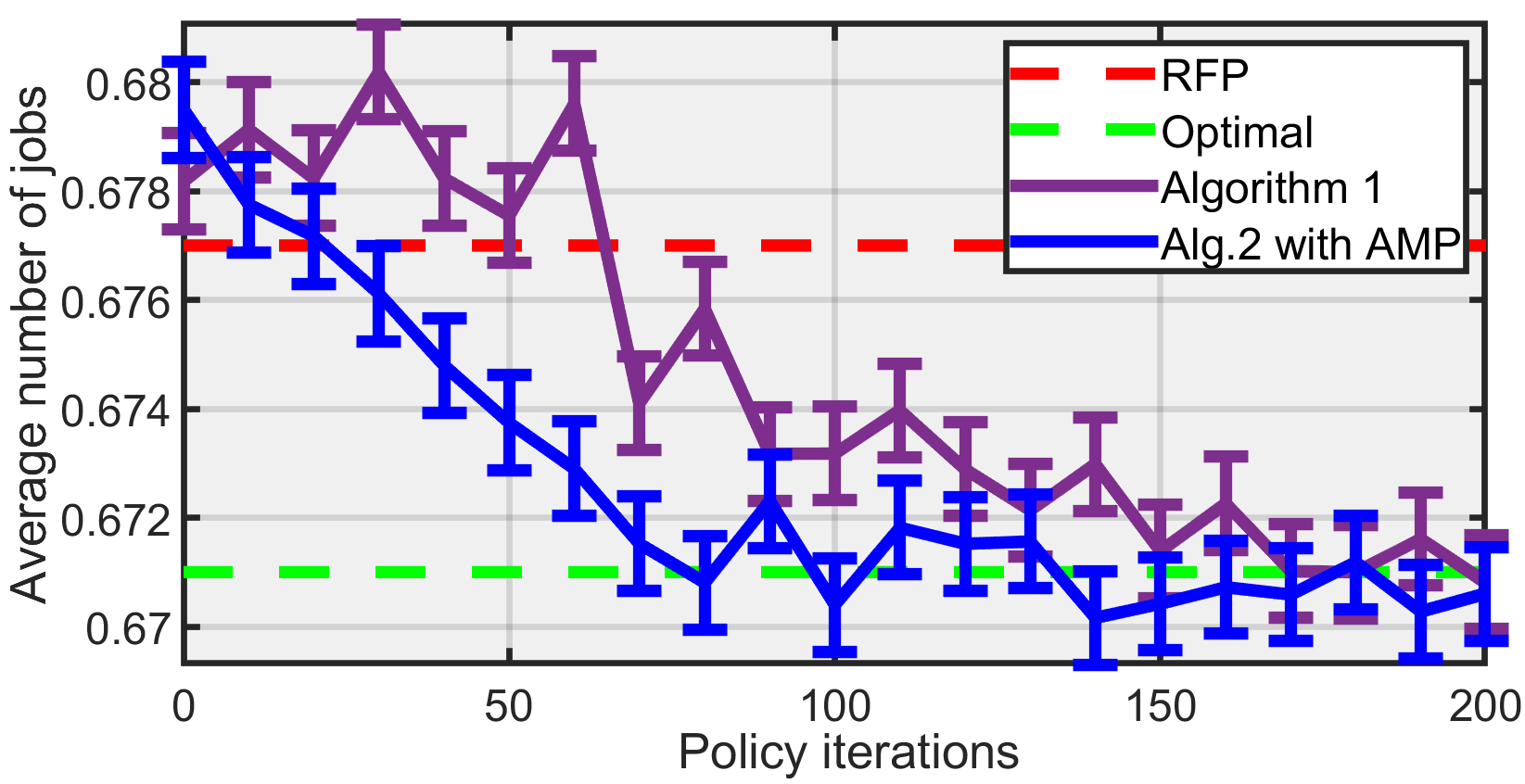}
     }
          \subfloat[Imbalanced medium (IM) traffic \label{subfig-4:IM}]{%
       \includegraphics[ height=0.19\textwidth, width=0.33\textwidth]{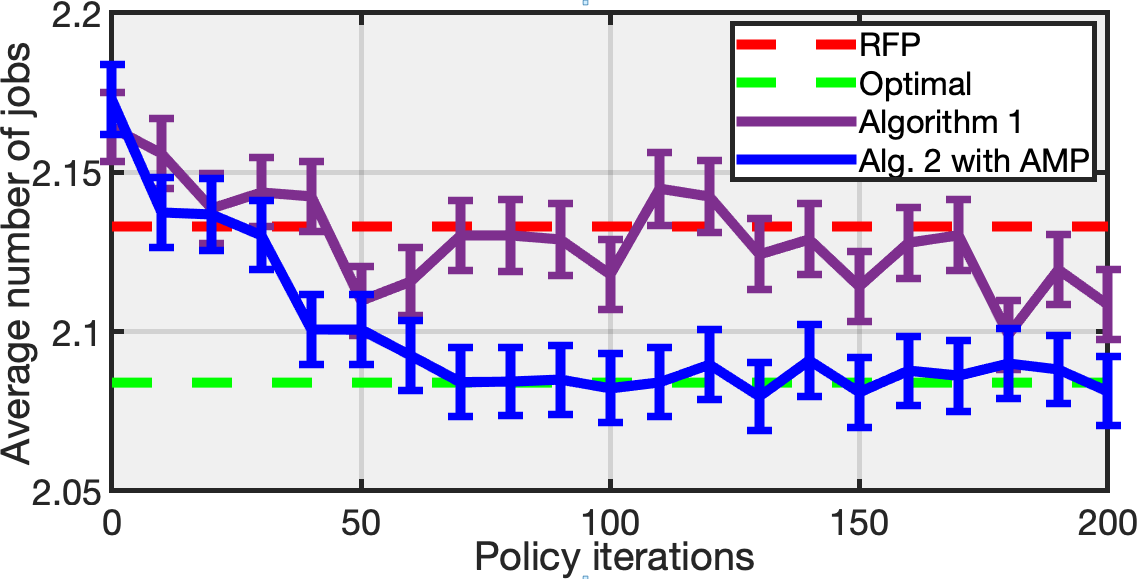}
     }
     \subfloat[Imbalanced heavy (IH) traffic \label{subfig-2:IH}]{%
       \includegraphics[ height=0.19\textwidth, width=0.33\textwidth]{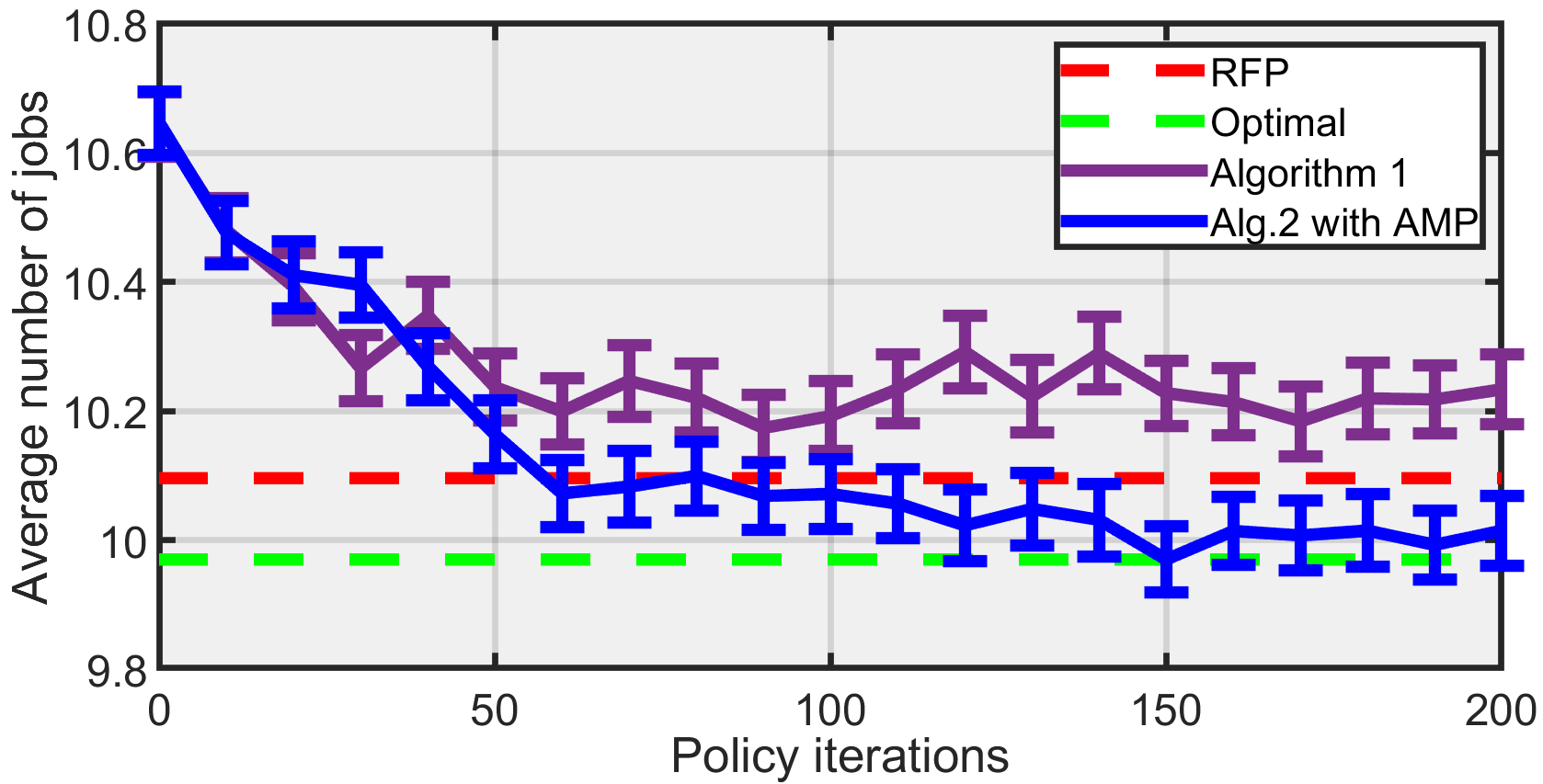}
     }\\
           \subfloat[Balanced low (BL) traffic\label{subfig-5:BL}]{%
       \includegraphics[  height=0.19\textwidth, width=0.33\textwidth]{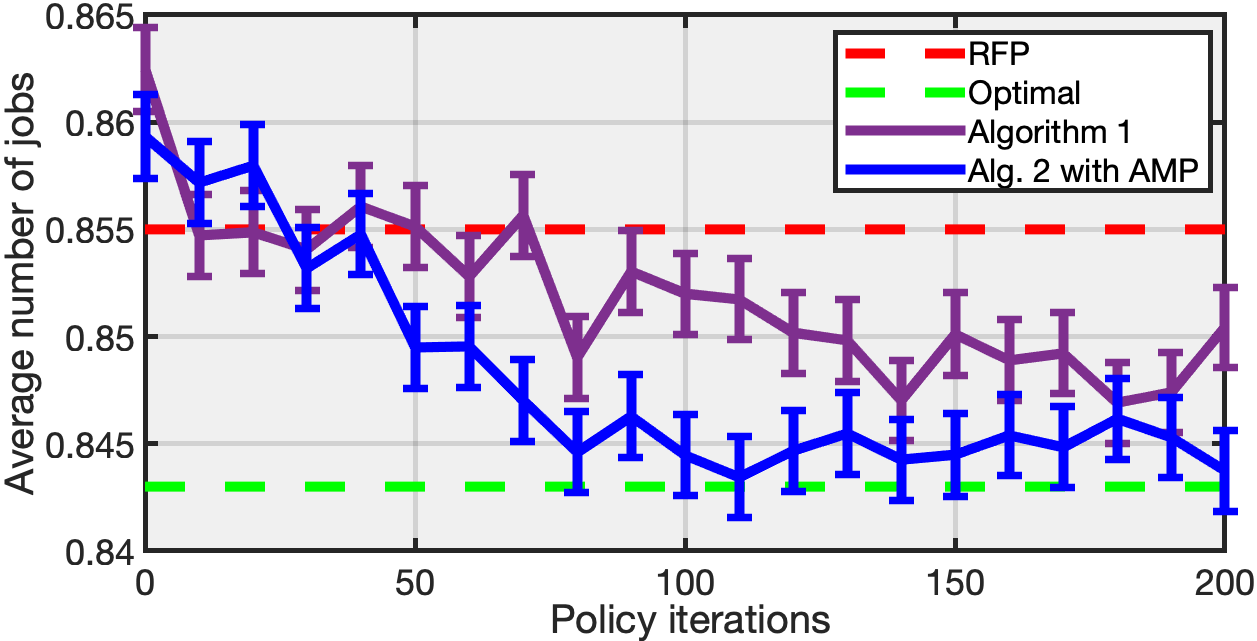}
     }
          \subfloat[Balanced medium (BM) traffic\label{subfig-3:BM}]{%
       \includegraphics[  height=0.19\textwidth, width=0.33\textwidth]{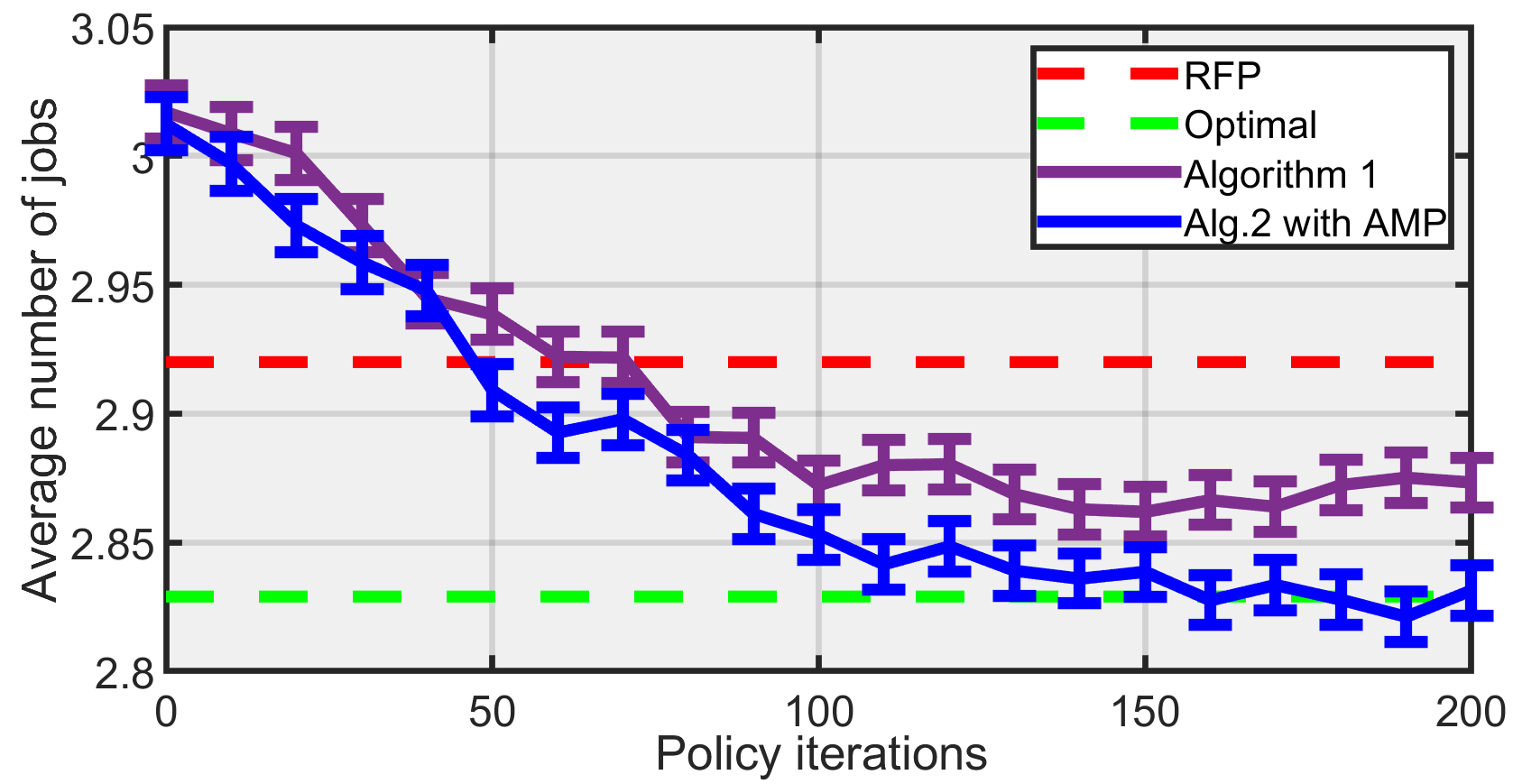}
     }
     \subfloat[Balanced heavy (BH) traffic\label{subfig-1:BH}]{%
       \includegraphics[ height=0.19\textwidth, width=0.33\textwidth]{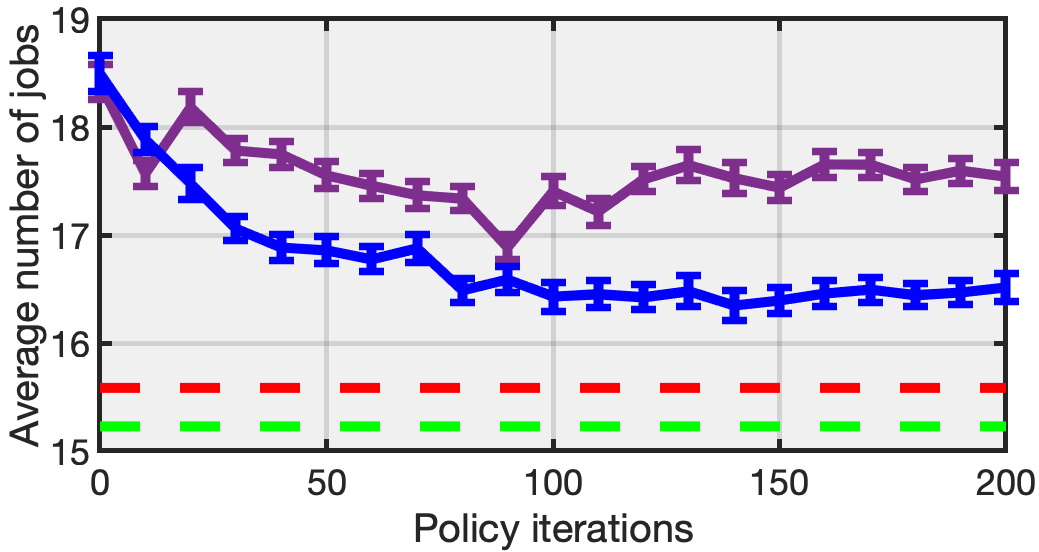}
     }

     \caption[Comparison of learning curves from Algorithm \ref{alg1} and Algorithm \ref{alg1amp}  on the criss-cross network with different traffic regimes.]{
     Comparison of learning curves from Algorithm \ref{alg1} and Algorithm \ref{alg1amp}  on the criss-cross network with different traffic regimes.
    The solid purple and blue lines show the performance of  the PPO policies obtained at the end of every 10th iterations of Algorithm \ref{alg1} and Algorithm \ref{alg1amp}, respectively; the dashed red lines show the performance of the robust fluid policy (RFP), and the dashed green lines show the  performance of the optimal policy.  }
     \label{fig:cc_opt}
   \end{figure}

\begin{table}[H]
\centering
\begin{tabular}{|c|c|c|c|c|c|c|c|}
  \hline
  % after \\: \hline or \cline{col1-col2} \cline{col3-col4} ...
  Load regime  & $\lambda_1$ & $\lambda_3 $ & $\mu_1$ & $\mu_2$ & $\mu_3$ & $\rho_1$ & $\rho_2$\\\hline
  I.L. & 0.3 & 0.3& 2 & 1.5 &2& 0.3&0.2\\\hline
  B.L. & 0.3 & 0.3 & 2 & 1 &2 &0.3&0.3\\\hline
  I.M. & 0.6 & 0.6  & 2 & 1.5 &2& 0.6&0.4\\\hline
  B.M. & 0.6 & 0.6  & 2 & 1 &2& 0.6&0.6\\\hline
  I.H. & 0.9 & 0.9 & 2 & 1.5 &2& 0.9&0.6\\\hline
  B.H. & 0.9 & 0.9 & 2 & 1&2&0.9&0.9 \\

  \hline

\end{tabular}
 \caption{Load parameters for the criss-cross network of Figure \ref{fig:cc}. }\label{t:lp}
\end{table}

\begin{table}[H]
\centering
\begin{tabular}{|c|c|c|c|c|c|c|}
  \hline
  % after \\: \hline or \cline{col1-col2} \cline{col3-col4} ...
  Load  regime  & DP (optimal) & TP & threshold & FP & RFP & PPO (Algorithm \ref{alg1amp})\\\hline
  I.L. & 0.671 & 0.678 & 0.679 & 0.678 &0.677&   $0.671\pm 0.001$ \\\hline
  B.L. & 0.843 & 0.856 & 0.857 & 0.857 &0.855&  $0.844\pm 0.004$\\\hline
  I.M. & 2.084 & 2.117 & 2.129 & 2.162 &2.133& $2.084\pm0.011$ \\\hline
  B.M. & 2.829 & 2.895 & 2.895 & 2.965 &2.920 &  $2.833\pm 0.010$\\\hline
  I.H. & 9.970 & 10.13 & 10.15 & 10.398 &10.096 & $10.014\pm0.055$\\\hline
  B.H. & 15.228 & 15.5 & 15.5 & 18.430 &15.585&   $16.513\pm 0.140$ \\

  \hline

\end{tabular}

 \caption[Average number of jobs per unit time in the criss-cross network under different policies.]{Average number of jobs per unit time in the criss-cross network under different policies.
  Column 1 reports the variances in the load regimes. }\label{tab:cc}%
\end{table}

   We observe that   Algorithm \ref{alg1amp} is not robust enough and converges to a  suboptimal policy when the criss-cross network operates in a balanced heavy load regime.
 We run Algorithm \ref{alg2} that uses discount factor $\gamma=0.998$, TD parameter $\lambda = 0.99$, and the AMP method for the value function estimation   at step 7.
  For each iteration we use $Q = 50$ parallel processes to generate trajectories, each with length $N=50,000$.
   We observe that Algorithm \ref{alg2} uses approximately 10 times fewer samples per iteration than  Algorithm \ref{alg1amp}. Algorithm \ref{alg2} outputs policy $\pi_{\theta_{200}}$ whose long-run average
   performance is $15.353\pm0.138$ jobs, which is lower than the RFP
     performance in \cite{Bertsimas2015}.
     We repeat the experiment with  Algorithm \ref{alg2} with the discounting, but we disable the AMP method in the value function estimation   at step 7.
      Figure \ref{fig:cc23} shows  that both variance reduction techniques (i.e. discounting, AMP method) have been necessary in Algorithm \ref{alg2} to achieve near-optimal performance.

%\begin{figure}[H]
%\centering%
%\includegraphics[width=.5\linewidth]{}
%\caption[]{Comparison of learning curves from Algorithm \ref{alg1amp} and Algorithm \ref{alg2}  on the criss-cross network with the balanced heavy regime. }
%\label{fig:cc23}%
%\end{figure}

\begin{figure}[H]
\centering%
\includegraphics[width=.7\linewidth]{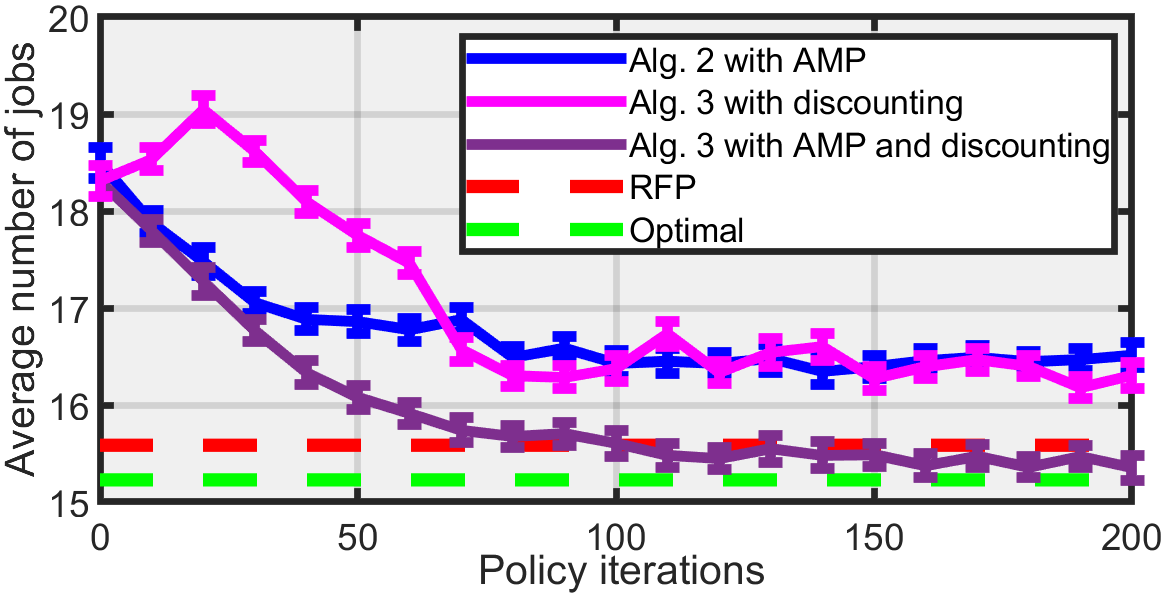}
\caption[Comparison of learning curves from Algorithm \ref{alg1amp} and Algorithm \ref{alg2}  on the criss-cross network with the balanced heavy regime.]{ Comparison of learning curves from Algorithm \ref{alg1amp} and Algorithm \ref{alg2}  on the criss-cross network with the balanced heavy regime. The solid blue and purple lines  show the performance of  the PPO policies obtained at the end of every 10th iterations of Algorithm \ref{alg1amp} and Algorithm \ref{alg2}, respectively. The solid pink   line shows the performance of  the PPO policies obtained at the end of every 10th iterations of Algorithm \ref{alg2} without the  AMP method. The dashed red line shows the performance of the robust fluid policy (RFP), and the dashed green line shows the  performance of the optimal policy.}
\label{fig:cc23}%
\end{figure}

Recall that in the uniformization procedure, the transition matrix
$\tilde P$ in (\ref{eq:unif}) allows ``fictitious'' transitions when
$\tilde P(x|x)>0$ for some state $x\in \X$.  The PPO algorithm
approximately solves the discrete-time MDP (\ref{eq:obj4}), which
allows a decision at every state transition, including fictitious ones.
The algorithm produces randomized stationary Markovian
policies.  In evaluating the performance of any such policies, we
actually simulate a DTMC $\left\{x^{(k)}:k=0,1, 2, \ldots\right\}$ operating under
the policy, estimating the corresponding long-run average cost as in
(\ref{eq:obj4}).  There are two versions of DTMCs depending on how
often the randomized policy is sampled to determine the next
action. In version 1, the policy is re-sampled only when a \emph{real}
state transition occurs. Thus, whenever a fictitious state transition
occurs, no new action is determined and server priorities do not change in this version.
In version 2, the policy is re-sampled at  \emph{every}
transition. Unfortunately, there is no guarantee that these two
versions of DTMCs yield the same long-run average cost. See
\cite[Example 2.2]{Beutler1987} for a counterexample.

When the randomized stationary Markovian policy is optimal for the
discrete-time MDP (\ref{eq:obj4}), under an additional mild condition,
the two DTMC versions yield the same long-run average cost
\cite[Theorem 3.6]{Beutler1987}. Whenever simulation is used to estimate
the performance of a randomized policy in this chapter, we use
version 1 of the DTMC. The reason for this choice is that the long-run
average for this version of DTMC is the same as the continuous-time
long-run average cost in (\ref{eq:obj3}), and the latter performance
has been used as  benchmarks in literature.
Although the final randomized stationary Markovian policy from our PPO
algorithm is not expected to be optimal, our numerical experiments
demonstrate that the performance results of the DTMC versions 1 and 2 are statistically
identical.  Table \ref{tab:cc2} reports  the performance of the DTMC versions 1 and 2  in the criss-cross network.   In Table \label{tab:cc2} column 2, the performance of Version 1 is identical to Table \ref{tab:cc} column 7.
 \begin{table}[H]
\centering
\begin{tabular}{|c|c|c|c|c|c|c|}
  \hline
  % after \\: \hline or \cline{col1-col2} \cline{col3-col4} ...
  Load regime   & Version 1 performance with CIs &   Version 2 performance with CIs
  \\\hline
  I.L. &$0.671\pm 0.001$&       $0.671\pm 0.001$ \\\hline
  B.L. &  $0.844\pm 0.004$&    $0.844\pm 0.004$\\\hline
  I.M. & $2.084\pm0.011$ &       $2.085\pm0.011$ \\\hline
  B.M. &  $2.833\pm 0.010$ &      $2.832\pm 0.010$\\\hline
  I.H. & $10.014\pm0.055$ &   $9.998\pm0.054$\\\hline
  B.H. &   $16.513\pm 0.140$ &       $16.480\pm 0.137$ \\

  \hline

\end{tabular}

 \caption{Average number of jobs per unit time in the discrete-time MDP model of the criss-cross network under PPO policies.}\label{tab:cc2}%
\end{table}

%In the next section we test the robustest of Algorithm \ref{alg2} to the size of the queueing network.

\subsection{Extended  six-class queueing network}\label{sec:ext}

In this subsection, we consider the family of extended six-class networks from \cite{Bertsimas2015} and apply Algorithm \ref{alg2} to find good control policies.

Figure \ref{fig:fig1} shows the structures of the extended six-class networks.
%Job classes 1 and 3 arrive externally  to the network according to a Poisson process with a rate $\lambda_1$ and $\lambda_2$, respectively. Then jobs from class 1 and class 3 follow two separate routes. Jobs from class 3 are sequentially processed in each of $L$ servers and then leave the network.  Jobs from class 1 after being processed in each server fed back into 2nd buffer associated with the first server. After the jobs of class 2 are served in each server again and then leave the system.
We run experiments for 6 different extended six-class queueing networks with the following traffic parameters: $\lambda_1 = \lambda_3 = 9/140$, the service times are exponentially distributed with service rates determined  by the modulus after dividing the class index  by 6 (i.e. classes associated with server 1 are served with rates $\mu_1 = 1/8$, $\mu_2 = 1/2$, $\mu_3 = 1/4$ and classes associated with server 2 are processed with service rates $\mu_4 = 1/6$, $\mu_5 = 1/7$, $\mu_6 = 1$). The service rates
for the odd servers $S_1, ..., S_{\lfloor L/2\rfloor+1}$ are the same as the service rates for server 1, while the service rates for the even
servers $S_2, ..., S_{\lfloor L/2 \rfloor}$ are the same as the service rates for server 2. The load is the same for each station and is equal to $\rho = 0.9$.

\begin{figure}[tbh]
  \begin{center}
 \begin{tikzpicture}[server/.style={rectangle, inner sep=0.0mm, minimum width=.8cm,
     draw=green,fill=green!10,thick},
  buffer/.style={rectangle, rounded corners=3pt,
    inner sep=0.0mm, minimum width=.9cm, minimum height=.6cm,
    draw=orange,fill=blue!10,thick}]

  \node[server,minimum width=.4in, minimum height=2in] at (6,4) (S1)
  {$S_1$};

  \node[server,minimum width=.4in, minimum height=2in] (S2)
  [right=.75in of S1.east] {$S_2$};

\node[server,minimum width=.4in, minimum height=2in] (S3)
  [right=2.5in of S1.east] {$S_L$};

  \node[buffer] (B1) [left=.2in of $(S1.west)!.5!(S1.north west)$ ] {$B_1$};
  \node[buffer] (B2) [left=.2in of S1.west ] {$B_2$};
  \node[buffer] (B3) [left=.2in of $(S1.west)!.5!(S1.south west)$ ] {$B_3$};
  \node[buffer] (B4) [left=.2in of $(S2.west)!.5!(S2.north west)$ ] {$B_4$};
  \node[buffer] (B5) [left=.2in of S2.west ] {$B_5$};
  \node[buffer] (B6) [left=.2in of $(S2.west)!.5!(S2.south west)$ ] {$B_6$};

  \node[buffer] (B7) [left=.2in of $(S3.west)!.5!(S3.north west)$ ] { $B_{3(L-1)+1}$ };
  \node[buffer] (B8) [left=.2in of S3.west ] {$B_{3(L-1)+2}$};
  \node[buffer] (B9) [left=.2in of $(S3.west)!.5!(S3.south west)$ ] {$B_{3(L-1)+3}$};

\coordinate (source1) at ($ (B1.west) + (-.2in, +.2in)$);
\coordinate (source3) at ($ (B3.west) + (-.2in, -.2in)$);
  \coordinate (departure1) at ($ (B8-|S3.east) + (+.7in, +.0in)$);
 \coordinate (departure3) at ($ (B9-|S3.east) + (+.7in, 0in)$);
  \coordinate (S3minus) at ($(B7-|S3.east)+(0.7in,.7in)$);
  \coordinate (B2plus) at ($(B2.east)+(-1in,0in)$);
 \coordinate (L11) at ($(B7-|S2.east)+(+.1in,0in)$);
 \coordinate (L12) at ($(B7.west)+(-.1in,0in)$);
 \coordinate (L21) at ($(B8-|S2.east)+(+.1in,0in)$);
 \coordinate (L22) at ($(B8.west)+(-.1in,0in)$);
 \coordinate (L31) at ($(B9-|S2.east)+(+.1in,0in)$);
 \coordinate (L32) at ($(B9.west)+(-.1in,0in)$);

  \path[->, thick]
 (B2.east) edge (B2-|S1.west)
 (B3.east) edge (B3-|S1.west)
(B1.east) edge (B1-|S1.west)
(B4.east) edge (B4-|S2.west)
 (B5.east) edge (B5-|S2.west)
(B6.east) edge (B6-|S2.west)
(B7.east) edge (B7-|S3.west)
 (B8.east) edge (B8-|S3.west)
(B9.east) edge (B9-|S3.west)
 (B2plus) edge (B2.west)
(B4-|S1.east) edge (B4.west)
(B5-|S1.east) edge (B5.west)
(B6-|S1.east) edge (B6.west);

\draw[thick, dashed]  (L11) -- (L12);
\draw[thick,->] (L12) |- (B7.west);
\draw[thick] (L11-|S2.east) |- (L11);
\draw[thick, dashed]  (L21) -- (L22);
\draw[thick,->] (L22) |- (B8.west);
\draw[thick] (L21-|S2.east) |- (L21);
\draw[thick, dashed]  (L31) -- (L32);
\draw[thick,->] (L32) |- (B9.west);
\draw[thick] (L31-|S2.east) |- (L31);
\draw [thick,->] (source1) |- (B1.west);
\draw [thick,->] (B8-|S3.east) |-(departure1);
\draw [thick,->] (source3) |- (B3.west);
\draw [thick,->] (B9-|S3.east) |- (departure3);
  \draw [thick] (S3minus) -| (B2plus);
\draw [thick] (B7-|S3.east) -| (S3minus);
  \draw [thick] (B8-|S3.east)  |-(departure1);

\node [above,align=left] at (source1.north) {class 1\\ arrivals};
\node [below,align=left] at (departure1.south) {class $3(L-1)+2$\\ departures};
\node [below,align=left] at (source3.south) {class 3\\ arrivals};
\node [below,align=left] at (departure3.south) {class $3(L-1)+3$\\ departures};

\end{tikzpicture}
  \end{center}
\caption{Extended six-class queueing network.}\label{fig:fig1}
\end{figure}
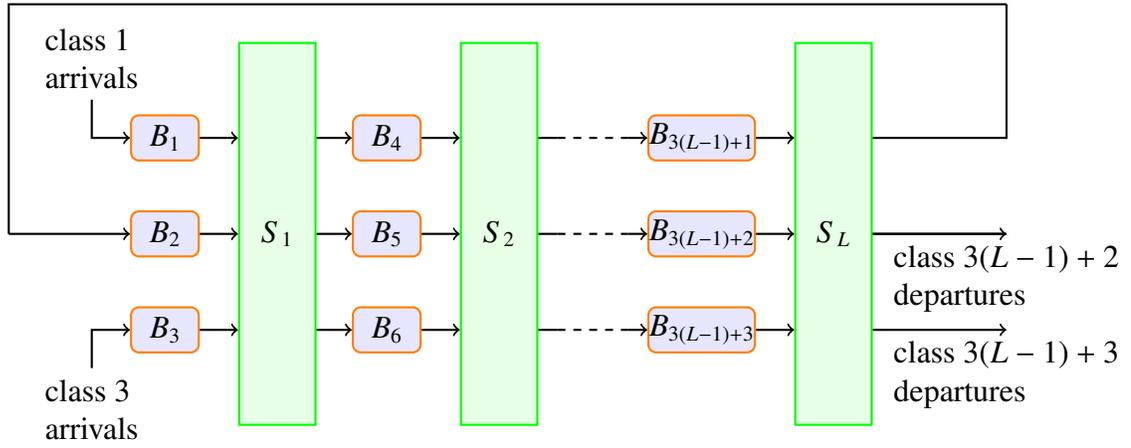

\begin{table}[tbh]
\centering%
\begin{tabular}{|c|c|c|c|c|c|}
  \hline
  % after \\: \hline or \cline{col1-col2} \cline{col3-col4} ...
  No. of classes $3$L  & LBFS & FCFS & FP & RFP & PPO (Algorithm \ref{alg2}) with CIs\\\hline
  6 & 15.749 & 40.173 & 15.422 & 15.286 & $14.130\pm 0.208$\\\hline
  9 & 25.257 & 71.518 & 26.140 & 24.917& $23.269\pm0.251$ \\\hline
  12  & 34.660 & 114.860 & 38.085 & 36.857& $32.171\pm0.556$ \\\hline
  15  & 45.110  & 157.556  & 45.962 & 43.628& $39.300\pm0.612$  \\\hline
  18  & 55.724 & 203.418 & 56.857  & 52.980  & $51.472\pm 0.973$  \\\hline
  21  & 65.980 & 251.657 & 64.713 & 59.051 & $55.124\pm 1.807$\\
  \hline
\end{tabular}
\caption[Numerical results for the extended six-class queueing network in Figure \ref{fig:fig1}.]{Numerical results for the extended six-class queueing network in Figure \ref{fig:fig1}.}
\label{tab:tab6extRes}
\end{table}

We vary the size of the network between 6 and 21 classes to test the robustness of the PPO policies. In all experiments we generate $Q=50$ episodes with $N=50,000$ timesteps.  Table \ref{tab:rt} in Appendix Section \ref{sec:par} reports the running time of the algorithm depending on the size of the queueing network. We set the discount factor and TD parameter  to $\gamma=0.998$ and $\lambda = 0.99$, respectively.
 Table \ref{tab:tab6extRes} shows the performance of the PPO policy and compares it with other heuristic methods for the extended six-class queueing networks.  In the table FP and RFP refer to fluid and robust fluid policies \cite{Bertsimas2015}.    Table \ref{tab:tab6extRes}  reports the performance of the best RFP corresponding to the best choice of policy  parameters for each class.
LBFS refers to the last-buffer first-serve policy,
where the priority at a server is given to jobs with highest index.  FCFS refers to the first-come first-serve policy, where the priority at a server is given to jobs with the longest waiting time for service.

 For each  extended six-class network we consider a corresponding discrete-time MDP.
We fix a stable, randomized policy for the discrete-time MDP,  since the use of Xavier initialization
could yield an unstable NN policy  for extended six-class networks. We refer to this  stable, randomized policy as an expert policy.
We simulate a long episode of the MDP operating under the expert policy.  At each timestep we save the state at the time
and the corresponding probability distribution over the actions.
We use this simulated data set to train the  initial NN policy $\pi_{\theta_0}$.
 In our numerical experiments, we use the \textit{proportionally  randomized (PR)}  policy as the expert policy.
 If the network operates under the PR policy, when an arrival or service completion event occurs  at station $k$, a nonempty buffer $j$ receives a
priority over other classes at the station with
probability
\begin{align}\label{eq:PRPprob}
\frac{x_j}{\sum\limits_{i\in \B(k)}  x_i},
\end{align}
 where  $\B(k)$ is a set of buffers associated with server $k$ and $x = (x_1,\dotsc, x_J)$ is the vector of jobcounts at the time of the event   after accounting for job arrivals and departures.
  The priority  stays fixed   until the next arrival or service completion event occurs.
  The PR policy is  maximally stable for open MQNs, meaning that if the system is unstable under the PR policy, no other policy can stabilize it. See Appendix Section \ref{sec:PR} for the details.  An initial NN policy  for  PPO algorithm plays an important role in its learning process.  PPO algorithm may suffer from an
exploration issue when the initial NN policy  is sufficiently far from the optimal one \cite{Wang2019}.

% \begin{remark}
% The proportionally randomized policy is a modification of HLPPS
%   (head-of-line proportional processor sharing) policy, which allows processor sharing and all nonempty buffers receive  service
% simultaneously proportionally to its queue size -- the fraction of service capacity allocated to class $j$ equals to (\ref{eq:PRPprob}).
%  The HLPPS policy is \textit{maximally stable} for MQNs \cite{Bramson1996}, but, unfortunately, the HLPPS  is not in the set of policies we want to optimize  over.
% \end{remark}

In  each plot in  Figure \ref{fig:ext_ac}, we save policy NN
parameters $\{\theta_i\}_{i=0, 10, \dotsc, 200}$ every 10th policy
iteration.   For each saved policy NN, we conduct a separate long
simulation of the queueing network operating under the policy for accurate performance evaluation by providing a $95\%$
confidence interval of the long-run average cost.   For any of the six queueing networks, when the
load is high, the regeneration is rare.  Thus, we adopt the \textit{batch means} method to estimate the confidence interval
\cite[Section 6]{Henderson1997}.  For each policy from the set
$\{ \pi_{\theta_i}: i = 0, 10, \dotsc, 200 \}$, we simulate an episode
starting from an empty state $x = (0,\dotsc,0)$ until $5\times10^6$
arrival events occur.  Then we estimate average performance of the
policy based on this episode. To compute the confidence interval from
the episode, we split the episode into 50 sub-episodes (batches), see
also \cite{Nelson1989}. Pretending that the obtained 50 mean estimates
are i.i.d., we compute the $95\%-$confidence intervals as shown
in Figure~\ref{fig:ext_ac}.
\begin{figure}[tbh]
    \subfloat[6-classes network \label{subfig-6:IL}]{%
       \includegraphics[height=0.19\textwidth, width=0.33\textwidth]{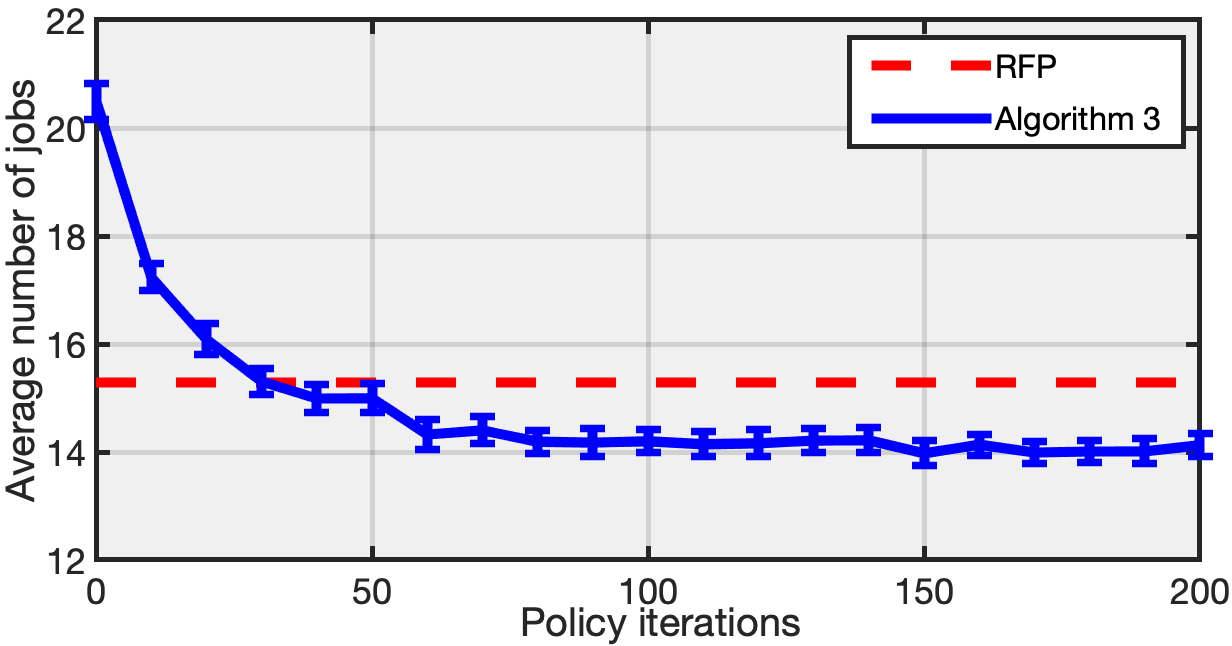}
     }
     \subfloat[9-classes network \label{subfig-4:IM}]{%
       \includegraphics[height=0.19\textwidth, width=0.33\textwidth]{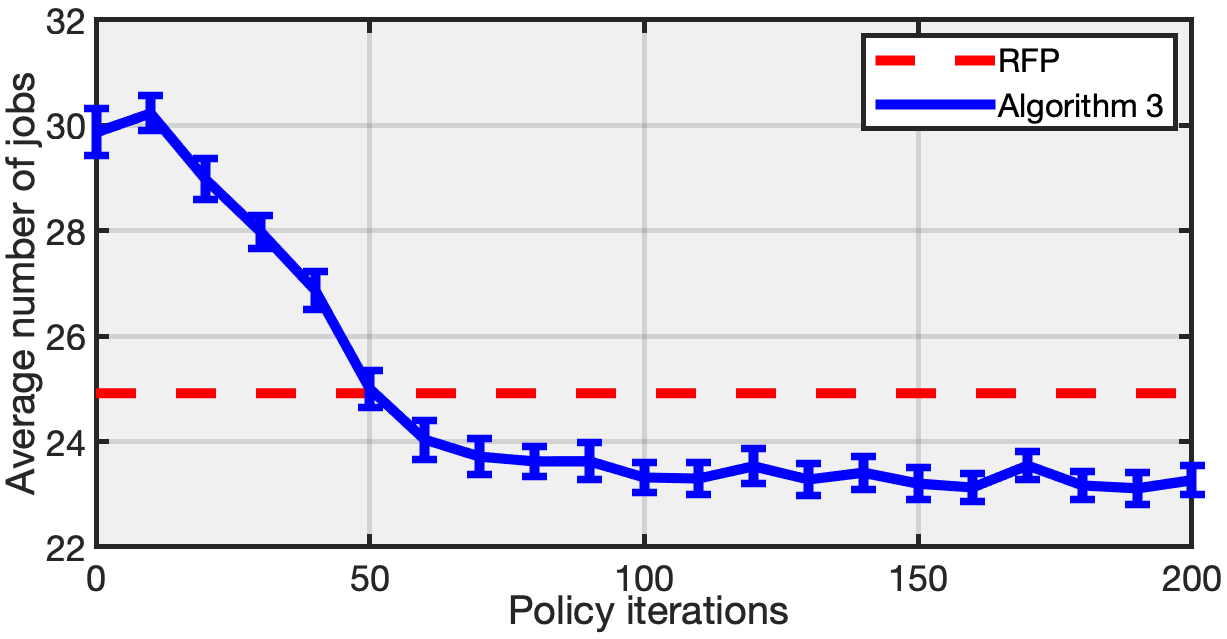}
     }
     \subfloat[12-classes network \label{subfig-2:IH}]{%
       \includegraphics[height=0.19\textwidth, width=0.33\textwidth]{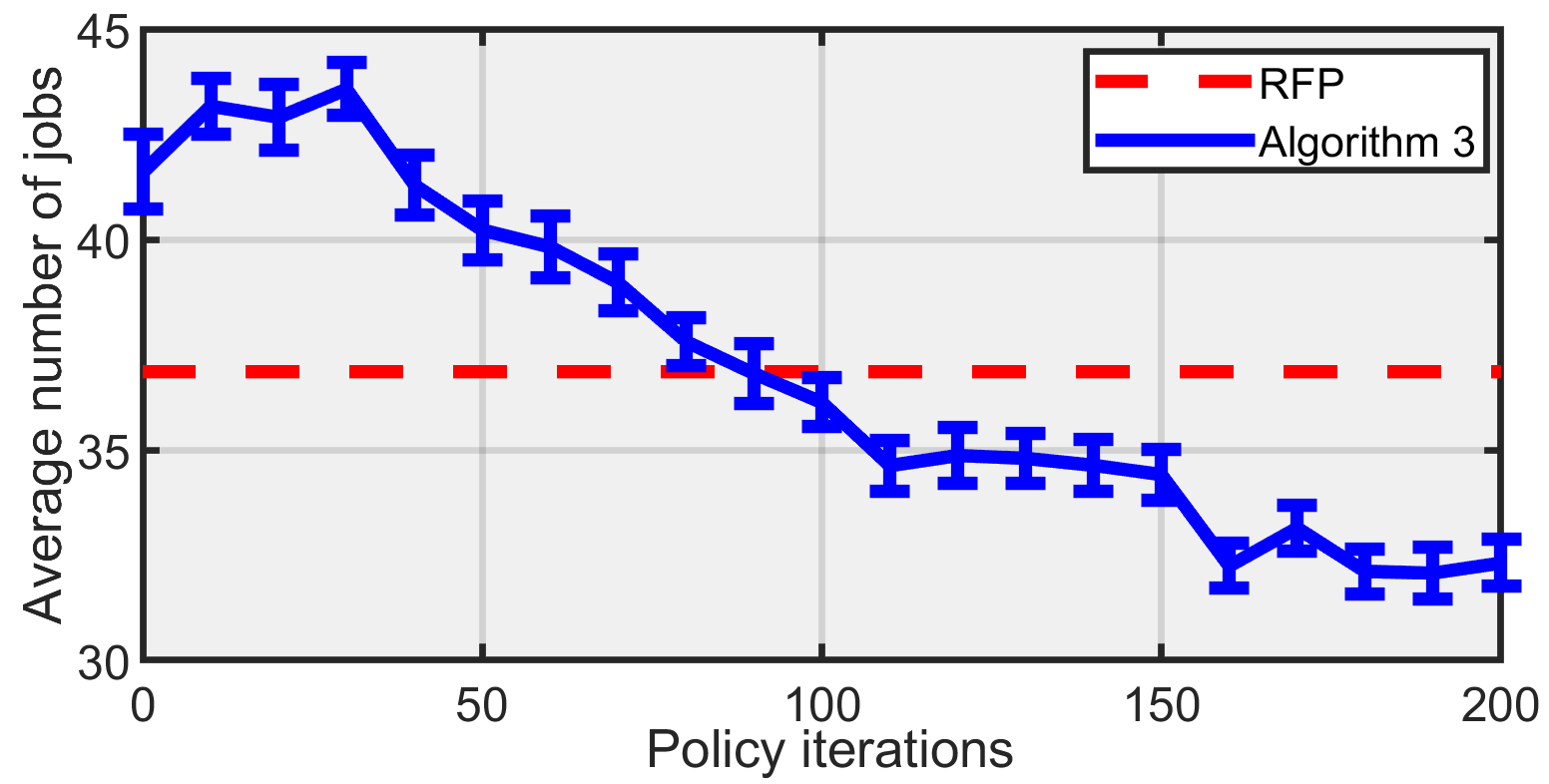}
     }\\
 \subfloat[15-classes network\label{subfig-5:BL}]{%
       \includegraphics[ height=0.19\textwidth, width=0.33\textwidth]{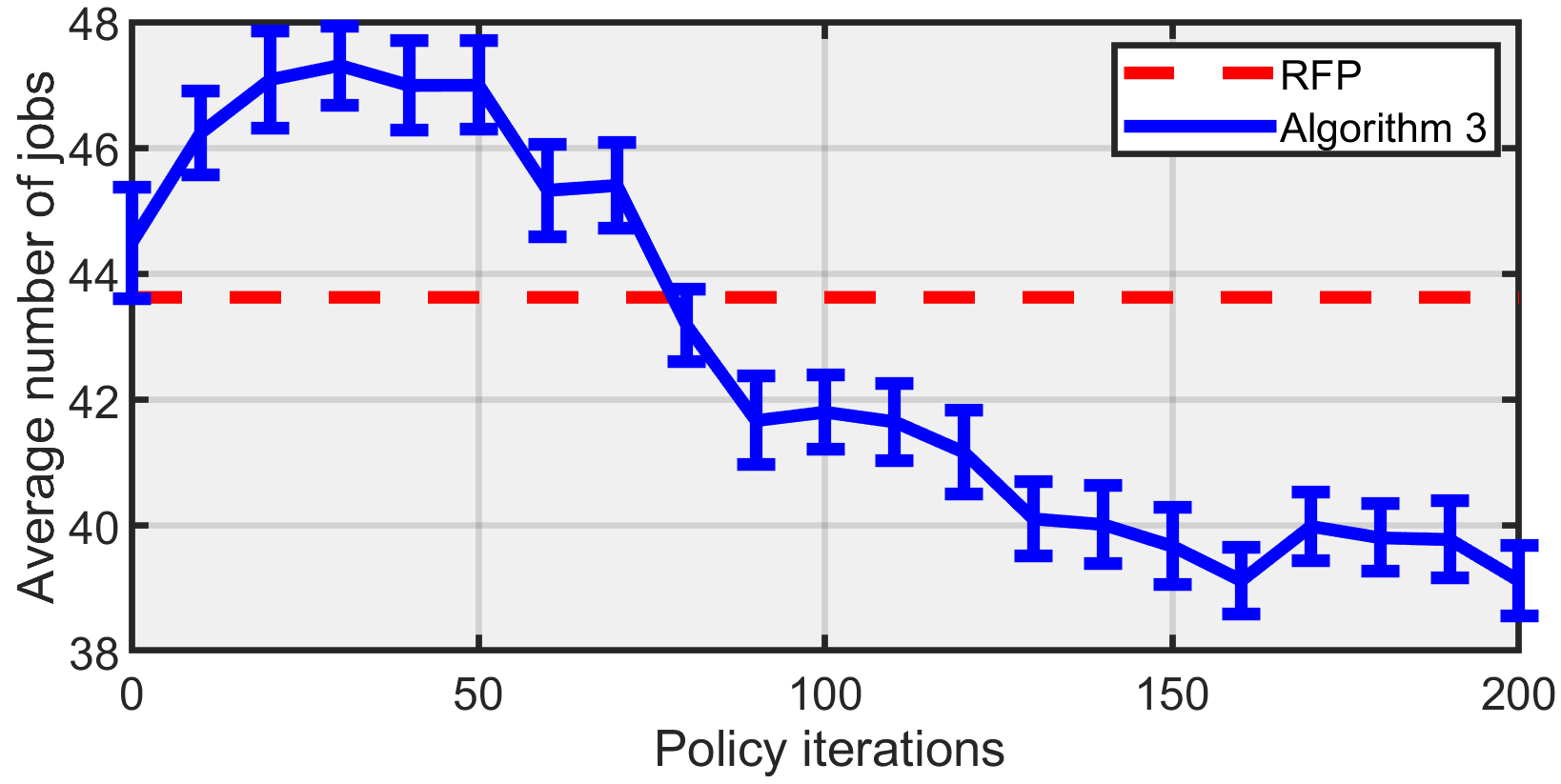}
     }
  \subfloat[18-classes network\label{subfig-3:BM}]{%
       \includegraphics[ height=0.19\textwidth, width=0.33\textwidth]{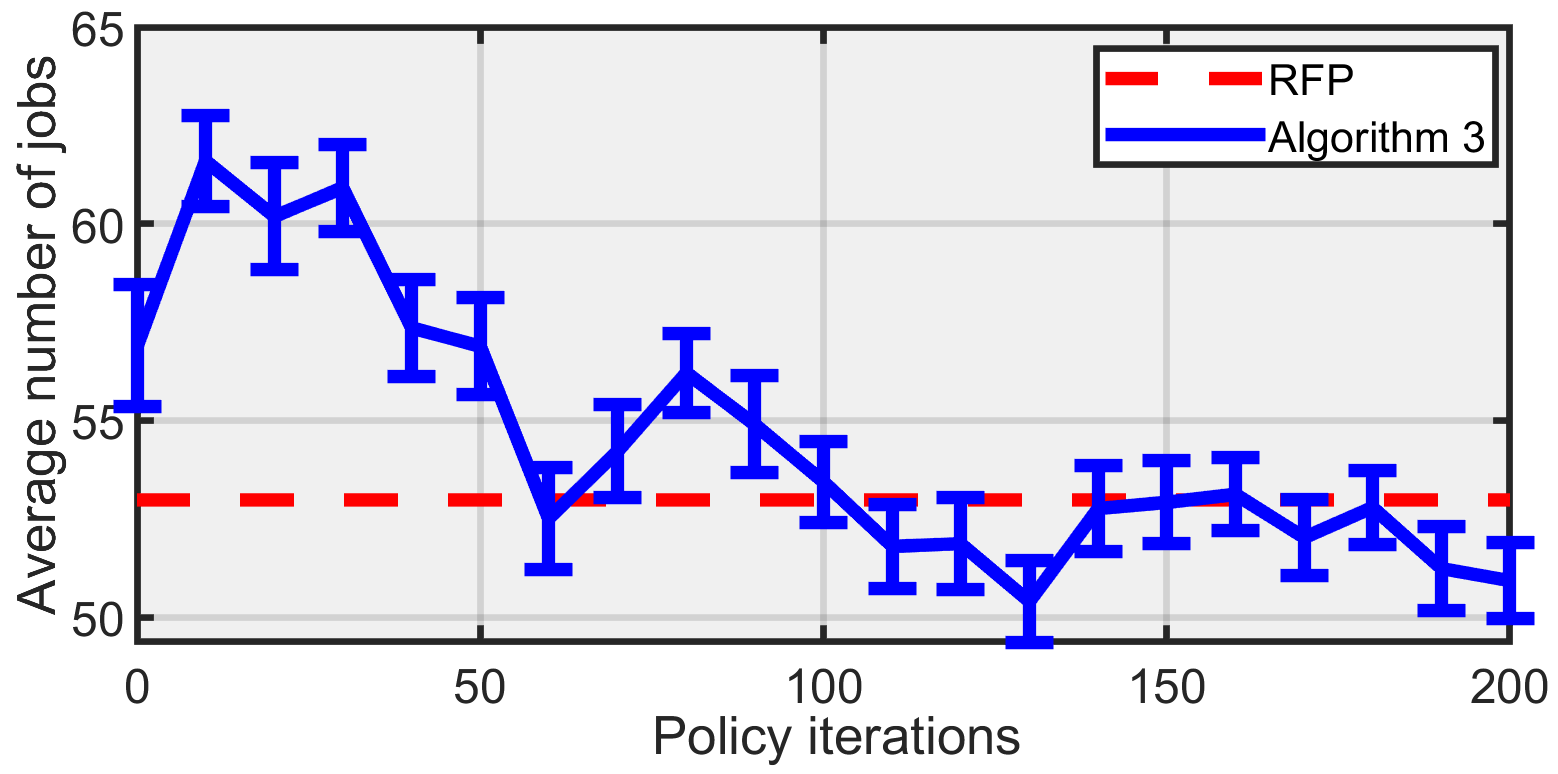}
     }
     \subfloat[21-classes network\label{subfig-1:BH}]{%
       \includegraphics[ height=0.19\textwidth, width=0.33\textwidth]{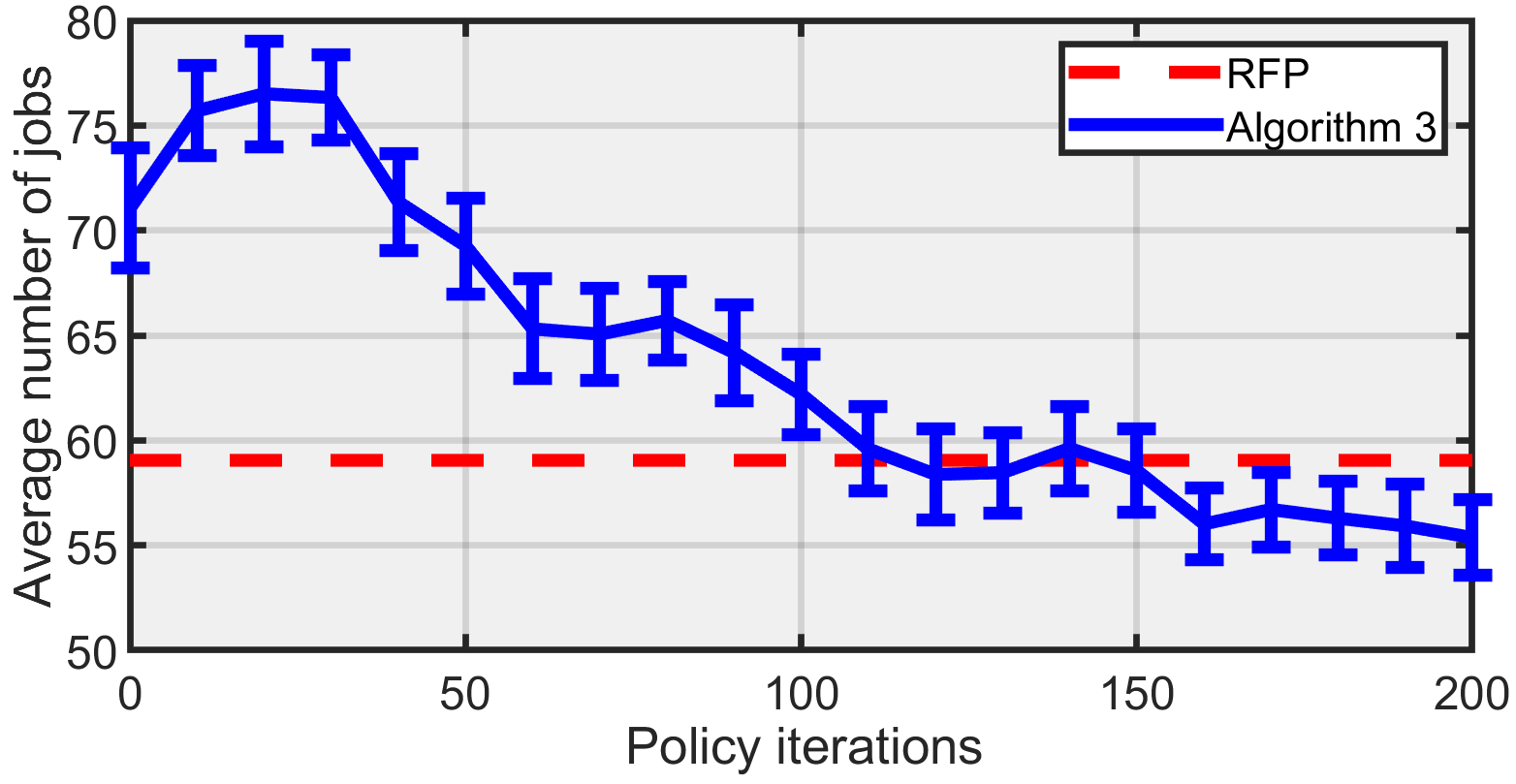}
     }
\caption[Performance of Algorithm~\ref{alg2} on six queueing networks.]{Performance of Algorithm~\ref{alg2} on six queueing networks. The solid   blue lines show the performance of  the PPO policies obtained at the end of every 10th iterations of Algorithm \ref{alg2}; the dashed red lines show the performance of the robust fluid policy (RFP).   }
%\caption{Learning curves  from Algorithm \ref{alg2} for the 6-class extended networks.     \textit{The blue solid line} shows the performance of  PPO policies obtained at the end of every 10th
% iterations of Algorithm \ref{alg2},  \textit{red dash line} --performance of the robust fluid policy (RFP).}
     \label{fig:ext_ac}
   \end{figure}

In Section \ref{sec:ge} we discussed the relationship between the
GAE   (\ref{eq:GAE}) and  AMP  (\ref{eq:esf})  estimators.
 Figure \ref{fig:AMPvsGAE}  illustrates the benefits of using the AMP method in Algorithm \ref{alg2} on the 6-classes network.
The learning curve for the GAE estimator is obtained by replacing the value function estimation in line 7 of Algorithm \ref{alg2} with the  GAE estimator (\ref{eq:GAE}).

\begin{figure}[H]
\centering%
\includegraphics[width=.7\linewidth]{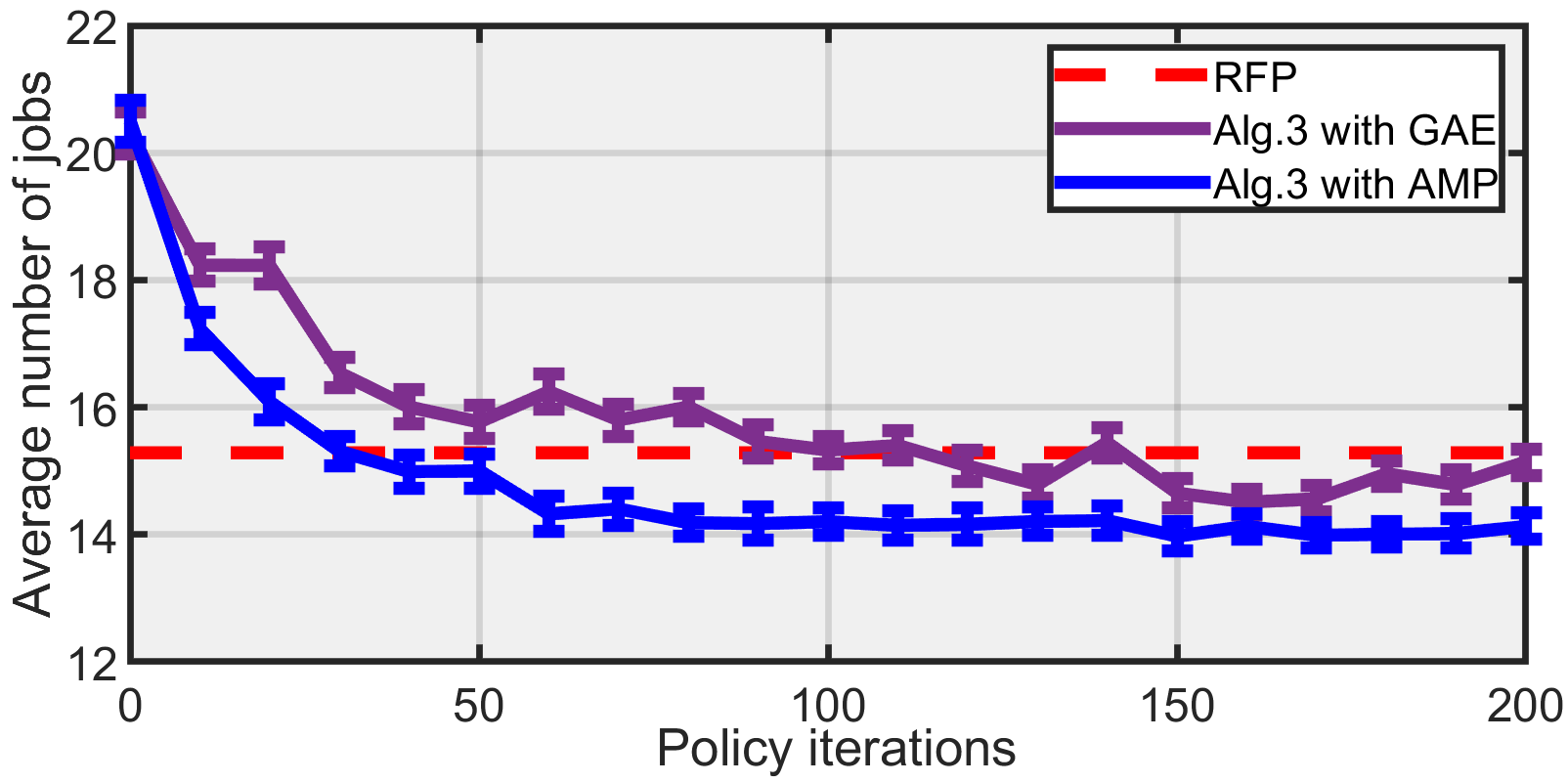}
\caption[Learning curves from Algorithm \ref{alg2} for the 6-class network.]{Learning curves from Algorithm \ref{alg2} for the 6-class network.
The solid blue and purple lines show the performance of  the PPO policies obtained from Algorithm \ref{alg2} in which
the value function estimates are computed by the AMP method   and by the GAE method, respectively; the dashed red line shows the performance of the robust fluid policy (RFP).
}
\label{fig:AMPvsGAE}%
\end{figure}

\subsection{Parallel servers network}\label{sec:nmodel}
In this section we  demonstrate that PPO Algorithm~\ref{alg2} is also effective for a stochastic processing network that is outside the model class of multiclass queueing networks.
Figure \ref{fig:Nmodel}  shows a  processing network system  with two independent Poisson input arrival flows, two servers, exponential
service times, and linear holding costs. Known as the  \textit{N-model network}, it first appeared in \cite{Harrison1998}.

\begin{figure}[H]
  \centering
\begin{tikzpicture}[inner sep=1.5mm] % N-model
  \node[server] at (6,4) (S1)  {S1};
  \node[server] (S2) [right=1in of S1] {S2};
  \node[vbuffer] (B1) [above=.4in of S1.north] {B1};
  \node[vbuffer] (B2) [above=.4in of S2.north] {B2};
  \coordinate (source1) at ($ (B1.north) + (0in, .3in)$) {};
  \coordinate (source2) at ($ (B2.north) + (0in, .3in)$) {};
  \coordinate (departure1) at ($ (S1.south) + (0in, -.3in)$) {};
  \coordinate (departure2) at ($ (S2.south) + (0in, -.3in)$) {};

 \draw[thick,->]    (S1) -- (departure1);
 \draw[thick,->]    (S2) -- (departure2);

  \path[->, thick]
  (source1) edge node [left] {$\lambda_1=1.3\rho$} (B1)
  (source2) edge node [right] {$\lambda_2=0.4\rho$} (B2)
   (B1) edge node [left] {$m_1=1$} (S1.north)
   (B1.south east) edge node [right,near start] {\quad $m_2=2$} (S2)
   (B2) edge node [right] {$m_3=1$} (S2.north);

   \node [inner sep=2.0mm,below,align=left] at (departure1.south east) {server 1 \\ departures};
   \node [inner sep=2.0mm,below,align=left] at (departure2.south west) {server 2\\ departures};
   \node [inner sep=2.0mm,above,align=left] at (source1.north) {class 1\\ arrivals};
   \node [inner sep=2.0mm, above,align=left] at (source2.north) {class 2\\ arrivals};
 \end{tikzpicture}
 \caption{N-model network}
  \label{fig:Nmodel}
\end{figure}
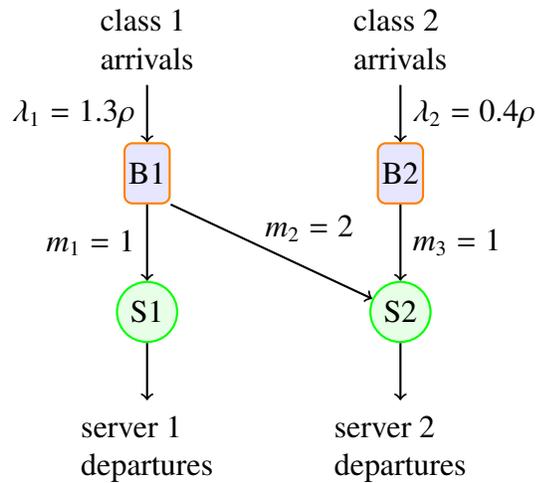

Jobs of class $i$ arrive according to a Poisson process at an average
rate of $\lambda_i$ such that $\lambda_1 = 1.3\rho$ and $\lambda_2 = 0.4\rho$ per unit time, where $\rho=0.95$ is a parameter that specifies the traffic intensity. Each job requires a single service
before it departs, and class 1 can be processed by either server 1 or server 2, whereas
class 2 can be processed only by server 2.   The jobs are processed by three different activities:
\begin{center}
activity 1 = processing of class 1 jobs by server 1,\\
activity 2 = processing of class 1 jobs by server 2,\\
activity 3 = processing of class 2 jobs by server 2.
\end{center}

We assume that  the  service at both servers is preemptive and work-conserving; specifically  we assume that activity 2 occurs only if there is at least one class 1 job in the system.

The service times for activity $i$ are exponentially distributed with mean $m_i$, where $m_1=m_3=1$ and $m_2=2$.
 The holding costs are continuously incurred at a rate of $h_j$ for each class $j$ job that remains within the system, with the specific
numerical values $h_1 = 3$ and $h_2 = 1.$ We note that all model parameter values   correspond to those in \cite{Harrison1998}.

We define $x = (x_1, x_2)\in \X$ as a system state, where $x_j$ is number of class $j$ jobs in the system. We use uniformization to convert the continuous-time control problem to a discrete-time control problem.
Under control $a = 1$ (class 1 has preemption high priority for  server 2) the transition probabilities are given by
\begin{align*}
&P\left((x_1+1, x_2)|(x_1, x_2)\right)= \frac{\lambda_1}{\lambda_1+\lambda_2+\mu_1+\mu_2+\mu_3},\\
&P\left((x_1, x_2+1)|(x_1, x_2)\right) = \frac{\lambda_2}{\lambda_1+\lambda_2+\mu_1+\mu_2+\mu_3},\\
&P\left((x_1-1, x_2)|(x_1, x_2),a= 1\right) = \frac{\mu_1\I_{\{x_1>0\}} + \mu_2\I_{\{x_1>1\}}}{\lambda_1+\lambda_2+\mu_1+\mu_2+\mu_3},\\
&P\left((x_1, x_2-1)|(x_1, x_2), a=1\right) = \frac{\mu_3\I_{\{x_2>0, x_1\leq 1\}}}{\lambda_1+\lambda_2+\mu_1+\mu_2+\mu_3},\\
&P\left((x_1, x_2 )|(x_1, x_2) \right) = 1 - P\left((x_1+1, x_2)|(x_1, x_2)\right)  - \\
 &\quad \quad -P\left((x_1, x_2+1)|(x_1, x_2)\right) -P\left((x_1-1, x_2)|(x_1, x_2) \right) - P\left((x_1, x_2 )|(x_1, x_2), 1\right),
\end{align*}
where $\mu_i  = 1/m_i$, $i=1, 2, 3.$

 Under control $a=2$ (class 2 has high priority), the only changes of transition probabilities are
 \begin{align*}
&P\left((x_1-1, x_2)|(x_1, x_2), a=2\right) = \frac{\mu_1\I_{\{x_1>0\}} + \mu_2\I_{\{x_1>1, x_2=0\}}}{\lambda_1+\lambda_2+\mu_1+\mu_2+\mu_3},\\
&P\left((x_1, x_2-1)|(x_1, x_2), a=2\right) = \frac{\mu_3\I_{\{x_2>0\}}}{\lambda_1+\lambda_2+\mu_1+\mu_2+\mu_3}.\\
\end{align*}

We define the cost-to-go function   as $g(x) := h_1x_1 +h_2x_2 = 3x_1+x_2.$  The objective is to find policy $\pi_\theta, \theta\in \Theta$ that minimizes the long-run average holding costs
\begin{align*}
\lim\limits_{N\rightarrow \infty} \frac{1}{N}  \E\left[\sum_{k=0}^{N-1}g (x^{(k)})\right],
\end{align*}
where $x^{(k)}$ is the system state after $k$ timesteps.

We use   Algorithm \ref{alg2} to find a near-optimal policy. Along with a learning curve from Algorithm \ref{alg2}, Figure \ref{fig:NmodelCurves}  shows the performance the best threshold policy with $T=11$ and the optimal policy. The threshold policy was proposed in \cite{Bell2001}. Server 2 operating under the  threshold policy  gives priority to class 1 jobs, if the number of class 1 jobs in the system is larger than a fixed threshold $T$.

 \begin{figure}[H]
\centering%
\includegraphics[width=.7\linewidth]{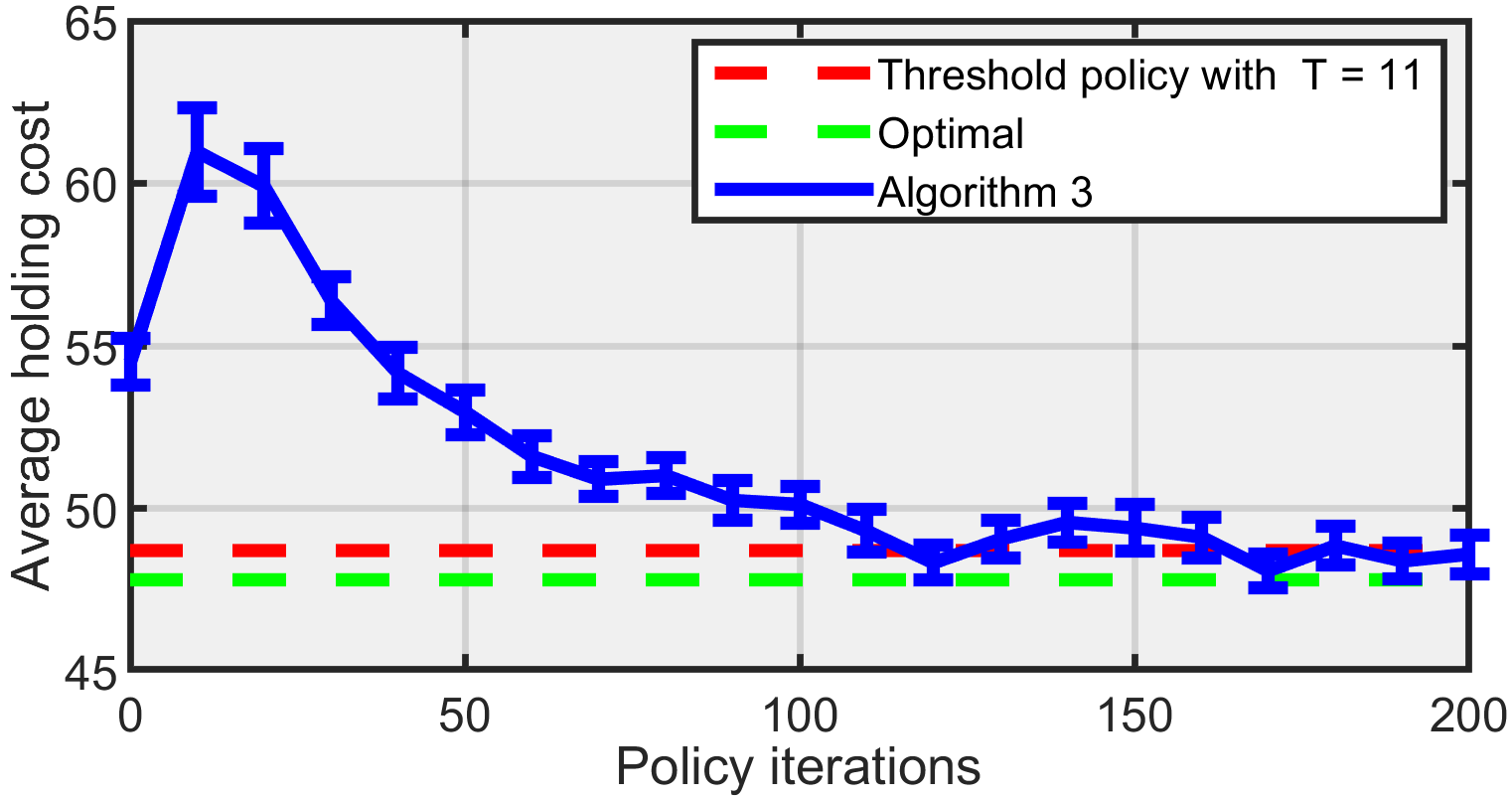}
\caption[Learning curves from Algorithm \ref{alg2} for the N-model network.]{Learning curves from Algorithm \ref{alg2} for the N-model network.  The blue solid line shows the performance of  PPO policies obtained from Algorithm \ref{alg2};       the dashed red line shows the performance of the threshold policy with $T=11$; the dashed greed line shows the performance of the optimal policy.}
\label{fig:NmodelCurves}%
\end{figure}

In Figure \ref{fig:NmodelPolicies} we show the control of randomized PPO policies obtained after 1, 50, 100, 150, and 200 algorithm iterations.  For each policy  we depict the probability distribution over two possible actions for states $x\in \X$ such that $0\leq x_j\leq 50,$ $j=1, 2$, and compare  the PPO, optimal, and  threshold policies.

\begin{figure}[H]
\begin{center}
    \subfloat[after 1 iteration \label{subfig-1:1}]{%
       \includegraphics[height=0.22\textwidth ]{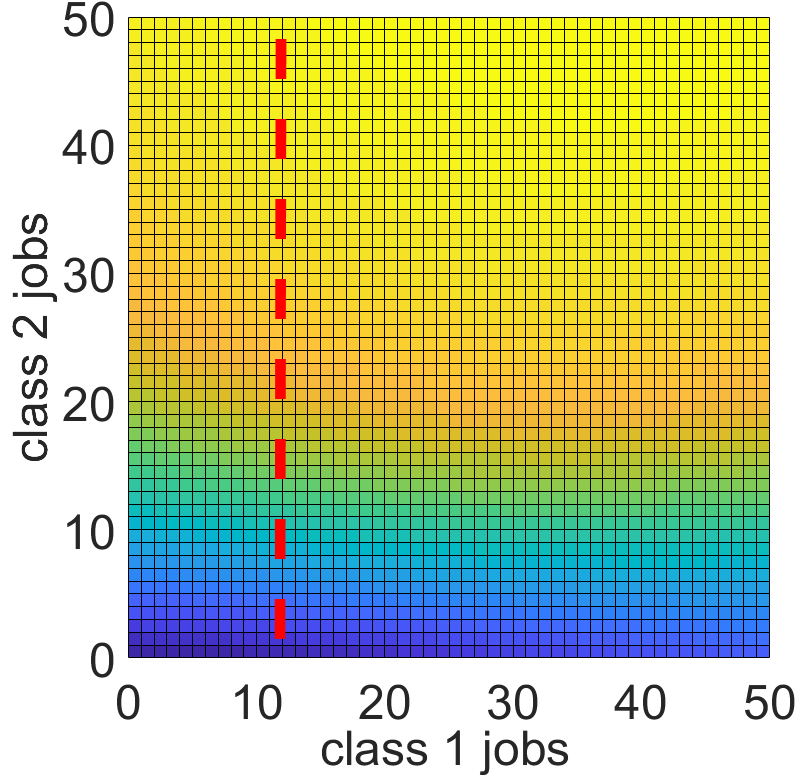}
     }\hspace{8pt}%
     \subfloat[after 50 iterations \label{subfig-2:50}]{%
       \includegraphics[height=0.22\textwidth ]{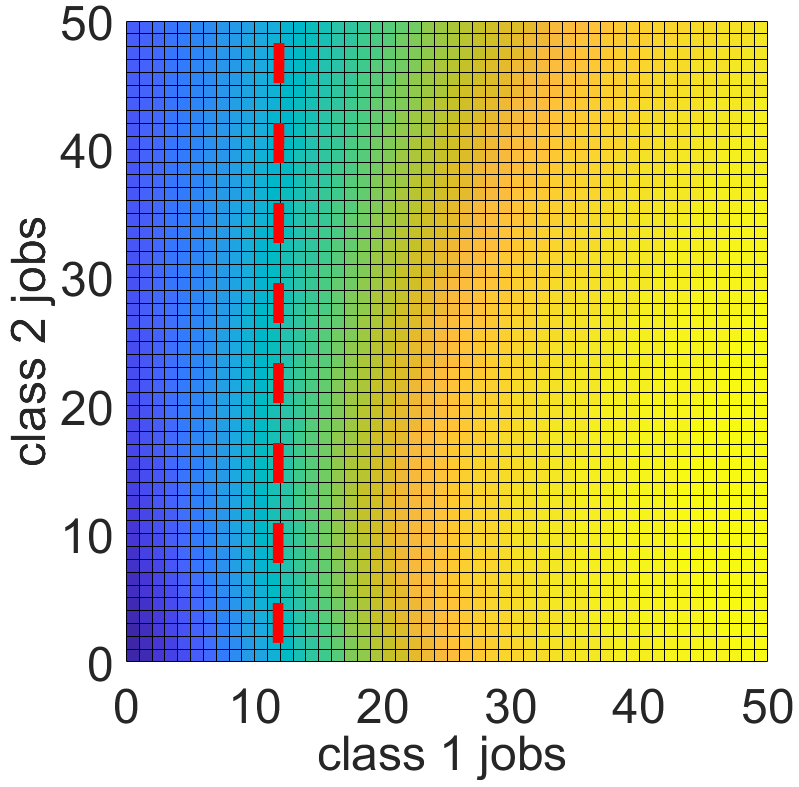}
     }\hspace{8pt}%
     \subfloat[after 100 iterations \label{subfig-3:100}]{%
       \includegraphics[height=0.22\textwidth ]{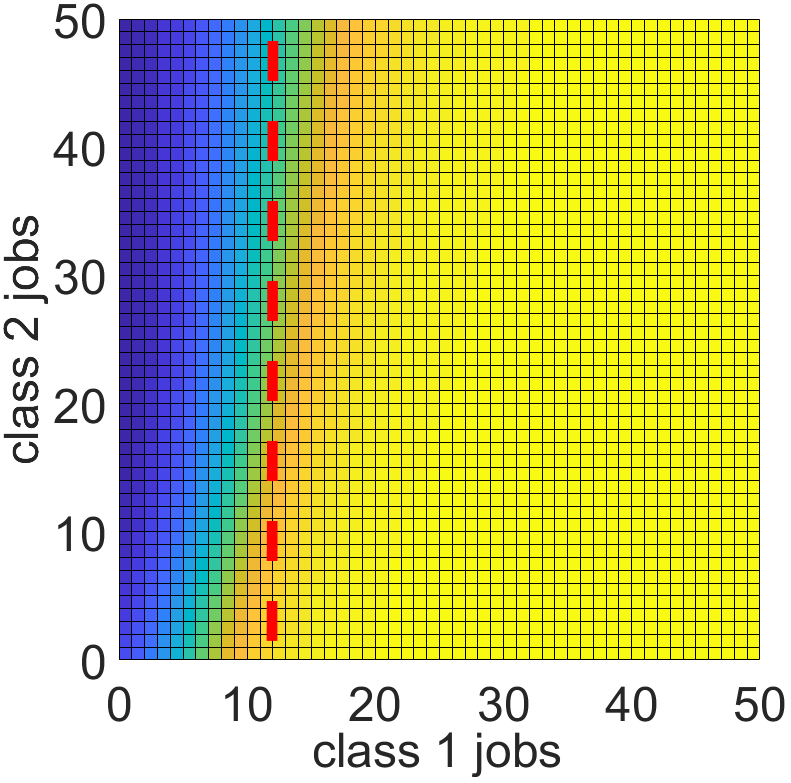}
     }\hspace{8pt}%
 \subfloat[after 150 iterations \label{subfig-4:150}]{%
       \includegraphics[height=0.22\textwidth ]{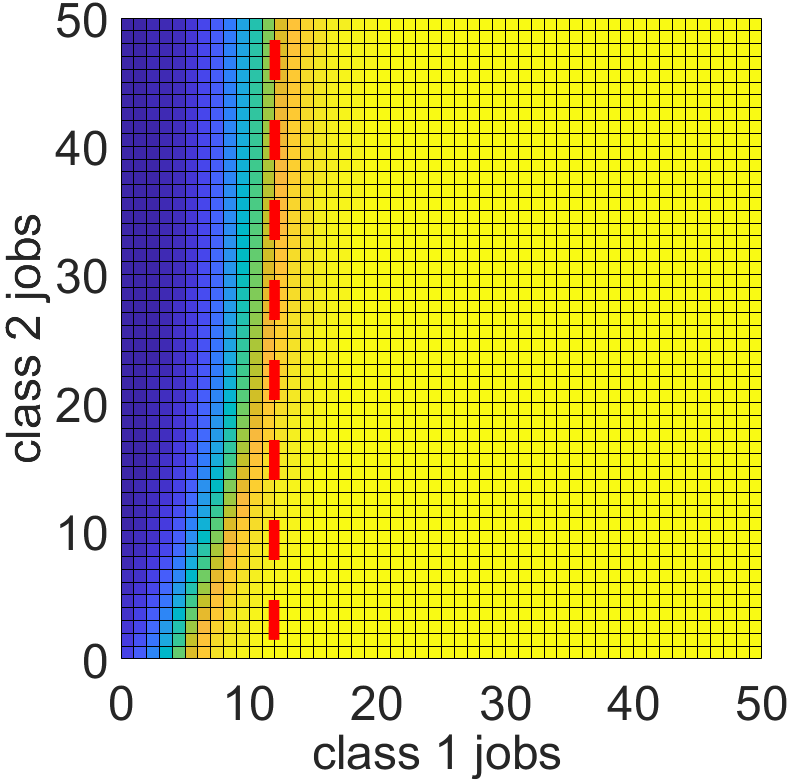}
     }\\
  \subfloat[after 200 iterations\label{subfig-5:200}]{%
       \includegraphics[ height=0.25\textwidth ]{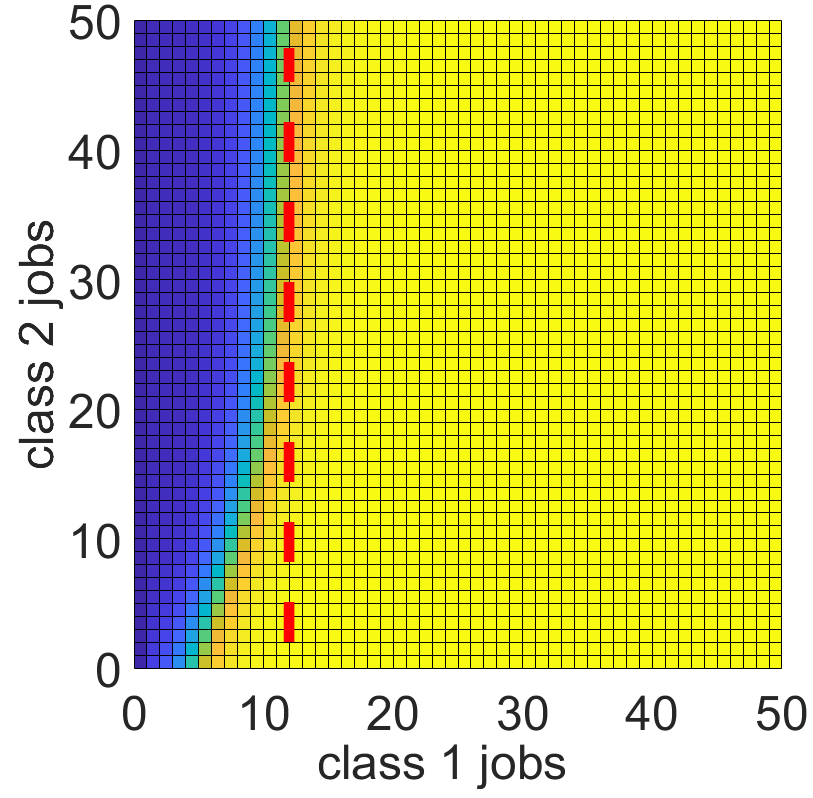}
     }\hspace{8pt}%
     \subfloat[Threshold policy \label{subfig-6:thr}]{%
       \includegraphics[height=0.25\textwidth ]{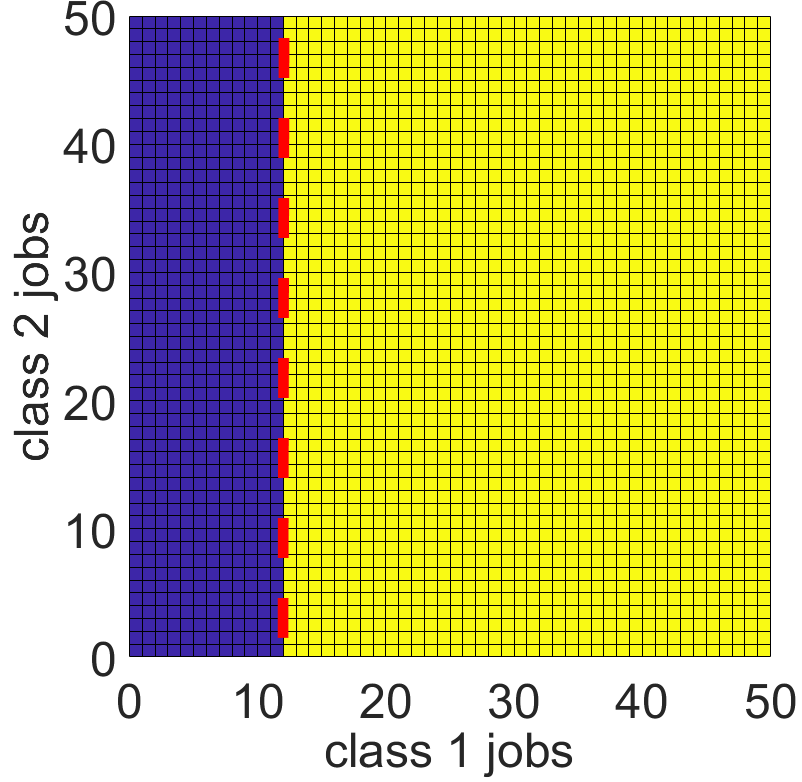}
     }\hspace{8pt}%
\subfloat[Optimal policy \label{subfig-7:opt}]{%
       \includegraphics[height=0.25\textwidth ]{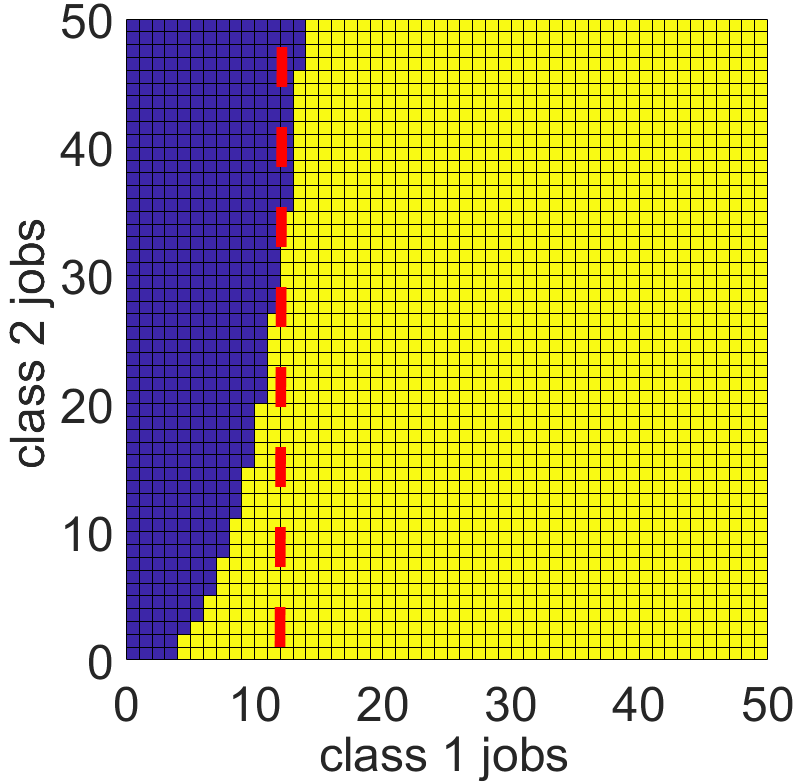}
     }\hspace{8pt}%
     \subfloat {%
       \includegraphics[height=0.25\textwidth ]{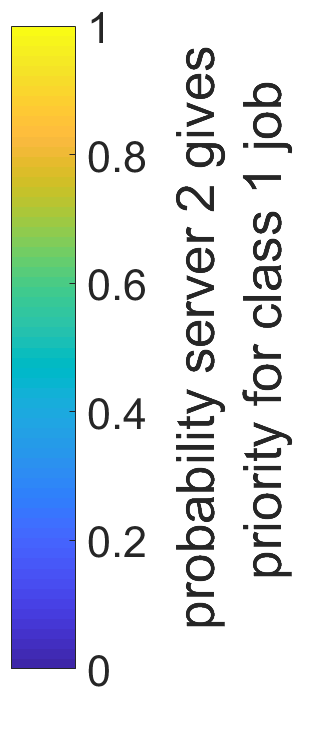}
     }
\end{center}

\caption[Evolution of the PPO policies over the  learning process and comparing them with threshold and optimal policies.]{Evolution of the PPO policies over the  learning process and comparing them with threshold and optimal policies.  The probability that server 2 gives priority to class 1 jobs is shown by a color gradient for system states that have less than 50 jobs in each buffer:
server 2 gives priority to class 1 jobs (yellow), server 2 gives priority to class 2 jobs (blue), and the dashed red lines represent the threshold policy with $T=11.$
}
     \label{fig:NmodelPolicies}
   \end{figure}

\section{Conclusion to Chapter \ref{ch:1}}\label{sec:conc_to_ch1}

This chapter proposed a method for optimizing the long-run average performance in queueing network control problems. It provided a theoretical justification for extending the PPO algorithm in \cite{Schulman2017} for Markov decision problems with infinite state space,  unbounded costs, and the long-run average cost objective.  Our idea  of applying Lyapunov function approach  has a potential to be adapted for other advanced policy gradient methods. We believe that theoretical analysis and performance comparison   of policy gradient \cite{Marbach2001, Peters2008}, trust-region \cite{Schulman2015}, proximal \cite{Schulman2017}, soft actor-critic \cite{Haarnoja2018}, deep Q-learning \cite{Mnih2015, Hessel2018}, and other policy optimization algorithms  can be of great benefit for the research community and deserve a separate  study.

The success of    the PPO algorithm implementation  largely depends on the accuracy of Monte
  Carlo estimates of the relative value function in each policy
  iteration. Numerical experiments using sets of fixed hyperparameters  showed that introducing an appropriate discount factor has the largest effect on the variance reduction of a value function estimator, even though the discount factor introduces biases,  and that introducing a control variate by using the approximating martingale-process (AMP)  further reduces the variance. Moreover, when system loads vary between light and moderate, regeneration  estimators also reduce  variances. The experiments also implied that   AMP estimation, rather than   GAE estimation, is preferable when the transition probabilities are known. Most of hyperparameters, including the discount factor and length of episodes,  were  chosen by experimental tuning rather than a theoretically justified scheme. It is desirable to investigate how to select the  discount factor,  length of episodes,  and other hyperparameters  based on queueing network load, size, topology.

Our numerical  experiments demonstrated that Algorithm~\ref{alg1amp} applied for the criss-cross network control optimization can obtain policies with long-run average performance within $1\%$ from the optimal.  For large-size networks PPO Algorithm~\ref{alg2} produced effective control policies that either outperform or perform as well as the current alternatives. For an extended six-class queueing network, PPO policies outperform the robust fluid policies on average by $10\%$.  The algorithm can be applied for a processing network control problem if the  processing network admits a uniformization representation under any feasible control policy. A wide class of such processing networks  described in  \cite{DaiHarrison2020}.  As an example, we provide the numerical experiment for the N-model network. Although this chapter considered only queueing networks with preemptive service, the proposed algorithm can also be applied for queueing networks with non-preemptive service policies. Two modifications  to the MDP formulation in Section \ref{sec:MQN} are required  if  a queueing network operates under non-preemptive service policies. First, a service status of each server should be included into the system state representation along with the  jobcount vector. Second, at each decision time a set of feasible actions should be restricted based on each server status.

Several research questions should be pursued in future  studies. First, future research could examine the necessity  of $\V$-uniform ergodicity  assumption  in Theorem \ref{thm:main}, and whether this assumption can be replaced by $\V$-ergodicity.   Second, one of the key components  of our PPO algorithm for MQNs is the proposed PR expert policy. Further research is needed to design an algorithm that does not require the knowledge of a stable expert policy. Third, our numerical experiments show that the variance reduction techniques are important for a good
performance of the algorithm. Future investigations are desirable to develop more sample efficient simulation methods, potentially, incorporating problem structure knowledge.

Complexity of the queueing control optimization problems highly depends
not only on the network topology, but on the traffic
intensity. As the network traffic intensity
  increases the long-term effects of the actions become less predictable that presents a challenge  for any reinforcement
  learning algorithm.  We believe that multiclass queueing networks should serve as  useful benchmarks for testing reinforcement learning methods.

\chapter{Scalable Deep Reinforcement Learning for Ride-Hailing}\label{ch:ride_hailing}

Following Chapter \ref{ch:1}, we continue to explore how   deep reinforcement learning (RL) can be used  in various processing network control problems. In this chapter we adapt  proximal policy optimization  algorithm \cite{Schulman2017}     for  order dispatching and relocation optimization in a  ride-hailing transportation network.

A ride-hailing service  is the next generation of taxi service that uses online platforms and mobile apps to connect passengers with drivers. Lyft, Uber, and Didi Chuxing together serve more than 45 million passengers per day \cite{Schlobach2018}. One of the important goals for these companies is to provide a reliable, trustworthy means of transportation, able to fulfill most, if not every, passenger's request \cite{Ferenstein2015}.

 A centralized planner of the ride-hailing service arranges cars in the system, matching them with ride requests from the passengers. Motivated by an empty-car routing mechanism, we follow \cite{Braverman2019} and assume that the centralized planner may also relocate an empty (without a passenger) car to another location in anticipation of future demand and/or shortage.
 Thus, the centralized planner   assigns tasks to multiple drivers over a certain time horizon controlling future geographical distribution of the cars. The centralized planner seeks to allocate enough  drivers at each region to fulfill expected ride requests.
 The optimization of cars routing in ride-hailing services is one of the most challenging problems among vehicle routing
problems \cite{Toth2014, Psaraftis2016}.

   In \cite{Braverman2019}  the authors proposed a closed queueing network model of a ride-hailing service under the assumption of \textit{time-homogeneous} traffic   parameters (e.g. passengers arrival rates, cars travel times). They
 formulated a fluid-based optimization
problem and  found an asymptotically optimal empty-car
routing policy in the “large market” parameter region. For
a \textit{time-varying} traffic pattern,  each decision time the traffic parameters were averaged over the finite time window. The averaged values were used to formulate the fluid-based
optimization problem  as if the ride-hailing service had a time-homogeneous traffic pattern.  The result was a time-dependent
lookahead control policy that  was not designed to be optimal.

In  \cite[Chapter 4]{Feng2020} the authors formulated  a Markov decision process (MDP) model of the ride-hailing system considered in \cite{Braverman2019}.     
The MDP formulation complies with the RL control optimization framework that does not require a centralized planner to know the traffic parameters.
  The ride-hailing operations optimization problem can be considered as an RL problem, i.e.  a (model-free) MDP problem in which the underlying dynamics is unknown, but optimal actions can be learned from sequences of data (states, actions, and rewards) observed or generated under a given policy.
 We follow \cite{Feng2020} and consider the centralized planner that receives real-time data on existing ride
requests and driver activities and assigns tasks to drivers at each decision epoch. The
decision epochs occur at discrete times, and the time between every two consecutive decision epochs is fixed. At
each decision epoch the centralized planner must solve the following combinatorial problem:   each available car should be either  matched with a passenger’s ride request, relocated to another location with no passengers, or asked to stay at its current location until the next decision epoch. The centralized planner’s action
  space grows exponentially with the number of agents, which
  presents a scalability challenge for any policy optimization
  RL algorithm.

Ride-hailing is one of several real-world application domains where deep RL has already been implemented into production.  In 2017 DiDi company deployed a deep RL algorithm for order dispatching in its production
system \cite{Qin2020a}. DiDi's algorithm was designed  to optimize car-passenger matching ignoring empty-car routing. Reported   A\slash B tests showed significant
improvement (0.5\% - 2\%) against the production baseline in several cities in China.  The RL algorithm designed by DiDi takes a
single-driver perspective, i.e. each driver follows a separate control policy that aims to optimize each driver's income separately.

The idea of using deep RL  for ride-hailing services control optimization attracted much attention and support in the scientific community as well, see a survey paper \cite{Qin2022a}. We mention several papers most relevant to our research. Related to empty-car routing,  \cite{Oda2018} employed a   deep Q-network   algorithm, proposed in \cite{Mnih2015},  to proactively dispatch cars to meet  future  demand while limiting   empty-car routing times. The algorithm was applied to find the optimal dispatch actions for individual cars, but did not take into account agents' interactions to achieve scalability. We note that the use of
deep RL algorithms to learn the optimal actions from the perspective of individual cars has been studied intensively \cite{Oda2018, Wang2018}, yet this approach can be ``intrinsically inadequate to be used in a production dispatching system which demands coordinations among multiple agents'' \cite{Tang2019}.
In~\cite{Tang2019, Xu2018a,  Shi2020} the authors applied  a two-stage optimization procedure consisting of  a deep RL algorithm used for a policy evaluation followed by dispatching policy optimization done by solving a  bipartite graph matching.
In~\cite{Ke2019} the authors proposed to delay the car assignment for some passengers in order to   accumulate more   drivers and waiting passengers in the matching pool, where the passengers and cars would be connected later via optimal bipartite matching.  
A multi-agent
actor-critic RL algorithm was developed to optimize the choice of  the delayed times for the passengers.

Over the last few years, deep RL algorithms with conservative policy updates \cite{Kakade2002, Schulman2015, Schulman2017} have become popular control optimization algorithms because of their good performance. These algorithms  involve a neural network (NN), which parametrizes  control policies. The size of  the NN output layer is equal to the number of actions.  Such NN architecture makes
 the algorithms  computationally expensive because the number of  parameters in the NN grows exponentially with the number of agents.

In this chapter we  suggest a way to use the deep RL algorithms with conservative policy updates  for ride-hailing services control optimization.
 We incorporate   a special decomposition of actions  and   assigns a different role to
the  control policy that \emph{sequentially} matches drivers with tasks. 
The control policy, observing current time, outstanding ride requests, and available cars, suggests
a  trip  from location A to location B. Then, the centralized planner matches the generated trip type with a driver and, preferably, with a passenger requesting a trip from A to B.
This trip-generating control policy is used repeatedly until all available cars have been assigned   some task.
Thus, addressing all available cars, the centralized planner forms an action for the current decision epoch. The  idea of actions decomposition is not new and many authors used it to address the scalability issue arisen out of large action spaces, see, for example, \cite{Spivey2004, Mao2016}. We focus our discussion on the actions decomposition applicability for ride-hailing system controls, in particular, 
scalability of an  NN architecture used to parametrize control policies.
A deep RL algorithm is applied to optimize the control policy  that should generate the most beneficial trip to fulfill at a current system state.  We use a proximal policy optimization (PPO) algorithm \cite{Schulman2017} for the  policy optimization  in our numerical experiments. A preliminary numerical experiment with the proposed  decomposition of actions and adapted PPO algorithm was performed in  \cite{Feng2020}.

We summarize the major contributions of this chapter:
\begin{enumerate}
\item In Section \ref{subsec:action} we propose a special decomposition for the MDP model of ride-hailing transportation network actions by
sequentially assigning tasks to the drivers. We discuss why the new actions structure resolves the scalability problem
and enables the use of deep RL algorithms for control policy optimization.
\item In Section \ref{sec:control_policy} we justify the use of PPO algorithm to solve MDPs with the proposed actions structure. In particular, we derive a novel policy improvement bound for the finite horizon setting.  
\item  In Section \ref{sec:num-study} we test the proposed PPO on the nine-region transportation network. We also mention the experiment with  the PPO algorithm on  the five-region transportation network from \cite{Feng2020}.  The resulting
policies outperform  the  time-dependent lookahead policy proposed in \cite{Braverman2019} and achieve the state-of-the-art performance for both networks. The benefits of empty-car routing   are demonstrated via a comparison test on the nine-region transportation network. Additional  experiment  is conducted disabling the empty-car routing. The matching rate of the best policy  learned  via the PPO algorithm for such configuration  was significantly lower than the performance of the policy learned with the enabled  empty-car routing.        
\end{enumerate}

This chapter is primarily based on the research reported in \cite{Feng2021}.

\section{The transportation network}
\label{sec:ridesharing-network}

 In this section we describe our model of the ride-hailing service and transportation network, following \cite{Braverman2019, Feng2020}. The service consists of a centralized planner, passengers requesting rides, and a fixed number of geographically distributed agents (cars). The transportation network consists of $N$ cars distributed across a service territory divided into $R$ regions.
For   ease of exposition, we assume that each working day (``episode'') of the  ride-hailing service starts at the same time  and  lasts for $H$ minutes.

  We assume that the number of passenger arrivals at region $o$ in the $t$-th minute (i.e., $ t$ minutes elapsed since the start of the working day) is a Poisson random variable with mean
  \begin{equation}
    \lambda_o(t),  \text{ for each } o=1,\dotsc,R, \, t=1,\dotsc,H. \nonumber
\end{equation}
 The collection of all  Poisson random variables is independent. Passengers only arrive \emph{after} a working day starts (i.e., there are no passengers at the 0-th minute).

Upon arrival  at region $o$,  a passenger travels to region $d$ with probability that depends on  time $t$, origin region $o$, and destination region $d$
\begin{equation}
    P_{od}(t), \quad o,d=1,\dotsc,R, \, t=1,\dotsc,H. \nonumber
\end{equation}

After a trip   from region $o$ to $d$ has been initiated, its duration is \emph{deterministic} and equals to
\begin{equation}
    \tau_{od}(t), \quad o,d=1,\dotsc,R, \, t=1,\dotsc,H.
    \label{eq:trav-time}
\end{equation}

  We let
 \begin{equation}\label{eq:tau_d}
 \tau_d :=\max_{t=1,\dotsc,H, \, o=1,\dotsc,R}\tau_{o d}(t),\quad   d=1,\dotsc, R
 \end{equation} be the maximum travel time to  region $d$ from any region of the transportation network at any time.

 While Section 1  in \cite{Braverman2019}  assumed that travel times were random variables having
an exponential distribution, the experiments in \cite[Section 3.2.1]{Braverman2019} were conducted under constant travel times. For ease of exposition, we use deterministic travel times in Section \ref{sec:prob-form} below.

Patience time denotes a new passenger's maximum waiting time for a car. We assume that each passenger  has a deterministic patience time   and we fix it as equal to $L$ minutes.  We assume that the centralized planner knows the patience time.

 We say a car is  \textit{available} if it is at or less than $L$ minutes away from its final destination, where $L$ is  the patience time.
In real time, the centralized planner receives   ride requests, observes the location and activity of each car in the system, and considers three types of tasks for the available cars: (1) car-passenger matching, (2) empty-car routing, and  (3)  do-nothing  (a special type of   empty-car routing).
  We assume that each passenger requires an immediate response to his or her request.   If the centralized planner assigns a matching between a passenger and an available car, we assume  the passenger has to accept the matching and to wait  up to $L$ minutes for the assigned car to pick him or her up. A passenger who is not matched with a car in the first decision epoch leaves the system before the next decision epoch. Hence, a passenger waits up to one decision epoch to be matched and, if matched, up to $L$ minutes  to be picked up.

Unlike \cite{Braverman2019},  the constraint that   only  cars   idling at the passenger's location can be matched with the passenger are relaxed. We assume that the centralized planner can match cars with subsequent ride requests before current trips are completed.
We assume that the   patience time   satisfies
 \begin{equation}
    L<\min_{t=1,\dotsc,H, \, o=1,\dotsc,R}
    \tau_{o d}(t), %L-1,
    \quad \text{for each } d = 1,\dotsc,R
    .
    \label{eq:a-trav-time}
\end{equation}
The assumption implies that the travel time of any trip   is larger than the patience time.    Therefore, no more than one subsequent trip can be assigned to a driver.

If a car  reaches its destination and has not been matched with a new passenger, it becomes empty.
The centralized planner may let the empty car stay at the destination or relocate to another region.  For the former, we note that the centralized planner's decision belongs to  the do-nothing task and does not cost any travel time. The centralized planner will be able to assign the car a new task at the next decision epoch. For the latter, the centralized planner chooses a region  for the relocation and the travel time remains the same as in  equation~\eqref{eq:trav-time}.
 Unlike \cite{Braverman2019}, the centralized planner can assign two empty-car routing tasks in succession.

\section{Optimal control problem formulation}
\label{sec:prob-form}

 Our goal is to find a control policy for the centralized planner that maximizes the total reward collected during one working day by the entire ride-hailing service. Following \cite{Feng2020}, we formulate the problem as a   finite-horizon, discrete-time,  undiscounted MDP. We set the time interval between two successive epochs to one minute.
 Under this setting, the time in minute, $t = 1,\dotsc,H$, also represents the decision epochs. As a result, a passenger waits at most one minute for a decision.  

\subsection{State space}
\label{subsec:state}

 The state space $\X^\Sigma$ of the MDP includes states $x^{(t)} = \left[ x^{(t)}_e, x^{(t)}_c, x^{(t)}_p\right]$, such that each state consists of three components:   current epoch  $x^{(t)}_e := t$,  cars status  $x^{(t)}_c$, and   passengers status  $x^{(t)}_p$. %We note that we use superscript $\Sigma$ to highlight that state $x^\Sigma$ belongs to state $\X^\Sigma$.

The cars status component represents the  number of   cars of every   type   in the system:
\begin{equation}
x^{(t)}_c  := \left(x^{(t)}_c(d,\eta )~\Big| ~ \substack{d = 1,\dotsc, R, \\ \eta = 0,1,\dotsc,\tau_d, \tau_d+1,\dotsc,\tau_d+L}\right), \nonumber
\end{equation}
 where $x^{(t)}_c(d,\eta )$ is the number of cars in the system whose \emph{final} destination region is $d$,  and the \emph{total} remaining travel time  (``distance'') to the destination is equal to $\eta$, and $\tau_d$ is the maximum travel time to region $d$ defined by (\ref{eq:tau_d}).

The passengers status component is equal to
\begin{equation}
x_p^{(t)} := \left( x_p^{(t)}(o, d)~\Big|~o,d=1,\dotsc, R \right),\nonumber
\end{equation}
where $x_p^{(t)}(o, d)$ characterizes the number of passengers in the system requesting rides from region $o$ to region $d$.

\subsection{Sequential decision making process}
\label{subsec:action}

At each epoch $t$, the centralized planner  observes  the system state $x^{(t)}$, and makes a decision $a^{(t)}$   that should  address  all    $I_t$ available cars, where
\begin{equation}
   I_t:= \sum_{o = 1}^R \sum_{\eta = 0}^L x^{(t)}_c (o, \eta).\nonumber
\end{equation}

 We let $\A^\Sigma$ denote the action space of the MDP.
  We propose to  decompose every decision $a^{(t)}\in \A^\Sigma$ into a sequence of \textit{atomic actions}, each addressing a single available car. Therefore, we consider action $a^{(t)}$ as:
\begin{align*}
  a^{(t)} := \left(  a^{(t,1)}, \dotsc, a^{(t, I_t)}\right),
\end{align*}
where $a^{(t, i)}$ is an atomic action that encodes a trip  by  one of the available cars. We let $\A$  denote the atomic action space. We  note that $\A = \{(o,d)\}_{o,d=1}^R$.

We call the  sequential generation of atomic actions a \textit{sequential decision making process} (SDM process). We let $x^{(t,i)}$   denote  a state of the SDM process after   $i-1$ steps, for each decision epoch $t=1,\dotsc, H$.  Figure \ref{fig:SDM} illustrates the SDM process  at  decision epoch $t$.

We let $\X$ be the state space of the SDM process.
Each  state $x^{(t,i)}$ of the SDM process has four components $x^{(t,i)} :=\left [x^{(t,i)}_e, x^{(t, i)}_c, x^{(t,i)}_p, x^{(t, i)}_\ell\right]$, where, as in the original MDP, the first three components $x^{(t,i)}_e$, $x^{(t,i)}_c$, $x^{(t, i)}_p$  represent current epoch,  cars status, and passengers status, respectively, and a new component $x^{(t,i)}_\ell$ tracks the cars exiting the available cars pool until the next decision epoch.
The SDM process is initialized with state $x^{(t, 1)}$ such that
$x^{(t,1)}_e = x^{(t)}_e,$ $x^{(t, 1)}_c = x^{(t)}_c,$ $x^{(t,1)}_p= x^{(t)}_p$,
 and   $x^{(t,1)}_\ell$ is  a zero vector,  for each decision epoch $t=1,\dotsc,H$.

 \begin{figure}
     \centering
     \includegraphics[width=0.8\textwidth]{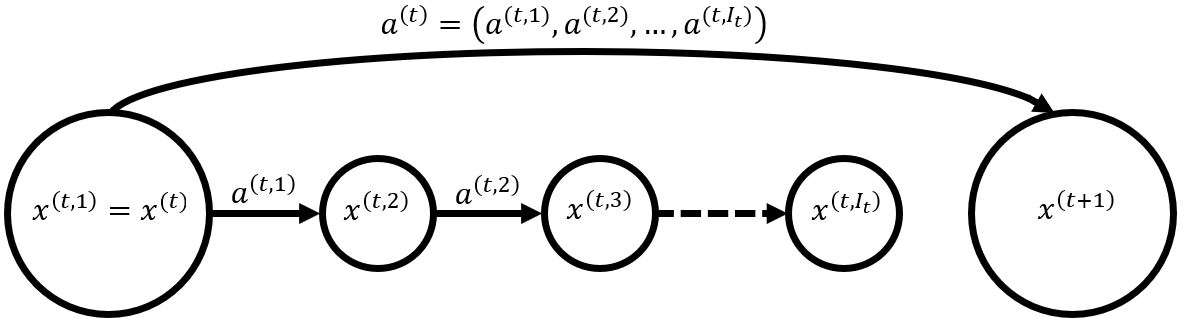}
     \caption[The SDM process  at   decision epoch $t$]{The SDM process  at   decision epoch $t$. Control policy $\pi$ sequentially generates atomic actions $a^{(t, 1)}$, $a^{(t, 2)}$, ..., $a^{(t, I_t)}$ which form an action $a^{(t)}\in \A^\Sigma$ for the original MDP.}
     \label{fig:SDM}
 \end{figure}

Each atomic action represents a \textit{feasible} trip type $a^{(t,i)}  = (o^{(t,i)}, d^{(t,i)})$,
where $o^{(t,i)},$ $d^{(t,i)}$ are the origin   and  destination regions of the trip, respectively.
Action $a^{(t,i)}$  is \emph{feasible} if there exists an available car that is $L$ minutes (or less) away from the origin region $o^{(t,i)}$, (i.e. $\sum_{\eta = 0}^{L} x_c^{(t,i)}(o^{(t,i)}, \eta) > 0$).
 Although atomic action $a^{(t,i)}$  only encodes the   origin and destination of a trip, we set a few rules that   specify which car will conduct the trip and if the car will carry a passenger.
 Among the set of available cars, we select  the car closest to origin $o^{(t,i)}$ to take the trip.
Then,  we  prioritize car-passenger matching over empty-car routing, (i.e. if there exists a passenger requesting a ride from the trip origin to the trip destination, we assign the car to the requesting passenger; if there are several passengers requesting such a ride, we assign the car to a passenger at random).
If there is no passenger requesting a ride from $o^{(t,i)}$ to $d^{(t,i)}$, we interpret atomic action $a^{(t,i)}$ as either an empty-car routing task or a do-nothing task depending on the value of $d^{(t,i)}$. Namely,  if the trip relocates the car to a different region ($d^{(t,i)} \neq o^{(t,i)}$), then the car fulfills an empty-car routing task. Otherwise, we interpret the atomic action as a "do nothing" task, and the car becomes a \textit{do-nothing} car.

Once an available car,  possibly  a do-nothing car,  has been assigned  a task at the $i$th step of the SDM process, the centralized planner should exclude it from the available cars pool. If the car has been assigned  a passenger ride request  or an empty-car routing task,   the cars status component of the SDM process state is updated such that the car becomes associated with its new final destination.
The car is automatically excluded from the available cars pool by assumption (\ref{eq:a-trav-time}). The do-nothing tasks require   special transitions that the original MDP does not have, so we use $x^{(t,i)}_\ell$ to track the do-nothing cars
\begin{equation}
x^{(t,i)}_\ell := \Big(x^{(t,i)}_\ell(d,\eta) ~\Big|~ d = 1,\dotsc, R, ~\eta = 0,1,\dotsc,L \Big), \nonumber
\end{equation}
 where $x^{(t,i)}_\ell(d,\eta)$ is the number of  do-nothing cars which drive to or idle at region $d$, $\eta$ minutes away from their destinations.
  The do-nothing component excludes do-nothing cars from the  available cars pool until the next decision epoch.

 The atomic actions are generated sequentially  under   control policy
 $\pi:\X\rightarrow \A,$
 which is a mapping from the state space into a set of the trip types. The  control policy $\pi$, given a current state of the SDM process $x^{(t,i)}$, sequentially generates feasible atomic actions $ a^{(t,i)} = \pi(x^{(t,i)})$. The   SDM process terminates when all available cars become unavailable cars, producing action $a^{(t)} = (a^{(t,1)}, \dotsc, a^{(t,I_t)})$.

 At each decision epoch the centralized planner observes system state $x^{(t)}$ and  exercises control policy $\pi$ sequentially  in  the SDM process to obtain action $a^{(t)}$. Then the transition of the system to the next state $x^{(t+1)}$ occurs according to the dynamics of the original MDP.

\subsection{Reward functions and objective}

A car-passenger matching generates an immediate reward
\begin{align*}
    g^{(t)}_f(o, d, \eta), \quad  o, d = 1,\dotsc,R;   ~ \eta = 0, 1, \dotsc, L;  ~    t = 1,\dotsc, H,
\end{align*}
where  $o$ and $d$ are the passenger's origin and destination regions, respectively, $\eta$ is the distance (in minutes) between the  matched car  and the passenger's location, and $t$ is the time of the decision.  The superscript $f$ denotes a full-car trip.

Every  empty-car routing atomic action  generates  a cost that  depends on   origin region $o$, destination region $d$,   and   decision time $t$
\begin{equation*}
     g^{(t)}_e(o,d), \quad o,~d = 1,\dotsc,R; ~ t = 1,\dotsc, H,
\end{equation*}
where the superscript denotes an empty-car trip.

We assume the do-nothing actions do not generate any rewards.
Therefore, a one-step reward function generated on  the $(i-1)$-th step of SDM process at epoch $t$ is equal to
\begin{align*}
    g(x^{(t,i)}, a^{(t,i)}) =  \begin{cases}
    g^{(t)}_f(o,d,\eta), \text{ if action }a^{(t,i)} \text{ implies a car-passenger matching,}\\
    -g_e^{(t)}(o,d), \text{ if action }a^{(t,i)} \text{ implies an empty-car routing},\\
0,\text{ if action }a^{(t,i)} \text{ implies a do-nothing action}.
    \end{cases}
\end{align*}

We want to find   control policy $\pi$  that maximizes the expected total  rewards over the finite time horizon
\begin{equation}
    \E_\pi \left[\sum_{t = 1}^H \sum_{i = 1}^{I_t}
    g(x^{(t,i)}, a^{(t, i)})
    \right].\nonumber
\end{equation}

\subsection{Control policy optimization}\label{sec:control_policy}

Here, a \textit{randomized} control policy refers to a map
\begin{equation*}
\pi:\X \rightarrow [0,1]^{R^2},
\end{equation*}  that outputs a probability distribution over all trip types  given   state $x\in \X$. We use $\pi(a | x)$ to denote a probability of choosing atomic action $a$ at state $x$ if the system operates under policy $\pi$. We assume that $\pi(a | x) =0 $ if action $a$ is infeasible at system state $x$. Then,  at each epoch $t$, the SDM process  selects atomic actions sampled according to  distribution $\pi(\cdot|x^{(t,i)})$ under      randomized control policy $\pi$, for each step $i=1,\dotsc, I_t.$

We define a   value function $V_\pi: \X\rightarrow \R$ of policy $\pi$
\begin{equation}
    V_\pi(x^{(t,i)}): =  \E_\pi   \left[\sum\limits_{k=i}^{I_t} g( x^{(t,k)}, a^{(t,k)}) + \sum\limits_{j=t+1}^H\sum\limits_{k=1}^{I_j} g( x^{(j,k)}, a^{(j,k)}) \right],\nonumber
\end{equation}
 for each $t=1, \dotsc, H$,  $i=1,\dotsc, I_t$, and $x\in \X.$
 For notation convenience we set $V_\pi(x^{(H+1, 1)})=0$ for any policy $\pi$.

Next, we define  advantage function $A_\pi:\X\times \A\rightarrow \R$ of policy $\pi$
\begin{align*}
    A_\pi(x^{(t,i)}, a^{(t,i)}):= \begin{cases}
    g(x^{(t,i)}, a^{(t,i)})+  V_\pi(x^{(t, i+1)}) - V_\pi(x^{(t,i)}), \quad \text{ if }i\neq I_t\\
     g(x^{(t,i)}, a^{(t,i)})+\sum\limits_{y\in \X} \mathcal{P}(x^{(t,i)}, a^{(t,i)}, y) V_\pi(y) - V_\pi(x^{(t, i)}), \text{ if }i= I_t,\nonumber
    \end{cases}
\end{align*}
 for each $t=1,\dotsc,H$; $i=1,\dotsc, I_t$; $x^{(t, i)}\in \X$, and $a^{(t,i)}\in \A = \{(o, d)\}_{o,d=1}^R$. We note that the transitions within the SDM process are deterministic.
 We use $\mathcal{P}$ to denote the probabilities of transitions that come  from random passenger arrival processes.

 We  let  $\{\pi_\theta,~ \theta\in \Theta\}$ be a set of parametrized control policies,
where $\Theta$ is an open subset of  $\R^d$,   $d\geq 1$.
Hereafter, we abuse the notation and use $V_\theta$ and $A_\theta$   to denote the value function and the advantage function  of policy $\pi_\theta$, $\theta\in \Theta$, respectively.

 In Lemma \ref{lem:per_diff} we obtain  the  performance difference identity for an MDP operating under the actions generated by the SDM process.    Performance difference identity was first obtained for MDPs with infinite-horizon discounted cost objectives in \cite{Kakade2002}.

 \begin{lemma}\label{lem:per_diff}
  We consider two policies $\pi_\theta$ and $\pi_\phi$, where $\theta, \phi\in \Theta$.
  Their value functions satisfy
 \begin{align*}
 V_\theta( x^{(1, 1)}) - V_\phi(x^{(1, 1)})  = \E_{\pi_\theta} \left[\sum\limits_{t=1}^{H} \sum\limits_{i=1}^{I_t} A_\phi ( x^{(t,i)}, a^{(t,i)})  \right].
  \end{align*}

\end{lemma}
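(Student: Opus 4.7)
The plan is to follow the classical Kakade--Langford telescoping argument, adapted to the two-level indexing $(t,i)$ of the SDM process. I begin with the right-hand side and substitute the definition of the advantage function $A_\phi$, splitting the sum into the in-epoch steps $i<I_t$ and the terminal step $i=I_t$ at which the environment transitions stochastically to the next decision epoch.

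Concretely, for $i<I_t$, the advantage contributes $g(x^{(t,i)},a^{(t,i)}) + V_\phi(x^{(t,i+1)}) - V_\phi(x^{(t,i)})$, while for $i=I_t$ it contributes $g(x^{(t,I_t)},a^{(t,I_t)}) + \sum_{y} \mathcal{P}(x^{(t,I_t)},a^{(t,I_t)},y)\,V_\phi(y) - V_\phi(x^{(t,I_t)})$. Under the expectation with trajectories generated by $\pi_\theta$, the tower property lets me replace the conditional expectation $\sum_{y}\mathcal{P}(\cdots)V_\phi(y)$ by $\E_{\pi_\theta}[V_\phi(x^{(t+1,1)}) \mid x^{(t,I_t)}, a^{(t,I_t)}]$. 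After this replacement, every summand for $i=I_t$ has the form $g(x^{(t,I_t)},a^{(t,I_t)}) + V_\phi(x^{(t+1,1)}) - V_\phi(x^{(t,I_t)})$, matching the in-epoch form if we identify $x^{(t,I_t+1)}$ with $x^{(t+1,1)}$.

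With this identification, the double sum $\sum_{t=1}^{H}\sum_{i=1}^{I_t}\bigl[V_\phi(x^{(\cdot, \cdot+1)}) - V_\phi(x^{(t,i)})\bigr]$ telescopes inside each epoch and across epochs, leaving only the boundary terms $V_\phi(x^{(H+1,1)}) - V_\phi(x^{(1,1)}) = -V_\phi(x^{(1,1)})$, where I use the convention $V_\pi(x^{(H+1,1)})=0$ stated just before the lemma. The remaining reward terms sum, in expectation under $\pi_\theta$ starting from $x^{(1,1)}$, to $V_\theta(x^{(1,1)})$ by the very definition of the value function. Combining yields the identity.

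The only nontrivial point is to handle the step $i=I_t$ correctly: the SDM transitions within an epoch are deterministic, while the jump from $x^{(t,I_t)}$ to $x^{(t+1,1)}$ involves the stochastic passenger arrivals captured by $\mathcal{P}$. I would justify the exchange $\sum_y \mathcal{P}(x^{(t,I_t)},a^{(t,I_t)},y)\,V_\phi(y) = \E_{\pi_\theta}\!\bigl[V_\phi(x^{(t+1,1)})\bigm| x^{(t,I_t)},a^{(t,I_t)}\bigr]$ by one invocation of the tower property, noting that once $(x^{(t,I_t)},a^{(t,I_t)})$ is fixed the distribution of $x^{(t+1,1)}$ is $\mathcal{P}(x^{(t,I_t)},a^{(t,I_t)},\cdot)$ regardless of policy. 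With this in hand, the telescoping is routine and finiteness of the outer sum is guaranteed by the bounded horizon $H$ and finite number $I_t$ of available cars at each epoch, so no integrability issue arises.
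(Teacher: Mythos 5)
Your proof is correct and follows essentially the same route as the paper's: both rest on telescoping the value-function differences $V_\phi(x^{(t,i+1)})-V_\phi(x^{(t,i)})$ within and across epochs, handling the stochastic jump at $i=I_t$ via the tower property (the paper phrases this as a zero-mean martingale identity), and invoking the convention $V_\phi(x^{(H+1,1)})=0$ together with the definition of $V_\theta$ as the expected cumulative reward under $\pi_\theta$. The only difference is cosmetic — you expand the right-hand side and telescope down to the left-hand side, whereas the paper starts from the left-hand side and adds the zero-expectation telescoping sum — so no gap to report.
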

 The proof of Lemma \ref{lem:per_diff} can be found in Appendix \ref{appendix:ride_hailing_proofs}.

 We  define an advantage function for the original MDP $A^\Sigma_\pi: \X^\Sigma\rightarrow \A^\Sigma$ of policy $\pi$ as
 \begin{align*}
     A^{\Sigma}_\pi(x^{(t)}, a^{(t)}): = \sum\limits_{i=1}^{I_t} A_\pi(x^{(t,i)}, a^{(t,i)}),
 \end{align*}
  where $a^{(t)} = (a^{(t,1)},\dotsc, a^{(t,I_t)})\in \A^\Sigma$.  We note that
\begin{align*}
A_\pi^\Sigma (x^{(t)}, a^{(t)}) = \sum\limits_{i=1}^{I_t}\left [g(x^{(t,i)}, a^{(t,i)})\right] + \sum\limits_{y\in \X} \mathcal{P}(x^{(t,I_t)}, a^{(t,I_t)}, y)V_\pi(y) - V_\pi(x^{(t,1)}).
\end{align*}
 We also let $\pi^\Sigma(a^{(t)}|x^{(t,1)})$  denote  the  probability of selecting action $a^{(t)} = (a^{(t,1)}, \dotsc, a^{(t, I_t)})$ through the SDM process   initialized at state $x^{(t,1)}$ under policy $\pi$.

We define an occupation measure   of policy $\pi_\theta^\Sigma$ at epoch $t$ as a distribution over states  of  $\X^\Sigma$:
\begin{align*}
   \mu_\theta(t, x) := \Prob(x^{(t)} = x),\quad \text{ for each }t=1,\dotsc, H,~x\in \X^\Sigma,
\end{align*}
 where $x^{(t) }$ is a state of the  MDP at epoch $t$   under policy $\pi^\Sigma_\theta$.
We define another occupation measure for the states of the SDM process under policy $\pi_\theta$, $\theta\in \Theta$. We denote  the  probability that starting at state $x^{(t,1)}=x$ at epoch $t$ under policy $\pi_\theta$ the SDM process is at state $y$  after  $i-1$ steps as
\begin{align*}
   \xi_\theta(t,i, x, y )  := \Prob(x^{(t,i)} = y~|~x^{(t,1)} = x),
\end{align*}
  for each  $t=1,\dotsc,H,$ $i=1,\dotsc,I_t,$ $y\in \X,$ $x\in \X^\Sigma$.

Next, we obtain a policy improvement bound on the difference of finite horizon objectives of two control policies.

\begin{theorem}\label{thm:pib_finite}

 We consider two policies $\pi_\theta$ and $\pi_\phi$, where $\theta, \phi\in \Theta$. Then the difference of the objectives of these policies satisfy the following policy improvement bound
\begin{align}\label{eq:two_terms}
      &V_\theta( x^{(1, 1)}) - V_\phi(x^{(1, 1)})  \geq \E_{x^{(t, i)}\sim \pi_\phi} \left[ \sum\limits_{t=1}^{H}\sum\limits_{i=1}^{I_t}\frac{\pi_\theta(a^{(t,i)}|x^{(t,i)})}{\pi_\phi(a^{(t,i)}|x^{(t,i)})} A_\phi(x^{(t,i)}, a^{(t,i)})  \right]  \\
     &\quad\quad- \max\limits_{x\in \X,a\in \A} |A_\phi (x,a)|\sum\limits_{t=1}^{H }\sum\limits_{x\in  \X^\Sigma} \mu_\phi(t, x )  \sum\limits_{i=1}^{I_t}\sum\limits_{y\in \X}|\xi_\phi(t, i, x, y) - \xi_\theta(t, i, x, y)|  \nonumber\\
     &\quad\quad-\max\limits_{x\in\X^\Sigma,~a\in  \A^\Sigma}| A_\phi^\Sigma (x, a)|\sum\limits_{t=1}^{H}\sum\limits_{x\in \X^\Sigma} |\mu_\phi(t, x) - \mu_\theta(t,x) |\nonumber .
 \end{align}
\end{theorem}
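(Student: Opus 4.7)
The plan is to start from the performance difference identity of Lemma \ref{lem:per_diff} and rewrite its right-hand side as an importance-sampled expectation under $\pi_\phi$ plus two residual terms that are controlled by total-variation distances between occupation measures. First I would expand the expectation using the occupation measures,
\begin{align*}
V_\theta(x^{(1,1)}) - V_\phi(x^{(1,1)})
= \sum_{t=1}^{H}\sum_{x\in\X^\Sigma}\mu_\theta(t,x)\sum_{i=1}^{I_t}\sum_{y\in\X}\xi_\theta(t,i,x,y)\,f(y),
\end{align*}
where $f(y):=\sum_{a\in\A}\pi_\theta(a|y)A_\phi(y,a)$. An application of the importance-sampling identity $\pi_\theta(a|y) = \bigl(\pi_\theta(a|y)/\pi_\phi(a|y)\bigr)\pi_\phi(a|y)$ rewrites $f(y)$ as an expectation over atomic actions sampled from $\pi_\phi$ weighted by the likelihood ratio, which is exactly the integrand appearing in the target lower bound.

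Next I would swap the two occupation measures via a standard add-and-subtract telescoping. Defining $T_\pi(t,x):=\sum_{i,y}\xi_\pi(t,i,x,y)f(y)$ for $\pi\in\{\pi_\theta,\pi_\phi\}$, one writes
\begin{align*}
\sum_{t,x}\mu_\theta T_\theta
= \sum_{t,x}\mu_\phi T_\phi
+\sum_{t,x}\bigl[\mu_\theta(t,x)-\mu_\phi(t,x)\bigr]T_\theta(t,x)
+\sum_{t,x}\mu_\phi(t,x)\bigl[T_\theta(t,x)-T_\phi(t,x)\bigr].
\end{align*}
The first sum is precisely the importance-sampled expectation under $\pi_\phi$ that appears in the statement of Theorem \ref{thm:pib_finite}.

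It remains to bound the two residual sums. The key observation for the first residual is that
\begin{align*}
T_\theta(t,x) = \E_{\pi_\theta}\Bigl[\sum_{i=1}^{I_t}A_\phi(x^{(t,i)},a^{(t,i)})\,\Bigm|\,x^{(t,1)}=x\Bigr]
= \sum_{a\in\A^\Sigma}\pi_\theta^\Sigma(a|x)A_\phi^\Sigma(x,a),
\end{align*}
so $|T_\theta(t,x)|\le \max_{x',a'}|A_\phi^\Sigma(x',a')|$, which produces the third term of \eqref{eq:two_terms} after applying the triangle inequality to $|\mu_\theta-\mu_\phi|$. For the second residual, the pointwise bound $|f(y)|\le \max_{x',a'}|A_\phi(x',a')|$ gives $|T_\theta(t,x)-T_\phi(t,x)|\le \max|A_\phi|\sum_{i,y}|\xi_\theta(t,i,x,y)-\xi_\phi(t,i,x,y)|$, and this yields the second term. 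Combining the three pieces completes the proof.

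The main technical subtlety is the identification at the last step: recognizing that the conditional expectation of the sum of atomic advantages under $\pi_\theta$ collapses to $A_\phi^\Sigma$ is what allows the first error term to be bounded by $\max|A_\phi^\Sigma|$ rather than by the much weaker $I_t\cdot\max|A_\phi|$. Beyond this observation, the argument is essentially a careful bookkeeping exercise: performance difference, then importance sampling on the atomic-action level, then telescoping the SDM and epoch-level occupation measures, and finally applying $|\sum_x p_x q_x|\le \|q\|_\infty\|p\|_1$ to obtain each bound in the advertised form.
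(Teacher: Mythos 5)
Your argument is correct and follows essentially the same route as the paper's proof: both start from the performance difference identity of Lemma \ref{lem:per_diff}, swap $\mu_\theta$ for $\mu_\phi$ at the cost of a term controlled by $\max|A_\phi^\Sigma|$, swap $\xi_\theta$ for $\xi_\phi$ at the cost of a term controlled by $\max|A_\phi|$, and apply importance sampling at the atomic-action level to obtain the leading term. The only cosmetic difference is that you perform the two swaps via a single three-term telescoping identity, whereas the paper carries them out sequentially; the key identification of the conditional sum of atomic advantages with $A_\phi^\Sigma$, which you correctly flag as the crux, is precisely the paper's equation relating $\sum_a \pi^\Sigma_\theta(a|x)A^\Sigma_\phi(x,a)$ to the SDM-level expectation.
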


 The proof of Theorem \ref{thm:pib_finite} can be found in Appendix \ref{appendix:ride_hailing_proofs}.

We assume that randomized control policy $\pi_\phi$ is the centralized planner's current policy. We want to improve it and get policy $\pi_\theta$  that outperforms the current policy (i.e. $V_\theta(x^{(1, 1)}) - V_\phi( x^{(1,1)})>0$).  We can guarantee the improvement if we find policy $\pi_\theta$ such that the right-hand side (RHS) of   (\ref{eq:two_terms}) is positive. We address the maximization of the RHS of (\ref{eq:two_terms}) following the approach previously used in \cite{Kakade2002, Schulman2015, DaiGluzman2021}: we  bound $\sum\limits_{x\in \X} |\mu_\phi(t, x) - \mu_\theta(t,x) |$ and $\sum\limits_{y\in \X}\left|\xi_\phi(t,i,x ,y) - \xi_\theta(t,i,x ,y)\right|$   by controlling
 the maximum change between policies $\pi_\theta$ and $\pi_\phi$, and focus on maximization of the first term of the RHS of (\ref{eq:two_terms}).

In \cite{Schulman2017} the authors proposed  to maximize a \emph{clipping} surrogate objective function:
\begin{align}\label{eq:PO_ride_hailing}
L(\theta, \phi):= \E_{{\pi_\phi}} \Big[ \sum\limits_{t=1}^{H} \sum\limits_{i=1}^{I_t}\min \Big( &r_{\theta, \phi}(x^{(t,i)}, a^{(t,i)}) A_{\phi} (x^{(t,i)}, a^{(t,i)}) , \\
&\text{clip} (r_{\theta, \phi}(x^{(t,i)}, a^{(t,i)}),  1-\epsilon, 1+\epsilon)  A_{\phi} (x^{(t,i)}, a^{(t,i)})  \Big) \Big],\nonumber
\end{align}
where $r_{\theta, \phi}(x, a) := \frac{\pi_\theta(x, a)}{\pi_\phi( x, a)}$, $\epsilon\in(0, 1)$ is a hyperparameter, and     clipping function is equal to
  \begin{equation*}
  \text{clip}(y,  1-\epsilon, 1+\epsilon):=
  \begin{cases}
   \min(y, 1+\epsilon)   \text{ if }  y\geq 1,\\
   \max(y, 1-\epsilon), \text{ otherwise.}
   \end{cases}
  \end{equation*}

We note that the clipping term $\text{clip} (r_{\theta, \phi}( x, a),  1-\epsilon, 1+\epsilon)  A_{\phi}  ( x, a) $ of the objective function  (\ref{eq:PO_ride_hailing}) prevents large changes to the policy and keeps $r_{\theta, \phi}(  x, a)$ close to 1, therefore promoting a conservative update.

We use Monte Carlo simulation to obtain an estimate of the objective function (\ref{eq:PO_ride_hailing}).
We generate $K$ episodes, $K\geq 1$, each  $H$ epochs long. For now, we assume that the advantage function estimates $\hat A$     required to evaluate  (\ref{eq:PO_ride_hailing}) are available. At each step of the SDM process we record a separate datapoint with the following fields (state, action, and advantage function estimate for the state-action pair) to get  a dataset:
\begin{align}\label{eq:data}
D_\phi^{(K)}:=& \left\{ \left(   \Big(  x^{(t,1,k)}, a^{(t,1,k)}, \hat A ( x^{(t,1,k)}, a^{(t,1,k)}) \Big ),\dotsc,    \Big(  x^{(t, I_{t,k} ,k)}, a^{(t, I_{t,k} ,k)}, \hat A ( x^{(t, I_{t,k},k)}, a^{(t,I_{t,k},k)})  \Big)  \right)_{t=1}^{H}
\right\}_{k=1}^{K},
\end{align}
where $x^{(t, i, k)}$ and $a^{(t, i, k)}$ are the state  and   action at the SDM process step $i$,   epoch $t$,  episode $k$, respectively.

Given dataset (\ref{eq:data}) we estimate the objective function as:
 \begin{align}\label{eq:POest}
\hat L(\theta, \phi, D^{(K)}_\phi):=\frac{1}{K}\sum\limits_{k=1}^K \Big[ \sum\limits_{t=1}^{H} \sum\limits_{i=1}^{I_{t,k}} \min \Big(& r_{\theta, \phi}(x^{(t,i,k)}, a^{(t,i,k)}) \hat A_{\phi} (  x^{(t,i,k)}, a^{(t,i,k)}) ,   \\
&\text{clip} \left(r_{\theta, \phi}(  x^{(t,i,k)}, a^{(t,i,k)}),  1-\epsilon, 1+\epsilon\right)  \hat A_{\phi}  (  x^{(t,i,k)}, a^{(t,i,k)})  \Big)\Big].\nonumber
\end{align}
Next, we discuss   estimating  the advantage function of policy $\pi_\phi$. First, we estimate the value function $V_\phi$. We compute a Monte Carlo estimate of the value function that corresponds to each step in the generated episodes (\ref{eq:data}), such as:
\begin{equation}\label{eq:Vest}
    \hat V_{t, i, k} := \sum\limits_{j=i}^{I_{t,  k}} \left[g(x^{(t,j,k)}, a^{(t,j,k)})\right] +\sum\limits_{\ell = t+1}^{H}  \sum\limits_{j=1}^{I_{\ell,k}} g(x^{(\ell,j, k)}, a^{(\ell,j, k)}),
\end{equation}
which is a one-replication estimate of the value function $V(x^{(t, i, k)})$ at state $x^{(t, i, k)}$ that is visited at epoch $t$, episode $k$, after  $i-1$ steps of the SDM process. We note that the approximating martingale-process (AMP) method  from Section \ref{sec:AMP}  is not incorporated into value function estimation (\ref{eq:Vest}). There are two reasons why we do not apply AMP for the considered ride-hailing system  model. First, the AMP method requires knowledge of the transition probabilities, which are assumed to be unknown to the centralized planner. Second, even if we assume that the transition probabilities are known, the AMP method requires an accurate estimation of the expected value of the value function at the subsequent state  each simulation step. This estimation of the expected value is a computationally intense task due to complexity of the ride-hailing system dynamics. 

We   use function approximator $V_\psi:\X\rightarrow \R$ to get a low-dimensional representation of value function $V_\phi$. We consider a set of function approximators $\{V_\psi, ~\psi\in \Psi\}$, and based on one-replication estimates we find the optimal $V_\psi $ that minimizes the mean-square norm:
 \begin{equation}\label{eq:mse}
     \sum\limits_{k=1}^{K}\sum\limits_{t=1}^{H}\sum\limits_{i=1}^{I_{t,k}} \left\| V_\psi (x^{(t, i, k)}) - \hat V_{t, i, k} \right\|^2.
 \end{equation}
Next, we obtain   the advantage function estimates   as
\begin{align}\label{eq:Aest}
    \hat A ( x^{(t, i,  k)}, a^{(t, i, k)}):=\begin{cases}
    g(x^{(t, i, k)}, a^{(t, i, k)}) +V_\psi( x^{(t, i+1, k)}) - V_\psi(x^{(t, i, k)})\quad \text{ if } i\neq I_{t,  k},\\
    g (x^{(t, i, k)}, a^{(t, i, k)}) +V_\psi (x^{(t+1, 1, k)}) - V_\psi(x^{(t, i, k)})\quad \text{ otherwise},
\end{cases}
\end{align}
for each $t=1, \dotsc,H$; $k=1,\dotsc, K;$ and $i=1,\dotsc, I_{t,k}$.

 Our proposed PPO algorithm consists of the following steps.

\begin{algorithm}[H]
\SetAlgoLined
\LinesNumbered
% This is to hide end and get the last vertical line straight
\SetKwBlock{Begin}{Begin}{}
\SetAlgoLined
\SetKwProg{Loop}{LOOP}{}{}
\KwResult{policy $\pi_{\theta_J}$}
 Initialize  policy function  $\pi_{\theta_0}$ and value function approximator $V_{\psi_{-1}}$\;
 \For{ policy iteration $j = 1, 2, \dotsc, J$}{

Run policy $\pi_{\theta_{j-1}}$ for $K$ episodes and collect dataset (\ref{eq:data}).

   Construct  Monte-Carlo estimates of the value function $V_{\theta_{j-1}}$  following (\ref{eq:Vest}).

Update function approximator $V_\psi$ minimizing (\ref{eq:mse}).

Estimate advantage functions $\hat A (x^{(t, i,  k)}, a^{(t, i, k)})$ by (\ref{eq:Aest}).

 Maximize surrogate objective function (\ref{eq:POest}) w.r.t. $\theta$.
 Update $\theta_{j} \leftarrow  \theta$
      }
 \caption{The PPO algorithm for the ride-hailing system}
 \label{alg:ppo}
\end{algorithm}

 \section{Experimental results}\label{sec:num-study}

In this section we report numerical experiments for two   transportation networks considered in \cite{Braverman2019}.
We evaluate the performance of the proposed PPO algorithm for a transportation network consisting of $R = 9$ regions, $N = 2000$ cars, and $H = 240$ minutes,   designed based on real data from Didi Chuxing. Previously, in \cite{Feng2020},  the numerical experiment with  the proposed algorithm was conducted for a transportation network consisting of $R = 5$ regions, $N = 1000$ cars, and $H = 360$ minutes, designed ``artificially''.  For completeness, we report this experiment as well. The traffic parameters, i.e. passengers arrival rates $\lambda_o(t)$, travel times $\tau_{od}(t)$, and destination probabilities $P_{od}(t)$, of the nine-region and five-region transportation networks can be found in Appendix~EC.3.1 and  Appendix~EC.3.2 of \cite{Braverman2019}, respectively.

In both experiments, following \cite{Braverman2019}, at the start of each working day, the centralized planner distributes the cars in proportion to each region's expected demand.
 We set patience time at $L = 5$. We establish the reward functions (i.e., car-passenger matching rewards are equal to $g_f^{(t)}(o,d,\eta) = 1$, and empty-car routing costs are equal to $g^{(t)}_e(o,d) = 0,$ for each $o, d = 1,\dotsc,R$, $\eta = 0,1,\dotsc,L $, and $t = 1,\dotsc, H$) such that the total reward accumulated at the end of the working day corresponds to the number of completed ride requests.
In this way, the total reward accumulated by the end of a working day correspond to the  number  of ride requests fulfilled. This can be reinterpreted as the fraction of ride requests fulfilled,  given  a sample path of the passenger arrivals. The number of completed ride requests fulfilled is the common objective considered by the dynamic matching problems, see, for example \cite{Ozkan2020}.

We  run the proposed PPO algorithm for the nine-region transportation network for $J =150$ policy iterations.
We use two separate and fully connected feed-forward neural networks (NNs) to represent randomized control policies $\pi_\theta$,  $\theta\in \Theta$ and value functions $V_\psi$, $\psi\in \Psi$, see the details in Appendix \ref{appendix:ride_hailing_nn}.
The algorithm simulates $K = 250$ episodes (working days) in each iteration.  See Appendix \ref{appendix:ride_hailing} for  more details about   hyperparameter values.
Figure \ref{fig:nine_regions} shows that our PPO algorithm   achieves  $85.6\%$  fulfilled ride requests  after $J = 150$ policy iterations. The performance of the randomized control policy was evaluated after every iteration by taking the average of the fractions of fulfilled ride requests on each of $K=250$ episodes.
We use the best result for the nine-region experiment in \cite{Braverman2019} as the benchmark.
The ``time-dependent lookahead'' policy from \cite{Braverman2019} could achieve 83.8\%  fulfilled ride requests. We also test a \textit{closest-driver} policy for the nine-region network to have another benchmark. Each time a new passenger arrives to the system, the closest-driver policy assigns this passenger to the closest available driver, if any.   In our test the closest-driver policy  fulfilled  $65.1\%$  ride requests, that is much worth performance if we compare it with the results of  the PPO and time-dependent lookahead policies.

\begin{figure}[H]
\centering%
\includegraphics[width=.9\linewidth]{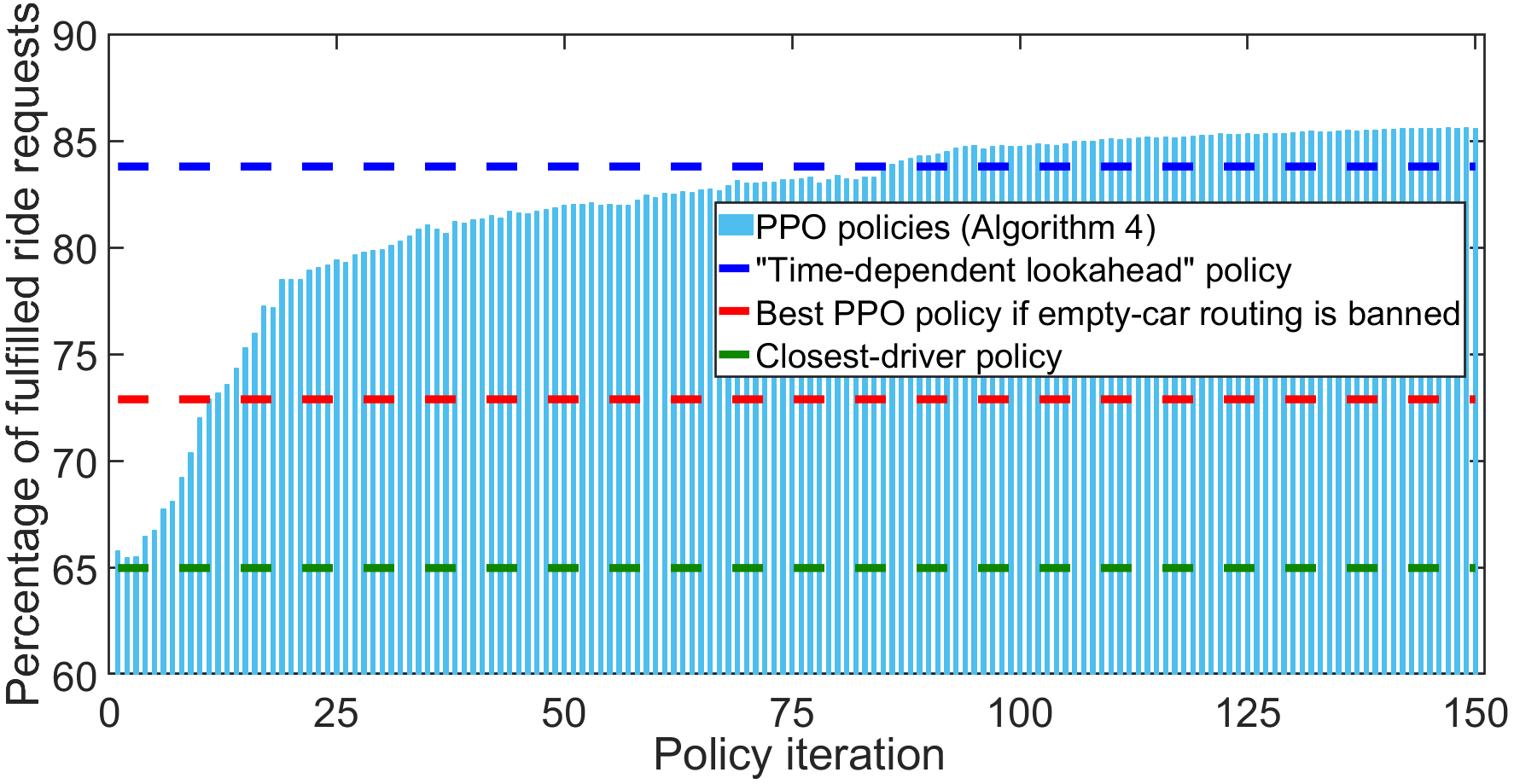}
\caption[Learning curve from Algorithm \ref{alg:ppo} for the nine-region  transportation network from \cite{Braverman2019}]{Learning curve from Algorithm \ref{alg:ppo} for the nine-region  transportation network from \cite{Braverman2019}. The    columns  show the  performance of the randomized control policies obtained at every iteration of Algorithm \ref{alg:ppo}. The dashed blue line shows the
best performance of the ``time-dependent lookahead'' policy from \cite{Braverman2019}.   The dashed red line shows the performance of the  best PPO policy learned assuming the empty-car routing is disabled. The dashed green line shows the performance of the closest-driver policy. }
  \label{fig:nine_regions}
\end{figure}

 We have performed again  our experiment with the PPO algorithm on the nine-region transportation network,
but we have disabled the empty-car routing. The number of iterations and the number of  episodes have remained unchanged. The PPO algorithm has achieved 72.9\% fulfilled ride requests. This result demonstrates the importance of the empty-car routing mechanism for ride-hailing services reliability.

 Similarly,  the PPO algorithm was tested on the five-region network running it for $J =75$ policy iterations (allowing the empty-car routing) in \cite{Feng2020}. Figure \ref{fig:five_regions} shows that the  algorithm   achieves  $87\%$  fulfilled ride requests while the performance of the ``time-dependent lookahead'' policy reported in \cite{Braverman2019} was $84\%.$
In fact,  Algorithm \ref{alg:ppo} needs only $8$ policy iterations to boost the performance to $80\%$ from the initial $59\%$ attained by a policy NN with random weights.

\begin{figure}[H]
\centering%
\includegraphics[width=.9\linewidth]{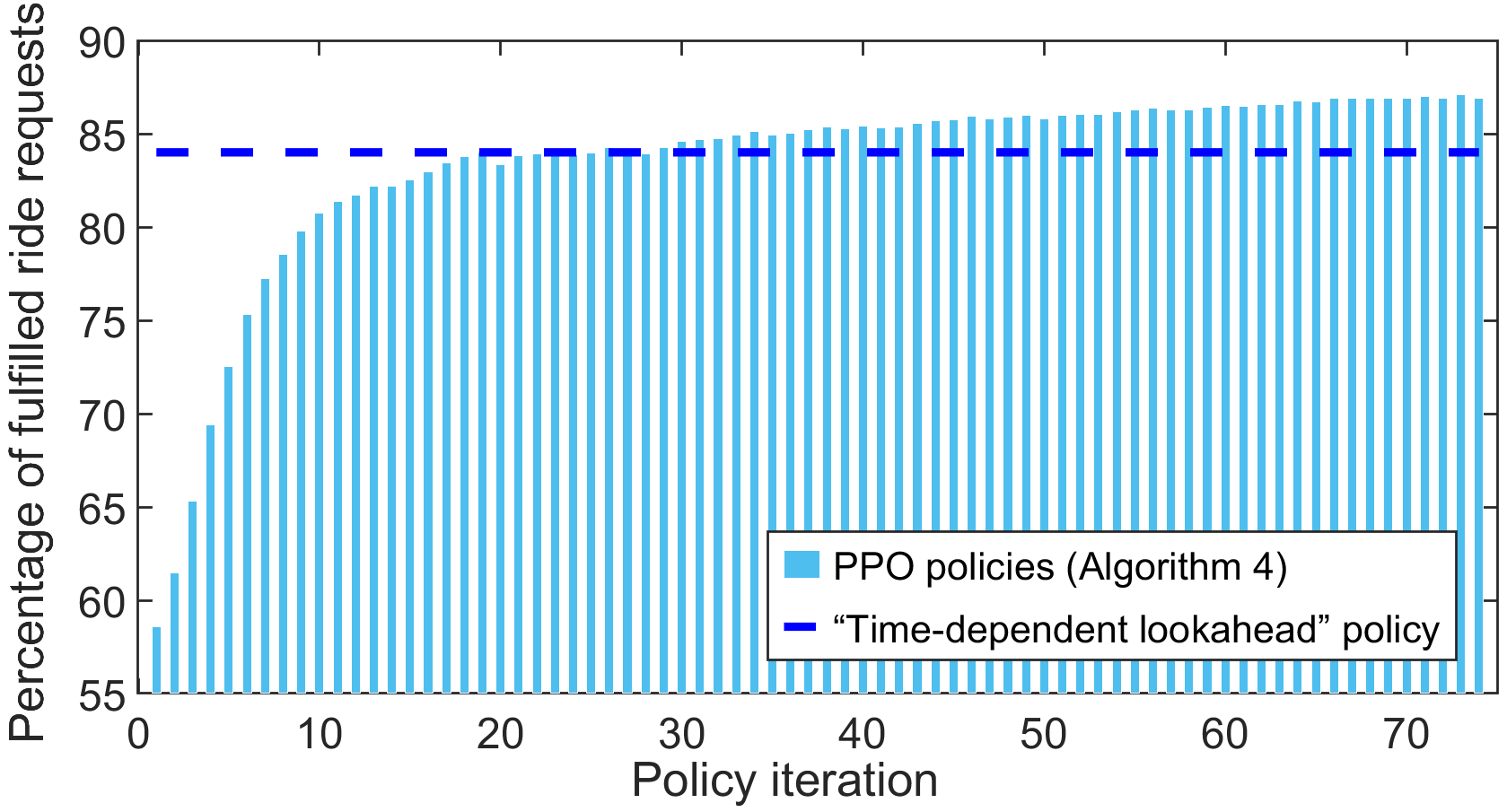}
\caption[Learning curve from Algorithm \ref{alg:ppo} for the five-region  transportation network reported in \cite{Feng2020}]{Learning curves from Algorithm \ref{alg:ppo} for the five-region  transportation network reported in \cite{Feng2020}. The    columns  show the  performance of the randomized control policies obtained at every iteration of Algorithm \ref{alg:ppo}. The dashed blue line shows the
best performance of the ``time-dependent lookahead'' policy from \cite{Braverman2019}.}
  \label{fig:five_regions}
\end{figure}

\section{Conclusion to Chapter \ref{ch:ride_hailing}}\label{sec:conclusion_ride_hailing}
This chapter proposes a method to optimize the  finite horizon total reward objective in a ride-hailing service system.
The large  action space
prohibits the direct use of policy optimization RL methods. Although the standard PPO algorithm \cite{Schulman2017} suggests designing a policy NN such that the the number of units in its output layer is equal to the number of actions, the algorithm becomes computationally expensive because the number of parameters grows exponentially with the number of agents. The large action space also makes challenging for  the policy NN to capture
  similarities among the actions.

Instead of searching for the optimal actions directly, we use the PPO algorithm to learn the most beneficial type of trip
to fulfill at a given   state. Repeated executions of the trip-generating policy allow a centralized planner to prioritize
trip types and sequentially assign tasks to all available cars.

  Numerical experiments  demonstrate that the PPO algorithm applied to the considered MDP formulation outperforms  the
policy proposed in \cite{Braverman2019} by 2-3\%. The importance of the empty-car routing mechanism was evidenced through the numerical tests.

Although, incorporation of the atomic action might resolve the scalability issue caused by a large number of cars, this modification does not address potential scalability issues with respect to the number of regions.  Moreover, in this chapter we omit any discuss on how practitioners should  divide ride-hailing operational territory into the regions. Such division is expected to be nontrivial  for most  real-world cases.

\chapter{Policy Improvement Bounds for Markov Decision Processes}\label{ch:2}
Policy improvement bounds on the difference of the discounted and average returns play a crucial role in the theoretical justification of the trust-region policy optimization (TRPO), proximal policy optimization (PPO), and related algorithms. Theorem \ref{thm:main} in Section \ref{sec:TRPOforAC} is an example of a policy improvement bound.

In this chapter  we focus our investigation on policy improvement bounds due to discrepancy between the implication of the existing bounds in the literature and common practical use of reinforcement learning algorithms. The policy improvement  bound   in  \cite{Schulman2015, Achiam2017}  leads to a degenerate bound when the discount factor approaches one, making the applicability of TRPO and related algorithms questionable when the discount factor is close to one. We refine the results in \cite{Schulman2015, Achiam2017} and propose a novel bound that is  continuous  in the discount factor. Furthermore, we generalize the obtained novel bounds on Markov decision process (MDP) problems with countable state spaces and on semi-Markov decision process (SMDP) problems.

 In \cite{Kakade2002} the authors developed a conservative
   policy iteration algorithm for MDPs
   that can avoid catastrophic large policy updates; each iteration
   generates a new policy as a mixture of the old policy and a greedy
   policy. They proved that the updated policy is guaranteed to
   improve when the greedy policy is properly chosen and the updated
   policy is sufficiently close to the old one.  In
   \cite{Schulman2015} the authors generalized the proof of
   \cite{Kakade2002} to a policy improvement  bound for two
   \emph{arbitrary} randomized policies.  This policy improvement
   bound allows one to find an updated policy that guarantees to
   improve by solving an unconstrained optimization problem.
   \cite{Schulman2015} also proposed a practical algorithm, called
   trust region policy optimization (TRPO), that approximates the
   theoretically-justified update scheme by solving a constrained
   optimization problem in each iteration.  In recent years, several
 modifications of TRPO have been proposed \cite{Schulman2016,
   Schulman2017, Achiam2017, Abdolmaleki2018}. These studies continued
 to exploit the policy improvement bound to theoretically motivate
 their algorithms.

  The policy improvement bounds in \cite{Schulman2015,
     Achiam2017} are   lower bounds on the difference of the expected
   \emph{discounted reward returns} under two policies.  Unfortunately, the
 use of these policy improvement bounds becomes questionable and
 inconclusive when the discount factor is close to one. These policy
 improvement bounds degenerate as discount factor converges to one.
 That is, the lower bounds on the difference of discounted reward returns
 converge to negative infinity as the discount factor goes to one,
 although the difference of discounted returns converges to the
 difference of (finite) average rewards.  Nevertheless, numerical experiments demonstrate that the TRPO
 algorithm and its variations perform best when the discount factor
 $\gamma$ is close to one, a region that the existing
 bounds do not justify;  e.g. \cite{Schulman2015, Schulman2016,
   Schulman2017} used $\gamma=0.99$, and \cite{Schulman2016,
   Achiam2017} used $\gamma=0.995$ in their experiments.

 Recent studies \cite{Zhang2021} and \cite{DaiGluzman2021} (see  Section \ref{sec:TRPOforAC})
 proposed policy improvement bounds for   average returns, justifying  that
 a family of TRPO algorithms can be used for continuing problems with
 long-run average reward or cost objectives. It remains unclear how the
 large values of the discount factor can be justified and why the
 policy improvement bounds in \cite{Schulman2015, Achiam2017} for the
 discounted returns do not converge to one of the bounds provided in
 \cite{DaiGluzman2021, Zhang2021}.

In this chapter we conduct a comprehensive study on policy improvement bounds in different settings, i.e. for average and discounted cost objectives, finite and countable state spaces, MDP and SMDP models. Following the narrative of Chapter \ref{ch:1} we focus on cost minimization MDP and SMDP problems.  As a result, unlike some other papers \cite{Schulman2015,
     Achiam2017, Zhang2021} that proposed lower bounds on  the difference of the expected reward returns,  we  derive \textit{upper bounds} on the difference of the expected cost returns.

      We summarize the major contributions of this chapter:

\begin{enumerate}
  \item  In Section \ref{sec:PIB_f} we provide a unified derivation of policy
   improvement bounds for both discounted and average cost MDPs on finite state spaces. Our
   bounds depend on the discount factor \emph{continuously}.  When the
   discount factor converges to $1$, the corresponding
   bound for discounted returns converges to a policy improvement
   bound for average costs.
   Our results justify the use of a large discount
   factor in TRPO algorithm and its variations.

\item Policy improvement bounds proposed for finite state spaces in Section \ref{sec:PIB_f} are not valid for MDPs on infinite state spaces.  In Section \ref{sec:PIB_V}  we  obtain  policy improvement bounds on the difference of performances of two policies of an MDP on the countable state space,
assuming $\V$-uniform ergodicity of the transition matrix of one of the policies. We introduce a  $\V$-weighted   ergodicity coefficient and relate it to the  $\V$-uniform ergodicity assumption.

\item In Section \ref{sec:PIB_SMDP} we provide performance identity and policy improvement bound for an SMDP. These results imply that TRPO, PPO, and related algorithms can be applied for SMDPs.

\end{enumerate}

 Section \ref{sec:PIB_f} is based on \cite{Dai2021}. The results in Sections \ref{sec:PIB_V} and \ref{sec:PIB_SMDP} have not been published elsewhere before.

\section{Policy improvement bounds in finite state spaces}\label{sec:PIB_f}

In this section  we provide a single policy improvement bound  for
both discounted and average cost objectives for an MDP on a finite state space.
This   result is achieved by   two innovative
   observations. First, we embed the discounted future state
   distribution under a fixed policy as the stationary distribution of
   a  modified Markov chain.  Second, we introduce an
   \emph{ergodicity coefficient} from Markov chain perturbation theory
   to bound the one-norm of the difference of discounted future state
   distributions, and prove that this bound is optimal in a certain
   sense.

\subsection{Preliminaries}

We consider an MDP defined by the tuple $(\X, \A, P, g, \mu)$, where $\X$ is a  finite state space; $\A$ is a finite action space; $P(y|x,a)$ is the probability of transitioning into state $y$ upon taking action $a$ in state $x$;
$g:\X\times\A\rightarrow \R$ is the cost function; $\mu$ is the probability distribution of the initial state $x^{(0)}$.

%We denote $\pi(a|x)$ as a probability of taking action $a\in \A$ at state $x\in \X$ under policy $\pi$. A stationary  policy $\pi$ is deterministic if for each $x\in \X$ there exists $a\in \A$ such that $\pi(a|x)=1$. We note that a stationary deterministic policy is a special case of a stationary randomized policy.
%In this paper by a stationary policy or a policy we understand a  stationary randomized policy.

%  The set of real numbers is denoted by $\R$. The set of nonnegative integers is denoted by $\Z_+$.
%For a vector $a$ and a matrix $A$,
  % $a^T$ and $A^T$ denote their transposes.

 We let $\pi$ denote a stationary randomized policy $\pi:\X\rightarrow \Delta(\A)$, where $\Delta(\A)$ is the probability simplex over $\A$.  Under policy $\pi$, the corresponding Markov chain has a transition matrix $P_\pi$ given by
$ P_\pi(x,y):=\sum\limits_{a\in\A }\pi(a|x)P(y|x,a),~x,y\in \X.
$
 We assume that MDPs we consider are unichain, meaning that for any stationary policy $\pi$ the corresponding  Markov chain with transition matrix $P_\pi$ contains only one recurrent class \cite{Puterman2005}.
 We use $d_\pi$ to denote a unique stationary distribution of  a Markov chain with transition matrix $P_\pi$.

 We discuss two formulations of an MDP problem: with infinite horizon discounted cost objective and long-run average cost objective.

%\subsection{MDPs with infinite horizon discounted returns}

We let  $\gamma\in [0,1)$ be a discount factor.  We define the value function for a given policy $\pi$ as
  \begin{equation*}
V^{(\gamma)}_\pi(x) :=\E_\pi    \left[ \sum\limits_{t=0}^\infty \gamma^tg(x^{(t)}, a^{(t)}) ~\Big|~  x^{(0)}=x \right],
 \end{equation*}

\noindent
 where $x^{(t)}$, $a^{(t)}$ are random variables for the state and action at time $t$ upon executing the policy $\pi$ from the initial state $x$. For policy $\pi$ we define the state-action value function as
 \begin{equation*}
Q^{(\gamma)}_\pi(x, a) :=g(x, a) + \gamma \underset{y\sim P_\pi(\cdot|x,a)}{\E} \left[V_\pi^{(\gamma)}(y) \right],
\end{equation*}
 and  the advantage function as
 \begin{equation*}
A^{(\gamma)}_\pi(x, a):  =Q^{(\gamma)}_\pi(x, a) -  V^{(\gamma)}_\pi(x).
\end{equation*}

%We measure the performance of policy $\pi$ by its expected discounted reward return from the initial state distribution $\mu$:
% \begin{align*}
% V_\gamma^\pi(\mu) :=  \underset{x\sim \mu}{\E}[V_\gamma^\pi(x)].
% \end{align*}

% The performance difference identity, first established in \cite{Kakade2002}, allows  to  express the difference in performance
%between two policies $\pi$ and $\pi_2$ as
% \begin{align}\label{eq:perf_diff}
% V_\gamma^{\pi_2}(\mu) -  V_\gamma^\pi(\mu)= \underset{\substack{x_0\sim \mu \\  a_0, x_1,a_1,...\sim \pi_2 } }{\E}\left[\sum\limits_{t=0}^\infty \gamma^t A^{(\gamma)}_{\pi_1}(x_t,a_t)  \right].
% \end{align}

We define  the discounted future state distribution of policy $\pi$ as
\begin{align*}%\label{eq:disc_distr}
d^{(\gamma)}_\pi(x):=(1-\gamma)\sum\limits_{t=0}^\infty \gamma^t\Prob\Big[x^{(t)}=x~|~x^{(0)}\sim \mu; x^{(1)}, x^{(2)},...\sim \pi\Big].
\end{align*}

 We measure the performance of policy $\pi$ by its expected discounted   return from the initial state distribution $\mu$:
  \begin{align*}%\label{eq:disc_ave}
 \eta^{(\gamma)}_\pi (\mu):=(1-\gamma)\E_{x\sim \mu} \left[V^{(\gamma)}_\pi(x)\right]= \underset{\substack{x\sim d^{(\gamma)}_\pi\\  a\sim \pi(\cdot|x)}  }{\E}[ g(x, a) ].
 \end{align*}

 In the following lemma we give an alternative definition of  the discounted future state distribution as a stationary distribution of a modified transition matrix. See Appendix \ref{appendix:bounds} for the proof of the following Lemma \ref{lem:stat}.

  \begin{lemma}\label{lem:stat}
 For a stationary policy $\pi$, we define a discounted transition matrix for policy $\pi$ as
 \begin{align}\label{eq:mod_trans}
 P_\pi^{(\gamma)}:=\gamma P_\pi + (1-\gamma) e\mu^T,
 \end{align}
 where $e := (1,1,\dotsc,1)^T$ is a vector of ones, $e\mu^T$ is the matrix which rows are equal to $\mu^T$.

 Then    the discounted future state distribution of policy $\pi$, $d^{(\gamma)}_\pi$,  is the stationary distribution of transition matrix $P_\pi^{(\gamma)}$.

 \end{lemma}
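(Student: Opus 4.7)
The plan is to verify directly that $d^{(\gamma)}_\pi$ satisfies the stationarity equation $(d^{(\gamma)}_\pi)^T P_\pi^{(\gamma)} = (d^{(\gamma)}_\pi)^T$, and then argue uniqueness so that we can call $d^{(\gamma)}_\pi$ \emph{the} stationary distribution of $P_\pi^{(\gamma)}$.

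First I would write the discounted future state distribution as a row-vector geometric series,
\begin{equation*}
(d^{(\gamma)}_\pi)^T = (1-\gamma)\sum_{t=0}^{\infty}\gamma^t \mu^T P_\pi^{t},
\end{equation*}
which is absolutely convergent since $\gamma<1$ and $P_\pi$ is stochastic; in particular, since each term is a probability vector weighted by $(1-\gamma)\gamma^t$ and $\sum_t (1-\gamma)\gamma^t = 1$, the sum is itself a probability distribution, so $(d^{(\gamma)}_\pi)^T e = 1$.

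Next I would plug this into the definition of $P_\pi^{(\gamma)} = \gamma P_\pi + (1-\gamma) e\mu^T$ and compute
\begin{align*}
(d^{(\gamma)}_\pi)^T P_\pi^{(\gamma)}
&= \gamma (d^{(\gamma)}_\pi)^T P_\pi + (1-\gamma)\bigl((d^{(\gamma)}_\pi)^T e\bigr)\mu^T \\
&= \gamma(1-\gamma)\sum_{t=0}^{\infty}\gamma^t \mu^T P_\pi^{t+1} + (1-\gamma)\mu^T \\
&= (1-\gamma)\sum_{s=1}^{\infty}\gamma^s \mu^T P_\pi^{s} + (1-\gamma)\mu^T \\
&= (1-\gamma)\sum_{s=0}^{\infty}\gamma^s \mu^T P_\pi^{s} = (d^{(\gamma)}_\pi)^T,
\end{align*}
which is exactly the stationarity relation. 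This is the main calculation and I expect no real obstacle here; the only care needed is to verify $(d^{(\gamma)}_\pi)^T e = 1$ so that the second term collapses cleanly to $(1-\gamma)\mu^T$.

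Finally, I would argue uniqueness in one short paragraph. Since $1-\gamma>0$, from any state $x\in\X$ the matrix $P_\pi^{(\gamma)}$ assigns probability at least $(1-\gamma)\mu(y)$ to any state $y$ in the support of $\mu$, and in particular all such $y$ form a single communicating class accessible from every state in one step. Hence $P_\pi^{(\gamma)}$ is unichain on the finite state space $\X$ and admits a unique stationary distribution, which by the calculation above must equal $d^{(\gamma)}_\pi$. The hardest part, if any, is simply being precise that the geometric series representation of $d^{(\gamma)}_\pi$ is legitimate and that the rearrangement of summation indices is justified, both of which follow from absolute convergence.
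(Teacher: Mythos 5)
Your proof is correct and follows essentially the same route as the paper's: both write $(d^{(\gamma)}_\pi)^T$ as the geometric series $(1-\gamma)\sum_t \gamma^t \mu^T P_\pi^t$, multiply by $P_\pi^{(\gamma)}$, use $(d^{(\gamma)}_\pi)^T e = 1$ to collapse the rank-one term to $(1-\gamma)\mu^T$, and reindex to recover the series. The only cosmetic difference is that the paper cites a reference for irreducibility and aperiodicity of the Google matrix $P_\pi^{(\gamma)}$, whereas you give a short self-contained unichain argument, which is if anything slightly more careful when $\mu$ does not have full support.
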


% The discounted future state distribution allows to rewrite  the performance difference identity (\ref{eq:perf_diff}) in more compact form.  The difference in performance
%between two policies $\pi$ and $\pi_2$ is equal to
%  \begin{align}\label{eq:perf_diff2}
% V_\gamma^{\pi_2}(\mu) -   V_\gamma^\pi(\mu)=\frac{1}{1-\gamma} \underset{\substack{ x\sim d^{(\gamma)}_{\pi_2}\\  a \sim \pi_2(\cdot|x) }  }{\E} \left[A^{(\gamma)}_{\pi_1}(x,a)   \right].
% \end{align}

%Moreover, using the discounted future state distribution we can interpret the infinite horizon discounted reward return as an average reward:
%\begin{align}\label{eq:disc_as_aver}
% (1-\gamma)V_\gamma^{  \pi}(\mu)&=(1-\gamma)\underset{\substack{ x_0\sim \mu\\  a_0, x_1, a_1, ...\sim \pi  }  }{\E}     \left[ \sum\limits_{t=0}^\infty \gamma^tr(x_t, a_t) \right] \nonumber\\
% &=\underset{\substack{ x\sim d^{(\gamma)}_{\pi}\\ a\sim \pi(\cdot|x)  }  }{\E}     \left[ r(x, a) \right]
%\end{align}

% The performance difference identity, first established in \cite{Kakade2002}, allows  to  express the difference in performance
%between two policies $\pi$ and $\pi_2$ as
%   \begin{align*}%\label{eq:perf_diff3}
% \phi_\gamma^{\pi_2}(\mu) -  \phi_\gamma^\pi(\mu) =\E_{ x\sim d^{(\gamma)}_{\pi_2}, a \sim \pi_2(\cdot|x) } \left[A^{(\gamma)}_{\pi_1}(x,a)   \right].
% \end{align*}

 % \subsection{MDPs with long-run average rewards}

 The long-run average cost of policy $\pi$ is defined as
\begin{align*}%\label{eq:av_rew}
 \eta_\pi :&= \lim\limits_{N\rightarrow \infty}\frac{1}{N} \E_\pi \left[ \sum\limits_{t=0}^{N-1} g (x^{(t)},a^{(t)} ) ~|~ x^{(0)}\sim \mu\right] \nonumber\\
&=   \underset{\substack{x\sim d_\pi\\  a\sim \pi(\cdot|x)}  }{\E}   \left[ g(x,a) \right].
 \end{align*}
The long-run average cost  $\eta_\pi$ does not depend on the initial state initial state distribution $\mu$, since
the Markov chain with transition matrix $P_\pi$ is assumed to be a unichain, see  \cite[Section 8.2]{Puterman2005}.% Therefore, hereafter, we  use $\phi^\pi$ to denote the long-run average reward.
%The long-run average reward can be written in terms of stationary distribution under policy $\pi$ \cite[Section 8.2]{Puterman2005}:
%\begin{align}\label{eq:av_rew2}
%\phi^\pi =  \underset{\substack{ x\sim d^\pi\\  a\sim \pi(\cdot|x)  }  }{\E}  \left[ r(x,a) \right].
%\end{align}

For an MDP with a long-run average cost objective we define  the relative value function
 \begin{align*}
 V_\pi(x) :=\lim\limits_{N\rightarrow \infty}\E_\pi   \left[ \sum\limits_{t=0}^{N-1} \left(g(x^{(t)}, a^{(t)}) - \eta_\pi\right)~|~x^{(0)}=x \right],
 \end{align*}
the relative state-action value function $
 Q_\pi(x, a) :=g(x, a) - \eta_\pi + \E_{y\sim P_\pi(\cdot|x,a)}\left[V_\pi(y)\right],
$ and  the  relative advantage function $
 A_\pi(x, a): =    Q_\pi(x, a) -V_\pi(x)$. The following relations hold for value, state-action value, and advantage functions.

\begin{lemma}\label{lem:adv}

We let $\pi$ be a stationary policy, $\gamma$ be the discount factor, and $\mu$ be the initial state distribution. Then the following limits hold for each $x\in \X$, $a\in \A$:
 \begin{align}
\eta_\pi &= \lim\limits_{\gamma\rightarrow 1} \eta_\pi^{(\gamma)}  \label{eq:conv1}\\
 V_\pi(x) &=\lim\limits_{\gamma \rightarrow 1} \left( V_\pi^{(\gamma)}(x)  - (1-\gamma)^{-1} \eta_\pi\right) ,\label{eq:conv2}\\
 Q_\pi(x,a)&=\lim\limits_{\gamma \rightarrow 1} \left( Q_\pi^{(\gamma)}(x, a)  - (1-\gamma)^{-1}\eta_\pi\right), \nonumber\\
A_\pi(x,a)&=\lim\limits_{\gamma \rightarrow 1}A_\pi^{(\gamma)}(x,a).\nonumber
\end{align}
\end{lemma}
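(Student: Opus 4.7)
The plan is to reduce all four limits to a single Laurent-type expansion of the discounted value function around $\gamma = 1$, which is available in the finite-state unichain setting. Set $g_\pi(x) := \sum_{a\in\A}\pi(a|x)g(x,a)$, so that $V_\pi^{(\gamma)} = (I-\gamma P_\pi)^{-1} g_\pi$ as a vector identity on $\X$. Under the unichain assumption, the Cesaro limit $P_\pi^* := \lim_{N\to\infty}\frac{1}{N}\sum_{n=0}^{N-1}P_\pi^n$ exists and equals $e\,d_\pi^T$, hence $P_\pi^* g_\pi = \eta_\pi\, e$, and the deviation matrix $H_\pi := \sum_{n=0}^\infty (P_\pi^n - P_\pi^*)$ is well-defined and finite (see, e.g., Puterman Section~8.2).

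First I would invoke the resolvent (Laurent) expansion
\begin{equation*}
(I-\gamma P_\pi)^{-1} \;=\; \frac{1}{1-\gamma}\, P_\pi^* \;+\; H_\pi \;+\; (1-\gamma)\, M_\pi(\gamma),
\end{equation*}
where $M_\pi(\gamma)$ is bounded entrywise uniformly for $\gamma$ in a left neighborhood of $1$; this follows from the simple-pole structure of $(I-\gamma P_\pi)^{-1}$ at $\gamma=1$ in the finite-state unichain case, with residue $-P_\pi^*$. Applying this operator identity to $g_\pi$ yields
\begin{equation*}
V_\pi^{(\gamma)}(x) \;=\; \frac{\eta_\pi}{1-\gamma} \;+\; (H_\pi g_\pi)(x) \;+\; (1-\gamma)\, r_\pi(x,\gamma),
\end{equation*}
with $|r_\pi(x,\gamma)|$ bounded in $\gamma$ for each fixed $x$.

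Next I would identify $H_\pi g_\pi$ with the relative value function $V_\pi$. By definition, $V_\pi(x) = \sum_{t=0}^\infty \bigl[(P_\pi^t g_\pi)(x) - \eta_\pi\bigr]$, and because $P_\pi^* g_\pi = \eta_\pi\, e$ this equals $\sum_{t=0}^\infty \bigl[(P_\pi^t - P_\pi^*) g_\pi\bigr](x) = (H_\pi g_\pi)(x)$. From here the four claimed limits follow mechanically. Taking expectation over $x\sim\mu$ and multiplying by $(1-\gamma)$ in the displayed expansion yields $\eta_\pi^{(\gamma)}(\mu) \to \eta_\pi$, giving \eqref{eq:conv1}. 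Subtracting $\eta_\pi/(1-\gamma)$ and sending $\gamma\uparrow 1$ pointwise gives $V_\pi^{(\gamma)}(x) - \eta_\pi/(1-\gamma) \to V_\pi(x)$, which is \eqref{eq:conv2}. For the $Q$-function identity, I would write
\begin{equation*}
Q_\pi^{(\gamma)}(x,a) - \frac{\eta_\pi}{1-\gamma} \;=\; g(x,a) - \eta_\pi + \gamma\,\underset{y\sim P(\cdot|x,a)}{\E}\!\left[V_\pi^{(\gamma)}(y) - \frac{\eta_\pi}{1-\gamma}\right],
\end{equation*}
and let $\gamma\uparrow 1$, using \eqref{eq:conv2} inside the (finite-sum) expectation to obtain the limit $Q_\pi(x,a)$. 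The advantage limit is then just the difference of the $Q$- and $V$-limits.

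The only delicate step is justifying the $O(1-\gamma)$ remainder in the resolvent expansion, but this is classical for finite-state unichain Markov chains (the pole of $(I-\gamma P_\pi)^{-1}$ at $\gamma=1$ is simple with residue $-P_\pi^*$), so no genuine obstacle is anticipated. The remaining work is bookkeeping to show that the three apparently distinct limits are consequences of the single expansion above.
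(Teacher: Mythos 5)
Your proof is correct and follows essentially the same route as the paper, which simply cites \cite[Section 8]{Puterman2005} for (\ref{eq:conv1})--(\ref{eq:conv2}): the argument there is precisely the Laurent expansion of the resolvent $(I-\gamma P_\pi)^{-1}$ about $\gamma=1$ with simple pole $\frac{1}{1-\gamma}P_\pi^*$ and constant term the deviation matrix, followed by the same bookkeeping for $Q$ and $A$. The only caveat worth noting is that for a periodic unichain the series $\sum_{n\ge 0}(P_\pi^n-P_\pi^*)$ and the partial-sum definition of $V_\pi$ converge only in the Cesaro/Abel sense, so $H_\pi$ should be taken as $(I-P_\pi+P_\pi^*)^{-1}-P_\pi^*$; this is the same convention the cited reference (and implicitly the paper) uses, so it does not affect the validity of your argument.
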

The proofs of identities (\ref{eq:conv1}), (\ref{eq:conv2}) can be found in \cite[Section 8]{Puterman2005}. The rest results  of Lemma \ref{lem:adv} follow directly.  %For completeness, we provide the proof of  Lemma \ref{lem:adv}  in Appendix.

\subsection{Novel policy improvement bounds}

%Novel advanced policy gradient  methods, such as Trust Region policy optimization \cite{Schulman2015},  Proximal policy optimization \cite{Schulman2015}, Maximum a posteriori policy optimization \cite{Abdolmaleki2018} are originated from the conservative policy updates idea proposed in \cite{Kakade2002}. These algorithms iteratively update policies preventing drastic deviation between current and next policies. Each iteration the intention of the algorithms  is to find a new policy $\pi_2$ within some region $D(\pi_2, \pi)\leq \delta$, where   $D$ is some divergence measure, improving the performance of the old policy $\pi$ .

The policy improvement bound in \cite{Schulman2015, Achiam2017} for  the discounted  returns serves to theoretically justify the TRPO algorithm and its variations.
The following lemma is a   reproduction of Corollary 1 in \cite{Achiam2017}. We state its upper bound version because it  is  more appropriate for MDPs with cost minimization objectives.
\begin{lemma}[Corollary 1 in \cite{Achiam2017}]\label{lem:perf_bound}
For any  two policies $\pi_1$ and $\pi_2$ the following bound holds:
\begin{equation}\label{eq:perf_bound}
\eta^{(\gamma)}_{\pi_2}(\mu) - \eta^{(\gamma)}_{\pi_1}(\mu)\leq \underset{    \substack{ x\sim d^{(\gamma)}_{  \pi_1}, a \sim \pi_2(\cdot|x) }     }{\E} \left[A^{(\gamma)}_{\pi_1}(x,a)   \right]      +\frac{ 2\gamma \epsilon^{(\gamma)}_{\pi_2}}{1-\gamma}   \underset{  x\sim d^{(\gamma)}_{ \pi_1} }{\E} \left[\text{TV}\Big(\pi_2(\cdot|x)~||~\pi_1(\cdot|x)\Big) \right],
\end{equation}

\noindent
where ${\text{TV}}\Big(\pi_2(\cdot|x)||\pi_1(\cdot|x)\Big):=\frac{1}{2}\sum\limits_{a\in \A} \abs{\pi_2(a|x) - \pi_1(a|x)}$, and $\epsilon^{(\gamma)}_{\pi_2}:=\max\limits_{x\in \X}\Big|\underset{a\sim \pi_2(\cdot|x)}{\E}[A^{(\gamma)}_{\pi_1}(x,a)]\Big|$.

\end{lemma}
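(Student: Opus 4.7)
The plan is to follow the two-step strategy originated in Kakade--Langford: first write an exact performance difference using the advantage of the old policy, then pay a controllable price for swapping the state distribution from $d^{(\gamma)}_{\pi_2}$ to $d^{(\gamma)}_{\pi_1}$. For the first step, using the Bellman identity $A^{(\gamma)}_{\pi_1}(x,a) = g(x,a) + \gamma\,\E_{y\sim P(\cdot|x,a)}[V^{(\gamma)}_{\pi_1}(y)] - V^{(\gamma)}_{\pi_1}(x)$ and telescoping along a trajectory sampled under $\pi_2$ with $x^{(0)}\sim\mu$, I would establish the performance difference identity
\begin{equation*}
\eta^{(\gamma)}_{\pi_2}(\mu) - \eta^{(\gamma)}_{\pi_1}(\mu) \;=\; \underset{x\sim d^{(\gamma)}_{\pi_2},\, a\sim\pi_2(\cdot|x)}{\E}\bigl[A^{(\gamma)}_{\pi_1}(x,a)\bigr].
\end{equation*}

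For the second step, set $f(x):=\E_{a\sim\pi_2(\cdot|x)}[A^{(\gamma)}_{\pi_1}(x,a)]$ and write
\begin{equation*}
\underset{x\sim d^{(\gamma)}_{\pi_2}}{\E}[f(x)] \;=\; \underset{x\sim d^{(\gamma)}_{\pi_1}}{\E}[f(x)] + \bigl(d^{(\gamma)}_{\pi_2} - d^{(\gamma)}_{\pi_1}\bigr)^{T} f,
\end{equation*}
so by H\"older's inequality the swap introduces an error of at most $\|d^{(\gamma)}_{\pi_2} - d^{(\gamma)}_{\pi_1}\|_1\cdot \|f\|_\infty$, and the second factor equals $\epsilon^{(\gamma)}_{\pi_2}$ by definition. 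It therefore remains to prove $\|d^{(\gamma)}_{\pi_2} - d^{(\gamma)}_{\pi_1}\|_1 \le \frac{2\gamma}{1-\gamma}\,\E_{x\sim d^{(\gamma)}_{\pi_1}}[\mathrm{TV}(\pi_2(\cdot|x)\|\pi_1(\cdot|x))]$.

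This last inequality is the main technical step. Iterating Lemma \ref{lem:stat} gives the resolvent representation $d^{(\gamma)}_\pi = (1-\gamma)\mu^{T}(I-\gamma P_\pi)^{-1}$, and applying the identity $A^{-1} - B^{-1} = B^{-1}(B-A)A^{-1}$ with $A = I-\gamma P_{\pi_2}$, $B = I-\gamma P_{\pi_1}$ produces
\begin{equation*}
d^{(\gamma)}_{\pi_2} - d^{(\gamma)}_{\pi_1} \;=\; \gamma\, d^{(\gamma)}_{\pi_1}\bigl(P_{\pi_2} - P_{\pi_1}\bigr)\bigl(I - \gamma P_{\pi_2}\bigr)^{-1}.
\end{equation*}
The choice of this (rather than the symmetric) variant of the resolvent identity is what makes the resulting bound depend on the \emph{known} distribution $d^{(\gamma)}_{\pi_1}$ instead of $d^{(\gamma)}_{\pi_2}$, and this is the step I expect to be the main obstacle to motivate cleanly. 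I then use two facts whose verifications are routine: $(i)$ for any signed row vector $v$ and stochastic matrix $P$, $\|vP\|_1\le\|v\|_1$, which applied to the Neumann series yields $\|v(I-\gamma P_{\pi_2})^{-1}\|_1 \le \|v\|_1/(1-\gamma)$; and $(ii)$ $\sum_y|P_{\pi_2}(x,y) - P_{\pi_1}(x,y)| \le \sum_a|\pi_2(a|x)-\pi_1(a|x)| = 2\,\mathrm{TV}(\pi_2(\cdot|x)\|\pi_1(\cdot|x))$. Combining $(i)$, $(ii)$ and the displayed resolvent identity gives the required bound on $\|d^{(\gamma)}_{\pi_2}-d^{(\gamma)}_{\pi_1}\|_1$, and substituting back into the error estimate of the swap step completes the proof.
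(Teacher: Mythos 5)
Your proposal is correct and follows essentially the same route as the paper's proof: the performance difference identity, the swap from $d^{(\gamma)}_{\pi_2}$ to $d^{(\gamma)}_{\pi_1}$ via H\"older, the perturbation identity $d^{(\gamma)}_{\pi_2}-d^{(\gamma)}_{\pi_1}=\gamma\, d^{(\gamma)}_{\pi_1}(P_{\pi_2}-P_{\pi_1})(I-\gamma P_{\pi_2})^{-1}$ (which the paper imports as Lemma~1 of Achiam et al.\ but you rederive directly from the resolvent identity), and the two norm estimates $\|v(I-\gamma P)^{-1}\|_1\le\|v\|_1/(1-\gamma)$ and the TV bound on the rows of $P_{\pi_1}-P_{\pi_2}$. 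No gaps; the argument goes through as written.
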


We provide a summary of the proof of \cite[Corollary 1]{Achiam2017} in Appendix \ref{appendix:bounds}.

The left-hand side of (\ref{eq:perf_bound}) converges to the difference of long-run average costs as $\gamma\rightarrow 1$. Unfortunately, the right-hand side of (\ref{eq:perf_bound})  converges to the positive infinity   because of $(1-\gamma)^{-1}$ factor in the second term.
Our goal is to get a new policy improvement bound for discounted returns that does not degenerate.

%We let $P$  be an irreducible and aperiodic transition matrix with  stationary distribution $d$.
%We define a fundamental matrix of transition matrix $P$ as
%\begin{align}\label{eq:fund_matr}
%Z :=\sum\limits_{t=0}^\infty\left(P - ed^T\right)^t,
%\end{align}
%where $ ed^T$ is a matrix with equal rows $d^T$, see \cite{Kemeny1976a}. The fundamental matrix can also be written as an inverse matrix:
%\begin{align*}
%Z =\left(I - P + ed^T\right)^{-1}.
%\end{align*}

The group inverse $D$ of a matrix $A$ is the unique matrix such that
$
ADA = A,~ DAD=D,  \text{ and } DA=AD.
$ From \cite{Meyer1975}, we know that if stochastic matrix $P$ is aperiodic and  irreducible  then the group inverse matrix of $I-P$ is well-defined and equals to
$%\begin{align}\label{eq:DZ}
 D = \sum\limits_{t=0}^\infty (P^t - ed^T),
$ where $d$ is the stationary distribution of $P$. \cite{Meyer1975} also established a connection between the fundamental matrix of transition matrix $P$ and the group inverse $D$ of a matrix $P$:
\begin{align}\label{eq:DZ}
 D = Z - ed^T,
\end{align}
where $Z :=\sum\limits_{t=0}^\infty\left(P - ed^T\right)^t$ is called the fundamental matrix of $P$, see \cite{Kemeny1976a}.

We let $D^{(\gamma)}_\pi$ be the group inverse of matrix $I-P^{(\gamma)}_\pi$, where $P^{(\gamma)}_\pi$ is defined by (\ref{eq:mod_trans}).
Following \cite{Seneta1991}, we define a one-norm ergodicity coefficient for a matrix $A$ as
\begin{align}\label{eq:erg_def}
\tau_1[A]:= \underset{ \substack{ \|x\|_1=1\\ x^Te = 0}}{\max}\|A^Tx\|_1.
\end{align}
The one-norm ergodicity coefficient has important property that
\begin{equation}\label{eq:prop}
\tau_1[A] = \tau_1[A+ec^T], ~\text{for any vector } c.
\end{equation}
By Lemma \ref{lem:matr_diff} below, $\tau_1\left[D^{(\gamma)}_\pi\right] = \tau_1\left[(I-\gamma P_\pi)^{-1}\right]$, for $\gamma<1$.

\begin{lemma}\label{lem:matr_diff}
We let $\pi$ be an arbitrary policy. Then
\begin{align*}
D^{(\gamma)}_{ \pi} = (I-\gamma P_\pi)^{-1}+e (d^{(\gamma)}_\pi)^T( I - \left(I - \gamma P_\pi \right)^{-1}) - ed_\pi^T.
\end{align*}
\end{lemma}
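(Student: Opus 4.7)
My plan is to verify the identity by checking the characterizing properties of the group inverse. Denote by $R$ the right-hand side of the claimed formula and abbreviate $F := (I - \gamma P_\pi)^{-1}$. The group inverse $D_\pi^{(\gamma)}$ of $I - P_\pi^{(\gamma)}$ is uniquely determined by the three conditions (a) $D_\pi^{(\gamma)} e = 0$, (b) $(d_\pi^{(\gamma)})^T D_\pi^{(\gamma)} = 0$, and (c) $D_\pi^{(\gamma)} (I - P_\pi^{(\gamma)}) = I - e (d_\pi^{(\gamma)})^T$; in fact (b) follows from (a) and (c), since post-multiplying (c) by $d_\pi^{(\gamma)}$ forces $(d_\pi^{(\gamma)})^T D_\pi^{(\gamma)}$ to be proportional to $(d_\pi^{(\gamma)})^T$ and evaluating against $e$ using (a) makes the constant vanish. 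So it suffices to verify (a) and (c) for $R$, and the identity $R = D_\pi^{(\gamma)}$ will follow by uniqueness.

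Two preliminary identities drive every subsequent computation. First, the Neumann expansion $F = \sum_{t \geq 0} \gamma^t P_\pi^t$ combined with $P_\pi e = e$ yields $F e = (1-\gamma)^{-1} e$. Second, the stationarity equation $(d_\pi^{(\gamma)})^T P_\pi^{(\gamma)} = (d_\pi^{(\gamma)})^T$ for $P_\pi^{(\gamma)} = \gamma P_\pi + (1-\gamma) e \mu^T$ rearranges to $(d_\pi^{(\gamma)})^T (I - \gamma P_\pi) = (1-\gamma) \mu^T$, so post-multiplying by $F$ produces the key relation $(d_\pi^{(\gamma)})^T = (1-\gamma) \mu^T F$.

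With these tools in hand, each verification reduces to a direct calculation. For (a), expand $R e$ and substitute $F e = (1-\gamma)^{-1} e$ together with $(d_\pi^{(\gamma)})^T e = d_\pi^T e = 1$; the four contributions cancel pairwise. For (c), distribute $R (I - P_\pi^{(\gamma)})$, apply $F(I - \gamma P_\pi) = I$ to the leading term, and use $F e \mu^T = (1-\gamma)^{-1} e \mu^T$ along with $(d_\pi^{(\gamma)})^T = (1-\gamma) \mu^T F$ to make the perturbation $(1-\gamma) e \mu^T$ cancel against the mixed summands; what remains is exactly $I - e (d_\pi^{(\gamma)})^T$.

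The main obstacle is purely bookkeeping: the undiscounted stationary distribution $d_\pi$ and the discounted stationary distribution $d_\pi^{(\gamma)}$ appear simultaneously in the formula and must be tracked separately, communicating only through the identity $(d_\pi^{(\gamma)})^T = (1-\gamma) \mu^T F$. The payoff of this representation is that it displays $D_\pi^{(\gamma)}$ as $F$ plus a sum of rank-one terms of the form $e c^T$, which is precisely the structure needed to invoke property (\ref{eq:prop}) of the ergodicity coefficient and conclude $\tau_1[D_\pi^{(\gamma)}] = \tau_1[(I - \gamma P_\pi)^{-1}]$.
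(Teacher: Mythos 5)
Your strategy---characterizing the group inverse by the two properties $D^{(\gamma)}_\pi e = 0$ and $D^{(\gamma)}_\pi(I - P^{(\gamma)}_\pi) = I - e(d^{(\gamma)}_\pi)^T$, then checking them for the right-hand side and invoking uniqueness---is sound and genuinely different from the paper's argument. The paper instead starts from the relation $D^{(\gamma)}_\pi + e(d^{(\gamma)}_\pi)^T = \left(I - P^{(\gamma)}_\pi + e(d^{(\gamma)}_\pi)^T\right)^{-1}$ and evaluates that inverse with the Sherman--Morrison formula, viewing $-(1-\gamma)e\mu^T + e(d^{(\gamma)}_\pi)^T$ as a rank-one update of $I-\gamma P_\pi$. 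Your route trades Sherman--Morrison for an (easy) uniqueness argument; both proofs ultimately run on the same two identities $Fe = (1-\gamma)^{-1}e$ and $(d^{(\gamma)}_\pi)^T = (1-\gamma)\mu^T F$, where $F:=(I-\gamma P_\pi)^{-1}$.

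There is, however, one concrete problem: the cancellation you assert in step (c) does not occur for the formula as literally stated. With $R = F + e(d^{(\gamma)}_\pi)^T(I-F) - ed_\pi^T$ one has $F(I-P^{(\gamma)}_\pi) = I - e\mu^T$ and $e(d^{(\gamma)}_\pi)^T(I-F)(I-P^{(\gamma)}_\pi) = e\mu^T - e(d^{(\gamma)}_\pi)^T$, but the trailing term contributes $-ed_\pi^T(I-P^{(\gamma)}_\pi) = (1-\gamma)e(\mu^T - d_\pi^T)$, which does not vanish unless $\mu = d_\pi$. Hence $R(I-P^{(\gamma)}_\pi) = I - e(d^{(\gamma)}_\pi)^T + (1-\gamma)e(\mu - d_\pi)^T$, and property (c) fails in general. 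The resolution is that the last term of the lemma should read $-e(d^{(\gamma)}_\pi)^T$ rather than $-ed_\pi^T$: the paper's own Sherman--Morrison computation yields $D^{(\gamma)}_\pi = \left(I - P^{(\gamma)}_\pi + e(d^{(\gamma)}_\pi)^T\right)^{-1} - e(d^{(\gamma)}_\pi)^T = F + e(d^{(\gamma)}_\pi)^T(I-F) - e(d^{(\gamma)}_\pi)^T$, and with that replacement your checks of (a) and (c) both close. Your verification scheme is in fact sharp enough to detect this discrepancy, so carry out (c) explicitly and flag the correction rather than asserting the remainder equals $I - e(d^{(\gamma)}_\pi)^T$. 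The downstream conclusion is unaffected either way: both candidate expressions differ from $(I-\gamma P_\pi)^{-1}$ only by matrices of the form $ec^T$, so property (\ref{eq:prop}) still gives $\tau_1\left[D^{(\gamma)}_\pi\right] = \tau_1\left[(I-\gamma P_\pi)^{-1}\right]$.
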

The proof of Lemma \ref{lem:matr_diff} can be found in Appendix \ref{appendix:bounds}.

We are ready to state the main result of Section \ref{sec:PIB_f}.

  \begin{theorem}\label{thm:main_bounds}
The following bound on the difference of discounted returns of two policies ${\pi_1}$ and $\pi_2$  holds:
\begin{align}\label{eq:bound_opt}
\eta^{(\gamma)}_{\pi_2}(\mu) -   \eta^{(\gamma)}_{\pi_1}(\mu)\leq \underset{    \substack{ x\sim d^{(\gamma)}_{  \pi_1}\\ a \sim \pi_2(\cdot|x) }     }{\E} \left[A^{(\gamma)}_{\pi_1}(x,a)   \right]   + 2\gamma \epsilon^{(\gamma)}_{\pi_2} \tau_1\left[D^{(\gamma)}_{\pi_2}\right] \underset{  x\sim d^{(\gamma)}_{ \pi_1} }{\E} \left[\text{TV}\Big(\pi_2(\cdot|x)~||~\pi_1(\cdot|x)\Big) \right].
\end{align}
%where $\text{TV}\Big(\pi_2(\cdot|x)||\pi(\cdot|x)\Big):=\frac{1}{2}\sum\limits_{a\in \A} \abs{\pi_2(a|x) - \pi(a|x)}$, $\epsilon_\gamma^{\pi_2}:=\max\limits_{x\in \X}\Big[\underset{a\sim \pi_2(\cdot|x)}{\E}[A^{(\gamma)}_{\pi_1}(x,a)]\Big]$.
\end{theorem}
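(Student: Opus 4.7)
The plan is to derive the bound by combining the standard performance difference identity with a perturbation bound on the discounted future state distribution, where the perturbation bound is obtained through the group inverse machinery of Lemma \ref{lem:stat} and Lemma \ref{lem:matr_diff} together with the one-norm ergodicity coefficient in (\ref{eq:erg_def}).

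First I would invoke the performance difference identity (the same identity used in the derivation of Lemma~\ref{lem:perf_bound}) to write
\begin{equation*}
\eta^{(\gamma)}_{\pi_2}(\mu) - \eta^{(\gamma)}_{\pi_1}(\mu) = \underset{x\sim d^{(\gamma)}_{\pi_2},\, a\sim \pi_2(\cdot|x)}{\E}\left[A^{(\gamma)}_{\pi_1}(x,a)\right].
\end{equation*}
Adding and subtracting the same expectation but under $d^{(\gamma)}_{\pi_1}$ and applying H\"older's inequality splits the right-hand side into the desired leading advantage term plus an error
\begin{equation*}
\sum_{x\in \X}\bigl(d^{(\gamma)}_{\pi_2}(x) - d^{(\gamma)}_{\pi_1}(x)\bigr)\underset{a\sim \pi_2(\cdot|x)}{\E}[A^{(\gamma)}_{\pi_1}(x,a)] ~\le~ \epsilon^{(\gamma)}_{\pi_2}\,\bigl\|d^{(\gamma)}_{\pi_2} - d^{(\gamma)}_{\pi_1}\bigr\|_1,
\end{equation*}
by the very definition of $\epsilon^{(\gamma)}_{\pi_2}$. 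It then suffices to show that the remaining $\ell^1$ distance is bounded by $2\gamma\,\tau_1[D^{(\gamma)}_{\pi_2}]\,\E_{x\sim d^{(\gamma)}_{\pi_1}}[\text{TV}(\pi_2(\cdot|x)\|\pi_1(\cdot|x))]$.

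Second, I would establish the perturbation identity. By Lemma~\ref{lem:stat} both $d^{(\gamma)}_{\pi_1}$ and $d^{(\gamma)}_{\pi_2}$ are stationary distributions of the discounted transition matrices $P^{(\gamma)}_{\pi_1}$ and $P^{(\gamma)}_{\pi_2}$. Subtracting the stationarity equations and rearranging gives
\begin{equation*}
\bigl(d^{(\gamma)}_{\pi_2}-d^{(\gamma)}_{\pi_1}\bigr)^T\!\bigl(I-P^{(\gamma)}_{\pi_2}\bigr) = \bigl(d^{(\gamma)}_{\pi_1}\bigr)^T\!\bigl(P^{(\gamma)}_{\pi_2}-P^{(\gamma)}_{\pi_1}\bigr).
\end{equation*}
Multiplying on the right by the group inverse $D^{(\gamma)}_{\pi_2}$ and using the group-inverse identity $(I-P^{(\gamma)}_{\pi_2})D^{(\gamma)}_{\pi_2}=I-e(d^{(\gamma)}_{\pi_2})^T$ together with $(d^{(\gamma)}_{\pi_2}-d^{(\gamma)}_{\pi_1})^Te=0$, I obtain
\begin{equation*}
\bigl(d^{(\gamma)}_{\pi_2}-d^{(\gamma)}_{\pi_1}\bigr)^T = \bigl(d^{(\gamma)}_{\pi_1}\bigr)^T\bigl(P^{(\gamma)}_{\pi_2}-P^{(\gamma)}_{\pi_1}\bigr)D^{(\gamma)}_{\pi_2}.
\end{equation*}

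Third, I would apply the ergodicity coefficient. The vector $y := (P^{(\gamma)}_{\pi_2}-P^{(\gamma)}_{\pi_1})^T d^{(\gamma)}_{\pi_1}$ satisfies $y^Te=0$ because each row of $P^{(\gamma)}_{\pi_2}-P^{(\gamma)}_{\pi_1}$ sums to zero. Hence, by the definition (\ref{eq:erg_def}) of $\tau_1$,
\begin{equation*}
\bigl\|d^{(\gamma)}_{\pi_2}-d^{(\gamma)}_{\pi_1}\bigr\|_1 = \bigl\|\bigl(D^{(\gamma)}_{\pi_2}\bigr)^T y\bigr\|_1 \le \tau_1\bigl[D^{(\gamma)}_{\pi_2}\bigr]\,\|y\|_1.
\end{equation*}
Finally, using $P^{(\gamma)}_{\pi_i}=\gamma P_{\pi_i}+(1-\gamma)e\mu^T$, the rank-one perturbation cancels in the difference, giving a factor $\gamma$, and a direct triangle-inequality computation shows
\begin{equation*}
\|y\|_1 \le \gamma\sum_{x\in \X} d^{(\gamma)}_{\pi_1}(x)\sum_{y\in \X}\bigl|P_{\pi_2}(x,y)-P_{\pi_1}(x,y)\bigr| \le 2\gamma\,\underset{x\sim d^{(\gamma)}_{\pi_1}}{\E}\bigl[\text{TV}(\pi_2(\cdot|x)\|\pi_1(\cdot|x))\bigr],
\end{equation*}
where the last inequality uses $\sum_y|\sum_a(\pi_2(a|x)-\pi_1(a|x))P(y|x,a)|\le \sum_a|\pi_2(a|x)-\pi_1(a|x)|$. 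Chaining these three bounds yields (\ref{eq:bound_opt}).

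The main obstacle is the careful use of the group-inverse identity in the second step: one must verify that the right multiplication by $D^{(\gamma)}_{\pi_2}$ is justified and that the residual term $e(d^{(\gamma)}_{\pi_2})^T$ cancels because differences of probability vectors are orthogonal to $e$. Property (\ref{eq:prop}) of $\tau_1$ is what makes the appearance of the group inverse (rather than some other generalized inverse) natural, and Lemma~\ref{lem:matr_diff} guarantees that the bound is equivalent to one stated in terms of $(I-\gamma P_{\pi_2})^{-1}$, which is what makes the bound continuous in $\gamma$ as $\gamma \uparrow 1$.
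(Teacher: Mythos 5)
Your proof is correct and follows essentially the same route as the paper's: the same Achiam-style decomposition into the advantage term plus $\epsilon^{(\gamma)}_{\pi_2}\,\|d^{(\gamma)}_{\pi_2}-d^{(\gamma)}_{\pi_1}\|_1$, the same perturbation identity $(d^{(\gamma)}_{\pi_2}-d^{(\gamma)}_{\pi_1})^T=(d^{(\gamma)}_{\pi_1})^T(P^{(\gamma)}_{\pi_2}-P^{(\gamma)}_{\pi_1})D^{(\gamma)}_{\pi_2}$, and the same application of $\tau_1$ together with the cancellation of the rank-one term $(1-\gamma)e\mu^T$ to extract the factor $\gamma$. The only cosmetic difference is that you derive the perturbation identity directly from $(I-P^{(\gamma)}_{\pi_2})D^{(\gamma)}_{\pi_2}=I-e(d^{(\gamma)}_{\pi_2})^T$ and $(d^{(\gamma)}_{\pi_2}-d^{(\gamma)}_{\pi_1})^Te=0$, whereas the paper cites Meyer's equation (4.1); your version is slightly more self-contained but is the same argument.
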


  \begin{proof}[\textbf{Proof of Theorem \ref{thm:main_bounds}}]
We   closely follow the first steps in the proof of Lemma 2 in \cite{Achiam2017} and start with
\begin{align*}
\eta^{(\gamma)}_{\pi_2}(\mu) -   \eta^{(\gamma)}_{\pi_1}(\mu)\leq \underset{    \substack{ x\sim d^{(\gamma)}_{ \pi_1}\\a \sim \pi_2(\cdot|x) }     }{\E} \left[A^{(\gamma)}_{\pi_1}(x,a)   \right]     +   \max\limits_{x\in \X}\Big|\underset{a\sim \pi_2(\cdot|x)}{\E}[A^{(\gamma)}_{\pi_1}(x,a)]\Big|~ \left\|d^{(\gamma)}_{\pi_1} - d^{(\gamma)}_{\pi_2}\right\|_1.
\end{align*}
Next,  unlike \cite{Achiam2017}, we obtain an upper bound on $ \|d^{(\gamma)}_{\pi_2}-d^{(\gamma)}_{ \pi_1}\|_1$ that does not degenerate as $\gamma\rightarrow 1$. We use the following  perturbation identity:
\begin{align}\label{eq:old_new}
(d^{(\gamma)}_{\pi_2})^T-(d^{(\gamma)}_{ \pi_1})^T &= (D^{(\gamma)}_{\pi_1})^T (P_{\pi_1}^{(\gamma)} - P_{\pi_2}^{(\gamma)})d^{(\gamma)}_{\pi_2}\\
&=\gamma (D^{(\gamma)}_{\pi_1})^T (P_{  \pi_1} - P_{ \pi_2})d^{(\gamma)}_{\pi_2}.\nonumber
\end{align}
Identity (\ref{eq:old_new}) follows from the perturbation identity for stationary distributions, see equation (4.1) in \cite{Meyer1980}, and the fact that $d^{(\gamma)}_{\pi_2}$ and $d^{(\gamma)}_{ \pi_1}$ are the stationary distributions of the discounted transition matrices $P_{\pi_2}^{(\gamma)}$ and $P_{\pi_1}^{(\gamma)}$, respectively.
We make use of the ergodicity coefficient (\ref{eq:erg_def}) to get a new perturbation bound:
\begin{align}\label{eq:inv_tau}
\|d^{(\gamma)}_{\pi_2}-d^{(\gamma)}_{ \pi_1} \|_1 &=\gamma \left\| \left(D^{(\gamma)}_{\pi_2}\right)^T(P_{  \pi_1} - P_{ \pi_2})^Td^{(\gamma)}_{\pi_1} \right\|_1 \nonumber \\
&\leq \gamma \tau_1 \left[D^{(\gamma)}_{\pi_2}  \right]\left\|(P_{  \pi_1} - P_{ \pi_2})^Td^{(\gamma)}_{\pi_1} \right\|_1\\
& \leq 2\gamma\tau_1\left[D^{(\gamma)}_{\pi_2}\right] \underset{  x\sim d^{(\gamma)}_{ \pi_1} }{\E} \left[\text{TV}\Big(\pi_2(\cdot|x)~||~\pi_1(\cdot|x)\Big) \right],\nonumber
\end{align}
where first equality follows from (\ref{eq:old_new}), first inequality follows from ergodicity coefficient (\ref{eq:erg_def}) definition and the fact that $\left( (P_{  \pi_1} - P_{\pi_2})^Td^{(\gamma)}_{\pi_1}\right)^Te = 0$, the second inequality follows from a definition of the total variation distance, see the proof of \cite[Lemma 3]{Achiam2017}.
 \end{proof}

The novel policy improvement bound (\ref{eq:bound_opt}) converges to a meaningful bound on the difference of average costs as $\gamma$ goes to 1. Corollary \ref{col:main_av} follows from Theorem \ref{thm:main_bounds}, Lemma \ref{lem:adv} and the fact that $\tau_1\left[D^{(\gamma)}_{\pi_2}\right] \rightarrow \tau_1\left[D_{\pi_2}\right]$ as $\gamma\rightarrow 1$.

  \begin{corollary}\label{col:main_av}
The following bound on the difference of long-run average costs of two policies $\pi_1$ and $\pi_2$  holds:
\begin{align}\label{eq:bound_av}
 \eta_{\pi_2} -  \eta_{\pi_1}  \leq  \underset{    \substack{ x\sim d_{  \pi_1}\\ a \sim \pi_2(\cdot|x) }     }{\E} \left[A_{\pi_1}(x,a)   \right]  + 2 \epsilon_{\pi_2} \tau_1\left[D_{\pi_2}\right] \underset{  x\sim d_{ \pi_1} }{\E} \left[\text{TV}\Big(\pi_2(\cdot|x)~||~\pi_1(\cdot|x)\Big) \right],
\end{align}
where  $D_{\pi_2}$ is the group inverse of matrix $I -  P_{\pi_2}$, $\epsilon_{\pi_2}:=\max\limits_{x\in \X}\Big|\underset{a\sim \pi_2(\cdot|x)}{\E}[A_{\pi_1}(x,a)]\Big|$.
\end{corollary}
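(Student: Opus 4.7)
The plan is to derive (\ref{eq:bound_av}) by passing to the limit $\gamma \uparrow 1$ in the bound (\ref{eq:bound_opt}) of Theorem \ref{thm:main_bounds}, handling each of the three factors on the right-hand side separately. First, the left-hand side $\eta^{(\gamma)}_{\pi_2}(\mu) - \eta^{(\gamma)}_{\pi_1}(\mu)$ converges to $\eta_{\pi_2} - \eta_{\pi_1}$ by identity (\ref{eq:conv1}) of Lemma \ref{lem:adv}.

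For the first term of the right-hand side, I would verify $A^{(\gamma)}_{\pi_1}(x,a) \to A_{\pi_1}(x,a)$ for every $(x,a)$ directly from Lemma \ref{lem:adv}, and show that $d^{(\gamma)}_{\pi_1} \to d_{\pi_1}$. The latter is an Abelian-type statement: since $d^{(\gamma)}_{\pi_1}$ is by definition the Abel average $(1-\gamma)\sum_{t=0}^{\infty}\gamma^t \mu^T P_{\pi_1}^t$, and the unichain assumption guarantees $\mu^T P_{\pi_1}^t \to d_{\pi_1}^T$ in Cesàro sense, the Hardy--Littlewood Abelian theorem yields pointwise convergence; on a finite state space this suffices to pass the limit through both the expectation and the maximum defining $\epsilon^{(\gamma)}_{\pi_2}$, giving $\epsilon^{(\gamma)}_{\pi_2} \to \epsilon_{\pi_2}$ and convergence of the first term on the right-hand side of (\ref{eq:bound_opt}) to the first term on the right-hand side of (\ref{eq:bound_av}).

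The main obstacle will be justifying $\tau_1\left[D^{(\gamma)}_{\pi_2}\right] \to \tau_1\left[D_{\pi_2}\right]$. Since $\tau_1[\cdot]$ is manifestly continuous in the matrix entries on the finite-dimensional space of matrices, it suffices to prove entrywise convergence $D^{(\gamma)}_{\pi_2} \to D_{\pi_2}$. The subtle point is that the representation $D^{(\gamma)}_{\pi_2} = (I-\gamma P_{\pi_2})^{-1} + e(d^{(\gamma)}_{\pi_2})^T(I - (I-\gamma P_{\pi_2})^{-1}) - e d_{\pi_2}^T$ of Lemma \ref{lem:matr_diff} contains the term $(I-\gamma P_{\pi_2})^{-1}$, which diverges as $\gamma \uparrow 1$. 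The strategy is to exploit the property (\ref{eq:prop}) of the ergodicity coefficient, $\tau_1[A] = \tau_1[A + e c^T]$, to absorb any divergent rank-one component of the form $e c^T$ before taking the limit; equivalently, after adding the $-e d_{\pi_2}^T$ correction one can identify the remaining matrix with the fundamental matrix via the relation (\ref{eq:DZ}) and the Neumann-type expansion $(I-\gamma P_{\pi_2})^{-1} - \frac{1}{1-\gamma} e d_{\pi_2}^T = \sum_{t=0}^{\infty}\gamma^t \left(P_{\pi_2}^t - e d_{\pi_2}^T\right)$, whose summands decay geometrically by $\V$-uniform ergodicity on the finite state space, so the series converges uniformly in $\gamma \in [0,1]$ to $Z_{\pi_2} - e d_{\pi_2}^T = D_{\pi_2}$.

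Combining the three convergences and the continuity of $\tau_1[\cdot]$, every term in (\ref{eq:bound_opt}) converges to the corresponding term in (\ref{eq:bound_av}); since a non-strict inequality is preserved under limits, this completes the proof.
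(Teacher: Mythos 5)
Your proposal is correct and follows essentially the same route as the paper, which simply states that the corollary "follows from Theorem \ref{thm:main_bounds}, Lemma \ref{lem:adv} and the fact that $\tau_1\left[D^{(\gamma)}_{\pi_2}\right]\to\tau_1\left[D_{\pi_2}\right]$ as $\gamma\to 1$." Your only addition is to supply the justification of that last convergence --- using property (\ref{eq:prop}) to discard the divergent rank-one part of $(I-\gamma P_{\pi_2})^{-1}$ and the geometrically convergent expansion $\sum_{t\ge 0}\gamma^t\left(P_{\pi_2}^t-ed_{\pi_2}^T\right)\to D_{\pi_2}$ --- which is a valid filling-in of a detail the paper leaves implicit.
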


\subsection{Interpretation of $\tau_1\left[ D^{(\gamma)}_{\pi}\right]$ }\label{sec:interp_tau_1_gamma}

 We provide several  bounds on $\tau_1[D^{(\gamma)}_{\pi}] $ to reveal its dependency on the discount factor $\gamma$ and policy $\pi$.
First, we compute $\tau_1[D^{(\gamma)}_{\pi}] $ using  geometric convergence rates of transition matrices to their stationary distributions.
For any policy $\pi$, there exists constants $k_\pi\in (0,1)$ and $C_\pi>0$ such that $
\|P_\pi^t(x, \cdot) - d_\pi(\cdot)\|_1\leq C_\pi k_\pi^t,
$  for each $x\in \X$, where $d_\pi$ is the stationary distribution of transition matrix $P_\pi$.
By  Appendix Section 5 in \cite{Cinlar2013}, $k_\pi$ can be taken to be $|\lambda_2|$, the largest number among the absolute values of the
eigenvalues of $P_\pi$ excluding the eigenvalue 1.

Matrix $P^{(\gamma)}_\pi$ defined by (\ref{eq:mod_trans}) is called the Google matrix, see \cite{Langville2003}. If  the spectrum of  transition matrix $P_\pi$ is $\{1, \lambda_2, \lambda_3, ..., \lambda_n\}$, then   the spectrum of    matrix $P^{(\gamma)}_\pi$ is $\{1, \gamma\lambda_2, \gamma\lambda_3, ..., \gamma\lambda_n\}$.
Google matrix $P^{(\gamma)}_\pi$ exhibits  a faster convergence rate than $P_\pi$: there exists constant $C^{(\gamma)}_\pi>0$,  such that   $
\|(P^{(\gamma)}_\pi)^t(x, \cdot) - d^{(\gamma)}_{\pi}(\cdot)\|_1\leq C_\pi^{(\gamma)} |\gamma\lambda_2|^t,\text{ for each }x\in \X.
$ Inequalities $ \tau_1\left[ D^{(\gamma)}_{\pi} \right]\leq \left\|D^{(\gamma)}_{\pi}\right\|_\infty \leq \sum\limits_{t=0}^\infty \left\|(P^{(\gamma)}_\pi)^t - e(d^{(\gamma)}_{\pi})^T\right\|_\infty $ lead to the following bound.
  \begin{lemma}\label{lem:lambda}
  We let $D^{(\gamma)}_{\pi}  $  be the group inverse matrix of $I-P^{(\gamma)}_{\pi}$. Then for any discount factor $\gamma\in (0,1]$ there exists  $C_\pi^{(\gamma)}>0$ such that
 \begin{align*}
  \tau_1\left[ d^{(\gamma)}_{\pi}\right ]\leq C_\pi^{(\gamma)} \frac{1}{1- \gamma|\lambda_2|},
 \end{align*}
where $\lambda_2$ is an eigenvalue of $P_\pi$ with the second largest absolute value.
\end{lemma}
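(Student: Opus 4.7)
The plan is to chain together the three inequalities already laid out in the paragraph immediately preceding the lemma. First, I would invoke the string $\tau_1\!\left[D^{(\gamma)}_\pi\right]\le \|D^{(\gamma)}_\pi\|_\infty \le \sum_{t=0}^\infty \|(P^{(\gamma)}_\pi)^t - e(d^{(\gamma)}_\pi)^T\|_\infty$. The first inequality combines the general bound $\tau_1[A]\le \|A\|_\infty$ with the invariance property \eqref{eq:prop}, applied to the representation $D^{(\gamma)}_\pi = \sum_{t=0}^\infty\bigl((P^{(\gamma)}_\pi)^t - e(d^{(\gamma)}_\pi)^T\bigr)$, which is the standard series for the group inverse of $I-P^{(\gamma)}_\pi$ when the Google matrix is aperiodic and irreducible. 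The second inequality is just the triangle inequality for $\|\cdot\|_\infty$.

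Next, I would rewrite each summand in its coordinate form: by the definition of the induced operator $\infty$-norm, $\|(P^{(\gamma)}_\pi)^t - e(d^{(\gamma)}_\pi)^T\|_\infty = \max_{x\in\X}\|(P^{(\gamma)}_\pi)^t(x,\cdot) - d^{(\gamma)}_\pi(\cdot)\|_1$. Then I would apply the geometric convergence bound stated just before the lemma, namely $\|(P^{(\gamma)}_\pi)^t(x,\cdot) - d^{(\gamma)}_\pi(\cdot)\|_1 \le C_\pi^{(\gamma)}|\gamma\lambda_2|^t$, which is available because the spectrum of $P^{(\gamma)}_\pi$ equals $\{1,\gamma\lambda_2,\dots,\gamma\lambda_n\}$ and so $\gamma|\lambda_2|$ controls the subdominant eigenvalue.

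Finally, I would sum the geometric series to conclude
\[
\tau_1\!\left[D^{(\gamma)}_\pi\right] \le \sum_{t=0}^\infty C_\pi^{(\gamma)}|\gamma\lambda_2|^t = \frac{C_\pi^{(\gamma)}}{1-\gamma|\lambda_2|},
\]
which is exactly the claimed bound. Convergence of the series requires $\gamma|\lambda_2|<1$; this holds because $\gamma\in(0,1]$ and, under the assumed unichain plus aperiodicity framework used throughout Section \ref{sec:PIB_f}, the subdominant eigenvalue satisfies $|\lambda_2|<1$. The only nontrivial ingredient is the existence of the constant $C_\pi^{(\gamma)}$ realizing the geometric rate $|\gamma\lambda_2|$, which is a standard consequence of the Jordan decomposition of the Google matrix (or, equivalently, of the spectral radius formula applied to $P^{(\gamma)}_\pi - e(d^{(\gamma)}_\pi)^T$); since this bound is already asserted in the preceding paragraph, no additional work is required beyond assembling the pieces. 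The whole argument is essentially a one-line calculation, so there is no real obstacle to overcome.
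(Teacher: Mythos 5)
Your proposal is correct and follows exactly the route the paper intends: the paper's ``proof'' of this lemma is precisely the chain $\tau_1[D^{(\gamma)}_\pi]\le\|D^{(\gamma)}_\pi\|_\infty\le\sum_{t=0}^\infty\|(P^{(\gamma)}_\pi)^t-e(d^{(\gamma)}_\pi)^T\|_\infty$ stated in the preceding paragraph, combined with the geometric bound $\|(P^{(\gamma)}_\pi)^t(x,\cdot)-d^{(\gamma)}_\pi(\cdot)\|_1\le C^{(\gamma)}_\pi|\gamma\lambda_2|^t$ and a geometric series summation. Your additional remarks (the series representation of the group inverse, the coordinate form of the $\infty$-norm, and why $\gamma|\lambda_2|<1$) just make explicit what the paper leaves implicit.
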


It may be  difficult to express how constant $C_\pi^{(\gamma)}$  depends  on the discount factor for a general transition matrix $P_\pi.$ In Lemma~\ref{lem:group} below we derive another upper bound on $\tau_1\left[D^{(\gamma)}_{ \pi}\right] $ that does not include additional constants dependent on $\gamma$. The proof of Lemma \ref{lem:group} is in Appendix \ref{appendix:bounds}.

  For a given policy $\pi$, we assume the transition matrix $P_\pi$ is aperiodic and irreducible. By Proposition 1.7 in \cite{Levin2017}, there exists an integer $\ell$ such that $P_\pi^q(x,y)>0$ for all $x,y\in \X$, and $q\geq \ell.$  Then, there exists a sufficiently small constant $\delta_\pi^{(\mu)}>0$, such that
   \begin{align}\label{eq:minor}
 P_\pi^\ell(x,y)\geq \delta^{(\mu)}_\pi \mu(y), \quad \text{ for each }x,y\in \X,
  \end{align}
where $\mu$ denotes the distribution of the initial state.

  \begin{lemma}\label{lem:group}
  We let $D^{(\gamma)}_{\pi}  $  be the group inverse matrix of $I-P^{(\gamma)}_{\pi}$.

 We let $\delta^{(\mu)}_\pi$ be a constant  that satisfies (\ref{eq:minor}) for $P_\pi$ and some integer $\ell$. Then
  \begin{align*}
  \tau_1[ D^{(\gamma)}_{\pi} ]\leq  \frac{2\ell}{1 - \gamma +  \gamma^\ell \delta_\pi^{(\mu)}},
  \end{align*}
  where $\delta^{(\mu)}_\pi$ and $\ell$ are  independent of $\gamma$.
\end{lemma}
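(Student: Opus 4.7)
The plan is to estimate $\tau_1\!\left[D_\pi^{(\gamma)}\right]$ via the series expansion of the group inverse combined with a Doeblin-type contraction of $P_\pi^\ell$ coming from the minorization (\ref{eq:minor}). As a starting point, I would use the standard representation
$D_\pi^{(\gamma)} = \sum_{t=0}^\infty \bigl( (P_\pi^{(\gamma)})^t - e (d_\pi^{(\gamma)})^T \bigr)$
(valid because $P_\pi^{(\gamma)}$ is irreducible and aperiodic for $\gamma\in[0,1)$), apply the triangle inequality to pull $\tau_1$ inside the series, and then invoke the shift-invariance property (\ref{eq:prop}) to drop the rank-one subtractions: $\tau_1[D_\pi^{(\gamma)}] \le \sum_{t=0}^\infty \tau_1\!\left[(P_\pi^{(\gamma)})^t\right]$.

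The first technical step is an induction that reduces powers of $P_\pi^{(\gamma)}$ to powers of $P_\pi$. I would show
$(P_\pi^{(\gamma)})^k = \gamma^k P_\pi^k + e c_k^T$
for some vector $c_k$ (with $c_k^T e = 1-\gamma^k$), where the induction step uses $P_\pi e = e$, $\mu^T e = 1$, and $e\mu^T e\mu^T = e\mu^T$ to absorb every cross-term into a rank-one correction. Property (\ref{eq:prop}) and homogeneity of $\tau_1$ then give $\tau_1\!\left[(P_\pi^{(\gamma)})^k\right] = \gamma^k \tau_1[P_\pi^k]$. Next, the minorization hypothesis (\ref{eq:minor}) yields the Doeblin splitting $P_\pi^\ell = \delta_\pi^{(\mu)} e \mu^T + (1-\delta_\pi^{(\mu)}) Q$ with $Q$ stochastic, and applying (\ref{eq:prop}), homogeneity, and $\tau_1[Q] \le 1$ gives $\tau_1[P_\pi^\ell] \le 1 - \delta_\pi^{(\mu)}$. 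Writing an arbitrary exponent as $t = j\ell + r$ with $0\le r < \ell$ and using submultiplicativity $\tau_1[AB]\le \tau_1[A]\tau_1[B]$ for stochastic matrices (which holds because $A$ stochastic preserves the hyperplane $\{x: x^T e = 0\}$) yields $\tau_1[P_\pi^t] \le (1-\delta_\pi^{(\mu)})^j$.

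Assembling the pieces, I would split the series into blocks of length $\ell$ and sum two geometric series,
\begin{align*}
\tau_1[D_\pi^{(\gamma)}] \;\le\; \sum_{j=0}^\infty (1-\delta_\pi^{(\mu)})^j \sum_{r=0}^{\ell-1} \gamma^{j\ell+r} \;=\; \frac{1-\gamma^\ell}{1-\gamma}\cdot \frac{1}{1 - (1-\delta_\pi^{(\mu)})\gamma^\ell},
\end{align*}
and then conclude with the elementary estimates $1-\gamma^\ell = (1-\gamma)\sum_{k=0}^{\ell-1}\gamma^k \le \ell(1-\gamma)$ together with $1 - (1-\delta_\pi^{(\mu)})\gamma^\ell = 1 - \gamma^\ell + \gamma^\ell \delta_\pi^{(\mu)} \ge 1 - \gamma + \gamma^\ell \delta_\pi^{(\mu)}$ (using $\gamma^\ell \le \gamma$ for $\gamma\in[0,1]$ and $\ell\ge 1$). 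The main obstacle is the induction identity for $(P_\pi^{(\gamma)})^k$: it is where all the algebra of the stochastic-plus-rank-one decomposition has to be checked carefully, and everything that follows is a mechanical application of the three structural properties of $\tau_1$ (shift-invariance, homogeneity, submultiplicativity on stochastic factors). A secondary point that must be verified cleanly is the submultiplicativity statement itself, which is not a generic property of $\tau_1$ but follows here because $P_\pi$ is stochastic and hence maps $\{x: x^T e = 0\}$ into itself.
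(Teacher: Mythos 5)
Your proof is correct, and it actually yields the slightly sharper constant $\ell$ in place of $2\ell$; since the stated bound only claims $2\ell$, this is fine. Your route is genuinely different from the paper's. The paper proves by induction the \emph{elementwise inequality} $(P^{(\gamma)}_\pi)^\ell \widetilde\geq \gamma^\ell P_\pi^\ell + (1-\gamma)e\mu^T$, combines it with (\ref{eq:minor}) to obtain a minorization $(P^{(\gamma)}_\pi)^\ell(x,y)\geq(\gamma^\ell\delta_\pi^{(\mu)}+1-\gamma)\mu(y)$ for the Google matrix itself, cites an external convergence-rate lemma of Rosenthal to get $\|(P^{(\gamma)}_\pi)^t - e(d^{(\gamma)}_\pi)^T\|_\infty \leq 2(\gamma-\gamma^\ell\delta_\pi^{(\mu)})^{[t/\ell]}$, and then passes through the coarser bound $\tau_1[D^{(\gamma)}_\pi]\leq\|D^{(\gamma)}_\pi\|_\infty$ before summing the series. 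You instead establish the \emph{exact} rank-one decomposition $(P^{(\gamma)}_\pi)^k=\gamma^k P_\pi^k+ec_k^T$ (your induction is easily verified, with $c_{k+1}^T=(1-\gamma)\mu^T+\gamma c_k^TP_\pi$), kill the rank-one part by shift-invariance (\ref{eq:prop}), and then handle $\tau_1[P_\pi^t]$ via the Doeblin splitting and submultiplicativity of $\tau_1$ on stochastic factors. What your approach buys is that it is entirely self-contained (no appeal to the Rosenthal lemma) and that it works with the seminorm $\tau_1$ throughout rather than dropping down to $\|\cdot\|_\infty$ and total-variation bounds, which is precisely where the factor of $2$ is saved; what the paper's approach buys is brevity given the citation, plus a minorization condition for $P^{(\gamma)}_\pi$ that is reusable elsewhere. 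Your two flagged verification points (the induction identity and the submultiplicativity $\tau_1[AB]\leq\tau_1[A]\tau_1[B]$ when $A$ is stochastic, which holds because $Ae=e$ implies $A^T$ preserves the hyperplane $\{x:x^Te=0\}$) both check out.
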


\subsection{Condition numbers in policy improvement bounds}

 Lemma \ref{lem:opt_bound}  demonstrates that inequality (\ref{eq:inv_tau}) is   the best (smallest) norm-wise bound on the difference of stationary distributions given "averaged" transition matrix perturbation.
Lemma \ref{lem:opt_bound} directly follows from the results in  \cite{Kirkland2008}.

\begin{lemma}\label{lem:opt_bound}
We consider an irreducible and aperiodic transition matrix   $ P$ with the stationary distribution   $  d$. We say that $\tau[  P]$ is a condition number of matrix $  P$ if inequality
\begin{align}\label{eq:cond}
\|d - \tilde d\|_1 \leq \tau[  P]~\| ( \tilde P  -  P)^T \tilde d \|_1,
\end{align}
holds for any irreducible and aperiodic transition matrix $\tilde P$ with the stationary distribution $\tilde d$.
We let $  D$ be a group inverse matrix of $I-  P$.

 Then  $\tau_1[  D]$ is the smallest condition number, i.e.  inequality $ \tau_1[ D]\leq \tau[  P]$
holds for any condition number $\tau[ P]$ satisfying (\ref{eq:cond}).

\end{lemma}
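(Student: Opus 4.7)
The plan is to verify the two conditions required for $\tau_1[D]$ to be the smallest condition number: (i) that $\tau_1[D]$ itself is a condition number in the sense of (\ref{eq:cond}), and (ii) that no strictly smaller constant will do. Step (i) is essentially already present inside the proof of Theorem \ref{thm:main_bounds}: the standard Schweitzer--Meyer perturbation identity gives $(\tilde d-d)^T=\tilde d^T(\tilde P-P)D$ (so $\tilde d-d=D^T(\tilde P-P)^T\tilde d$), and because the rows of both $\tilde P$ and $P$ sum to one, the vector $v:=(\tilde P-P)^T\tilde d$ satisfies $v^Te=0$ and $\|v\|_1\le\|v\|_1$, so the definition (\ref{eq:erg_def}) of $\tau_1$ immediately yields $\|\tilde d-d\|_1\le\tau_1[D]\,\|v\|_1$. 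Hence $\tau[P]=\tau_1[D]$ satisfies (\ref{eq:cond}).

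For step (ii), I would show the bound is tight by constructing a family of admissible perturbations that asymptotically saturates the ratio in (\ref{eq:cond}). Let $x^*$ attain the maximum in (\ref{eq:erg_def}), so that $\|x^*\|_1=1$, $(x^*)^Te=0$, and $\|D^Tx^*\|_1=\tau_1[D]$. The goal is to exhibit a stochastic matrix $E$ with zero row sums such that $E^Td=x^*$, and then to set $\tilde P_\varepsilon=P+\varepsilon E$ for small $\varepsilon>0$. Since $P$ is irreducible and aperiodic, for sufficiently small $\varepsilon$ the same is true of $\tilde P_\varepsilon$, so its stationary distribution $\tilde d_\varepsilon$ exists. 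A first-order expansion using the perturbation identity yields
\begin{align*}
\tilde d_\varepsilon - d = \varepsilon\,D^T E^T d + O(\varepsilon^2) = \varepsilon\,D^T x^* + O(\varepsilon^2),
\end{align*}
and $(\tilde P_\varepsilon-P)^T\tilde d_\varepsilon = \varepsilon E^T\tilde d_\varepsilon = \varepsilon x^* + O(\varepsilon^2)$. Dividing gives
\begin{align*}
\frac{\|\tilde d_\varepsilon-d\|_1}{\|(\tilde P_\varepsilon-P)^T\tilde d_\varepsilon\|_1} \;\longrightarrow\; \frac{\|D^Tx^*\|_1}{\|x^*\|_1}=\tau_1[D]\quad\text{as }\varepsilon\downarrow 0.
\end{align*}
Therefore any $\tau[P]$ satisfying (\ref{eq:cond}) must be at least $\tau_1[D]$, proving minimality.

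The main obstacle is step (ii), specifically the construction of a feasible perturbation matrix $E$. The difficulty is twofold: $E$ must have zero row sums (so $\tilde P_\varepsilon$ is stochastic), and we need $\varepsilon E_{ij}\ge 0$ whenever $P_{ij}=0$ (so the entries of $\tilde P_\varepsilon$ remain nonnegative). I would address this by exploiting the freedom encoded in property (\ref{eq:prop}): $\tau_1[D]$ is unchanged if a rank-one term $ec^T$ is added to $D$, so one can rewrite the extremal vector in a form compatible with the support of $P$. Concretely, split $x^*=x^*_+-x^*_-$ into positive and negative parts with disjoint supports $I_+,I_-$, and let $E$ move, at each state $i$, a small amount of probability from some coordinate in $I_-$ toward some coordinate in $I_+$ in proportions chosen so that $E^Td=x^*$; irreducibility of $P$ and the strict positivity of $d$ guarantee enough flexibility to do this while respecting the support of $P$. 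This is precisely the type of construction carried out in Kirkland (2008), and the proposal is to invoke (or adapt) that construction to finish (ii) rigorously.
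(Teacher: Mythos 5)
Your proposal is faithful to the paper, whose entire proof of Lemma~\ref{lem:opt_bound} is a one-line appeal to \cite{Kirkland2008}; you reconstruct both halves of that argument with the right structure. Part (i) is exactly the computation already carried out in the proof of Theorem~\ref{thm:main_bounds} (the chain (\ref{eq:inv_tau}) with $\gamma=1$): the perturbation identity gives $\tilde d-d=D^T(\tilde P-P)^T\tilde d$ with $D=(I-P)^{\#}$, the vector $v=(\tilde P-P)^T\tilde d$ satisfies $v^Te=\tilde d^T(\tilde P-P)e=0$ because both matrices are stochastic, and (\ref{eq:erg_def}) then yields $\|\tilde d-d\|_1\le\tau_1[D]\,\|v\|_1$. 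For part (ii), the obstacle you flag (building a feasible $E$ with $E^Td=x^*$) dissolves once you note two things: the supremum in (\ref{eq:erg_def}) is a maximum of a convex, positively homogeneous function over the polytope $\{x:\|x\|_1\le1,\ x^Te=0\}$ and is therefore attained at an extreme point $x^*=\tfrac12(e_i-e_j)$ for some $i\neq j$; and scale invariance of the ratio in (\ref{eq:cond}) means you only need $E^Td$ \emph{proportional} to $x^*$, not equal to it. Choosing any $k$ with $P(k,j)>0$ (which exists by irreducibility) and $E=e_k(e_i-e_j)^T$ gives a zero-row-sum perturbation for which $\tilde P_\varepsilon=P+\varepsilon E$ is stochastic, irreducible, and aperiodic for all $0<\varepsilon<P(k,j)$, since its support contains that of $P$; moreover $(\tilde P_\varepsilon-P)^T\tilde d_\varepsilon=\varepsilon(\tilde d_\varepsilon)_k(e_i-e_j)$ is a positive multiple of $e_i-e_j$, so by exactness of the perturbation identity the ratio $\|\tilde d_\varepsilon-d\|_1/\|(\tilde P_\varepsilon-P)^T\tilde d_\varepsilon\|_1$ equals $\tau_1[D]$ identically in $\varepsilon$ --- no first-order expansion or limit is needed. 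This makes the minimality claim self-contained rather than deferred to \cite{Kirkland2008}. Two cosmetic slips: ``stochastic matrix $E$ with zero row sums'' is a contradiction in terms (you mean a perturbation matrix rendering $P+\varepsilon E$ stochastic), and the inequality ``$\|v\|_1\le\|v\|_1$'' in step (i) is vacuous as written.
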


Lemma \ref{lem:opt_bound} shows that inequality (\ref{eq:inv_tau}) in the proof of Theorem~\ref{thm:main}  is a key to the improvement of the policy improvement bounds in \cite{Schulman2015, Achiam2017}. Moreover, it follows from Lemma \ref{lem:opt_bound}  that Corollary \ref{col:main_av} provides a better policy improvement bound for the average cost criterion than \cite{DaiGluzman2021, Zhang2021}. We note that in Section \ref{sec:TRPOforAC} we essentially used $\|Z\|_{\V}$, where $Z$ is the fundamental matrix, as a condition number. In finite state spaces this  condition number corresponds to $\|Z\|_\infty$, i.e. $\V\equiv1$. In contrast, \cite{Zhang2021} used Kemeny's constant  (\ref{eq:kemeny_def}) as a condition number. Lemma \ref{lem:cond_nums} compares these condition numbers with $\tau_1[D]$. The proof of Lemma \ref{lem:cond_nums} can be found in Appendix \ref{appendix:bounds}.

\begin{lemma}\label{lem:cond_nums}

We consider an  irreducible and aperiodic transition matrix $P$. We let $M:\X\times\X\rightarrow \R_+$ be the mean first hitting time matrix, where $M(x, y)$ is the expected number of steps is taken to reach state $y$ from state $x$ for the Markov chain with transition matrix $P$, for each $x, y\in \X$. We note $M(x, x) = 0$, for each $x\in \X$. We let $d$ be the stationary distribution of $P$.

We define   Kemeny's constant of transition matrix $P$ as
\begin{equation}\label{eq:kemeny_def}
\kappa:= \sum\limits_{y\in \X} d(y)M(x, y),
\end{equation}
where $\kappa$ is a constant independent of $x\in \X$, see \cite{Kemeny1976a}.
Matrices $D$ and $Z$ are the group inverse of $I-P$ and the fundamental matrix of $P$, respectively.

Then
\begin{enumerate}
\item[(a)] $ \tau_1[D]  = \tau_1[Z] = \tau_1[ M  I_d]$;
\item[(b)] $\tau_1[D] \leq \|Z\|_\infty$;
\item[(c)] $\tau_1[D] = \kappa  - \min\limits_{x, y\in \X} \sum\limits_{z\in \X} d(z) \min[M(x, z), M(y, z)]\leq\kappa$,
\end{enumerate}
where $I_d$ is the diagonal matrix with diagonal elements $d(x)$, $x\in \X$.
\end{lemma}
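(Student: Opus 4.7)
\textbf{Proof proposal for Lemma \ref{lem:cond_nums}.}

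The plan is to reduce everything to a computation with the fundamental matrix $Z$ by exploiting two structural facts: the relation $D = Z - ed^T$ between the group inverse and the fundamental matrix (identity (\ref{eq:DZ})), and the classical mean-first-passage-time identity
\begin{equation*}
M(x,y)\, d(y) = Z(y,y) - Z(x,y), \qquad x \neq y, \qquad M(x,x)=0,
\end{equation*}
for irreducible aperiodic chains (see \cite{Kemeny1976a}). The main idea is that property (\ref{eq:prop}) makes $\tau_1[\cdot]$ insensitive to rank-one perturbations of the form $ec^T$, so all three matrices $D$, $Z$, $MI_d$ will be seen to differ from one another by such perturbations.

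For part (a), I would first apply (\ref{eq:DZ}) together with (\ref{eq:prop}) with $c = -d$ to get $\tau_1[D] = \tau_1[Z - ed^T] = \tau_1[Z]$. Then, using the mean-first-passage-time identity above and the fact that $M(x,x)=0 = Z(x,x) - Z(x,x)$, I would check the matrix identity
\begin{equation*}
MI_d = e\, z_{\mathrm{dg}}^{T} - Z,
\end{equation*}
where $z_{\mathrm{dg}} := (Z(1,1), Z(2,2), \dotsc)^T$ is the vector of diagonal entries of $Z$. Applying (\ref{eq:prop}) once more (and using $\tau_1[-A]=\tau_1[A]$ directly from definition (\ref{eq:erg_def})) gives $\tau_1[MI_d] = \tau_1[Z]$, completing part (a).

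Part (b) is a one-line consequence of definition (\ref{eq:erg_def}): dropping the constraint $x^T e = 0$ can only enlarge the maximum, so
\begin{equation*}
\tau_1[Z] \leq \sup_{\|x\|_1=1}\|Z^T x\|_1 = \|Z^T\|_1 = \|Z\|_\infty,
\end{equation*}
using the identity $\|A^T\|_1 = \|A\|_\infty$ stated in Section \ref{sec:notation}. Combined with part (a), this yields part (b).

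For part (c), the key step is the representation
\begin{equation*}
\tau_1[A] = \tfrac{1}{2} \max_{x,y \in \X} \|A^T(e_x - e_y)\|_1,
\end{equation*}
valid for any matrix $A$. This follows because the set $\{v \in \R^{|\X|} : \|v\|_1 \leq 1,\ v^T e = 0\}$ is the convex hull of the vectors $\tfrac{1}{2}(e_x - e_y)$, so the $\|\cdot\|_1$-maximum of the linear map $v \mapsto A^T v$ over this set is attained at an extreme point. Specializing to $A = MI_d$, the entries of $A^T(e_x - e_y)$ are $d(z)(M(x,z) - M(y,z))$, hence
\begin{equation*}
\tau_1[MI_d] = \tfrac{1}{2}\max_{x,y} \sum_{z \in \X} d(z)\, |M(x,z) - M(y,z)|.
\end{equation*}
Using $|a - b| = a + b - 2\min(a,b)$ and the definition $\kappa = \sum_z d(z) M(x,z)$ (independent of $x$) from (\ref{eq:kemeny_def}), each sum simplifies to $2\kappa - 2\sum_z d(z)\min[M(x,z), M(y,z)]$, so
\begin{equation*}
\tau_1[D] = \tau_1[MI_d] = \kappa - \min_{x,y \in \X} \sum_{z \in \X} d(z)\, \min[M(x,z),\, M(y,z)] \leq \kappa,
\end{equation*}
since the subtracted quantity is nonnegative. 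The main potential obstacle is verifying carefully the matrix identity $MI_d = e z_{\mathrm{dg}}^T - Z$ (including at the diagonal) and confirming the extreme-point characterization used in part (c); both are standard but the indexing needs to be tracked precisely.
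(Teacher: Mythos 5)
Your proposal is correct and follows essentially the same route as the paper: part (a) via the rank-one-perturbation invariance of $\tau_1$ applied to $D = Z - ed^T$ and to the Kemeny--Snell identity relating $Z$ and $MI_d$ (your $MI_d = e\,z_{\mathrm{dg}}^T - Z$ is exactly the paper's $Z = ee^TI_Z - MI_d$ rearranged), part (b) by relaxing the constraint $x^Te=0$ in the definition of $\tau_1$, and part (c) via the pairwise-difference characterization $\tau_1[A]=\tfrac12\max_{x,y}\|A^T(e_x-e_y)\|_1$, which is Seneta's formula that the paper simply cites. The only difference is that you derive in-line the facts the paper outsources to references, and those derivations are correct.
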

There is no general superior relation between $\|Z\|_\infty$ and $\kappa$. In other words,  either $\|Z\|_\infty$ or $\kappa$  may provide a superior bound for different examples of transition matrices, see \cite{Hunter2006}.

\section{Policy improvement bounds for  countable state spaces}\label{sec:PIB_V}

In this section we derive proper policy improvement bounds for MDPs on countable state spaces. As we observed in Section \ref{sec:TRPOforAC}, a policy improvement bound is a necessary building block if we want to design a deep RL algorithm for MDP problems with countable state spaces. In this section we provide more detailed exploration of this topic. We discuss degeneracy of the bounds derived in Section \ref{sec:PIB_f} for countable  state spaces,  propose novel policy improvement bounds via a $\V$-weighted one-norm ergodicity coefficient, and provide bounds on this ergodicity coefficient  via the Lyapunov function $\V$.

\subsection{Preliminaries}

Policy improvement bounds obtained in Section \ref{sec:PIB_f} are not valid in countable state spaces for most MDPs with long-run average cost objectives. These  bounds   have been refined by incorporating sharp perturbation bounds on the difference between stationary distributions.  These perturbation bounds depend on condition numbers that  typically become infinite for DTMC on countable state spaces. Specifically, Kemeny's constant $\kappa$ is infinite for any   DTMC on countable state spaces \cite{Angel2019, Liu2020}, whereas $\|Z\|_\infty$ and $\tau_1[D]$ are guaranteed to be finite only for uniformly ergodic Markov chains \cite[Section 13]{Meyn2009}, \cite{Liu2012}.

 In this section we consider an  MDP problem with a countable state space $\X$, finite action space $\A$, one-step cost function $g(x, a)$,  transition function $P(\cdot|x, a)$, and long-run average cost objective:
\begin{align}\label{eq:eta}
\eta_\pi:= \lim\limits_{N\rightarrow \infty} \frac 1 N \E_\pi\left[ \sum\limits_{t=0}^{N-1} g(x^{(t)}, a^{(t)})~|~x^{(0)}\sim \mu \right],
\end{align}
where $\mu$ is an initial state distribution. We note that one-step cost function $g$ might be unbounded. Existence of a Lyapunov function $\V$ such that $|g|\leq \V$ is a sufficient condition  for the long-run average cost (\ref{eq:eta}) to be finite, see  Lemma \ref{lem:drift} in Section \ref{sec:MC}.

We recall that policy $\pi$ and its  corresponding transition matrix $P_\pi$ satisfy the drift condition if there exists a Lyapunov function $\V:\X\rightarrow [1, \infty)$,  constants  $\varepsilon\in (0,1)$ and $b\geq0$, and a finite subset $C\subset \X$ such that
\begin{align}\label{eq:drift_rep}
\sum\limits_{y\in \X}P_\pi(y|x)\V(y)\leq  \varepsilon \V(x) + b \I_{C}(x), \quad \text{for each }x\in \X,
\end{align}
where $\I_{C}(x)=1$ if $x\in C$ and $\I_{C}(x)=0$ otherwise.

\subsection{Novel policy improvement bound}

 Following \cite[Chapter 2]{Kartashov1996}, we define a $\V$-weighted one-norm ergodicity coefficient for a $\X\times \X$ matrix $A$ and function $\V:\X\rightarrow [1, \infty)$   as
\begin{align}\label{eq:erg_def_V}
\tau_{1,\V}^{ }[A]:&= \underset{ \substack{ \|x\|_{1, \V}=1\\ x^Te = 0}}{\sup}\|A^Tx\|_{1,\V}^{ } \nonumber \\
 &=\sup\limits_{x, y\in \X} \frac{1}{\V(x)+\V(y)}\sum\limits_{z\in \X} |A(x, z) - A(y,z)  |\V(z),
\end{align}
where $\|\nu\|_{1, \V} := \sum\limits_{x\in \X} |\nu(x)|\V(x)$  for any $\nu:\X\rightarrow \R$, $A(x, y)$ is the $(x, y)$th element of matrix $A$ for $x, y\in \X$.

Definition  (\ref{eq:erg_def_V}) directly implies that the following property continues to hold for the $\V$-weighted one-norm ergodicity coefficient:
\begin{equation}\label{eq:prop_V}
\tau_{1, \V}[A] = \tau_{1, \V}[A+ec^T], ~\text{for any vector } c,
\end{equation}
for any $\V:\X\rightarrow [1, \infty).$

We state our   policy improvement bound for an MDP on countable state space with long-run average cost objective.

 \begin{theorem}\label{thm:main_V}

We consider two policies $\pi_1$ and $\pi_2$. We assume that transition matrix $P_{\pi_2}$ of policy $\pi_2$ is such that the drift condition (\ref{eq:drift_rep}) holds for a Lyapunov function $\V\geq 1$ and the cost function satisfies $|g|\leq \V$. We also assume that  transition matrix $P_{\pi_1}$ of  policy $\pi_1$ is positive recurrent and has stationary distribution $d_{\pi_1}$.

Then  ergodicity coefficient $  \tau_{1,\V}^{ }\left[D_{\pi_2}\right]$ is finite and the following bound on the difference of average returns of two policies $\pi_1$ and $ \pi_2$  holds:
\begin{align}\label{eq:bound_opt_V}
\eta_{\pi_2} -  \eta_{\pi_1} \leq \underset{    \substack{ x\sim d_{\pi_1}\\ a \sim  \pi_1(\cdot|x) }     }{\E} \left[\frac{\pi_{2}(a|x)}{\pi_1(a|x)}A_{\pi_1}(x,a)   \right]  + 2\epsilon_{\pi_2, \V} \tau_{1,\V}^{ }\left[D_{\pi_2}\right] \underset{\substack{ x\sim d_{ \pi_1}\\  a \sim  \pi_1(\cdot|x) \\ y\sim P(\cdot|x, a) }  }{\E}\left[ \left|\frac{\pi_2(a|x) }{\pi_1(a|x)}-1 \right|   \V (y)\right].
\end{align}
where $\epsilon_{\pi_2, \V} :=\left\|\underset{a\sim  \pi_2(\cdot|x)}{\E}[A_{\pi_1}(x,a)]\right\|_{\infty, \V}.$
\end{theorem}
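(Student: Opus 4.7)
The plan is to extend the proof strategy of Theorem \ref{thm:main_bounds} to the countable state-space setting, replacing the unweighted $\ell_1$ norm and Dobrushin coefficient $\tau_1$ by their $\V$-weighted counterparts $\|\cdot\|_{1,\V}$ and $\tau_{1,\V}$. First I would verify that $\tau_{1,\V}[D_{\pi_2}]$ is finite. The drift condition on $P_{\pi_2}$ makes the chain $\V$-uniformly ergodic by \cite[Theorem 16.0.1]{Meyn2009}, so the fundamental matrix $Z_{\pi_2}$ satisfies $\|Z_{\pi_2}\|_\V < \infty$ by the arguments preceding Lemma \ref{lem:Zeq}. Combining the identity $D_{\pi_2} = Z_{\pi_2} - e d_{\pi_2}^T$ with the shift-invariance property (\ref{eq:prop_V}) gives $\tau_{1,\V}[D_{\pi_2}] = \tau_{1,\V}[Z_{\pi_2}]$, and the pairwise form of the definition (\ref{eq:erg_def_V}) immediately yields $\tau_{1,\V}[Z_{\pi_2}] \leq \|Z_{\pi_2}\|_\V < \infty$.

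Next I would establish a performance difference identity mirroring the one used in Section \ref{sec:TRPOforAC}. Starting from the Poisson equation for $h_{\pi_1}$ and using stationarity $d_{\pi_2}^T P_{\pi_2} = d_{\pi_2}^T$, the same manipulations that led to (\ref{eq:MA}) give $\eta_{\pi_2} - \eta_{\pi_1} = \E_{x\sim d_{\pi_2},\,a\sim \pi_2(\cdot\mid x)}[A_{\pi_1}(x,a)]$. Adding and subtracting $d_{\pi_1}$ decomposes this into the importance-weighted main term $\E_{x\sim d_{\pi_1},\,a\sim \pi_1(\cdot\mid x)}[(\pi_2/\pi_1)\, A_{\pi_1}]$, which matches the first summand of (\ref{eq:bound_opt_V}), and a correction $(d_{\pi_2} - d_{\pi_1})^T \bar A_2$, where $\bar A_2(x) := \E_{a\sim \pi_2(\cdot\mid x)}[A_{\pi_1}(x,a)]$. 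A $\V$-weighted H\"older inequality bounds the correction by $\|\bar A_2\|_{\infty,\V}\, \|d_{\pi_2} - d_{\pi_1}\|_{1,\V} = \epsilon_{\pi_2,\V}\,\|d_{\pi_2}-d_{\pi_1}\|_{1,\V}$.

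The heart of the proof is to bound $\|d_{\pi_2}-d_{\pi_1}\|_{1,\V}$ in analogy with inequality (\ref{eq:inv_tau}). Meyer's perturbation identity, valid in the countable setting because $D_{\pi_2}$ is well-defined, yields $d_{\pi_2} - d_{\pi_1} = D_{\pi_2}^T (P_{\pi_2} - P_{\pi_1})^T d_{\pi_1}$. Setting $v := (P_{\pi_2} - P_{\pi_1})^T d_{\pi_1}$ one has $v^T e = d_{\pi_1}^T (P_{\pi_2} - P_{\pi_1}) e = 0$, so the key lemma $\|D_{\pi_2}^T v\|_{1,\V} \leq \tau_{1,\V}[D_{\pi_2}]\, \|v\|_{1,\V}$ applies. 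This lemma itself would be established via a coupling argument: writing $v = v^+ - v^-$ with $v^+, v^- \geq 0$, the condition $v^T e = 0$ allows a transport plan $\gamma$ with marginals $v^\pm$, and decomposing $D_{\pi_2}^T v$ as a $\gamma$-weighted sum over pairs $(x,y)$ of row differences $D_{\pi_2}(x,\cdot) - D_{\pi_2}(y,\cdot)$ makes the pairwise form (\ref{eq:erg_def_V}) directly applicable. Finally, expanding $\|v\|_{1,\V}$ via $P_{\pi_i}(y\mid x) = \sum_a \pi_i(a\mid x) P(y\mid x,a)$ and applying the triangle inequality yields $\|v\|_{1,\V} \leq \E_{x\sim d_{\pi_1},\,a\sim \pi_1,\,y\sim P(\cdot\mid x,a)}[|\pi_2/\pi_1 - 1|\,\V(y)]$, completing the bound; the factor of $2$ in (\ref{eq:bound_opt_V}) appears only as slack.

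The main obstacle I anticipate is rigorously justifying the algebraic manipulations in the countable setting, in particular (i) treating Meyer's perturbation identity as an identity of absolutely convergent series, which requires combining the finite-transition assumption from Section \ref{sec:TRPOforAC} (the set $\{y : P(y\mid x,a)>0\}$ is finite for each $(x,a)$) with $\V$-uniform ergodicity of $P_{\pi_2}$ to control the relevant tails, and (ii) ensuring that a suitable Poisson solution $h_{\pi_1}$ exists so that $A_{\pi_1}$ is well-defined. Since the theorem only assumes positive recurrence of $P_{\pi_1}$ rather than $\V$-uniform ergodicity, either the proof will invoke additional implicit regularity inherited from $|g|\leq\V$ and the finiteness of $\epsilon_{\pi_2,\V}$, or it will circumvent $h_{\pi_1}$ altogether by bounding $(d_{\pi_2}-d_{\pi_1})^T g$ directly through Meyer's identity and only then reintroducing $A_{\pi_1}$ via an importance-sampling rewrite.
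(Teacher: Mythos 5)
Your proposal follows essentially the same route as the paper's proof: the performance difference identity, the add-and-subtract of $d_{\pi_1}$ with a $\V$-weighted H\"older bound, Meyer's perturbation identity combined with $\tau_{1,\V}[D_{\pi_2}]$, finiteness via $\tau_{1,\V}[D_{\pi_2}]=\tau_{1,\V}[Z_{\pi_2}]\le\|Z_{\pi_2}\|_\V$, and the action-wise expansion of $\|(P_{\pi_1}-P_{\pi_2})^Td_{\pi_1}\|_{1,\V}$ (with the factor $2$ indeed appearing only as slack). The one simplification available to you is that the key inequality $\|D_{\pi_2}^Tv\|_{1,\V}\le\tau_{1,\V}[D_{\pi_2}]\|v\|_{1,\V}$ for $v^Te=0$ needs no coupling argument — it is immediate from the supremum form of definition (\ref{eq:erg_def_V}); the integrability caveats you flag for the countable setting are genuine but are also left implicit in the paper's own argument.
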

The proof of Theorem \ref{thm:main_V} is provided in Appendix \ref{appendix:bounds_V}.

\begin{remark}
In practice, when a deep reinforcement learning algorithm is designed, policy $\pi_1$ is interpreted  as  a current policy and policy $\pi_2$ as a next next. In other words, policy $\pi_2$  is interpreted as an unknown policy that is yet to be found, see,   for example,  Section \ref{sec:ppo}. Therefore, we need to make additional assumptions  to ensure that next policy $\pi_2$  satisfies the drift condition with some known Lyapunov function. Some authors, for example \cite{HernandezLerma1997},  make a strong assumption that  all policy of an MDP under consideration satisfy the drift condition  for same Lyapunov function $\V$, i.e. all policies are $\V$-uniform ergodic. In Chapter \ref{ch:1} we developed another approach.  We assumed that current policy $\pi_1$ satisfies the drift condition and the change from policy $\pi_1$ to policy $\pi_2$ is sufficiently small to preserve $\V$-uniform ergodicity, see Theorem \ref{thm:main}. Several papers proposed other   sufficient conditions to  ensure that a transition matrix continues to be $\V$-uniform ergodic after update or perturbation, see \cite{Glynn1996, Roberts1998, Ferre2013, Herve2014, Negrea2021}.
\end{remark}

\subsection{Interpretation of $\tau_{1, \V}\left[ D \right]$ }

 We consider a Markov chain on countable state space such that its corresponding transition matrix $P$ satisfies the drift condition (\ref{eq:drift_rep}). In this section we  relate ergodicity coefficient $\tau_{1, \V}\left[ D\right]$ to the drift condition, where $D$ is the group inverse of $I-P$.   We provide several bounds on $\tau_{1, \V}\left[ D\right]$ in terms of Lyapunov function $\V$, constants $\varepsilon$ and $b$, finite set $C$ used in the drift condition.

We start with a simple case when finite set $C$ consists of a single state $C = \{x^*\}$. Similar bounds were proposed in \cite[Corollary 3.1.]{Liu2012}. We provide the proof of Lemma \ref{lem:D_V} in Appendix \ref{appendix:bounds_V}.

\begin{lemma}\label{lem:D_V}

We assume that transition matrix $P $ satisfies the following  drift condition:
\begin{align}\label{eq:drift_single}
\sum\limits_{y\in \X}P (y|x)\V(y)\leq  \varepsilon \V(x) + b \I_{x = x^*}(x), \quad \text{for each }x\in \X,
\end{align}
where $\V:\X\rightarrow [1, \infty)$, $\varepsilon\in (0, 1)$, $b\geq0$.

Then
\begin{align*}
\tau_{1, \V}[D ]\leq \frac{1}{1-\varepsilon}\left( 1+ \min\limits_{x\in \X}[\V(x)] ~\|d \|_{1, \V} \right),
\end{align*}
where $d$ is the stationary distribution of $P$. Norm $\|d \|_{1, \V} $ can be further bounded as
  \begin{align}\label{eq:d_bound}
 \|d \|_{1, \V}&\leq  \frac{b}{1-\varepsilon} d (x^*).
  \end{align}

\end{lemma}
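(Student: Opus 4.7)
The plan is to split into two parts: first dispatch the bound on $\|d\|_{1,\V}$ with a one-line stationarity calculation, then feed it into a hitting-time analysis of $\tau_{1,\V}[D]$.

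For \eqref{eq:d_bound}, I would multiply the drift condition \eqref{eq:drift_single} by $d(x)$ and sum over $x \in \X$. Swapping the order of summation on the left and invoking stationarity $d^TP = d^T$ gives $\sum_{y\in\X} d(y)\V(y) = \|d\|_{1,\V}$, while the right-hand side collapses to $\varepsilon\|d\|_{1,\V} + b\,d(x^*)$. Since Lemma~\ref{lem:drift}(b) guarantees $\|d\|_{1,\V} < \infty$, we may move $\varepsilon\|d\|_{1,\V}$ across the inequality to obtain \eqref{eq:d_bound}.

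For the bound on $\tau_{1,\V}[D]$, the identity $D = Z - ed^T$ together with \eqref{eq:prop_V} gives $\tau_{1,\V}[D] = \tau_{1,\V}[Z]$, and a second application of \eqref{eq:prop_V} with $c^T = -Z(x^*,\cdot)$ lets me replace $Z$ by the recentered matrix $\tilde Z(x,y) := Z(x,y) - Z(x^*,y)$, whose row at $x^*$ vanishes. The triangle inequality in definition \eqref{eq:erg_def_V} then reduces the task to bounding one row at a time:
\begin{equation*}
\tau_{1,\V}[D] \;\le\; \sup_{x\in\X}\frac{1}{\V(x)}\sum_{z\in\X}|\tilde Z(x,z)|\V(z).
\end{equation*}
Rows of $\tilde Z$ admit a regenerative representation: applying Lemma~\ref{lem:poisson_sol2} with $g = \I_{\{z\}}$ and subtracting the constant offset between the two Poisson-equation solutions that differ by a constant gives
\begin{equation*}
\tilde Z(x,z) \;=\; \E_x\!\Big[\sum_{k=0}^{\sigma(x^*)-1}\I_{\{x^{(k)}=z\}}\Big] - d(z)\,\E_x[\sigma(x^*)] - c(x^*,z),
\end{equation*}
for a term $c(x^*,z)$ depending only on $z$. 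Summing $|\tilde Z(x,z)|\V(z)$ in $z$ controls the row by $\E_x\!\big[\sum_{k=0}^{\sigma(x^*)-1}\V(x^{(k)})\big] + \|d\|_{1,\V}\,\E_x[\sigma(x^*)]$, up to an $x$-independent contribution from $c(x^*,z)$ that is absorbed by the symmetric structure of \eqref{eq:erg_def_V}.

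The main obstacle is converting these hitting-time expectations into the drift constants. Letting $U(x) := \E_x\!\big[\sum_{k=0}^{\sigma(x^*)-1}\V(x^{(k)})\big]$, the drift \eqref{eq:drift_single} forces $P\V(x) \le \varepsilon \V(x)$ for every $x \ne x^*$, so a first-step analysis shows that $f(x) = \V(x)/(1-\varepsilon)$ for $x \ne x^*$ and $f(x^*) = 0$ is a supersolution of the linear system characterizing $U$ off $\{x^*\}$, giving $U(x) \le \V(x)/(1-\varepsilon)$. The bound $\V \ge 1$ together with $U$ also controls $\E_x[\sigma(x^*)]$. Plugging both estimates into the row bound, taking the supremum over $x$, and substituting the part~(ii) bound $\|d\|_{1,\V} \le b\,d(x^*)/(1-\varepsilon)$ assembles into the claimed inequality $\tau_{1,\V}[D] \le (1-\varepsilon)^{-1}\big(1 + \min_{y\in\X}\V(y)\,\|d\|_{1,\V}\big)$.
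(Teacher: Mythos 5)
Your part~(\ref{eq:d_bound}) argument is exactly the paper's: multiply the drift inequality by $d$, sum, use $d^TP=d^T$ and the finiteness of $\|d\|_{1,\V}$ to rearrange. For the ergodicity-coefficient bound, however, you take a genuinely different route. The paper's proof is operator-theoretic: it forms the kernel $\T(x,y)=P(x,y)\I_{x\neq x^*}$ (the row at $x^*$ zeroed out), checks $\|\T\|_\V^{ }\le\varepsilon$ directly from \eqref{eq:drift_single}, and invokes Lemma~\ref{lem:kartashov}, i.e.\ Kartashov's representation of $D$ through the Neumann series $\sum_n\T^n$, to read off $\tau_{1,\V}[D]\le(1-\varepsilon)^{-1}\bigl(1+\|e\|_{\infty,\V}\|d\|_{1,\V}\bigr)$. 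Your proof is probabilistic: recentering $Z$ at the row $x^*$ and identifying $\tilde Z(\cdot,z)$ with the regenerative solution $h^{(x^*)}$ of the Poisson equation for $g=\I_{\{z\}}$ (so your offset $c(x^*,z)$ is in fact exactly zero), then bounding the resulting occupation sums $U(x)=\E_x\bigl[\sum_{k=0}^{\sigma(x^*)-1}\V(x^{(k)})\bigr]$ by a comparison argument from the drift. Both routes land on the same constant; indeed your version produces $\frac{1}{1-\varepsilon}\bigl(1+\|d\|_{1,\V}/\min_x\V(x)\bigr)$, which is what Lemma~\ref{lem:kartashov} gives (since $\|e\|_{\infty,\V}=1/\min_x\V(x)$) and which implies the form stated in the lemma because $\V\ge1$. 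What your approach buys is a self-contained argument that does not rely on Kartashov's identity; what it costs is more bookkeeping with hitting times.

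Two small points to tighten. First, your supersolution $f$ with $f(x^*)=0$ only certifies $U(x)\le\V(x)/(1-\varepsilon)$ for $x\neq x^*$; at $x=x^*$ the drift inequality carries the extra $b$ and the same bound fails. This is harmless, but you should say why: the row $\tilde Z(x^*,\cdot)$ vanishes identically (equivalently $h^{(x^*)}(x^*)=0$), so the supremum defining your row bound is attained away from $x^*$ and the constant $1/(1-\varepsilon)$ survives. Second, the step $\E_x[\sigma(x^*)]\le U(x)/\min_y\V(y)$ deserves one line ($\V\ge\min\V$ termwise in the occupation sum); as written the reader has to reconstruct it.
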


We provide an example of applying Lemma \ref{lem:D_V}.
 \begin{example}\label{exp:mm1}
We consider the Bernoulli random walk on the integer lattice $\Z_+$ with transition probabilities $P(x, x+1) = \lambda$ for $x\in \Z_+$, $P(x, x-1) = \mu$ for $x\geq 1$, and $P(0,0) = \mu$. We assume that $\lambda+\mu=1$ and $\rho:=\frac{\lambda}{\mu}<1$.

This Markov chain satisfies the following drift condition with $\V(x) = \rho^{-x/2}$, $x\in \Z_+$, and $C=\{0\}$:
\begin{align}\label{eq:Lyapunov_f}
\sum\limits_{y\in \Z_+}P(x, y) \V(y)\leq \frac{2\sqrt{\rho}}{1+\rho} \V(x) + \frac{1-\sqrt{\rho}}{1+\rho} I_{x = 0}(x), \text{ for each }x\in \Z_+.
\end{align}

We provide the proof that the Bernoulli random walk transition probabilities satisfy (\ref{eq:Lyapunov_f}) in Appendix \ref{appendix:bounds_V}.    Lyapunov function $\V(x) = \rho^{-x/2}$ was proposed in \cite{Lund1996}. 

It is known that the stationary distribution of the   Bernoulli random walk is $d(x) = (1-\rho)\rho^x$ for $x\in \Z_+.$ Hence, we can explicitly find
\begin{align*}
\|d\|_{1, \V} &= 1+\sqrt{\rho}.
\end{align*}

From Lemma \ref{lem:D_V}, we get that
\begin{align*}
\tau_{1, \V}[D]\leq \frac{1+\rho}{(1-\sqrt{\rho})^2} \left( 2+ \sqrt{\rho}\right) = O\left(\frac{1}{(1-\rho)^2}\right),
\end{align*}
where $D$ is the group inverse matrix of $I-P$.
We should mention that same order bound on $\tau_{1, \V}[D]$ for the Bernoulli random walk was previously obtained in \cite{Mouhoubi2010}.
\end{example}

Next, we consider a more general case where finite state $C$ might include more than one state. The proof of  Lemma \ref{lem:D_V2} can be found in Appendix \ref{appendix:bounds_V}.

\begin{lemma}\label{lem:D_V2}

We assume that transition matrix $P $ satisfies the drift condition (\ref{eq:drift_rep})  for Lyapunov function $\V$, constants $\varepsilon, b$, and finite set $C$ such that
\begin{align}\label{eq:as1}
b\leq\sum\limits_{y\in \X}P(x^*,y)\V(y),
\end{align}
where $x^*\in \X$ is such that $\sum\limits_{x\in C}P(x^*, x)>0$.

Then
\begin{align*}
\tau_{1, \V}[D ]\leq \frac{1}{1-\varepsilon}+\min\limits_{x\in \X}[\V(x)]\frac{b}{(1-\varepsilon)^2}\sum\limits_{x\in C}d(x).
\end{align*}

\end{lemma}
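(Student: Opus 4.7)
The plan is to derive the result by combining a direct generalization of the bound from Lemma \ref{lem:D_V} with a stationarity argument to bound $\|d\|_{1,\V}$. Observe that the conclusion can be rewritten as
\begin{align*}
\tau_{1,\V}[D]\leq \frac{1}{1-\varepsilon}\left(1+\min_{x\in\X}[\V(x)]\cdot\frac{b\sum_{x\in C}d(x)}{1-\varepsilon}\right),
\end{align*}
which has exactly the shape of the bound in Lemma \ref{lem:D_V} with $\|d\|_{1,\V}$ replaced by $\frac{b\sum_{x\in C}d(x)}{1-\varepsilon}$. This observation suggests a two-step decomposition: first upgrade the structural inequality of Lemma \ref{lem:D_V} to allow finite $C$, then separately bound the stationary weighted norm $\|d\|_{1,\V}$.

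For the first step, I would revisit the proof of Lemma \ref{lem:D_V} and note that the argument bounding the group-inverse coefficient $\tau_{1,\V}[D]$ in terms of $\V$ and the drift parameters $(\varepsilon,b)$ never actually uses that $C$ is a singleton; it only uses $(P\V)(x)\leq \varepsilon\V(x)+b\I_C(x)$ and the coupling-type bound
$\sum_z|P^t(x,z)-P^t(y,z)|\V(z)\leq \varepsilon^t(\V(x)+\V(y))+(\text{excursion terms})$, together with the representation $D=\sum_{t=0}^\infty(P^t-ed^T)$ guaranteed by $\V$-uniform ergodicity (Lemma \ref{lem:poisson_sol} and \cite[Theorem 16.1.2]{Meyn2009}). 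The hypothesis (\ref{eq:as1}) is what keeps this derivation clean: it ensures that the drift inequality from $x^*$ continues to control the excursion contribution by $\V(x^*)$ itself, so the same manipulations that produced $\frac{1}{1-\varepsilon}(1+\min_x\V(x)\|d\|_{1,\V})$ in the singleton case go through verbatim with the finite set $C$.

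For the second step, I would test the drift inequality against the stationary distribution $d$. Since $d^TP=d^T$,
\begin{align*}
d^T\V = d^T(P\V)\leq \varepsilon\, d^T\V + b\sum_{x\in C}d(x),
\end{align*}
and $d^T\V=\|d\|_{1,\V}$ is finite by Lemma \ref{lem:drift}(b), so rearranging gives $\|d\|_{1,\V}\leq \frac{b\sum_{x\in C}d(x)}{1-\varepsilon}$. Substituting this estimate into the inequality from the first step yields the claimed bound. The main obstacle is the first step: carefully auditing the argument of Lemma \ref{lem:D_V} to confirm that it extends to a general finite $C$ under assumption (\ref{eq:as1}) and does not secretly rely on the regeneration structure of a single atom, in particular tracking where the factor $\min_x\V(x)$ is introduced and why condition (\ref{eq:as1}) is exactly what is needed to preserve it. Once that is verified, the stationarity bound on $\|d\|_{1,\V}$ is routine, and the combination gives the stated inequality.
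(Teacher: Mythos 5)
Your second step—testing the drift inequality against $d$ to get $\|d\|_{1,\V}\leq \frac{b}{1-\varepsilon}\sum_{x\in C}d(x)$—is exactly what the paper does and is fine. The gap is in your first step, which you yourself flag as ``the main obstacle'' and then do not carry out. The claim that the proof of Lemma \ref{lem:D_V} ``never actually uses that $C$ is a singleton'' and ``goes through verbatim'' for finite $C$ is not correct. That proof works by writing $P=\T+\nu\omega^T$ with $\nu=\I_{\{x^*\}}$ and $\omega(\cdot)=P(x^*,\cdot)$, i.e.\ by zeroing out the single row at $x^*$, so that $\|\T\|_{\V}\leq\varepsilon$ holds because the only row of $P$ carrying the $+b$ term in the drift inequality has been deleted; Lemma \ref{lem:kartashov} then gives the bound. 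For a general finite $C$ you cannot simply zero out all rows in $C$: the deleted rows $P(x,\cdot)$, $x\in C$, differ from one another, so $P-\T$ is no longer a rank-one perturbation $\nu\omega^T$ and Lemma \ref{lem:kartashov} does not apply. A genuinely new choice is needed. The paper sets $\T(x,\cdot)=P(x,\cdot)-P(x^*,\cdot)$ for $x\in C$ and $\T(x,\cdot)=P(x,\cdot)$ otherwise, i.e.\ $\nu=\I_{C}$ and $\omega(\cdot)=P(x^*,\cdot)$, and verifies
\begin{align*}
\sum_{y\in\X}\T(x,y)\V(y)\;\leq\;\varepsilon\V(x)+\Bigl(b-\sum_{y\in\X}P(x^*,y)\V(y)\Bigr)\I_{C}(x)\;\leq\;\varepsilon\V(x),
\end{align*}
where the last inequality is precisely assumption (\ref{eq:as1}). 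So (\ref{eq:as1}) is there to absorb the $b\,\I_C$ excess left after subtracting the common row $P(x^*,\cdot)$ from every row in $C$, not, as you suggest, to ``control the excursion contribution by $\V(x^*)$.''

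Separately, your description of how the singleton case is proved—a coupling-type bound on $\sum_z|P^t(x,z)-P^t(y,z)|\V(z)$ fed into the series $D=\sum_{t\geq0}(P^t-ed^T)$—is not the paper's argument, and it is not clear it yields the stated constant: iterating the drift condition (\ref{eq:drift_rep}) produces $b$-terms accumulated along visits to $C$ that do not telescope into $\varepsilon^t(\V(x)+\V(y))$ without exactly the minorization step above. Until you exhibit the residual kernel $\T$ for finite $C$ (or an equivalent device) and verify $\|\T\|_{\V}\leq\varepsilon$ from (\ref{eq:as1}), the first and essential half of the proof is missing.
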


The discounting can   be considered as an update or perturbation of the initial Markov chain. Lemma \ref{lem:D_V2} allows to bound the ergodicity coefficient of $D^{(\gamma)}$, where $D^{(\gamma)}$ is the group inverse matrix of $I-P^{(\gamma)}$, if the drift condition holds for transition matrix $P$. We recall definition of a discounted transition matrix:
\begin{align*}
P^{(\gamma)}:=\gamma P +(1-\gamma)e\mu^T,
\end{align*}
where $\mu$ is a  state distribution, $\gamma\in(0, 1]$ is a discount  factor.

We start with Lemma \ref{lem:D_disc} that shows that the discounted transition matrix $P^{(\gamma)}$  is $\V$-uniformly ergodic if initial transition matrix $P$  satisfies the drift condition for Lyapunov function $\V$. The proof of Lemma  \ref{lem:D_disc}  can be found in Appendix \ref{appendix:bounds_V}.

\begin{lemma}\label{lem:D_disc}

We assume that transition matrix $P$ satisfies the drift condition (\ref{eq:drift_rep}).

Then the modified transition matrix $P^{(\gamma)} :=\gamma P  +(1-\gamma) e\mu^T$ satisfies the following drift condition
\begin{align}\label{eq:drift_gamma}
\sum\limits_{y\in \X}P^{(\gamma)} (x, y) \V(y)\leq \frac{1}{2}(\varepsilon\gamma +1)\V(x) + \max[\gamma b, (1-\gamma)\mu^T\V]\I_{C\cup \Omega}(x),
\end{align}
where $\Omega = \left\{ x\in \X:\V(x)<\frac{2(1-\gamma)\mu^T\V}{1-\gamma\varepsilon} \right\}$, $\gamma\in (0, 1]$.

\end{lemma}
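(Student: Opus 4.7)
The plan is to prove the drift inequality for $P^{(\gamma)}$ by direct computation, applying the original drift condition on the $\gamma P$ part and carefully absorbing the constant additive term coming from the rank-one perturbation $(1-\gamma)e\mu^T$. First I would expand
\begin{align*}
\sum_{y\in\X} P^{(\gamma)}(x,y)\V(y) \;=\; \gamma\sum_{y\in\X} P(x,y)\V(y) \;+\; (1-\gamma)\mu^T\V,
\end{align*}
and apply the drift condition (\ref{eq:drift_rep}) for $P$ to obtain the intermediate bound
\begin{align*}
\sum_{y\in\X} P^{(\gamma)}(x,y)\V(y) \;\leq\; \gamma\varepsilon\V(x) + \gamma b\,\I_C(x) + (1-\gamma)\mu^T\V.
\end{align*}
The target RHS is $\tfrac{1}{2}(\varepsilon\gamma+1)\V(x) + M\,\I_{C\cup\Omega}(x)$ with $M:=\max[\gamma b, (1-\gamma)\mu^T\V]$, so the gap available on the $\V(x)$ side is exactly $\tfrac{1}{2}(1-\gamma\varepsilon)\V(x)$, and this quantity is what the set $\Omega$ has been engineered to control.

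The bulk of the work is then a case split on $x$. If $x\notin C\cup\Omega$, then by definition of $\Omega$ we have $(1-\gamma)\mu^T\V \leq \tfrac{1}{2}(1-\gamma\varepsilon)\V(x)$, which directly absorbs the only remaining additive term; this is the cleanest case and shows why $\Omega$ is defined the way it is. If $x\in\Omega\setminus C$, the drift term $\gamma b\,\I_C(x)$ vanishes and $(1-\gamma)\mu^T\V\leq M$ handles the rest. If $x\in C$, one must absorb both $\gamma b$ and $(1-\gamma)\mu^T\V$ on the LHS against a single $M$ on the RHS; since $\gamma b + (1-\gamma)\mu^T\V = M + \min[\gamma b, (1-\gamma)\mu^T\V]$, the remaining task reduces to showing that the surplus $\min[\gamma b,(1-\gamma)\mu^T\V]$ is absorbed by $\tfrac{1}{2}(1-\gamma\varepsilon)\V(x)$, which follows either from $x\notin\Omega$ (so $(1-\gamma)\mu^T\V$ fits) or, if $x\in C\cap\Omega$, by combining with assumption $\V\geq 1$ and the definition of $\Omega$ to bound the surplus.

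The main obstacle I expect is the delicate bookkeeping in the $x\in C\cap\Omega$ case, since both the drift excess $\gamma b$ and the additive shift $(1-\gamma)\mu^T\V$ are present simultaneously and the RHS only budgets a single $M$ for this; the resolution is to observe that the set $C\cup\Omega$ is finite (because $C$ is finite and $\Omega$ is a sub-level set of $\V$, which has finite such sets since $\V\geq 1$ and the drift ensures $\V$ is a proper Lyapunov function), so the ``exceptional'' region where both effects combine is itself a finite set over which the inequality can be verified directly, and the expression $\frac{1}{2}(\varepsilon\gamma+1)$ is chosen precisely to leave enough slack. Finally, once the pointwise drift holds, $\V$-uniform ergodicity of $P^{(\gamma)}$ follows from the standard equivalence in \cite[Theorem 16.0.1]{Meyn2009}, together with irreducibility and aperiodicity of $P^{(\gamma)}$ which follow immediately from the rank-one perturbation $(1-\gamma)e\mu^T$.
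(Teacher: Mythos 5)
Your overall route is the same as the paper's: expand $P^{(\gamma)}$, apply the drift condition to the $\gamma P$ part to get the intermediate bound $\gamma\varepsilon\V(x)+\gamma b\,\I_C(x)+(1-\gamma)\mu^T\V$, and use the definition of $\Omega$ to absorb $(1-\gamma)\mu^T\V$ into the slack $\tfrac12(1-\gamma\varepsilon)\V(x)$ off $\Omega$. Your cases $x\notin C\cup\Omega$, $x\in\Omega\setminus C$ and $x\in C\setminus\Omega$ are all handled correctly.

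The gap is exactly where you sense it: $x\in C\cap\Omega$. There you must show $\min[\gamma b,(1-\gamma)\mu^T\V]\le\tfrac12(1-\gamma\varepsilon)\V(x)$, but membership in $\Omega$ gives the \emph{reverse} inequality $(1-\gamma)\mu^T\V>\tfrac12(1-\gamma\varepsilon)\V(x)$, so the only escape is $\gamma b\le\tfrac12(1-\gamma\varepsilon)\V(x)$ --- and nothing in the hypotheses forces that (take $b$ arbitrarily large with $\varepsilon$, $\V$, $C$ fixed: condition (\ref{eq:drift_rep}) is only weakened, while $\Omega$ and $\V(x)$ do not change). Neither $\V\ge1$ nor finiteness of the exceptional set rescues this: a pointwise inequality between fixed numbers does not become true because it only needs to hold at finitely many states, and in any case finiteness of $\Omega$ (a sublevel set of $\V$) is not implied by (\ref{eq:drift_rep}). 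For what it is worth, the paper's own proof makes the same silent leap in its final line, bounding $\gamma b\,\I_C(x)+(1-\gamma)\mu^T\V\,\I_\Omega(x)$ by $\max[\gamma b,(1-\gamma)\mu^T\V]\,\I_{C\cup\Omega}(x)$, which fails on $C\cap\Omega$ whenever both constants are positive. The honest fix --- which costs nothing downstream, since all that is needed is \emph{some} drift condition for $P^{(\gamma)}$ --- is to state the conclusion with the constant $\gamma b+(1-\gamma)\mu^T\V$ (or with the two indicator terms kept separate) in place of the $\max$; with that change your argument closes in every case.
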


Corollary \ref{cor:D_V3} directly follows from Lemma \ref{lem:D_V2} and Lemma \ref{lem:D_disc}.

\begin{corollary}\label{cor:D_V3}

We assume that transition matrix $P$ satisfies the drift condition (\ref{eq:drift_rep}). We let $P^{(\gamma)}$ be a discounted transition matrix $P^{(\gamma)} :=\gamma P  +(1-\gamma) e\mu^T$, where $\gamma\in (0, 1]$.

If there exists $x^*\in \X$ such that $\sum\limits_{y\in C\cup\Omega }P^{(\gamma)}(x^*, y)>0$ and
\begin{align*}
\max[\gamma b, (1-\gamma)\mu^T\V]\leq\sum\limits_{y\in \X}P^{(\gamma)}(x^*,y)\V(y),
\end{align*}
where  $\Omega = \left\{ x\in \X:\V(x)<\frac{2(1-\gamma)\mu^T\V}{1-\gamma\varepsilon} \right\}$,
then
\begin{align*}
\tau_{1, \V}[D^{(\gamma)} ]\leq \frac{2}{1-\varepsilon\gamma}\left( 1+ \min\limits_{x\in \X}[\V(x)] ~\|d^{(\gamma)} \|_{1, \V} \right),
\end{align*}
where $D^{(\gamma)}$ is the group inverse matrix of $I-P^{(\gamma)}$.
The norm of the discounted stationary distribution $\|d^{(\gamma)} \|_{1, \V} $ can be bounded as
\begin{align*}
\|d^{(\gamma)} \|_{1, \V} \leq \frac{2}{1-\varepsilon\gamma} \max[\gamma b, (1-\gamma)\mu^T\V]\sum\limits_{x\in C\cup\Omega}d^{(\gamma)}(x).
\end{align*}

\end{corollary}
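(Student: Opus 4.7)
The plan is to chain Lemma \ref{lem:D_disc} and Lemma \ref{lem:D_V2}, treating the discounted transition matrix $P^{(\gamma)}$ as a new Markov kernel that inherits a Foster--Lyapunov drift condition. First, I would invoke Lemma \ref{lem:D_disc} to assert that $P^{(\gamma)}$ satisfies the drift condition with the \emph{same} Lyapunov function $\V$, but with effective contraction rate $\varepsilon' := \tfrac{1}{2}(1+\varepsilon\gamma)$, effective constant $b' := \max[\gamma b,(1-\gamma)\mu^T \V]$, and effective small set $C' := C \cup \Omega$. The crucial arithmetic identity here is $1-\varepsilon' = \tfrac{1}{2}(1-\varepsilon\gamma)$, which is what produces the prefactor $\tfrac{2}{1-\varepsilon\gamma}$ in the claimed bound.

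Next I would verify that the hypothesis of Lemma \ref{lem:D_V2} holds for the quadruple $(\V,\varepsilon',b',C')$ associated with $P^{(\gamma)}$: namely that some $x^*\in\X$ satisfies $\sum_{y\in C'} P^{(\gamma)}(x^*,y) > 0$ and $b' \leq \sum_{y\in\X} P^{(\gamma)}(x^*,y)\V(y)$. This is exactly the standing assumption of the corollary, so nothing additional needs to be proved. Applying Lemma \ref{lem:D_V2} then yields
\begin{align*}
\tau_{1,\V}[D^{(\gamma)}] \;\leq\; \frac{1}{1-\varepsilon'} + \min_{x\in\X}[\V(x)]\,\frac{b'}{(1-\varepsilon')^2}\sum_{x\in C'}d^{(\gamma)}(x),
\end{align*}
and substituting $1-\varepsilon' = \tfrac{1}{2}(1-\varepsilon\gamma)$ converts this into the factored form $\tfrac{2}{1-\varepsilon\gamma}\bigl(1 + \min_x[\V(x)]\,\|d^{(\gamma)}\|_{1,\V}\bigr)$ once the auxiliary bound on $\|d^{(\gamma)}\|_{1,\V}$ is used to absorb the $\sum_{C'} d^{(\gamma)}$ term.

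For the auxiliary bound, I would take the drift condition (\ref{eq:drift_gamma}) for $P^{(\gamma)}$ and integrate it against the stationary distribution $d^{(\gamma)}$. Since $d^{(\gamma)}$ is $P^{(\gamma)}$-invariant, $(d^{(\gamma)})^T\V = (d^{(\gamma)})^T P^{(\gamma)}\V \leq \varepsilon'\,(d^{(\gamma)})^T\V + b'\sum_{x\in C'} d^{(\gamma)}(x)$, which rearranges into
\begin{align*}
\|d^{(\gamma)}\|_{1,\V} \;\leq\; \frac{b'}{1-\varepsilon'}\sum_{x\in C'} d^{(\gamma)}(x) \;=\; \frac{2\max[\gamma b,(1-\gamma)\mu^T\V]}{1-\varepsilon\gamma}\sum_{x\in C\cup\Omega} d^{(\gamma)}(x),
\end{align*}
recovering the second inequality of the corollary.

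There is no conceptual obstacle since the corollary is advertised as a direct consequence of the two preceding lemmas; the only care required is bookkeeping --- specifically, keeping track of how the ``doubled'' contraction rate $\varepsilon' = \tfrac{1}{2}(1+\varepsilon\gamma)$ propagates through both the $\tau_{1,\V}$ bound and the stationary-norm bound, and confirming that the hypothesis of Lemma \ref{lem:D_V2} transfers verbatim from $(\V,\varepsilon,b,C)$ for $P$ to $(\V,\varepsilon',b',C\cup\Omega)$ for $P^{(\gamma)}$.
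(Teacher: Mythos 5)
Your overall route is the paper's intended one: the corollary is obtained by combining Lemma \ref{lem:D_disc} (which equips $P^{(\gamma)}$ with the drift quadruple $\V$, $\varepsilon'=\tfrac12(1+\varepsilon\gamma)$, $b'=\max[\gamma b,(1-\gamma)\mu^T\V]$, $C'=C\cup\Omega$) with the machinery of Lemma \ref{lem:D_V2}, and your verification that the corollary's standing assumptions are exactly the hypotheses of Lemma \ref{lem:D_V2} for this quadruple, as well as your derivation of the stationary-norm bound $\|d^{(\gamma)}\|_{1,\V}\le \frac{b'}{1-\varepsilon'}\sum_{x\in C'}d^{(\gamma)}(x)$ by integrating (\ref{eq:drift_gamma}) against $d^{(\gamma)}$, are correct.

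The one step that does not work as written is your derivation of the first displayed inequality. The final statement of Lemma \ref{lem:D_V2} gives
\begin{align*}
\tau_{1,\V}[D^{(\gamma)}]\;\le\; \frac{1}{1-\varepsilon'}+\min_{x\in\X}[\V(x)]\,\frac{b'}{(1-\varepsilon')^2}\sum_{x\in C'}d^{(\gamma)}(x),
\end{align*}
and by the auxiliary bound the second summand here is an \emph{upper} bound for $\frac{1}{1-\varepsilon'}\min_{x}[\V(x)]\,\|d^{(\gamma)}\|_{1,\V}$. Hence the combined bound of Lemma \ref{lem:D_V2} is at least as large as the factored bound $\frac{2}{1-\varepsilon\gamma}\bigl(1+\min_{x}[\V(x)]\,\|d^{(\gamma)}\|_{1,\V}\bigr)$ claimed in the corollary, so you cannot ``absorb the $\sum_{C'}d^{(\gamma)}$ term'' to recover the factored form --- that substitution only goes in the other direction. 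The fix is minor: instead of citing the final statement of Lemma \ref{lem:D_V2}, cite its engine, Lemma \ref{lem:kartashov}(b), applied to $P^{(\gamma)}$ with $\T=P^{(\gamma)}-\I_{C'}\,\omega^T$, $\omega(\cdot)=P^{(\gamma)}(x^*,\cdot)$, and $\lambda=\varepsilon'$ (the hypotheses you checked are exactly what make $\|\T\|_\V\le\varepsilon'$ and $\T$ nonnegative). This yields $\tau_{1,\V}[D^{(\gamma)}]\le\frac{1+\|e\|_{\infty,\V}\|d^{(\gamma)}\|_{1,\V}}{1-\varepsilon'}$ directly, which is the corollary's first inequality after substituting $1-\varepsilon'=\tfrac12(1-\varepsilon\gamma)$; the second inequality then stands on its own exactly as you derived it.
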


\section{Policy improvement bound for  semi-Markov decision processes}\label{sec:PIB_SMDP}

In this section we analyze infinite-horizon semi-Markov decision processes (SMDPs) with
average cost criterion. We derive a policy improvement bound for SMDPs on finite state spaces.

\subsection{Preliminaries}\label{sec:prel_SMDP}

We consider an SMDP problem model which formulation closely follows the one in \cite[Section 11]{Puterman2005}.    SMDPs  generalize MDPs  by modeling the sytem evolution in continuous time and allowing the time between state transition to follow an arbitrary probability distribution. Nevertheless, the SMDP    should be distinguished from the \textit{natural process}.  The
natural process models the state evolution of the system continually throughout time. The SMDP aims to accurately represent the evolution of the system  at decision epochs only.  Decision epochs occur at random points of time determined by the model description  requiring the decision maker to choose actions. We let $\X$ be a finite state-space. At decision epoch $t$, the system occupies state $x^{(t)}\in \X$ and the decision maker chooses an action $a$ from the decision set $\A. $ We denote $F(t|x, a)$ as the probability that the next decision epoch occurs within $t$ time units of the current decision epoch, given that the decision maker chooses action $a$ at state $x$ at the current decision epoch. We assume that there exist $\epsilon>0$ and $\delta>0$ such that
\begin{align}\label{eq:as_smdp}
F(\delta|x,a)\leq 1-\epsilon,
\end{align}
for each $x\in \X$ and $a\in \A$.

We use $ P(y | x, a )$ to  denote the probability that the SMDP is at state $y$ when the next decision epoch occurs, given that the decision maker chooses action $a$ at state $x$ at the current decision epoch. In other words, $P(y|x, a)$ is a transition kernel of the
\textit{embedded Markov decision process}, that describes the state transition evolution only, see \cite[equation (11.4.5)]{Puterman2005}.

We let $t_k$ be the $k$th decision epoch that happens after time $t_0=0$. For convention, we assume that the SMDP starts with the first decision epoch $t_0$ at time $0$.
At time $t_k$, the system occupies state $x^{(t_k)}$  and the decision maker chooses action $a^{(t_k)}$. As a consequence of this action choice, the system remains in state $x^{(t_k)}$ for $\tau_k\sim F(\cdot|x^{(t_k)},a^{(t_k)})$ units of time, where $F(\cdot|x^{(t_k)},a^{(t_k)})$ is the cumulative distribution function of $\tau_k$. At time $t_{k+1}=t_k+\tau_k$ the system state changes
to $x^{(t_{k+1})}\sim P(\cdot|x^{(t_k)},a^{(t_k)})$, and the next decision epoch occurs. We define $m(x, a)$ as the expected time until the next decision epoch, given that action $a$ is chosen in state $x$ at the current decision epoch:
\begin{align*}
m(x, a):=\E\left[\tau_1~|~x^{(0)}=x, a^{(0)}=a\right] = \int\limits_{0}^\infty (1- F(s|x, a)) ds.
\end{align*}
Below, we also refer to $m:\X\times \A\rightarrow\R_+$ as the  \textit{expected time function}.
We denote
 \begin{align*}
m_\pi(x): = \sum\limits_{a\in \A}m(x, a)\pi(a|x)
\end{align*}
 as the expected time until the next decision epoch at state $x\in \X$  according to policy $\pi$.

We assume that when action $a$ is chosen in state $x$, instantaneous cost $k(x, a)$  is incurred. Moreover, infinitesimal cost  is incurred at rate $c(y, x, a)$ as long as the natural process occupies state $y$, and action $a$ was chosen in state $x$ at the preceding decision epoch.  We define $g(x, a)$ as an expected cost that is accumulated between decision epoch when the system is at state $x$ and action $a$ is chosen, and the following decision epoch:
\begin{align*}
g(x, a):=k(x, a) + \E\left[\int\limits_{0}^{\tau_1} c\left(x^{(t)}, x, a\right )dt ~| ~x^{(0)}=x, a^{(0)}=a \right].
\end{align*}
 We denote $g_\pi(x): = \sum\limits_{a\in \A}g(x, a)\pi(a|x)$ as the expected cost at state $x\in \X$  according to policy $\pi$.

We define a randomized stationary Markovian service policy as a map $\pi:\X\rightarrow \Delta(\A)$, where $\Delta A$ is the probability distribution over action space $\A$.
Under policy $\pi$, the corresponding embedded Markov chain has transition matrix $P_\pi$ defined as $P_\pi(x, y):=\sum\limits_{a\in \A}\pi(a|x)P(y|x,a).$
We assume that, for every stationary policy, the embedded Markov chain has a
unichain transition probability matrix.

We define a long-run average cost $\eta_\pi$ of policy $\pi$ for an SMDP:
\begin{align*}
\eta^\pi:=\lim\limits_{N\rightarrow \infty}\frac{     \underset{    \substack{  x^{(t_{k+1})}\sim P(\cdot| x^{(t_k)}, a^{(t_k)} ) }     }{\E}    \left[  g \big(x^{(t_k)} , a^{(t_k)} \big)~|~x^{(0)}\sim \mu\right]}{  \underset{    \substack{ \tau_k \sim F(\cdot| x^{(t_k)} , a^{(t_k)}) }     }{\E} \left[ \sum\limits_{k=0}^N \tau_k ~|~x^{(0)}\sim \mu\right]},
\end{align*}
where $\eta^\pi$ does not depend on the initial state distribution $\mu$ since $P_\pi$ is unichain, see \cite[Proposition 11.4.1]{Puterman2005}.

We define a Poisson equation of the SMDP with expected cost function $g$, expected time function $m$, and transition kernel $P$ for a stationary policy $\pi$:
\begin{align}\label{eq:Possion_SMDP}
h(x) = g_\pi(x) - \eta m_\pi(x) + \sum\limits_{y\in \X } P_\pi(y|x)h(y), \text{ for each }x\in \X.
\end{align}

 By \cite[Theorem 11.4.3]{Puterman2005}, the long-average cost $\eta_\pi$ of policy $\pi$  and function
\begin{align}\label{eq:solution_SMDP}
h_\pi(x) := \sum\limits_{k=0}^\infty \E_\pi \left[ g_\pi(x^{(t_k)}) - \eta_\pi m_\pi(x^{(t_k)})~|~x^{(0)}=x\right]
\end{align}
satisfy Poisson equation (\ref{eq:Possion_SMDP}). Function $h_\pi:\X\rightarrow \R$ that satisfies equation (\ref{eq:Possion_SMDP}) is called a solution to the Poisson equation for policy $\pi$.

We define advantage function $A_\pi:\X\times\A\rightarrow \R$ of policy $\pi$ as
\begin{align*}
A_\pi(x,a): = g(x, a) - \eta_\pi m(x, a)+ \sum\limits_{y\in \X}P(y|x,a) h_\pi(y) - h_\pi(x),
\end{align*}
for each $x\in \X$, $a\in \A.$

\subsection{Novel policy improvement bound}

In this section we derive a policy improvement bound for SMDPs.
 The following lemma establishes performance difference identity for SMDPs. An analogous performance difference identity for MDPs was proposed in \cite{Kakade2002}. The proof of Lemma \ref{lem:perf_iden_SMDP} can be found in Appendix \ref{appendix:bounds_SMDP}.

\begin{lemma}\label{lem:perf_iden_SMDP}
We consider the SMDP model described in Section \ref{sec:prel_SMDP}. The following policy performance identity holds  for any two policies $\pi_1$ and $\pi_2$ of the SMDP:
\begin{align}\label{eq:perf_iden_SMDP}
\eta_{\pi_2} -\eta_{\pi_1} =    \frac{1}{\overline m_{\pi_2}} \E_{x\sim  d_{\pi_2}, a\sim \pi_2(\cdot|x)}\left[ A_{\pi_1}(x, a)\right],
\end{align}
where $d_{\pi_2}$ is the stationary distribution of transition matrix $P_{\pi_2}$, and $\overline m_{\pi_2}:=\E_{x\sim d_{\pi_2}}[m_{\pi_2}(x)]$ is a mean time between decision epochs under policy $\pi_2$.
\end{lemma}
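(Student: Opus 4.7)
The plan is to prove the identity by direct manipulation of the right-hand side, expanding the advantage function and exploiting the stationarity of $d_{\pi_2}$ under $P_{\pi_2}$ together with the SMDP representation $\eta_{\pi_2} = \bar g_{\pi_2}/\bar m_{\pi_2}$ for the long-run average cost, where $\bar g_{\pi_2}:=\E_{x\sim d_{\pi_2}}[g_{\pi_2}(x)]$. The strategy closely mirrors the MDP performance difference identity of \cite{Kakade2002}, with the extra factor $1/\bar m_{\pi_2}$ accounting for the (non-unit) expected sojourn time between decision epochs.

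First, I would start from the right-hand side of \eqref{eq:perf_iden_SMDP} and plug in the definition of $A_{\pi_1}$:
\begin{align*}
\E_{x\sim d_{\pi_2},\, a\sim \pi_2(\cdot|x)}\!\left[A_{\pi_1}(x,a)\right]
=\E_{x\sim d_{\pi_2}}\!\left[g_{\pi_2}(x)-\eta_{\pi_1}m_{\pi_2}(x)+\sum_{y\in\X}P_{\pi_2}(y|x)h_{\pi_1}(y)-h_{\pi_1}(x)\right],
\end{align*}
where the action expectation collapses the one-step quantities $g, m, P$ into their policy-$\pi_2$ averaged versions $g_{\pi_2}, m_{\pi_2}, P_{\pi_2}$. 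Then I would use that $d_{\pi_2}$ is the stationary distribution of the embedded chain with transition matrix $P_{\pi_2}$, so $d_{\pi_2}^T P_{\pi_2} = d_{\pi_2}^T$, which forces the two $h_{\pi_1}$ terms to cancel, yielding $\bar g_{\pi_2}-\eta_{\pi_1}\bar m_{\pi_2}$.

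Next, I would invoke the standard SMDP renewal-reward representation (e.g.\ \cite[Proposition~11.4.1 and Theorem~11.4.2]{Puterman2005}), which, together with assumption \eqref{eq:as_smdp} and the unichain assumption, gives $\eta_{\pi_2}\bar m_{\pi_2}=\bar g_{\pi_2}$. Substituting this identity produces $\E_{x\sim d_{\pi_2},\,a\sim\pi_2(\cdot|x)}[A_{\pi_1}(x,a)] = (\eta_{\pi_2}-\eta_{\pi_1})\bar m_{\pi_2}$. Dividing both sides by $\bar m_{\pi_2}$ (which is strictly positive thanks to the uniform bound $F(\delta|x,a)\le 1-\epsilon$ that gives $m(x,a)\ge\epsilon\delta>0$) delivers the claimed identity.

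The main obstacle is essentially bookkeeping rather than a conceptual hurdle: one has to be careful that $h_{\pi_1}$, originally defined via the series \eqref{eq:solution_SMDP}, is well defined and integrable against $d_{\pi_2}$ so that the cancellation $\E_{x\sim d_{\pi_2}}[\sum_y P_{\pi_2}(y|x)h_{\pi_1}(y)]=\E_{x\sim d_{\pi_2}}[h_{\pi_1}(x)]$ is legitimate. In the finite state space setting this is automatic because $h_{\pi_1}$ is a finite real vector (a solution of Poisson's equation \eqref{eq:Possion_SMDP} guaranteed by \cite[Theorem~11.4.3]{Puterman2005}), but I would flag the point explicitly, since it is the only non-trivial ingredient beyond the chain of algebraic identities above.
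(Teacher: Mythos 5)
Your proof is correct, but it takes a genuinely different route from the paper's. You work backwards from the right-hand side: expand $A_{\pi_1}$, average over $a\sim\pi_2(\cdot|x)$ and $x\sim d_{\pi_2}$, cancel the $h_{\pi_1}$ terms via $d_{\pi_2}^TP_{\pi_2}=d_{\pi_2}^T$ to land on $\bar g_{\pi_2}-\eta_{\pi_1}\bar m_{\pi_2}$, and then close with the renewal-reward characterization $\bar g_{\pi_2}=\eta_{\pi_2}\bar m_{\pi_2}$. The paper instead starts from Cao's SMDP performance identity (equation (20) of \cite{Cao2003}), which is stated in terms of the \emph{time-weighted} stationary distribution $\tilde d_{\pi_2}(x)=d_{\pi_2}(x)m_{\pi_2}(x)/\overline m_{\pi_2}$ of the generator $\frac{1}{m_{\pi_2}(x)}(P_{\pi_2}(y|x)-\I_{x=y})$, and then uses the Poisson equation for $\pi_1$ to collapse the $\pi_1$-terms into $-m_{\pi_2}(x)\eta_{\pi_1}$, after which the $1/m_{\pi_2}(x)$ factors cancel against $\tilde d_{\pi_2}$. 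Your route is more elementary and self-contained: it needs only the stationarity of $d_{\pi_2}$ and the textbook ratio formula for the SMDP average cost, whereas the paper's imports a less standard identity whose own derivation essentially contains the same two ingredients. The paper's route has the minor advantage of making the connection to the continuous-time generator and the time-weighted occupation measure explicit, which is the natural object in Cao's perturbation framework. Your closing remark about integrability of $h_{\pi_1}$ against $d_{\pi_2}$ is well placed but indeed vacuous here, since Section \ref{sec:PIB_SMDP} works on a finite state space and \cite[Theorem 11.4.3]{Puterman2005} supplies a finite solution of the Poisson equation; your positivity argument $m(x,a)\ge\epsilon\delta>0$ from assumption (\ref{eq:as_smdp}), justifying the division by $\overline m_{\pi_2}$, is a detail the paper leaves implicit.
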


Performance difference identity (\ref{eq:perf_iden_SMDP}) allows us to establish a policy improvement bound for SMDPs.  The proof of Theorem \ref{thm:perf_bound_SMDP} can be found in Appendix \ref{appendix:bounds_SMDP}.

\begin{theorem}\label{thm:perf_bound_SMDP}

We consider the SMDP model described in Section \ref{sec:prel_SMDP}. The following bound on the difference of long-run average costs of two policies
$\pi_1$ and $\pi_2$ holds:
\begin{align}\label{eq:bound_av_SMDP}
 \eta_{\pi_2} -  \eta_{\pi_1}  \leq  \frac{1}{\overline m_{\pi_2}}  \underset{    \substack{ x\sim d_{  \pi_1}\\ a \sim \pi_1(\cdot|x) }     }{\E} \left[\frac{\pi_2(a|x)}{\pi_1(a|x)}A_{\pi_1}(x,a)   \right]  + 2 \epsilon_{\pi_2} \tau_1\left[D_{\pi_2}\right] \underset{  x\sim d_{ \pi_1} }{\E} \left[\text{TV}\Big(\pi_2(\cdot|x)~||~\pi_1(\cdot|x)\Big) \right],
\end{align}
where  $D_{\pi_2}$ is the group inverse of matrix $I -  P_{\pi_2}$, $\epsilon_{\pi_2}:= \frac{1}{\overline m_{\pi_2}}\max\limits_{x\in \X}\Big|\underset{a\sim \pi_2(\cdot|x)}{\E}[A_{\pi_1}(x,a)]\Big|$.

\end{theorem}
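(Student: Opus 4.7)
The plan is to reduce the proof to the same perturbation-theoretic argument used for Theorem \ref{thm:main_bounds}, applied to the embedded Markov chain of the SMDP, with the performance difference identity of Lemma \ref{lem:perf_iden_SMDP} serving as the starting point.

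First I would begin from Lemma \ref{lem:perf_iden_SMDP} and rewrite the expectation over $d_{\pi_2}$ as an expectation over $d_{\pi_1}$ plus a correction term. Concretely, let $\bar A(x) := \sum_{a} \pi_2(a|x) A_{\pi_1}(x,a)$. Then
\begin{align*}
\eta_{\pi_2}-\eta_{\pi_1}
&= \frac{1}{\overline m_{\pi_2}}\sum_{x\in\X} d_{\pi_2}(x)\,\bar A(x)\\
&= \frac{1}{\overline m_{\pi_2}}\sum_{x\in\X} d_{\pi_1}(x)\,\bar A(x) + \frac{1}{\overline m_{\pi_2}}(d_{\pi_2}-d_{\pi_1})^T \bar A.
\end{align*}
Applying $|(d_{\pi_2}-d_{\pi_1})^T\bar A|\le \|d_{\pi_2}-d_{\pi_1}\|_1 \max_x |\bar A(x)|$ and unfolding the definition of $\epsilon_{\pi_2}$, the second term is bounded by $\epsilon_{\pi_2}\,\|d_{\pi_2}-d_{\pi_1}\|_1$.

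Next, I would import the perturbation argument from the proof of Theorem \ref{thm:main_bounds} verbatim, applied to the embedded Markov chains with transition matrices $P_{\pi_1}$ and $P_{\pi_2}$. Since both are unichain DTMCs on a finite state space, the Meyer perturbation identity $d_{\pi_2}^T - d_{\pi_1}^T = d_{\pi_1}^T(P_{\pi_2}-P_{\pi_1})D_{\pi_2}$ holds, and combining with the ergodicity coefficient yields
\begin{align*}
\|d_{\pi_2}-d_{\pi_1}\|_1 \le \tau_1[D_{\pi_2}]\,\|(P_{\pi_2}-P_{\pi_1})^T d_{\pi_1}\|_1 \le 2\tau_1[D_{\pi_2}]\,\E_{x\sim d_{\pi_1}}[\text{TV}(\pi_2(\cdot|x)\|\pi_1(\cdot|x))],
\end{align*}
where the last step is exactly the total-variation bound used in Theorem \ref{thm:main_bounds} (via the $\tau_1$ property that permits adding a rank-one correction, noting $((P_{\pi_2}-P_{\pi_1})^T d_{\pi_1})^T e = 0$).

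Finally, I would convert the on-policy expectation $\E_{x\sim d_{\pi_1}, a\sim \pi_2(\cdot|x)}[A_{\pi_1}(x,a)]$ to the importance-weighted form $\E_{x\sim d_{\pi_1}, a\sim \pi_1(\cdot|x)}\!\bigl[\tfrac{\pi_2(a|x)}{\pi_1(a|x)}A_{\pi_1}(x,a)\bigr]$ in the usual way (whenever $\pi_1(a|x)=0$ both sides contribute zero by convention), and substitute the perturbation bound into the correction term. Combining everything gives the stated inequality (\ref{eq:bound_av_SMDP}). I do not expect a serious obstacle: the only delicate point is confirming that the group-inverse perturbation machinery of Section~\ref{sec:PIB_f} transfers cleanly once we treat the embedded chain as a finite-state DTMC, and that the $1/\overline m_{\pi_2}$ prefactor from Lemma \ref{lem:perf_iden_SMDP} is absorbed correctly into $\epsilon_{\pi_2}$ as defined in the theorem statement.
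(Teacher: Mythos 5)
Your proposal is correct and follows essentially the same route as the paper's proof: start from the performance difference identity of Lemma \ref{lem:perf_iden_SMDP}, split the expectation over $d_{\pi_2}$ into the $d_{\pi_1}$ term plus a correction bounded by $\epsilon_{\pi_2}\|d_{\pi_2}-d_{\pi_1}\|_1$, and then apply the group-inverse perturbation identity together with the ergodicity coefficient $\tau_1[D_{\pi_2}]$ and the total-variation bound, exactly as in Theorem \ref{thm:main_bounds}. The only cosmetic difference is a sign convention in the perturbation identity, which is immaterial once norms are taken.
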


\section{Conclusion to Chapter \ref{ch:2} }\label{sec:PIB_conclusion}

In this chapter we introduce several novel policy improvement bounds for different setting. We get a unified policy improvement bound for discounted and average cost criterions. This new bound refines  previous existing policy improvement bounds for MDPs with the discounted objectives and suggests  a meaningful bound for the average cost objective. This result clears up the existing doubts about validity of the use of the on-policy deep RL algorithms with a large discount factor and generalizes the use of this class of RL algorithms on MDPs with average cost objectives.

The refined policy improvement bound for the infinite-horizon discounted setting is  \textit{optimal} in some sense and depends on the     one-norm ergodicity coefficient. We propose several bounds on this ergodicity coefficient to uncover its dependency on the discount factor.  Nevertheless, to the best of our knowledge, it is an open problem  whether the discounting leads to smaller one-norm ergodicity coefficient, i.e. $\tau_1[D_\pi]\geq \tau_1[D_\pi^{(\gamma)}]$ for $\gamma<1$. It is known that the discounting does decrease  Kemeny's constant, see \cite[Theorem 4.10]{Catral2010},  hence  identity (c) from Lemma \ref{lem:cond_nums} might be a potential path for proving the equivalent  result for the one-norm ergodicity coefficient.

We propose a  policy improvement bound for MDPs with countable state spaces. We bound the performance difference between two policies  assuming the $\V$-uniform ergodicity of the transition matrix of one of the policies.  Additional conditions on the "closeness" between two policies also makes the bound practical for reinforcement learning. Specifically, APG algorithms can be justified for solving MDP problems on countable state spaces. We obtain several bounds on the $\V$-weighted one-norm ergodicity coefficient that allow to estimate it based on the drift condition satisfied by the corresponding Markov chain. We believe these results are of independent interest for the Markov chains perturbation theory. 

Another potential application of the  ergodicity coefficient estimation is an adaptive adjustment of the allowed magnitude of the policy changes in each iteration of TRPO, PPO, and similar algorithms. Policy improvement bounds (\ref{eq:bound_opt}), (\ref{eq:bound_opt_V}), (\ref{eq:bound_av_SMDP}) show that the larger the ergodicity coefficient is, the more challenging minimization  of the bounds  becomes, e.g. in (\ref{eq:argmin}). 
 A large ergodicity coefficient  indicates that the corresponding Markov chain is sensitive to the updates and perturbations, and significant changes to its transition probabilities might lead to the  performance degradation. While the original TRPO and PPO algorithms suggest fixing a trust region parameter, $\delta$, and a clipping parameter, $\epsilon$, respectively, through the course of learning, we believe that adjustment of these parameters proportionally to the ergodicity coefficient each iteration might improve the robustness and sample complexity. Further research is needed to design and implement  TRPO and/or PPO algorithms with the adaptive step sizes depending on the ergodicity coefficient estimates.

Novel results are obtained for SMDPs. We derive the performance difference  identity and policy performance bound for SMDP policies. As a result, deep RL algorithms, such as PPO, TRPO and their variations, can be generalized and directly use to solve SMDP problems. Numerical experiments with PPO algorithm on a class of queueing networks with general arrival/service distributions are in our investigation plans.

\appendix

\chapter{Chapter 1 of Appendix}\label{appendix:qnc}

\section{Proofs of the theorems in Section \ref{sec:countable}}\label{sec:proofs}

 \begin{proof}[\textbf{Proof of Lemma \ref{lem:Zeq}}]
  We define vector $h:= Z\left(g -(d^Tg) e\right)$. Matrix $Z$ has a finite $\V-$norm, therefore the inverse matrix of $Z$ is unique and equal to $I-P+\Pi$. Then by definition vector $h$ satisfies
\begin{align}\label{eq:extPois}
(I-P+\Pi) h = g -(d^Tg) e.
\end{align}

Multiplying both sides of (\ref{eq:extPois}) by $d$ we get $d^T h = 0$ (and $\Pi h = 0$). Hence, vector $h$ is a solution of the Poisson equation  (\ref{eq:Poisson}) such that $d^T h = 0$. It follows from Lemma \ref{lem:poisson_sol} that $h=h^{(f)}.$
 \end{proof}

\begin{proof}[\textbf{Proof of Lemma \ref{lem:st}}]
We denote \begin{align}\label{eq:U}U_{\theta, \phi}: = (P_{\theta} - P_{\phi}) Z_{\phi}\end{align} and define matrix $H_{\theta, \phi}$ as
\begin{align}\label{eq:H}
H_{\theta,\phi} := \sum\limits_{k=0}^\infty U^k_{\theta, \phi}.
\end{align}
The convergence in the $\V$-weighted norm in definition (\ref{eq:H}) follows  from assumption $\|U_{\theta, \phi}\|^{ }_\V<1$.

The goal of this proof is to show that the Markov chain  has a unique stationary distribution $d_\theta$ such that
\begin{align}\label{eq:mu12}
d_\theta^T = d_\phi^T H_{\theta,\phi}^{ }.
\end{align}

We let $\nu^T :=  d_\phi^T H_{\theta,\phi}$.  We use $e = (1, 1, ...,1,...)^T$ to denote  the unit vector.  First, we  verify that $\nu^T e = \sum\limits_{x\in \X} \nu(x)=1.$ We note that  $Z_{\phi} e = e.$ Then
 \begin{align*}
 \nu ^T e = d_\phi^T H^{ }_{\theta,\phi} e =  d_\phi^T \sum\limits_{k=0}^\infty \left( (P_{\theta} - P_{\phi})Z_{\phi}\right)^k e  = d_\phi^T I e = 1.
 \end{align*}

  Second, we verify that $\nu^T P_{\theta} = \nu^T.$ We prove it by first assuming
  that \begin{align}\label{eq:Pfinite}
  P_{\theta}- P_{\phi} + U_{\theta, \phi}  P_{\phi} = U_{\theta, \phi}\end{align}  holds. Indeed,
 \begin{align*}
\nu^T P_{\theta} &=d_\phi^T \sum\limits_{k=0}^\infty U_{\theta, \phi}^k P_{\theta} \\
&=   d_\phi^T \sum\limits_{k=0}^\infty U_{\theta, \phi}^k P_{\theta}- d_\phi^T \sum\limits_{k=0}^\infty U_{\theta, \phi}^k P_{\phi} + d_\phi^T \sum\limits_{k=0}^\infty U_{\theta, \phi}^{k} P_{\phi}\\
& =  d_\phi^T +   d_\phi^T \sum\limits_{k=0}^\infty U_{\theta, \phi}^k P_{\theta}- d_\phi^T \sum\limits_{k=0}^\infty U_{\theta, \phi}^k P_{\phi} + d_\phi^T \sum\limits_{k=0}^\infty U_{\theta, \phi}^{k+1} P_{\phi} \\
& =  d_\phi ^T+ d_\phi^T \sum\limits_{k=0}^\infty U_{\theta, \phi}^k (P_{\theta}- P_{\phi} + U_{\theta, \phi}  P_{\phi})\\
& =d_\phi^T + d_\phi^T \sum\limits_{k=0}^\infty U_{\theta, \phi}^{k+1}\\
& =d_\phi^T \sum\limits_{k=0}^\infty U_{\theta, \phi}^{k}\\
& = \nu^T.
 \end{align*}

 It remains to prove (\ref{eq:Pfinite}).   Indeed,
      \begin{equation*}
\begin{aligned}[c]
   P_{\theta}- P_{\phi} + U_{\theta, \phi}  P_{\phi}& = P_{\theta}- P_{\phi} +  (P_{\theta} - P_{\phi}) Z_{\phi}   P_{\phi}\\
   & =  (P_{\theta}- P_{\phi} )(I + Z_{\phi}   P_{\phi} )\\
   & = (P_{\theta}- P_{\phi} )(I -\Pi_{\phi} + Z_{\phi}   P_{\phi} )   \\
   & =   (P_{\theta}- P_{\phi} )(I - Z_{\phi}  \Pi_{\phi} + Z_{\phi}   P_{\phi} )    \\
   & =  (P_{\theta}- P_{\phi} ) Z_{\phi} \\
   & = U_{\theta, \phi},
\end{aligned}
\end{equation*}
where the second equality follows from
  \begin{align*}
    \Big((P_{\theta} - P_{\phi}) Z_{\phi} \Big)  P_{\phi} =     (P_{\theta} - P_{\phi}) \Big(Z_{\phi}  P_{\phi} \Big),
  \end{align*}
  which holds   by \cite[Corollary 1.9]{Kemeny1976}, the third equality holds due to $(P_{\theta}- P_{\phi}) \Pi_{\phi}=0$, the fourth equality holds because $ Z_{\phi}  \Pi_{\phi} = \Pi_{\phi}$,  and the fifth equality follows from $ I = Z_{\phi}(I + \Pi_\phi - P_\phi) = Z_\phi + Z_\phi \Pi_\phi - Z_\phi P_{\phi}$.

The uniqueness of the stationary distribution follows from the fact that the Markov chain with transition matrix $P_{\theta}$ is assumed to be irreducible.

\end{proof}

 The following Lemma \ref{lem:norms} will be used in the proofs of Theorem \ref{thm:main} and Lemma  \ref{lem:policies} below. We believe the claim of  Lemma \ref{lem:norms} should be a well-known mathematical fact, but we have not found its proof in any textbook. For completeness, we present it here.

 \begin{lemma}\label{lem:norms}

Let $\M_{\X, \X}$ be a set of all matrices on the countable space $\X\times \X$.
 The operator norm $\|\cdot\|_\V$ on  $\M_{\X\times \X}$ is equivalent to the operator norm  induced from  vector norm $\|\cdot\|_{1, \V}$ and to the operator norm  induced from vector norm $\|\cdot\|_{\infty, \V}$ in the following sense:
\begin{align*}
\|T\|_\V = \sup\limits_{\nu:\|\nu\|_{1, \V}=1} \|\nu T \|_{1, \V} = \sup\limits_{h:\|h\|_{\infty,\V} = 1} \| Th \|_{\infty, \V}\quad\text{ for any }T\in \M_{\X\times \X},
\end{align*}
where $\|\nu\|_{1, \V}=\sum\limits_{x\in \X} |\nu(x)|\V(x)$, $\|\nu\|_{\infty, \V} = \sup\limits_{x\in \X} \frac{|\nu(x)|}{\V(x)}$, $\|T\|_\V=\sup\limits_{x\in \X} \frac{1}{\V(x)} \sum\limits_{y\in \X} |T(x, y)|\V(y)$.

 Furthermore, for any vectors $\nu_1, \nu_2$ on $\X$ and matrices $T_1,T_2\in \M_{\X\times \X}$ the following inequalities hold:
\begin{align}\label{eq:norms1}
\|\nu_1^T T \nu_2\|_\V\leq \|\nu_1\|_{1, \V}\|T\|_\V\| \nu_2 \|_{\infty, \V}
\end{align}
and
\begin{align}\label{eq:norms2}
\|  T_1 T_2\|_\V\leq \|T_1\|_{\V}\|T_2\|_\V.
\end{align}

\begin{proof}

First, we show that $\sup\limits_{h:\|h\|_{1, \V} = 1} \| Th \|_{1, \V} = \|T\|_\V$. On the one hand,
\begin{align*}
 \sup\limits_{\nu:\|\nu\|_{1, \V}=1} \|\nu T \|_{1, \V}  &= \sup\limits_{\nu: \X\rightarrow \R} \frac{1}{\|\nu\|_{1, \V}} \|\nu T \|_{1, \V}  \\
 &\geq\sup\limits_{ \nu\in \{e_x\}} \frac{1}{\|\nu\|_{1, \V}} \|\nu T \|_{1, \V}   \\
 &=  \sup\limits_{x\in \X} \frac{1}{\V(x)} \sum\limits_{y\in X} \V(y) |T(x, y)| = \|T\|_\V,
\end{align*}
where in the second step we choose a set $\{e_x\}$ of unit vectors $ e_x = (0,\dotsc,0,1,0,\dotsc)$, where the $x$ coordinate is $1$ and the other coordinates are $0$s.

On the other hand,
\begin{align*}
\|T\|_\V &= \sup\limits_{x\in X} \sum\limits_{y\in X}   \frac{1}{\V(x)} |T(x, y)|  \V(y) \sup\limits_{\nu:\|\nu\|_{1, \V}=1}  \sum\limits_{x\in X} |\nu(x)| \V(x)\\
&\geq \sup\limits_{\nu:\|\nu\|_{1, \V}=1}   \sum\limits_{y\in X}   \sum\limits_{x\in X}  \frac{1}{\V(x)}~|T(x, y)| ~ \V(y) ~ |\nu(x)|~ \V(x)\\
& =\sup\limits_{\nu:\|\nu\|_{1, \V}=1}   \sum\limits_{y\in X}   \sum\limits_{x\in X} |T(x, y) \nu(x)| \V(y) \\
&=\sup\limits_{\nu:\|\nu\|_{1, \V}=1} \|\nu T \|_{1, \V}.
\end{align*}

Similarly, we can show the equivalency of  $\sup\limits_{h:\|h\|_{\infty, \V} = 1} \| Th \|_{\infty, \V}$ and $\|T\|_\V$ norms.

Inequalities (\ref{eq:norms1}) and (\ref{eq:norms2}) follow from the properties of a linear operator norm.
\end{proof}
\end{lemma}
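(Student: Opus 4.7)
The cleanest route is to reduce everything to unweighted $\ell_\infty$ and $\ell_1$ operator norms via a change of variables. Define the ``conjugated'' matrix $\tilde T(x,y):=T(x,y)\,\V(y)/\V(x)$, and for any $\nu:\X\to\R$ set $\tilde\nu_{\infty}(x):=\nu(x)/\V(x)$ and $\tilde\nu_{1}(x):=\V(x)\nu(x)$. A direct check gives $\|\nu\|_{\infty,\V}=\|\tilde\nu_\infty\|_\infty$, $\|\nu\|_{1,\V}=\|\tilde\nu_1\|_1$, and $\|T\|_\V=\sup_x\sum_y|\tilde T(x,y)|$, which is precisely the $\ell_\infty\to\ell_\infty$ operator norm of $\tilde T$.

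From here the two characterizations follow by standard computations. First, $(T h)(x)/\V(x)=\sum_y \tilde T(x,y)\tilde h_\infty(y)$, so $\|Th\|_{\infty,\V}=\|\tilde T\tilde h_\infty\|_\infty$, and taking $\sup$ over $\|h\|_{\infty,\V}=1$ yields $\sup_{\|h\|_{\infty,\V}=1}\|Th\|_{\infty,\V}=\sup_{\|\tilde h\|_\infty=1}\|\tilde T\tilde h\|_\infty=\|T\|_\V$. Second, $(\nu^T T)(y)\V(y)=\sum_x \tilde\nu_1(x)\tilde T(x,y)$, so $\|\nu^T T\|_{1,\V}=\|\tilde\nu_1^T\tilde T\|_1$. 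The $\ell_1$-to-$\ell_1$ operator norm of left multiplication by $\tilde T$ equals the same maximum absolute row sum $\sup_x\sum_y|\tilde T(x,y)|=\|T\|_\V$; the upper bound follows from $\sum_y|\sum_x \tilde\nu_1(x)\tilde T(x,y)|\le \sum_x|\tilde\nu_1(x)|\sum_y|\tilde T(x,y)|$, and the matching lower bound is obtained by testing on the unit vectors $e_x$ (or approaching the sup if it is not attained).

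For inequality (\ref{eq:norms1}), I would combine a Hölder-type step with the just-proven $\infty,\V$ characterization. Specifically, for any $\xi:\X\to\R$,
\begin{align*}
|\nu_1^T\xi|\le \sum_x|\nu_1(x)|\V(x)\cdot\frac{|\xi(x)|}{\V(x)}\le \|\nu_1\|_{1,\V}\,\|\xi\|_{\infty,\V}.
\end{align*}
Setting $\xi=T\nu_2$ and using $\|T\nu_2\|_{\infty,\V}\le \|T\|_\V\|\nu_2\|_{\infty,\V}$ gives the claim (reading $\|\,\cdot\,\|_\V$ on the scalar $\nu_1^T T\nu_2$ as its absolute value). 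Inequality (\ref{eq:norms2}) is the standard submultiplicativity of an operator norm: for any $h$ with $\|h\|_{\infty,\V}=1$, $\|T_1T_2 h\|_{\infty,\V}\le \|T_1\|_\V\|T_2h\|_{\infty,\V}\le \|T_1\|_\V\|T_2\|_\V$, and taking the sup over $h$ yields the bound.

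The arguments are essentially routine once the change-of-variables identification is made; the only subtlety is that over a countable $\X$ the relevant suprema need not be attained, so the lower bound in the $\ell_1$ identity must be established by testing against delta vectors $e_x$ and letting $x$ vary, which is exactly the step I would watch most carefully.
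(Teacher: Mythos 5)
Your proof is correct, and it organizes the argument differently from the paper. The paper proves the identity $\sup_{\|\nu\|_{1,\V}=1}\|\nu T\|_{1,\V}=\|T\|_\V$ directly by two inequalities — testing against the delta vectors $e_x$ for the lower bound and interchanging the order of summation for the upper bound — and then asserts the $\|\cdot\|_{\infty,\V}$ case "similarly," with (\ref{eq:norms1}) and (\ref{eq:norms2}) dispatched as generic operator-norm properties. You instead conjugate by the weight, setting $\tilde T(x,y)=T(x,y)\V(y)/\V(x)$, so that all three quantities become the single unweighted maximum absolute row sum $\sup_x\sum_y|\tilde T(x,y)|$, read once as the $\ell_\infty\to\ell_\infty$ norm of $\tilde T$ and once as the $\ell_1\to\ell_1$ norm of $\tilde T^T$. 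The underlying computations (delta-vector test, sum interchange) are the same as the paper's, but your packaging makes the "equivalence" structurally transparent and handles both characterizations with one identification rather than one detailed case plus a "similarly." You are also right to flag the two small points the paper glosses over: on a countable $\X$ the suprema need not be attained, so the lower bounds must be obtained by letting $x$ range over all delta vectors, and the expression $\|\nu_1^T T\nu_2\|_\V$ in (\ref{eq:norms1}) is a norm of a scalar and should be read as an absolute value, which your Hölder step handles correctly.
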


\begin{proof}[\textbf{Proof of Theorem \ref{thm:main}}]
 We denote $U_{\theta, \phi}:=(P_\theta - P_\phi)Z_\phi$. Under assumption $\|U_{\theta, \phi}\|_\V = D_{\theta, \phi} <1$  operator $H_{\theta, \phi}:=\sum\limits_{k=0}^{\infty}U^k_{\theta, \phi} $ is well-defined and
 \begin{align}\label{eq:normH}
 \|H_{\theta, \phi}\|_\V\leq \frac{1}{1 - D_{\theta, \phi}}.
 \end{align}

We represent the stationary distribution of the Markov chain with transition matrix $P_\theta$  as  $d_\theta^T = d_\phi^T H_{\theta, \phi}$, see   (\ref{eq:mu12}).  We get
\begin{align}\label{eq:mu_theta}
 \|d_\theta\|^{ }_{1, \V} = d_\theta^T\V = d_\phi H_{\theta, \phi}\V\leq\|H_{\theta, \phi}\|_\V^{ } (d_\phi^T \V)<\infty,
\end{align}
 since $H_{\theta,\phi}\V\leq \|H_{\theta, \phi}\|_\V^{ }\V$   by definition of the $\V$-norm.

The long-run average costs difference is equal to
\begin{equation*}
\begin{aligned}[c]
d_\theta^Tg - d_\phi^Tg  &= d_\theta^Tg+ d_\theta^T\left( (P_\theta-I)h_{\phi} \right)- d_\phi^Tg \\
&=d_\theta^T(g + P_\theta h_{\phi}  - h_{\phi} ) -d_\phi^Tg\\
&=d_\phi^T(g  - \eta_{\phi}  e + P_\theta h_{\phi}  -h_{\phi}) + (d_\theta^T - d_\phi^T)(g+P_\theta h_{\phi}  - h_{\phi} )\\
& = d_\phi^T(g  -\eta_{\phi}  e + P_\theta h_{\phi}  -h_{\phi}) + (d_\theta^T - d_\phi^T)(g - \eta_{\phi}  e +P_\theta h_{\phi}  - h_{\phi} ).
\end{aligned}
 \end{equation*}

Now we are ready to bound the last term:
\begin{equation*}
\begin{aligned}[c]
 \left|(d_\theta^T - d_\phi^T)   (g - \eta_{\phi}e +P_\theta h_{\phi}  - h_{\phi} )\right|&\leq \|d_\theta - d_\phi  \|_{1, \V}~ \|g - (d_\phi^T g)e +P_\theta h_{\phi}  - h_{\phi} \|_{\infty, \V} \\
 &=\|d_\theta - d_\phi  \|_{1, \V}~\|( P_\theta - P_{\phi}) h_{\phi}  \|_{\infty, \V}\\
 &= \|d_\theta - d_\phi  \|_{1, \V}~\| (P_\theta - P_{\phi})Z_{\phi} (g - (d_\phi^T g)e)  \|_{\infty, \V} \\
 &\leq  \|d_\theta - d_\phi  \|_{1, \V}~\| (P_\theta - P_{\phi}) Z_{\phi}\|_\V~ \| g - \eta_{\phi} e  \|_{\infty, \V}\\
 &=  D_{\theta, \phi}  \|d_\theta - d_\phi  \|_{1, \V}~\| g -\eta_{\phi}  e  \|_{\infty, \V}   \\
 &  =  D_{\theta, \phi} \|d_\theta^T U_{\theta, \phi} \|_{1, \V}~\| g - \eta_{\phi} e  \|_{\infty, \V},\\
 &\leq D_{\theta, \phi}  \|d_\theta\|_{1,\V}^{  } \|U_{\theta, \phi} \|_{ \V}~\| g - \eta_{\phi} e  \|_{\infty, \V},\\
  &\leq  D_{\theta, \phi}^2 \|d_\theta\|_{1,\V}^{ } \| g - \eta_{\phi} e  \|_{\infty, \V },\\
  &\leq D^2_{\theta, \phi} \|H_{\theta, \phi}\|_\V^{ }  \| g - \eta_{\phi} e  \|_{\infty, \V}  (d_\phi^T \V),\\
 &\leq  \frac{D^2_{\theta, \phi}}{1-D_{\theta, \phi}} \| g - \eta_{\phi} e  \|_{\infty, \V} (d_\phi^T \V),
\end{aligned}
 \end{equation*}
 where the first, second and third inequalities follow from Lemma  \ref{lem:norms}, the second equality follows from Lemma \ref{lem:Zeq},  the last equality holds due to
$d_\theta^T - d_\phi^T  =  d_\theta^T U_{\theta, \phi}$
from (\ref{eq:mu12}),   the fourth inequality follows from  (\ref{eq:D}), the fifth inequality follows from (\ref{eq:mu_theta}),  and the last inequality holds due to  (\ref{eq:normH}).

\end{proof}

\begin{proof}[\textbf{Proof of Lemma \ref{lem:policies}}]
    \begin{equation*}
\begin{aligned}[c]
     \|  (P_{\theta} - P_{\phi}) Z_{\phi}\|_\V&\leq  \|  P_{\theta} - P_{\phi} \|_\V\|Z_{\phi}\|_\V\\
     & = \|Z_{\phi}\|_\V\sup\limits_{x\in \X}\frac{1}{\V(x)} \sum\limits_{y\in \X} |P_{\theta} - P_{\phi}|_{x, y} \V(y)\\
     & = \|Z_{\phi}\|_\V\sup\limits_{x\in \X}\frac{1}{\V(x)} \sum\limits_{y\in \X} | \sum\limits_{a\in \A} P(y|x, a)\pi_\theta(a|x) - \sum\limits_{a\in \A} P(y|x, a)\pi_{\phi}(a|x)| \V(y)\\
          & \leq \|Z_{\phi}\|_\V\sup\limits_{x\in \X}\frac{1}{\V(x)} \sum\limits_{y\in \X} \sum\limits_{a\in \A} P(y|x, a) | \pi_\theta(a|x) -  \pi_{\phi}(a|x)| \V(y)\\
          & =\|Z_{\phi}\|_\V\sup\limits_{x\in \X}    \sum\limits_{a\in \A} | \pi_\theta(a|x) -  \pi_{\phi}(a|x)|    \frac{\sum\limits_{y\in \X} P(y|x, a) \V(y) }{\V(x)} \\
          &= \|Z_{\phi}\|_\V\sup\limits_{x\in \X}    \sum\limits_{a\in \A} \Big| \frac{\pi_\theta(a|x)}{ \pi_{\phi}(a|x)} - 1\Big|   G(x, a)
\end{aligned}
 \end{equation*}
\end{proof}

 \section{Proofs of the theorems in Section \ref{sec:M1} }\label{sec:disc}

 We consider the Poisson equation for a Markov chain with the transition kernel $P$, stationary distribution $d$, and cost function $g:\X\rightarrow \R$:
\begin{align*}
g(x) - d^Tg +\sum\limits_{y\in \X} P(y|x) h(y) - h(x) = 0, \text{ for each }x\in \X,
\end{align*}
which admits a solution
\begin{align*}
h^{(x^*)}(x):=\E\left[ \sum\limits_{k=0}^{\sigma(x^*)-1} \left(g (x^{(k)} )-d^Tg\right)|x^{(0)}=x \right] \text{ for each }x\in \X,
\end{align*}
where $\sigma(x^*) = \min\left\{k>0~|~x^{(k)}=x^*\right\}$ is the first time when state $x^*$ is visited.

Since regenerative cycles can be long in large-size systems, we propose to change the original dynamics and increase the probability of transition to the regenerative state $x^*$ from each state $x\in \X.$

We let $P(y|x)$ be an original transition probability from state $x$ to state $y$, for each $x,y\in \X$. We consider a new Markov reward process with cost function $g$ and a modified transition kernel $\tilde P^{(\gamma)}$:
\begin{align}\label{eq:newP}
\begin{cases}
\tilde P^{(\gamma)}(y|x) := \gamma P(y|x)\quad \text{ for }y\neq x^*,\\
\tilde P^{(\gamma) }(x^*|x) := \gamma P(x^*|x)+(1-\gamma),
\end{cases}
\end{align}
for each $x\in \X.$

We  modified the transition kernel so that the probability of transition to the regenerative state $x^*$ is at least $1-\gamma$ from any state.

The Poisson equation for the modified problem is equal to:
\begin{align}\label{eq:newPoisson}
g(x) - \tilde d^Tg +\sum\limits_{y\in \X}  \tilde P^{(\gamma)}(y|x) \tilde h(y) - \tilde h(x) = 0, \text{ for each }x\in \X,
\end{align}
 where $\tilde d$ is the stationary distribution of the Markov chain $\tilde P^{(\gamma)}$.

 Equation (\ref{eq:newPoisson}) admits a solution
\begin{align}\label{eq:newSol}
\tilde h^{(x^*)}(x):=\E\left[ \sum\limits_{k=0}^{\tilde \sigma(x^*)-1} \left (g (x^{(k)} )- \tilde d^Tg \right )~|~x^{(0)}=x \right] \text{ for each }x\in \X,
\end{align}
where   $x^{(k)}$ is the state of the Markov chain with transition matrix $\tilde P^{(\gamma)}$ after $k$ timesteps,  and $\tilde \sigma(x^*) = \min\left\{k>0~|~x^{(k)} = x^*\right\}$.
According to the new dynamics the regeneration occurs more frequently and we can estimate solution (\ref{eq:newSol})   by using  fewer replications of the regenerative simulation.

\begin{lemma}\label{lem:2sol}
Consider the Poisson equation for the Markov chain with the transition kernel $\tilde P^{(\gamma)}$ defined by (\ref{eq:newP}), stationary distribution $\tilde d$, and cost function $g:\X\rightarrow \R$:
\begin{align}\label{eq:newPoisson1}
g(x) - \tilde d^Tg +\sum\limits_{y\in \X}  \tilde P^{(\gamma)}(y|x) \tilde h(y) - \tilde h(x) = 0, \text{ for each }x\in \X.
\end{align}
Equation (\ref{eq:newPoisson1}) admits solutions:
\begin{align*}
J^{(\gamma)}(x):=\E\left[ \sum\limits_{k=0}^{\infty} \gamma^k\left(g(x^{(k)})- d^Tg \right)~|~x^{(0)}=x \right] \text{ for each }x\in \X,
\end{align*}
and
\begin{align*}
V^{(\gamma )}(x):=\E\left[ \sum\limits_{k=0}^{\sigma(x^*)-1} \gamma^k\left(g (x^{(k)} )-r(x^*)\right )~|~x^{(0)}=x \right] \text{ for each }x\in \X,
\end{align*}
where   $x^{(k)}$ is the state of the Markov chain with transition matrix $P$   after $k$ timesteps.
\begin{proof}
We substitute the definition of $ \tilde P^{(\gamma)}$ (\ref{eq:newP}) and rewrite equation (\ref{eq:newPoisson1}) as
 \begin{align}\label{eq:newPoisson2}
g(x) - (\tilde d^Tg - (1-\gamma) \tilde h(x^*)) + \gamma \sum\limits_{y\in \X} P(y|x) \tilde h(y)  - \tilde h(x) = 0, \text{ for each }x\in \X.
\end{align}

Equation (\ref{eq:newPoisson2}) admits infinitely many solutions, but we specify a unique solution fixing $\tilde h(x^*).$
Next, we consider two options.

First, we let $ \tilde h(x^*) = \frac{1}{1-\gamma}(d^Tg -\tilde d^Tg )$. Then the Poisson equation (\ref{eq:newPoisson2}) becomes
 \begin{align*}
g(x) - d^Tg + \gamma \sum\limits_{y\in \X} P(y|x) \tilde h(y)  - \tilde h(x) = 0, \text{ for each }x\in \X,
\end{align*}
and admits   solution
\begin{align*}
J^{(\gamma)}(x):=\E\left[ \sum\limits_{k=0}^{\infty} \gamma^k\left(g (x^{(k)} )- d^Tg \right)~|~x^{(0)}=x \right] \text{ for each }x\in \X.
\end{align*}

Second, we let $\tilde h(x^*) = 0$ in equation (\ref{eq:newPoisson2}). We note that $\tilde d^Tg = r(x^*)$, where $r(x^*) = (1-\gamma)\E\left[\sum\limits_{k=0}^\infty \gamma^k g\left(x^{(k)}\right)~|~x^{(0)}=x^*\right]$ is a present discounted value at $x^*.$

We get  the Poisson equation (\ref{eq:Poiss_reg})
\begin{align*}
g(x) - r(x^*) +\gamma \sum\limits_{y\in \X} P(y|x) \tilde  h(y) -\tilde  h(x) = 0, \text{ for each }x\in \X,
\end{align*}
which admits   solution  (\ref{eq:Vreg})
\begin{align*}
V^{(\gamma )}(x):=\E\left[ \sum\limits_{k=0}^{\sigma(x^*)-1} \gamma^k\left(g (x^{(k)} )-r(x^*) \right)|x^{(0)}=x \right] \text{ for each }x\in \X.
\end{align*}

Indeed,
\begin{align*}
V^{(\gamma )}(x) &= \E\left[ \sum\limits_{k=0}^{\infty} \gamma^k\left(g (x^{(k)} )-r(x^*)\right )|x^{(0)}=x \right]\\
&=\E\left[ \sum\limits_{k=0}^{\sigma(x^*)-1} \gamma^k\left(g (x^{(k)} )-r(x^*) \right)|x^{(0)}=x \right]+ \E\left[ \gamma^{\sigma(x^*)}\E\left[\sum\limits_{k=0}^{\infty} \gamma^k\left(g(x^{(k)} )-r(x^*)\right)|x^{(0)}=x^* \right]\right]\\
&=\E\left[ \sum\limits_{k=0}^{\sigma(x^*)-1} \gamma^k\left(g (x^{(k)} )-r(x^*) \right)|x^{(0)}=x \right].
\end{align*}

\end{proof}
\end{lemma}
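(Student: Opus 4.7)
The plan is to verify directly that both $J^{(\gamma)}$ and $V^{(\gamma)}$ satisfy \eqref{eq:newPoisson1}. My first step is to expand $\tilde{P}^{(\gamma)}$ using its definition \eqref{eq:newP} and rewrite the equation as
\begin{align*}
g(x) - \bigl(\tilde d^T g - (1-\gamma)\tilde h(x^*)\bigr) + \gamma\sum_{y\in\X} P(y|x)\tilde h(y) - \tilde h(x) = 0,
\end{align*}
which is a discounted Bellman equation under the original kernel $P$ with effective average-cost constant $c := \tilde d^T g - (1-\gamma)\tilde h(x^*)$. The strategy is then to produce two candidate solutions of this reduced equation corresponding to two natural normalizations of $\tilde h(x^*)$, and to check consistency of the implied value of $c$ with the stationary mean $\tilde d^T g$.

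For $J^{(\gamma)}$, I would observe via the Markov property that the infinite-discounted-sum definition yields the identity
\begin{align*}
J^{(\gamma)}(x) = g(x) - d^T g + \gamma\sum_{y\in\X}P(y|x) J^{(\gamma)}(y),
\end{align*}
so $J^{(\gamma)}$ solves the reduced equation with $c = d^T g$, provided the consistency relation $\tilde d^T g = d^T g + (1-\gamma) J^{(\gamma)}(x^*)$ holds. For $V^{(\gamma)}$, I would first establish $V^{(\gamma)}(x^*) = 0$ by rewriting the defining sum via the regenerative decomposition of the discounted value at $x^*$, then condition on one step to obtain
\begin{align*}
V^{(\gamma)}(x) = g(x) - r(x^*) + \gamma\sum_{y\in\X}P(y|x)V^{(\gamma)}(y), \quad x\in\X,
\end{align*}
so $V^{(\gamma)}$ solves the reduced equation with $c = r(x^*)$, provided $\tilde d^T g = r(x^*)$.

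The main obstacle is establishing these two consistency conditions for $\tilde d^T g$. Observing that $(1-\gamma)J^{(\gamma)}(x^*) = r(x^*) - d^T g$ directly from the definitions of $J^{(\gamma)}$ and $r(x^*)$, both consistency relations collapse to the single identity $\tilde d^T g = r(x^*)$. I would prove this by a coupling argument: realize $\tilde P^{(\gamma)}$ as ``with probability $\gamma$ follow $P$, with probability $1-\gamma$ reset to $x^*$,'' and let $K$ be the first reset time, which is geometrically distributed with parameter $1-\gamma$ and independent of the underlying $P$-chain. Starting the chain at $x^*$, the cycle length is $\tilde\sigma = \min(K,\sigma(x^*))$, and substituting into the renewal-reward formula
\begin{align*}
\tilde d^T g = \frac{\E_{x^*}\bigl[\sum_{k=0}^{\tilde\sigma-1}g(\tilde x^{(k)})\bigr]}{\E_{x^*}[\tilde\sigma]}
\end{align*}
and averaging over $K$ converts both numerator and denominator into $\gamma$-weighted sums along the $P$-chain truncated at $\sigma(x^*)$. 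A short algebraic manipulation, together with the regenerative identity $V^{\mathrm{disc}}(x^*)\bigl(1-\E_{x^*}\gamma^{\sigma(x^*)}\bigr) = \E_{x^*}\sum_{k=0}^{\sigma(x^*)-1}\gamma^k g(x^{(k)})$ under $P$, identifies this ratio with $(1-\gamma)V^{\mathrm{disc}}(x^*) = r(x^*)$, closing the argument.
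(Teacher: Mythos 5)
Your proposal is correct, and its skeleton matches the paper's: both rewrite the Poisson equation for $\tilde P^{(\gamma)}$ as the discounted equation $g(x) - c + \gamma\sum_y P(y|x)\tilde h(y) - \tilde h(x) = 0$ with $c = \tilde d^Tg - (1-\gamma)\tilde h(x^*)$, and then recognize $J^{(\gamma)}$ and $V^{(\gamma)}$ as the solutions corresponding to the two normalizations $\tilde h(x^*) = \frac{1}{1-\gamma}(d^Tg - \tilde d^Tg)$ and $\tilde h(x^*)=0$. The substantive difference is how the pivotal identity $\tilde d^Tg = r(x^*)$ is handled: the paper simply asserts it (it follows from the representation $\tilde d^T = (1-\gamma)e_{x^*}^T(I-\gamma P)^{-1}$, i.e.\ the same Neumann-series computation as in Lemma \ref{lem:stat} with $\mu = \delta_{x^*}$), whereas you prove it via a coupling of $\tilde P^{(\gamma)}$ with an independent geometric reset clock and the renewal--reward formula, using $\tilde\sigma = \min(K,\sigma(x^*))$ and the regenerative identity $V^{\mathrm{disc}}(x^*)\bigl(1-\E_{x^*}\gamma^{\sigma(x^*)}\bigr) = \E_{x^*}\sum_{k=0}^{\sigma(x^*)-1}\gamma^k g(x^{(k)})$. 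Your observation that both consistency conditions collapse to this single identity (since $(1-\gamma)J^{(\gamma)}(x^*) = r(x^*) - d^Tg$ by definition) is a nice organizational point, and your route also yields $V^{(\gamma)}(x^*)=0$ as a byproduct rather than taking it for granted; the paper's (implicit) algebraic route is shorter but leaves that step to the reader. The only thing to make explicit in a polished write-up is the integrability needed to apply renewal--reward and to interchange expectation with the infinite sums, which holds here because $\tilde\sigma \le K$ with $\E K = (1-\gamma)^{-1}$ and because the drift condition with $|g|\le\V$ controls $\E_{x^*}\bigl[\V(x^{(k)})\bigr]$.
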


\begin{proof}[\textbf{Proof of Lemma \ref{lem:disc}}]
By Lemma \ref{lem:2sol} function $ V^{(\gamma )}$ is a solution of Poisson equation (\ref{eq:newPoisson}). We consider the discounted value function  $J^{(  \gamma)}  = \E\left[ \sum\limits_{k=0}^{\infty} \gamma^k\left(g\left(x^{(k)}\right)- d^Tg \right)~|~x^{(0)}=x \right] $ that is another solution.

Then, for an arbitrary $x\in \X$,
\begin{align}\label{eq:VJ}
\left|V^{(\gamma)}(x) - h^{(x^*)}(x)\right| \leq \left|J^{(\gamma)}(x) - h^{(f)}(x)\right| + \left|V^{(\gamma)}(x) - h^{(x^*)}(x)  - \left(J^{(\gamma)}(x) - h^{(f)}(x)\right)\right|,
\end{align}
where $h^{(f)}$ is the fundamental solution of the Poisson equation (\ref{eq:Poisson}).

First, we  bound $\left|J^{(\gamma)}(x) - h^{(f)}(x)\right|$:
\begin{align*}
|J^{(\gamma)}(x) - h^{(f)}(x) | &\leq \sum\limits_{t=0}^\infty |\gamma^{t} - 1| \Big| \sum\limits_{y\in \X} P^t(y|x) (g(y) - d^Tg)   \Big| \\
&\leq R \V(x) \sum\limits_{t=0}^\infty (1  - \gamma^{t}) r^t\\
&= R\V(x)r\frac{1-\gamma}{(1-r)(1-\gamma r) },
\end{align*}
where the second inequality follows from (\ref{eq:geo}).

 Since  $V^{(\gamma)}$ and $J^{(\gamma)}$ are both solutions of the Poisson equation (\ref{eq:newPoisson}), therefore,
\begin{align*}
J^{(\gamma)}(x) - V^{(\gamma)}(x) =J^{(\gamma)}(x^*) - V^{(\gamma)}(x^*) = \frac{1}{1-\gamma}(d^Tg - r(x^*)) \text{ for each }x\in \X.
\end{align*}

Similarly, $ h^{(f)}(x) - h^{(x^*)}(x) = h^{(f)}(x^*)$ for each $x\in \X.$

Second, we bound the last term in inequality (\ref{eq:VJ}):
\begin{align*}
\Big|V^{(\gamma)}(x) - h^{(x^*)}(x) & - \left(J^{(\gamma)}(x) - h^{(f)}(x)\right)\Big|\\
&=\left|h^{(f)}(x^*) - \frac{1}{1-\gamma}(r(x^*)-d^Tg ) \right|\\
&=\left| \sum\limits_{t=0}^{\infty}\sum\limits_{y\in \X} P^t(y|x^*)(g(y)-d^Tg) - \sum\limits_{t=0}^{\infty}\sum\limits_{y\in \X}  \gamma^tP^t(y|x^*)(g(y)-d^Tg)  \right|\\
&=\left|  \sum\limits_{t=0}^{\infty}\sum\limits_{y\in \X}  (1-\gamma^t) P^t(y|x^*)(g(y)-d^Tg) \right|\\
&\leq    \sum\limits_{t=0}^{\infty} |1-\gamma^t| \left|\sum\limits_{y\in \X} P^t(y|x^*)(g(y)-d^Tg) \right|  \\
&\leq R\V(x^*)r\frac{1-\gamma}{(1-r)(1-\gamma r) },
\end{align*}
where the last inequality holds due to (\ref{eq:geo}).
\end{proof}

\begin{proof}[\textbf{Proof of Lemma \ref{lem:var}}]

We denote $\overline g(x^{(k)}): = \left(g (x^{(k)} )- r (x^*) \right )$.
 Following   \cite[Section 17.4.3]{Meyn2009}, we can show that
\begin{align*}
Var[ \hat V^{(\gamma )}(x)] &= \E\left[  \left(\sum\limits_{k=0}^{\sigma(x^*)-1} \gamma^k \overline  g (x^{(k)} ) \right)^2~\Big|~x^{(0)}=x \right] -\left( V^{(\gamma)}(x)\right)^2 \\
&=\E\left[ -\sum\limits_{k=0}^{\sigma(x^*)-1} \gamma^{2k}\overline g^2(x^{(k)}) + 2\sum\limits_{k=0}^{\sigma(x^*)-1} \sum\limits_{j=k}^{\sigma(x^*)-1}\gamma^{k+j}\overline g(x^{(k)})\overline g(x^{(j)}) ~\Big|~x^{(0)}= x \right] -\left( V^{(\gamma)}(x)\right)^2 \\
&=\E\left[ \sum\limits_{k=0}^{\sigma(x^*)-1}  \E\left[ 2  \sum\limits_{j=k}^{\sigma(x^*)-1}\gamma^{k+j}\overline g(x^{(k)})\overline g(x^{(j)})  - \gamma^{2k}\overline g^2(x^{(k)}) ~|~\F_k\right]~\Big|~x^{(0)}= x \right] -\left( V^{(\gamma)}(x)\right)^2 \\
&=\E\left[ \sum\limits_{k=0}^{\sigma(x^*)-1}  2\gamma^{2k}\overline g(x^{(k)})\E\left[   \sum\limits_{j=k}^{\sigma(x^*)-1}\gamma^{j-k}\overline g(x^{(j)})   ~|~\F_k\right]- \gamma^{2k}\overline g^2(x^{(k)})~\Big|~x^{(0)}= x \right] -\left( V^{(\gamma)}(x)\right)^2 \\
&= \E\left[ \sum\limits_{k=0}^{\sigma(x^*)-1}  2\gamma^{2k}\overline  g(x^{(k)} ) V^{(\gamma)}(x^{(k)})-\gamma^{2k} \overline g^2(x^{(k)}) ~\Big|~x^{(0)}= x \right] -\left( V^{(\gamma)}(x)\right)^2,
\end{align*}
where $\F_k$ is a $\sigma-$algebra generated by $x^{(0)}, x^{(1)}, ..., x^{(k)}$.

Hereafter, we denote $\sum\limits_{y\in \X}P(y|x) V^{(\gamma)}(y)$ and $\sum\limits_{y\in \X}P(y|x) \left(V^{(\gamma)}(y)\right)^2$  as $PV^{(\gamma)}(x)$ and $P\left(V^{(\gamma)}\right)^2(x)$, respectively,  to improve readability. We use the Poisson equation (\ref{eq:Poiss_reg}) and replace $ \overline g(x^{(k)} )$ by $V^{(\gamma)}(x^{(k)}) -\gamma P V^{(\gamma)}(x^{(k)})$.
\begin{align*}
  &\E\left[ \sum\limits_{k=0}^{\sigma(x^*)-1}  2\gamma^{2k}\overline  g(x^{(k)} ) V^{(\gamma)}(x^{(k)})-\gamma^{2k} \overline g^2(x^{(k)})  ~\Big|~x^{(0)}= x \right] -\left( V^{(\gamma)}(x)\right)^2\\
&=\E\left[ \sum\limits_{k=0}^{\sigma(x^*)-1}  \gamma^{2k}\left(2\left(V^{(\gamma)}(x^{(k)}) -\gamma P V^{(\gamma)}(x^{(k)})\right)V^{(\gamma)}(x_k)-  \left(V^{(\gamma)}(x^{(k)}) - \gamma PV^{(\gamma)}(x^{(k)})\right)^2\right)~\Big|~x^{(0)}= x \right] \\
&\quad -\left( V^{(\gamma)}(x)\right)^2\\
&=\E\left[ \sum\limits_{k=0}^{\sigma(x^*)-1}  \gamma^{2k}\left(\left(V^{(\gamma)}(x^{(k)})\right)^2  -  \left(\gamma PV^{(\gamma)}(x^{(k)})\right)^2 \right)~\Big|~x^{(0)}= x \right] -\left( V^{(\gamma)}(x)\right)^2.
\end{align*}
Next, we subtract the expectation of  a martingale
\begin{align*}
\E\left[  -\left( V^{(\gamma)}(x)\right)^2 +\gamma^{2\sigma(x^*)}\left(V^{(\gamma)}(x^{(\sigma(x^*))})\right)^2 +\sum\limits_{k=0}^{\sigma(x^*)-1}  \gamma^{2k}\left(\left(V^{(\gamma)}(x^{(k)})\right)^2  -  \gamma^2P\left(V^{(\gamma)}\right)^2(x^{(k)}) \right)~\Big|~x^{(0)}= x \right]
\end{align*}
 that is equal to zero by \cite[Proposition 1]{Henderson1997}. Since $V^{(\gamma)}(x^{(\sigma(x^*))})=0$, we get
\begin{align*}
Var[ \hat V^{(\gamma )}(x)] &=\E\left[ \sum\limits_{k=0}^{\sigma(x^*)-1}  \gamma^{2k}\left(\gamma^2 P\left(V^{(\gamma)}\right)^2(x^{(k)}) -    \left( \gamma PV^{(\gamma)}(x^{(k)})\right)^2\right)~\Big|~x^{(0)}= x \right]\\
  &=\gamma^2\E\left[ \sum\limits_{k=0}^{\sigma(x^*)-1}  \gamma^{2k}\left( Var\left[V^{(\gamma)}(x^{(k+1)})~|~x^{(k)}\right]\right)~\Big|~x^{(0)}= x \right],
\end{align*}
where $Var\left[V^{(\gamma)}(x^{(k+1)})~|~x^{(k)}\right] = \sum\limits_{y\in \X}P(y|x^{(k)}) \left(V^{(\gamma)}(y)\right)^2 - \left(\sum\limits_{y\in \X}P(y|x^{(k)}) V^{(\gamma)}(y)\right)^2.$

Next, we want to show that there exists   constant $B_1>0$ such that for any  $\gamma\in [0,1]$
 \begin{align*}\left(V^{(\gamma)}(x)\right)^2\leq B_1 \V(x)\quad \text{for each }x\in \X.\end{align*}
First, we recall that function $V^{(\gamma)}$ is a solution of Poisson equation (\ref{eq:newPoisson}) for the system with modified dynamics (\ref{eq:newP}) and cost function $g$. Given that transition matrix $P$ satisfies the drift condition (\ref{eq:drift}), for the modified dynamics we have the following drift inequality
\begin{align*}
\sum\limits_{y\in \X}\tilde P^{(\gamma)}(y|x)\V(y) \leq \varepsilon \V(x) +(\V(x^*)+b)\I_{C\cup \{x:b \V(x)\leq \V(x^*)\}}(x)\quad \text{for each }x\in \X,
\end{align*}
for any $\gamma\in [0,1].$
Indeed,
\begin{align*}
\sum\limits_{y\in \X}\tilde P^{(\gamma)}(y|x)\V(y) &=\gamma \sum\limits_{y\in \X} P(y|x)\V(y) +(1-\gamma)\V(x^*)\\
&\leq \gamma \varepsilon \V(x) +(1-\gamma)\V(x^*) +b\I_C\\
&\leq \varepsilon \V(x) +(\V(x^*)+b)\I_{C\cup \{x\in \X:\varepsilon \V(x)\leq \V(x^*)\}}(x),
\end{align*}
where the first inequality follows from the drift condition (\ref{eq:drift}), and the second inequality follows from the fact that $ \gamma \varepsilon \V(x) +(1-\gamma)\V(x^*) \leq \varepsilon \V(x)$ if $\varepsilon \V(x)\geq \V(x^*)$, and $ \gamma \varepsilon\V(x) +(1-\gamma)\V(x^*) \leq   \V(x^*)$ otherwise.

Second, we use Jensen's inequality and get that function $\sqrt{\V}$ is also a Lyapunov function for the modified system:
\begin{align*}
\sum\limits_{y\in \X}\tilde P^{(\gamma)}(y|x) \sqrt{\V(y)} \leq \sqrt{\varepsilon \V(x)} +\sqrt{\V(x^*)+b}~\I_{C\cup \{x\in \X:\varepsilon \V(x)\leq \V(x^*)\}}(x)\quad \text{for each }x\in \X.
\end{align*}
This drift inequality and the assumption that $|g(x)|\leq \sqrt{\V(x)}$ for each $x\in \X$  allow us to apply \cite[Theorem 17.7.1]{Meyn2009}, see also \cite[equation (17.39)]{Meyn2009},   and conclude that, for some $c_0>0$ independent of $\gamma$, Poisson equation (\ref{eq:newPoisson}) admits the fundamental solution $J^{(\gamma)}:\X\rightarrow \R$ such that
\begin{align*}
|J^{(\gamma)}(x)|\leq c_0(\sqrt{\V(x)}+1), \text{ for each } x\in \X.
\end{align*}
 Function $V^{(\gamma)}$ is another solution of Poisson equation (\ref{eq:newPoisson}), such that  $J^{(\gamma)}(x) =V^{(\gamma)}(x)+J^{(\gamma)}(x^*)$, for each $x\in \X$, because $V^{(\gamma)}(x^*)=0$. Since $\V\geq 1$, there exists   constant $B_1>0$ such that
\begin{align*}
|V^{(\gamma)}(x)|\leq |J^{(\gamma)}(x)| +|J^{(\gamma)}(x^*)|\leq c_0(\sqrt{\V(x)}+1) +c_0(\sqrt{\V(x^*)}+1)\leq \sqrt{B_1 \V(x)},
\end{align*}
for each $x\in \X$.
We have proved that, for some   constant $B_1>0$,
  \begin{align*}\left(V^{(\gamma)}(x)\right)^2\leq B_1 \V(x)\quad \text{for each }x\in \X,\end{align*} for any $\gamma\in [0,1].$

We let $G^{(\gamma)}(x) := Var\left[V^{(\gamma)}(x^{(1)})~|~x^{(0)}=x\right].$  Then there exists a positive constant $B$ such that
  \begin{align}\label{eq:var_ineq1}
  G^{(\gamma)}(x) \leq  \sum\limits_{y\in \X}P(y|x) \left(V^{(\gamma)}(y)\right)^2\leq B_1  \sum\limits_{y\in \X}P(y|x)  \V(y)\leq B\V(x),
  \end{align}
where the last inequality follows from the drift condition (\ref{eq:drift}).
By  \cite[Theorem 15.0.1]{Meyn2009} we have that there exist constants $R>0$ and $r\in (0,1)$ such that
\begin{align}\label{eq:var_ineq2}
\left|   \sum\limits_{y\in \X}P^k(y|x)    G^{(\gamma)}(y) -  \beta^{(\gamma)}\right|\leq R\V(x) r^k,
\end{align}
where  $\beta^{(\gamma)} := \sum\limits_{x\in \X} d(x) G^{(\gamma)}(x)$ is a discounted asymptotic variance.

Then
\begin{align*}
Var[ \hat V^{(\gamma )}(x)] &=\gamma^2\E\left[ \sum\limits_{k=0}^{\sigma(x^*)-1}  \gamma^{2k}\left( Var\left[V^{(\gamma)}(x^{(k+1)})~|~x^{(k)}\right]\right)~\Big|~x^{(0)}= x \right]\\
&\leq \gamma^2 \sum\limits_{k=0}^\infty \gamma^{2k}\E[G^{(\gamma)}(x_k)~|~x^{(0)}=x ]\\
&\leq  \gamma^2 \sum\limits_{k=0}^\infty  \gamma^{2k}\left( R\V(x) r^k + \beta^{(\gamma)} \right)\\
&=\gamma^2\left( R\V(x)\frac{1}{1-\gamma^2r} + \beta^{(\gamma)} \frac{1}{1-\gamma^2}  \right)\\
&\leq \gamma^2\left( R\V(x)\frac{1}{1-\gamma^2r} + (d^T\V) B\frac{1}{1-\gamma^2}  \right),
\end{align*}
where the second inequality follows from (\ref{eq:var_ineq2}) and the last inequality follows from  (\ref{eq:var_ineq1}).

\end{proof}

\section{Maximal stability of the proportionally randomized policy}\label{sec:PR}

We assert that a discrete-time MDP obtained by
uniformization of the multiclass queueing network semi-Markov decision process model is stable
under the proportionally randomized (PR) policy if the load conditions
(\ref{eq:load}) are satisfied. We illustrate the proof for the criss-cross
queueing network. We let $x\in \Z_+^3$ be a state for the discrete-time
MDP. The proportionally randomized policy $\pi$ is given by
\begin{align*}
  & \pi(x) =
    \begin{cases}
      \Big(\frac{x_1}{x_1+x_3}, 1, \frac{x_3}{x_1+x_3}\Big) & \text{ if } x_2\ge 1 \text{ and } x_1+x_3\ge 1, \\
      \Big(\frac{x_1}{x_1+x_3}, 0, \frac{x_3}{x_1+x_3}\Big) & \text{ if } x_2= 0 \text{ and } x_1+x_3\ge 1, \\
    \big(0, 1, 0\big) & \text{ if } x_2\ge 1 \text{ and } x_1+x_3=0 , \\
     \big(0, 0, 0\big) & \text{ if } x_2=0 \text{ and } x_1+x_3=0.
      \end{cases}
\end{align*}
Recall the transition probabilities $\tilde P$ defined by (\ref{eq:unif}).  The
discrete-time MDP operating under policy $\pi$ is a DTMC. Now we can  specify its
transition matrix.  For $x\in \Z_+^3$ with $x_1\ge 1$, $x_3\ge 1$, and $x_2\ge 1$,
\begin{align*}
  P(y|x)& = \pi_1(x)\tilde P\big(y|x,(1,2)\big)+\pi_3(x)\tilde P\big(y|x,(3,2)\big) \quad \text{ for each } y\in \Z_+^3.
\end{align*}
For $x\in \Z_+^3$ on the boundary with $x_1\ge 1$, $x_3\ge 1$, and $x_2= 0$,
\begin{align*}
  P(y|x)& = \pi_1(x)\tilde P\big(y|x,(1,0)\big)+\pi_3(x)\tilde P\big(y|x,(3,0)\big) \quad \text{ for each } y\in \Z_+^3.
\end{align*}
For $x\in \Z_+^3$ on the boundary with $x_1= 0$, $x_3\ge 1$, and $x_2\ge 1$,
\begin{align*}
  P(y|x)& = \tilde P\big(y|x,(3,2)\big) \quad \text{ for each } y\in \Z_+^3.
\end{align*}
Similarly, we write the transition probabilities for other boundary cases. One can verify that
\begin{align}
  & P\big((x_1+1,x_2, x_3)| x\big)  = \frac{\lambda_1}{B},
    \quad    P\big((x_1,x_2, x_3+1)| x\big)  = \frac{\lambda_3}{B}, \label{eq:pr1}\\
  &  P\big((x_1-1,x_2+1, x_3)| x\big) =\frac{\mu_1}{B}\frac{x_1}{x_1+x_3}  \quad \text{ if } x_2\ge 1, \\
  & P\big((x_1,x_2, x_3-1)| x\big) =\frac{\mu_3}{B}\frac{x_3}{x_1+x_3} \quad \text{ if } x_3\ge 1, \\
  &  P\big((x_1,x_2-1, x_3)| x\big) =\frac{\mu_2}{B} \quad \text{ if } x_2\ge 1, \\
  & P(x|x) = 1- \sum_{y\neq x} P(y|x),\label{eq:pr5}
\end{align}
where $B=\lambda_1+\lambda_3+\mu_1+\mu_2+\mu_3$. This transition
matrix $P$ is irreducible. Now we consider the continuous-time
criss-cross network operating under the head-of-line
proportional-processor-sharing (HLPPS) policy  defined in
\cite{Bramson1996}. Under the HLPPS policy, the  jobcount  process
$\{Z(t), t\ge 0\}$ is a CTMC. Under the load condition (\ref{eq:load_cc}),
\cite{Bramson1996} proves that the CTMC is positive recurrent. One can
verify that the transition probabilities in
(\ref{eq:pr1})-(\ref{eq:pr5}) are identical to the ones for a
uniformized DTMC of this CTMC. Therefore, the DTMC corresponding to the
transition probabilities (\ref{eq:pr1})-(\ref{eq:pr5}) is positive
recurrent,  and proves  the stability of the discrete-time MDP operating under the
proportionally randomized policy.

\section{Additional experimental results}\label{sec:regVSinf}

In Remark \ref{rem:regVSinf} we discussed two possible biased
estimators of the solution to the Poisson equation. In this section we
compare the performance of the PPO algorithm with these two
estimators.  We consider two versions of line 7 in Algorithm \ref{alg2}: Version 1 uses the regenerative discounted value function
(VF) estimator (\ref{eq:esf}), and Version 2 uses the discounted value
function estimator (\ref{eq:Jesf}). We apply two versions of the PPO
algorithm for the criss-cross network operating under the  balanced medium (B.M.) load
regime.  The queueing network parameter setting is identical to
  the one detailed in Section \ref{sec:cc}, except that the quadratic
  cost function $g(x) = x_1^2+x_2^2+x_3^2$ replaces the linear
  cost function that is used to minimize the long-run average cost, where
$x_i$ is a number of jobs in buffer $i$, $i=1, 2, 3.$

We use  Xavier initialization to initialize
the policy NN parameters $\theta_0$. We take the empty system state $x^* = (0,0,0)$ as a regeneration state. Each episode in each iteration starts at the regenerative state and runs for $6,000$ timesteps. We compute the one-replication estimates of a value function (either regenerative discounted VF or discounted VF)   for the first $N=5,000$ steps at each episode. In this experiment we simulated $Q=20$ episodes in parallel.  The values of the remaining  hyperparameters (not mentioned yet) are  the same as  in Table \ref{tab:par2}.

In Figure \ref{fig:regVSinf} we compare the learning curves of PPO algorithm \ref{alg2}  empirically to demonstrate the benefits of using the regenerative discounted VF estimator over the discounted VF estimator when the system regeneration occurs frequently.

\begin{figure}[H]
\centering%
\includegraphics[width=.7\linewidth]{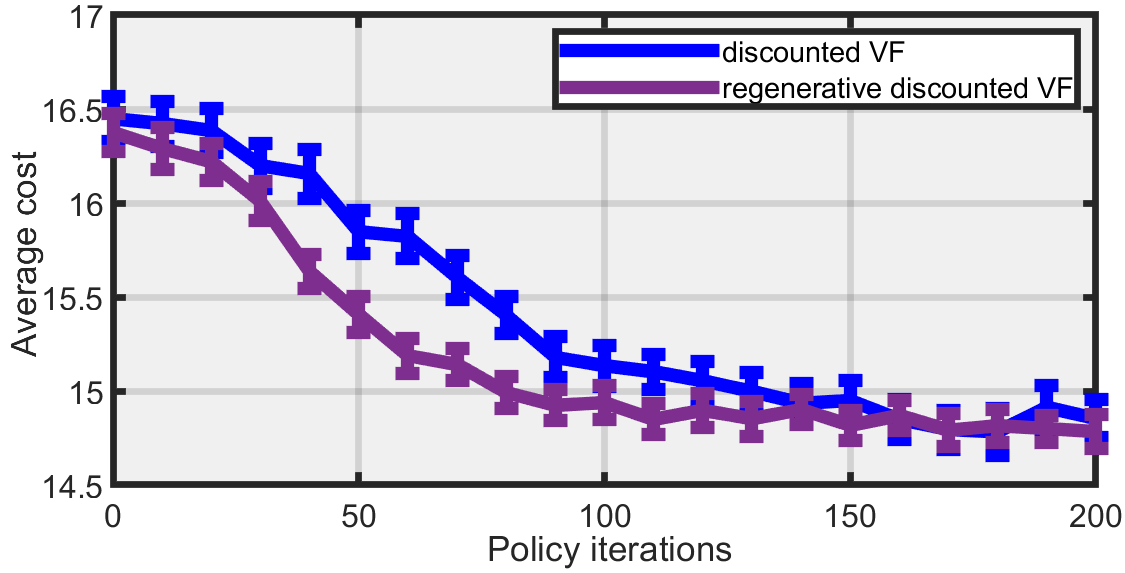}
\caption[Learning curves from Algorithm \ref{alg2} for the criss-cross network with the B.M. load and quadratic cost function.]{
Learning curves from Algorithm \ref{alg2} for the criss-cross network with the B.M. load and quadratic cost function.
The solid purple and blue lines show the performance of  the PPO policies obtained from Algorithm \ref{alg2}  in which
the solutions to the Poisson equations are estimated  by the discounted VF estimator and by the regenerative discounted VF estimator, respectively.
}

 \label{fig:regVSinf}
\end{figure}

\section{Neural network structure}\label{sec:nn}

In the experiments we parameterized the RL policy with a neural
network. Here, we use $\theta$ to denote the vector of weights and
  biases of the neural network. For a fixed parameter $\theta$, the
  neural network outputs deterministically distribution
  $\pi_\theta(\cdot|x)$ over the action space for each state
  $x\in \X$. Therefore, the resulting policy $\pi_\theta$ is a randomized policy as explained in Section~\ref{sec:MQN}.

To represent the policy we use a fully connected  feed-forward neural network with one input layer,   three hidden layers with tanh activation functions, and one output layer. The input layer has $J$ units, one for each job class, the first hidden layer has $10\times J$ units, the third hidden layer has $10\times L$, where $L$ is  number of stations in the queueing system. The number of units in the  second hidden layer is a geometric mean of units in the first and third hidden layers (i.e. $10\times \sqrt{LJ}$).

We use $z^{(k)}_j$ to denote the variable in the $j$th unit of hidden
layer $k$, $k=1, 2, 3$. Thus, our feed-forward neural network has the following representations:
\begin{align*}
  & z^{(1)}_j =  h\Big(\sum_{i=1}^J A^{(1)}_{ji}x_i + b^{(1)}_j\Big), \quad j=1, \ldots, 10J, \\
  & z^{(2)}_j =  h\Big(\sum_{i=1}^{10J} A^{(2)}_{ji}z^{(1)}_i + b^{(2)}_j\Big), \quad j=1, \ldots, 10\sqrt{LJ},\\
  & z^{(3)}_j =  h\Big(\sum_{i=1}^{10\sqrt{LJ}} A^{(3)}_{ji}z^{(2)}_i + b^{(3)}_j\Big), \quad j=1, \ldots, 10\sqrt{L},
\end{align*}
where $h:\R\to\R$ is the activation function given by $h(y)=\tanh(y)$ for each $y\in \R$.

 We denote $p=(p_j)$ as the output vector, which is given by
\begin{align*}
  p_j = \sum_{i=1}^{10\sqrt{L}} A^{(4)}_{ji}z^{(3)}_i + b^{(4)}_j, \quad j\in \J,
\end{align*}
and normalize output vector $p$ via a \textit{softmax function}  into $L$ probability
distributions as:
\begin{align}\label{eq:softmax}
  \pi_{\theta}(j|x) = \frac{\exp(p_j)}{\sum_{i\in \mathcal{L}(\ell)} \exp(p_i)} \quad \text{ for each } j\in \mathcal{B}(\ell), \, \ell\in \LL,
\end{align}
where the sets $\J$, $\LL$, and $\mathcal{B}(\ell)$ are defined in Section~\ref{sec:MQN}.

The neural network parameter $\theta$ is the vector of weights $A$'s and biases $b$'s
\begin{align*}
\theta=  \Big(A^{(1)},\, b^{(1)}, A^{(2)},\, b^{(2)}, A^{(3)},\, b^{(3)}, A^{(4)},\, b^{(4)}\Big),
\end{align*}
which has dimension
\begin{align*}
  K=\big(J\times 10J + 10J\big) +   \big(10J \times 10\sqrt{LJ} + 10\sqrt{LJ}\big) +    \big(10\sqrt{LJ}\times 10L + 10L\big) +
    \big(10L\times J + J\big).
\end{align*}
For example, when $J=21$ and $L=7$, this dimension is approximately equal to
$30203$.

To represent the value function we use a neural network whose
architecture is almost identical to the policy neural network except
that the third hidden layer has $10$ units. The
number of units in the second hidden layer is $10\times \sqrt{J}$. The
output layer contains one unit with a linear activation function, which means
that
\begin{align*}
  V(x) = \sum_{i=1}^{10} A^{(4)}_{i} z^{(3)}_i + b^{(4)}.
\end{align*}

 For the N-model processing network  in Section  \ref{sec:nmodel} the  structure of policy and value NNs is the same as for the MQNs described above, except for the meaning of  set $\B(\ell)$ in (\ref{eq:softmax}). We consider   station $\ell\in \LL$ of a processing network. The set   $\B(\ell)$ includes a job class  if and only if  there is an activity  such that station $\ell$ can process jobs from this   class.

\section{Implementation details of numerical experiments in Section \ref{sec:experiments}}\label{sec:par}

 We use Tensorflow v1.13.1  \cite{Abadi2016} to build a training routine of the neural networks and Ray package v0.6.6 \cite{Moritz2018}
    to maintain parallel simulation of the actors. We run all experiments on a   2.7 GHz  96-core processor with 1510 GB of RAM.

    We optimize  the value and policy functions to minimize the corresponding loss functions (\ref{eq:Vappr}), (\ref{eq:popt}) by the Adaptive Moment Estimation (Adam) method \cite{Kingma2017}.
    The Adam method is an algorithm for mini-batch gradient-based optimization.
We assume that $N$ datapoints have been generated  $D^{(1:N)}= \Big\{  (x^{(j)}, a^{(j)}, \hat A_j)\Big\}_{j=1}^N$ to update the policy NN  or $D^{(1:N)} = \left\{  ( x^{(j)}, \hat V_j)\right\}_{j=1}^N$ to update the value NN.
 The Adam algorithm runs for $E$ epochs. The number of epochs is the number of complete passes through the entire dataset. In the beginning of a new epoch $e$ the entire dataset
is randomly reshuffled and divided into   batches with size $m$. Then each batch (indexed by $n$) is passed to the learning algorithm and the parameters
of the neural networks $\theta = (\theta^1, \dotsc, \theta^i,\dotsc, \theta^K)$  are updated at the end of every such step  according to
     \begin{align*}
     \theta_{n+1}^i = \theta_n^i - \varsigma \frac{1}{\sqrt{\hat H^i_n} +\varrho} \hat G^i_n,
     \end{align*}
     where $\varsigma$ is a learning rate, $\hat G_n$ and $\hat H_n$ are moving average estimates of the first and second moments of the gradient, respectively, and $\varrho<<1$ is a constant.

     We use the batches to compute the gradient of a loss function $\hat L(\theta, D)$, such as for (\ref{eq:popt}):
    \begin{align}\label{grad}
    g_n = \frac{1}{m} \nabla_\theta   L\left(\theta,  D_e^{ (nm:nm+m)}\right),
    \end{align}
where $D_e^{ (nm:nm+m)}$ denotes the data segment in the $n$th batch of size $m$ at epoch $e$.

 The moving averages $G_{0}$ and $H_0$ are initialized as vectors of zeros at the first epoch.   Then    the Adam method updates the moving average estimates   and includes bias corrections to   account for their initialization at the origin:
   \begin{align*}
   \begin{cases}
    G_{n} = \beta_1 G_{n-1} +(1-\beta_1) g_{n-1},\\
    \hat G_{n} = \frac{G_{n}}{1 - \beta_1^n},
    \end{cases}
    \end{align*}
where  $\beta_1>0$,  with $\beta_1^n$  denoting $\beta_1$ to the power $n$.

    Similarly, we compute the second moments by
    \begin{align*}
    \begin{cases}
    H_{n} = \beta_2 H_{n-1} +(1-\beta_2)  g_{n-1}^2,\\
    \hat H_{n} = \frac{H_{n}}{1 - \beta_2^n},
    \end{cases}
    \end{align*}
    where $g_n^2$ means the elementwise square, $\beta_2>0$   with $\beta_2^n$  denoting $\beta_2$ to the power $n$.

 Each subsequent epoch continues   the count over $n$  and keeps  updating the moving average estimates $G_n, H_n$  starting from their final values of the latest  epoch.

Table \ref{tab:par1} and Table \ref{tab:par2} list the PPO hyperparameters we choose for the experiments in Section \ref{sec:experiments}. Table \ref{tab:rt} reports the estimates of the running time of Algorithm \ref{alg2}.

\begin{table}[H]
\centering%
\begin{tabular}{l|@{\quad}l}
  \hline
  % after \\: \hline or \cline{col1-col2} \cline{col3-col4} ...
  Parameter  & Value\\\hline
   Clipping parameter  $(\epsilon)$ & $0.2\times \max[ \alpha,0.01]$ \\
 No. of regenerative cycles per actor (N) & 5,000 \\
  No. of  actors $(Q)$  & 50 \\
  Adam parameters for policy NN& $\beta_1 = 0.9$, $\beta_2 = 0.999$,    \\
 & $\varrho = 10^{-8},$  $\varsigma=5\cdot 10^{-4}\times \max[ \alpha,0.05]  $   \\
  Adam parameters for value NN& $\beta_1 = 0.9$, $\beta_2 = 0.999$,  \\
&  $\varrho = 10^{-8},$  $\varsigma=2.5\cdot 10^{-4}  $ \\
  No. of epochs (E) & 3\\
  Minibatch size in Adam method (m) & 2048\\
\end{tabular}
\caption[PPO hyperparameters used in Algorithms \ref{alg1} and \ref{alg1amp} for the experiments in Section \ref{sec:cc}.]{PPO hyperparameters used in Algorithms \ref{alg1} and \ref{alg1amp} for the experiments in Section \ref{sec:cc}.  Parameter $\alpha$ decreases linearly  from $1$ to $0$ over the course of learning: $\alpha =(I - i)/I$ on the $i$th policy iteration, $i=0, 1,\dotsc,I-1. $  }\label{tab:par1}
\end{table}

\begin{table}[H]
\centering%
\begin{tabular}{l|@{\quad}l}
  \hline
  % after \\: \hline or \cline{col1-col2} \cline{col3-col4} ...
  Parameter  & Value\\\hline
   Clipping parameter  $(\epsilon)$ & $0.2\times \max[ \alpha,0.01]$ \\
  Horizon $(N)$ & 50,000 \\
  No. of  actors $(Q)$  & 50 \\
  Adam parameters for policy NN& $\beta_1 = 0.9$, $\beta_2 = 0.999$,    \\
  &   $\varrho = 10^{-8},$  $\varsigma=5\cdot 10^{-4}\times \max[ \alpha,0.05]  $   \\
  Adam parameters for value NN& $\beta_1 = 0.9$, $\beta_2 = 0.999$,  \\
 &   $\varrho = 10^{-8},$  $\varsigma=2.5\cdot 10^{-4}  $ \\
  Discount factor $(\beta)$  & 0.998  \\
  GAE parameter $(\lambda)$  & 0.99 \\
  No. of epochs (E)& 3\\
  Minibatch size in Adam method (m)  & 2048\\
\end{tabular}
\caption[PPO hyperparameters used in  Algorithm \ref{alg2} for the experiments in Sections \ref{sec:ext}  and  \ref{sec:nmodel}.]{PPO hyperparameters used in  Algorithm \ref{alg2} for the experiments in Sections \ref{sec:ext}  and  \ref{sec:nmodel}. Parameter $\alpha$ decreases linearly  from $1$ to $0$ over the course of learning: $\alpha =(I - i)/I$ on the $i$th policy iteration, $i=0, 1,\dotsc,I-1. $}\label{tab:par2}
\end{table}

   \begin{table}[H]
\centering%
\begin{tabular}{|c|c|}
  \hline
  % after \\: \hline or \cline{col1-col2} \cline{col3-col4} ...
  Num. of classes $3L$  & Time (minutes)  \\\hline
  6 & 0.50 \\\hline
  9 & 0.73 \\\hline
  12  & 1.01\\\hline
  15  & 2.12 \\\hline
  18  & 4.31\\\hline
  21  & 7.61 \\
  \hline
\end{tabular}
\caption[Running time of one policy iteration of Algorithm \ref{alg2}  for the extended six-class network in Figure \ref{fig:fig1}.]{Running time of \textit{one policy iteration} of Algorithm \ref{alg2}  for the extended six-class network in Figure \ref{fig:fig1}.}\label{tab:rt}
\end{table}

In the Algorithm \ref{alg2} we use  finite length episodes to estimate the expectation of the loss function in line 10. For each episode we need  to specify an initial state.
 We propose sampling the initial states from the set of states generated during previous policy iterations.
  We consider the $i$th policy iteration of the algorithm. We need to choose initial states to simulate policy $\pi_i$.  Since policy $\pi_{i-1}$ has been simulated in the $(i-1)$th iteration of the algorithm,   we can    sample $Q$ states uniformly at random from the episodes generated under policy $\pi_{i-1}$ and  save them in memory. Then we use them as initial states for   $Q$ episodes under $\pi_i$ policy. For policy $\pi_0$ all $Q$ episodes start from state $x = (0,\dotsc,0).$

\chapter{Chapter 2 of Appendix}

 \section{Proofs of the theorems in Section \ref{sec:control_policy} }\label{appendix:ride_hailing_proofs}

\begin{proof}[\textbf{Proof of Lemma \ref{lem:per_diff}}]

First, we note that due to the martingale property
 \begin{align*}
 \E_{\pi_\theta} \Big[&V_\phi(  x^{(1,1)}) -V _\phi(  x^{(H+1, 1)})\\
 &\quad\quad\quad+ \sum\limits_{t=1}^{H }   \Big( \sum\limits_{y\in \X}\mathcal{P}( x^{(t,I_t)},a^{(t,I_t)}, y) V_\phi(y) - V _\phi(  x^{(t,I_t)}) +
   \sum\limits_{i=1}^{I_t-1} \Big(    V_\phi(x^{(t, i+1)}) - V _\phi(  x^{(t,i)})\Big) \Big)      \Big]=0,
 \end{align*}
 see, for example, \cite{Henderson2002}.

 Then
\begin{align*}
&V_\theta(  x^{(1, 1)}) - V_\phi (x^{(1, 1)})  =  V_\theta( x^{(1,1 )}) - V_\phi( x^{(1, 1)})+ \E_{\pi_\theta} \Big[V_\phi(  x^{(1,1)}) -V _\phi(  x^{(H+1, 1)}) \\
&\quad\quad\quad\quad+ \sum\limits_{t=1}^{H }   \Big( \sum\limits_{y\in \X}\mathcal{P}( x^{(t,I_t)},a^{(t,I_t)}, y) V_\phi(y) - V _\phi(x^{(t,I_t)}) +
   \sum\limits_{i=1}^{I_t-1} \Big(    V_\phi(x^{(t, i+1)}) - V _\phi(  x^{(t,i)})\Big) \Big)      \Big]\\
     &= V_\theta(  x^{(1, 1)})  + \E_{\pi_\theta} \left[ \sum\limits_{t=1}^{H }   \left(\sum\limits_{y\in \X} \mathcal{P}( x^{(t,I_t)},a^{(t,I_t)}, y) V_\phi(y) - V _\phi(  x^{(t,I_t)}) +
   \sum\limits_{i=1}^{I_t-1} \Big(    V_\phi(x^{(t, i+1)}) - V _\phi(  x^{(t,i)})\Big) \right)      \right]\\
     &= \E_{\pi_\theta} \left[
 \sum\limits_{t=1}^{H}
   \sum\limits_{i=1}^{I_t}  g(s^{(t,i)},a^{(t,i)})    \right] \\
&\quad\quad\quad\quad+\E_{\pi_\theta} \left[ \sum\limits_{t=1}^{H }   \left(\sum\limits_{y\in \X} \mathcal{P}( x^{(t,I_t)},a^{(t,I_t)}, y) V_\phi(y) - V _\phi(  x^{(t,I_t)}) +
   \sum\limits_{i=1}^{I_t-1} \Big(    V_\phi(x^{(t, i+1)}) - V _\phi(  x^{(t,i)})\Big) \right)      \right]\\
   &=\E_{\pi_\theta} \Big[
 \sum\limits_{t=1}^{H }   \Big(g(x^{(t,I_t)},a^{(t,I_t)}) + \sum\limits_{y\in \X}\mathcal{P}( s^{(t,I_t)},a^{(t,I_t)}, y) V_\phi(y) - V _\phi( x^{(t,I_t)}) \\
&\quad\quad\quad\quad+  \sum\limits_{i=1}^{I_t-1} \Big(g(x^{(t,i)},a^{(t,i)}) +   V_\phi(x^{(t, i+1)}) - V _\phi( x^{(t,i)})\Big) \Big)      \Big]\\
     &= \E_{\pi_\theta} \left[\sum\limits_{t=1}^{H} \sum\limits_{i=1}^{I_t} A_\phi (x^{(t,i)}, a^{(t,i)}) \right].
\end{align*}

\end{proof}

\begin{proof}[\textbf{Proof of Theorem \ref{thm:pib_finite}}]

We recall  that we defined an occupation measure   of policy $\pi_\theta^\Sigma$ at epoch $t$ as a distribution over states  of  $\X^\Sigma$ as
\begin{align*}
   \mu_\theta(t, x) := \Prob(x^{(t)} = x),\quad \text{ for each }t=1,\dotsc, H,~x\in \X^\Sigma,
\end{align*}
 where $x^{(t) }$ is a state of the  MDP at epoch $t$   under policy $\pi^\Sigma_\theta$.
We also defined  the  probability that starting at state $x^{(t,1)}=x$ at epoch $t$  the SDM process is at state $y$  after  $i-1$ steps under policy $\pi_\theta$ as
\begin{align*}
   \xi_\theta(t,i, x, y )  := \Prob(x^{(t,i)} = y~|~x^{(t,1)} = x),
\end{align*}
  for each  $t=1,\dotsc,H,$ $i=1,\dotsc,I_t,$ $y\in \X,$ $x\in \X^\Sigma$.

First, starting from the result of  Lemma \ref{lem:per_diff} we get the policy improvement bound for the original MDP:
\begin{align}\label{eq:pib_finite1}
    V_\theta( x^{(1,1)}) - V_\phi( x^{(1, 1)})   & \geq \sum\limits_{t=1}^{H }\sum\limits_{x\in  \X^\Sigma} \mu_\phi(t, x)  \sum\limits_{a\in \A^\Sigma} \pi^\Sigma_\theta(a| x) A^\Sigma_\phi (x, a) \\
     &\quad\quad-\max\limits_{x\in \X^\Sigma,~a\in  \A^\Sigma}\left| A_\phi^\Sigma (x, a)\right|\sum\limits_{x\in \X^\Sigma} \left|\sum\limits_{t=1}^{H}\mu_\phi(t, x) - \mu_\theta(t,x) \right|\nonumber .
\end{align}

Indeed,
\begin{align*}
    V_\theta( x^{(1,1)}) - V_\phi( x^{(1, 1)})   & =  \E_{\pi_\theta} \left[\sum\limits_{t=1}^{H} \sum\limits_{i=1}^{I_t} A_\phi (x^{(t,i)}, a^{(t,i)}) \right] \nonumber  \\
    & =  \E_{\pi_\theta} \left[\sum\limits_{t=1}^{H}   A_\phi^\Sigma (x^{(t) }, a^{(t)}) \right]\nonumber   \\
     &= \sum\limits_{t=1}^{H }\sum\limits_{x\in  \X^\Sigma } \mu_\theta(t, x)  \sum\limits_{a\in \A^\Sigma} \pi^\Sigma_\theta(a| x) A^\Sigma_\phi (x, a)  \nonumber\\
     & \geq \sum\limits_{t=1}^{H }\sum\limits_{x\in  \X^\Sigma} \mu_\phi(t, x)  \sum\limits_{a\in \A^\Sigma} \pi^\Sigma_\theta(a| x) A^\Sigma_\phi (x, a) \\
     &\quad\quad-\max\limits_{x\in \X^\Sigma,~a\in  \A^\Sigma}\left| A_\phi^\Sigma (x, a)\right|\sum\limits_{x\in \X^\Sigma} \left|\sum\limits_{t=1}^{H}\mu_\phi(t, x) - \mu_\theta(t,x) \right|\nonumber .
\end{align*}

Next, we find the policy improvement bound for the policies that define atomic actions. A sampled composed action $a^{(t)}\sim \pi^\Sigma$ defines the path of the SDM process.  The goal is to replace values related to the original MDP (i.e. $\pi^\Sigma$, $A^\Sigma$) by the values related to the SDM process (i.e. $\pi$, $A$). First, we reformulate $  \sum\limits_{a\in \A^\Sigma} \pi^\Sigma_\theta(a| x) A^\Sigma_\phi (x, a)$. Although, this expression does not directly depend on $t$, we abuse the notation and use superscript $t$ to distinguish   states and actions $(x^{(t, i)},  a^{(t, i)} )$ of the SDM process from states and actions  of the original MDP.
\begin{align}\label{eq:pib_finite2}
\sum\limits_{a^{(t)}\in \A^\Sigma} \pi^\Sigma_\theta(a| x) A^\Sigma_\phi (x, a) &=  \underset{a \sim \pi_\theta^\Sigma(\cdot|x )}{\E}\left[A_\phi^\Sigma(x , a )\right] \nonumber\\
&= \underset{(x^{(t, i)}, a^{(t, i)})\sim \pi_\theta^\Sigma(\cdot|x )}{\E}\left[\sum\limits_{i=1}^{I_t}A_\phi(x^{(t, i)}, a^{(t, i)})\right] \nonumber\\
&=\sum\limits_{i=1}^{I_t}\underset{(x^{(t, i)}, a^{(t, i)})\sim \pi_\theta^\Sigma(\cdot|x^{(t )})}{\E}\left[ A_\phi(x^{(t, i)}, a^{(t, i)})\right] \nonumber\\
&=\sum\limits_{i=1}^{I_t}\underset{\substack{x^{(t, i)}\sim \xi_\theta(t, i, x , y) \\ a^{(t, i)}\sim \pi_\theta(\cdot|x^{(t, i)})}}{\E}\left[ A_\phi(x^{(t, i)}, a^{(t, i)})\right] \nonumber\\
&=\sum\limits_{i=1}^{I_t} \sum\limits_{y\in \X} \xi_\theta(t, i, x , y) \sum\limits_{a^{(t, i)}\in \A}\pi_\theta(a^{(t, i)}|y)  A_\phi(y, a^{(t, i)}),
\end{align}
where in the second equality we started to use superscript $t$ to specify states and actions of the SDM process: action $a$ is decomposed as $a = (a^{(t, 1)},\dotsc,a^{(t, I_t)}  )$,  state $x^{(t, i)}$ denotes a state of the SDM process   after $i-1$ steps, the SDM process starts at state $x$.

 We combine (\ref{eq:pib_finite1}), (\ref{eq:pib_finite2}) and obtain a policy improvement bound for an MDP with a SDM process:
\begin{align*}
   V_\theta( x^{(1,1)}) - V_\phi( x^{(1, 1)})   & \geq  \sum\limits_{t=1}^{H }\sum\limits_{x\in  \X^\Sigma} \mu_\phi(t, x)  \sum\limits_{a\in \A^\Sigma} \pi^\Sigma_\theta(a| x) A^\Sigma_\phi (x, a)  \\
     &\quad\quad-\max\limits_{x\in \X^\Sigma,~a\in  \A^\Sigma}\left| A_\phi^\Sigma (x, a)\right|\sum\limits_{x\in \X^\Sigma} \left|\sum\limits_{t=1}^{H}\mu_\phi(t, x) - \mu_\theta(t,x) \right|\\
&\geq  \sum\limits_{t=1}^{H }\sum\limits_{x\in  \X^\Sigma} \mu_\phi(t, x)  \sum\limits_{i=1}^{I_t} \sum\limits_{y\in \X}\xi_\theta(t, i, x, y) \sum\limits_{a^{(t,i)}\in \A} \pi_\theta(a^{(t,i)}| y) A_\phi (y, a^{(t,i)})  \\
     &\quad\quad-\max\limits_{s\in \X^\Sigma,~a\in  \A^\Sigma}\left| A_\phi^\Sigma (x, a)\right|\sum\limits_{t=1}^{H}\sum\limits_{x \in \X^\Sigma} |\mu_\phi(t, x) - \mu_\theta(t,x) |   \\
     & \geq \sum\limits_{t=1}^{H }\sum\limits_{x\in  \X^\Sigma} \mu_\phi(t, x)  \sum\limits_{i=1}^{I_t} \sum\limits_{y\in \X}\xi_\phi(t, i, x, y) \sum\limits_{a^{(t,i)}\in \A} \pi_\theta(a^{(t,i)}| y) A_\phi (y, a^{(t,i)})  \\
     &\quad\quad- \max\limits_{x\in \X,a\in \A} |A_\phi (x,a)|\sum\limits_{t=1}^{H }\sum\limits_{x \in  \X^\Sigma} \mu_\phi(t, x)  \sum\limits_{i=1}^{I_t}\sum\limits_{y\in \X}|\xi_\phi(t, i, x, y) - \xi_\theta(t, i, x, y)|  \\
     &\quad\quad-\max\limits_{x\in \X^\Sigma,~a\in  \A^\Sigma}| A_\phi^\Sigma (x, a)|\sum\limits_{t=1}^{H}\sum\limits_{x\in \X^\Sigma} |\mu_\phi(t, x) - \mu_\theta(t,x) |   \\
     & =\E_{x^{(t, i)}\sim \pi_\phi} \left[ \sum\limits_{t=1}^{H}\sum\limits_{i=1}^{I_t}\frac{\pi_\theta(a^{(t,i)}|x^{(t,i)})}{\pi_\phi(a^{(t,i)}|x^{(t,i)})} A_\phi(x^{(t,i)}, a^{(t,i)})  \right]  \\
     &\quad\quad- \max\limits_{x\in \X,a\in \A} |A_\phi (x,a)|\sum\limits_{t=1}^{H }\sum\limits_{x\in  \X^\Sigma} \mu_\phi(t, x )  \sum\limits_{i=1}^{I_t}\sum\limits_{y\in \X}|\xi_\phi(t, i, x, y) - \xi_\theta(t, i, x, y)|  \\
     &\quad\quad-\max\limits_{x\in\X^\Sigma,~a\in  \A^\Sigma}| A_\phi^\Sigma (x, a)|\sum\limits_{t=1}^{H}\sum\limits_{x\in \X^\Sigma} |\mu_\phi(t, x) - \mu_\theta(t,x) | .
 \end{align*}

\end{proof}

\section{Neural network structure }\label{appendix:ride_hailing_nn}

In this section we focus on  the architecture of the policy neural network (NN) used for atomic actions sampling in the SDM process.
 The value NN  has    identical architecture  except
the output layer. The
output layer of the value NN contains one unit with a linear activation function.

 We use $\theta$ to denote the vector of weights and
  biases of the neural network.
 For a fixed parameter $\theta$, the
  neural network outputs deterministically distribution
  $\pi_\theta(\cdot|x)$ over the atomic action space (trip types) for each state
  $x\in \X$. 
We consider a ride-hailing transportation network with $R$ regions,  patience time $ L$, and length of a working day $H$.

We start with a description of the input layer.
We recall that each  state $x^{(t,i)}$ of the SDM process has four components $x^{(t,i)} =\left [x^{(t,i)}_e, x^{(t, i)}_c, x^{(t,i)}_p, x^{(t, i)}_\ell\right]$, where  the first three components $x^{(t,i)}_e$, $x^{(t,i)}_c$, $x^{(t, i)}_p$  represent current epoch,  cars status, and passengers status, respectively, and  component $x^{(t,i)}_\ell$ tracks the cars exiting the available cars pool until the next decision epoch.  Based on a system state, the policy NN generates a sampling probability distribution over atomic actions. Next, we discuss how we encode each state component as an input to the NN.

Component $x^{(t,i)}_e$ is a categorical variable that takes   integer values in range $1, \dotsc, H$. We use entity embedding, see \cite{Guo2016},  to encode this component as a low-dimensional vector. First, we apply one-hot embedding to represent a value of $x^{(t, i)}_e$ as a vector in $\R^H$. Namely, we map    $x^{(t,i)}_e = t$ into   vector $ x^{(t)}_{\text{one-hot}} = (0, \dotsc,0,  1, 0, \dotsc, 0)^T$, where the $t$-th element of vector   $  x^{(t)}_{\text{one-hot}}$ is equal to $1$ and the rest elements are equal to $0$.

We define an \textit{embedding matrix} $E$ as an $H\times B$ matrix that is a transformation from a set of one-hot vectors into a continuous vector space with dimensionality $B$. Matrix $E$ maps each one-hot embedded vector $  x^{(t)}_{\text{one-hot}}$ into  a vector $y^{(t)}$ of size $B$:
\begin{align*}
y_e^{(t)} = E  x^{(t)}_{\text{one-hot}},
\end{align*}
where $y_e^{(t)}$ is a part of the input vector of the NN. 

Elements of matrix $E$ are training parameters of the  NN and are included in $\theta$. In other words, the  embedding matrix $E$ is not given but learned during the NN parameters optimization. 

Next, the cars status component $x^{(t, i)}_c$ is represented by a vector that is divided into $R$ parts, one for  each region. The $r$-th part of  $x^{(t, i)}_c$ contains $\tau^{\text{max}}_r+L$ elements, where $\tau^{\text{max}}_r$ is the maximum time (in minutes) that is required for a driver to reach region $r$ from any location of the transportation network. The $k$-th element of the $r$-th part of  $x^{(t, i)}_c$ counts the number of cars that have final destination in region $r$ and that are  $k$ minutes away from it.  
 
The passengers status vector $x^{(t, i)}_p$ has $R^2$ elements. Each element of $x^{(t, i)}_p$ corresponds to one of the trip types $(o, d)$ and counts the number of passengers that want to get a ride from region $o$ to region $d$, where $o, d=1, \dotsc, R$. Component $x^{(t,i)}_\ell$, that tracks the cars exiting the available cars pool, is represented by an  $R(L+1)$-dimensional vector. Element $(x^{(t,i)}_\ell)_{r, k}$ of this vector counts the number of cars which final destination or current location is region $r$ and which are $k$ minutes away from their destination, where $r=1,\dotsc,R $, $k=0, 1,\dotsc, L$.

After standard normalization, vector $x^{(t, i)}_{\text{input}} =\left [y_e^{(t)}, x^{(t, i)}_c, x^{(t,i)}_p, x^{(t, i)}_\ell\right]$ is   used as an input to the NN. The input layer has $K:= B+\sum\limits_{r=1}^R(\tau^{\text{max}}_r+L)+R^2+R(L+1)$ units, one for each element of the input vector $x^{(t, i)}_{\text{input}} $.

The input layer has $K$ units, the first hidden layer has $K$ units, the third hidden layer has $5$ units, where $5$ is a fixed  number. The number of units in the  second hidden layer is a geometric mean of units in the first and third hidden layers (i.e. integer part of $\sqrt{5K}$). The output layer of the policy NN has $R^2$ units,  one for each trip type. The softmax function is used as the activation function in the output layer of the policy neural network. The policy NN is a feed-forward policy NN and its layers are joined accordantly, see Appendix   \ref{sec:nn} for details.

 For the nine-region transportation network, the input layer is of size $527$.
It consists of the cars status component $x^{(t, i)}_c$ ($388$ entries), the passengers status component $x^{(t,i)}_p$ ($81$ entries), the ``do nothing'' cars component $x^{(t, i)}_\ell$ ($54$ entries). The time-of-day component $x^{(t,i)}_e $ is a  categorical  variable taking one of $H=240$ values and  it is additionally processed into an embedding layer $y_e^{(t)}$ of size $B = 4$. The first, second, and third hidden layers are of size $527$, $51$, $5$, respectively. The output layer of the policy NN has $81$ units.

 \section{Implementation details of numerical experiments in Section \ref{sec:num-study} }\label{appendix:ride_hailing}

We use Tensorflow v1.13.1  \cite{Abadi2016} to build a training routine of the neural networks and Ray package v0.6.6 \cite{Moritz2018}
    to maintain parallel simulation of the actors. We run all experiments on a   2.7 GHz  96-core processor with 1510 GB of RAM.

    We optimize  the value and policy functions to minimize the corresponding loss functions (\ref{eq:Vappr}), (\ref{eq:popt}) by the Adaptive Moment Estimation (Adam) method \cite{Kingma2017}, see the details in Appendix \ref{sec:par}.

 Table \ref{tab:dNN_hyperparams} summarizes the hyper-parameters of Algorithm \ref{alg:ppo} used in the nine-region experiment in Section \ref{sec:num-study}.

\begin{table}[H]
\centering%
\begin{tabular}{l|@{\quad}l}
  \hline
  % after \\: \hline or \cline{col1-col2} \cline{col3-col4} ...
  Parameter  & Value\\\hline
   Number of policy iterations  $(J)$ & $150$ \\
  Number of episodes per policy iteration $(K)$ &250\\
   No. of  actors $(Q)$  & 50 \\
  Adam parameters for policy NN& $\beta_1 = 0.9$, $\beta_2 = 0.999$,    \\
(see parameter descriptions in  Appendix \ref{sec:par})  &   $\varrho = 10^{-8},$  $\varsigma=5\cdot 10^{-5}\times \max[ \alpha,0.05]  $   \\
  Adam parameters for value NN& $\beta_1 = 0.9$, $\beta_2 = 0.999$,  \\
 &   $\varrho = 10^{-8},$  $\varsigma=2.5\cdot 10^{-4}  $ \\
 Clipping parameter  $(\epsilon)$ & $0.2\times \max[ \alpha,0.01]$ \\
  No. of epochs  for policy NN update& 3\\
(passes over training data for policy NN update) & \\
  No. of epochs  for value NN update& 10\\
(passes over training data for value NN update) & \\
  Minibatch size in Adam method & 4096
\end{tabular}
 \caption[Algorithm \ref{alg:ppo} hyperparameters used for experiments in Section \ref{sec:num-study}.]{Algorithm \ref{alg:ppo} hyperparameters used for experiments in Section \ref{sec:num-study}. Parameter $\alpha$ decreases linearly  from $1$ to $0$ over the course of learning: $\alpha =(J - j)/J$ on the $j$th policy iteration, $j=0, 1, \dotsc,J. $  }
    \label{tab:dNN_hyperparams}
\end{table}

Table \ref{tab:mdp-params9} summarizes the values for the parameters used in the experiment with the  nine-region transportation network. The values for the traffic   parameters $ \lambda,$ $P$, and $\tau$ for  the nine-region transportation network  are the same as in  \cite[Appendix EC.3.1]{Braverman2019}.

\begin{table}[H]
\centering%
\begin{tabular}{l|@{\quad}l}
  \hline
  % after \\: \hline or \cline{col1-col2} \cline{col3-col4} ...
  Parameter  & Value\\\hline
   Number of regions  $(R)$ & $9$ \\
  Number of cars $(N)$ &2,000\\
   Length of a working day $(H)$  & $240$ (minutes)\\
  Passenger patience time  ($L$)& $5$ (minutes),    \\
 Immediate rewards for a car-passenger matching ($g_f^{(t)}$) &  $g^{(t)}_f(o,d,\eta)\equiv 1$,  \\
Immediate rewards for an empty-car routing ($g_e^{(t)}$) & $g_e^{(t)}(o,d)\equiv 0$.
\end{tabular}
\caption{The nine-region transportation network configuration.}\label{tab:mdp-params9}
\end{table}

In Table \ref{tab:mdp-params5} we summarize  the  five-region transportation network configuration details used in the experiment in \cite{Feng2020}. The values for the traffic parameters $ \lambda,$ $P$, and $\tau$ for  the five-region transportation network  can be found in \cite[Appendix C.2]{Feng2020} or \cite[Appendix EC.3.2.]{Braverman2019}.

\begin{table}[H]
\centering%
\begin{tabular}{l|@{\quad}l}
  \hline
  % after \\: \hline or \cline{col1-col2} \cline{col3-col4} ...
  Parameter  & Value\\\hline
   Number of regions  $(R)$ & $5$ \\
  Number of cars $(N)$ &1,000\\
   Length of a working day $(H)$  & $360$ (minutes)\\
  Passenger patience time  ($L$)& $5$ (minutes),    \\
 Immediate rewards for a car-passenger matching ($g_f^{(t)}$) &  $g^{(t)}_f(o,d,\eta)\equiv 1$,  \\
Immediate rewards for an empty-car routing ($g_e^{(t)}$) & $g_e^{(t)}(o,d)\equiv 0$.
\end{tabular}
\caption{The five-region transportation network configuration.}\label{tab:mdp-params5}
\end{table}

\chapter{Chapter 3 of Appendix}

 \section{Proofs of the theorems in Section \ref{sec:PIB_f} }\label{appendix:bounds}

\begin{proof}[\textbf{Proof of Lemma \ref{lem:stat}}]

The discounted future state distribution can be expressed in a vector form as:
  \begin{align}\label{eq:disc_distr2}
(d^{(\gamma)}_{\pi})^T&= (1-\gamma)\mu^T\sum\limits_{t=0}^\infty (\gamma P_\pi)^t \nonumber\\
&=(1-\gamma)\mu^T(I-\gamma P_\pi)^{-1}.
\end{align}

If $\gamma<1$,  matrix $P^{(\gamma)}_\pi$  is irreducible and aperiodic regardless of $P_\pi$, see \cite{Langville2003}.

 Now, we need to show that $(d^{(\gamma)}_{\pi})^TP_\pi^{(\gamma)}=(d^{(\gamma)}_{\pi})^T$. Indeed, using (\ref{eq:disc_distr2}) and (\ref{eq:mod_trans}), we get
 \begin{align*}
(d^{(\gamma)}_{\pi})^TP_\pi^{(\gamma)}&= (1-\gamma)\mu^T\sum\limits_{t=0}^\infty (\gamma P_\pi)^tP_\pi^{(\gamma)}\\
&=(1-\gamma)\mu^T\sum\limits_{t=0}^\infty (\gamma P_\pi)^t\Big(  \gamma P_\pi  + (1 - \gamma) e\mu^T \Big)\\
&=(1-\gamma)\mu^T\sum\limits_{t=0}^\infty (\gamma P_\pi)^{t+1}+(1-\gamma)^2\mu^T \sum\limits_{t=0}^\infty \gamma^t e\mu^T\\
& = (1-\gamma)\mu^T\sum\limits_{t=0}^\infty (\gamma P_\pi)^{t+1}+(1-\gamma)\mu^T\\
&=(1-\gamma)\mu^T \left(\sum\limits_{t=0}^\infty (\gamma P_\pi)^{t+1}+I \right)\\
&=(1-\gamma)\mu^T \sum\limits_{t=0}^\infty (\gamma P_\pi)^{t}\\
&= (d^{(\gamma)}_{\pi})^T.
\end{align*}
\end{proof}

  \begin{proof}[\textbf{Proof of Lemma \ref{lem:perf_bound}}]

The performance difference identity, proposed in \cite{Cao1999, Kakade2002}, allows to express the difference in performance between two policies $\pi_1$ and $\pi_2$ as
\begin{equation}\label{eq:perf_diff3}
 \eta^{(\gamma)}_{\pi_2}(\mu) -  \eta^{(\gamma)}_{\pi_1}(\mu) = \underset{\substack{ x\sim d^{(\gamma)}_{\pi_2}\\  a \sim \pi_2(\cdot|x) }  }{\E} \left[A^{(\gamma)}_{\pi_1}(x,a)   \right].
\end{equation}

Starting from the performance difference identity (\ref{eq:perf_diff3}) we get
\begin{align*}
\eta^{(\gamma)}_{\pi_2}(\mu) -  \eta^{(\gamma)}_{\pi_1}(\mu) &=   \underset{\substack{ x\sim d^{(\gamma)}_{\pi_2}\\  a \sim \pi_2(\cdot|x) }  }{\E} \left[A^{(\gamma)}_{\pi_1}(x,a)   \right]\\
&=  \underset{\substack{ x\sim d^{(\gamma)}_{\pi_1}\\  a \sim \pi_2(\cdot|x) }  }{\E} \left[A^{(\gamma)}_{\pi_1}(x,a)   \right] + \sum\limits_{x\in \X}\left(d^{(\gamma)}_{\pi_2}(x)-d^{(\gamma)}_{\pi_1}(x)\right) \underset{\substack{ a \sim \pi_2(\cdot|x) }  }{\E} \left[A^{(\gamma)}_{\pi_1}(x,a)   \right]\\
&\leq \underset{\substack{ x\sim d^{(\gamma)}_{\pi_1}\\  a \sim \pi_2(\cdot|x) }  }{\E} \left[A^{(\gamma)}_{\pi_1}(x,a)   \right] +\max\limits_{x\in \X}\Big[\underset{a\sim \pi_2(\cdot|x)}{\E}[A^{(\gamma)}_{\pi_1}(x,a)]\Big]  \left\|d^{(\gamma)}_{\pi_2}-d^{(\gamma)}_{\pi_1}\right\|_1.
\end{align*}

Then in \cite[Lemma 1]{Achiam2017}, the following  perturbation identity was derived for discounted stationary distributions:
\begin{align}\label{eq:old_id}
(d^{(\gamma)}_{\pi_2})^T-(d^{(\gamma)}_{\pi_1})^T =\gamma (d_\gamma^{   \pi_1})^T (P_{  \pi_1} - P_{ \pi_2})(I-\gamma P_{\pi_2})^{-1}.
\end{align}

\cite[Lemma 1]{Achiam2017} finalized the proof of \cite[Corollary 1]{Achiam2017} showing that
\begin{align}\label{eq:inv}
\left\|(I-\gamma P_{\pi_2})^{-1}\right\|_\infty\leq \frac{1}{1-\gamma}
\end{align}
and
\begin{align*}
\left\| (P_{  \pi_1} - P_{ \pi_2})^Td^{(\gamma)}_{   \pi_1} \right\|_1\leq 2 \underset{  x\sim d^{(\gamma)}_{\pi_1} }{\E} \left[\text{TV}\Big(\pi_2(\cdot|x)~||~\pi_1(\cdot|x)\Big) \right],
\end{align*}
which combined result in
\begin{align*}
\left\|d^{(\gamma)}_{\pi_2}-d^{(\gamma)}_{\pi_1}\right\|_1 &=\gamma \left\| \left((I-\gamma P_{\pi_2})^{-1} \right)^T(P_{  \pi_1} - P_{  \pi_2})^Td^{(\gamma)}_{   \pi_1} \right\|_1\\
& \leq \frac{2\gamma}{(1-\gamma)} \underset{  x\sim d^{(\gamma)}_{\pi_1} }{\E} \left[\text{TV}\Big(\pi_2(\cdot|x)~||~\pi_1(\cdot|x)\Big) \right].
\end{align*}

 \end{proof}

  \begin{proof}[\textbf{Proof of Lemma \ref{lem:matr_diff}}]
 Using the definition of a discounted transition matrix (\ref{eq:mod_trans}), group inverse $D^{(\gamma)}_{\pi_1}$ can be written as
 \begin{align*}
D^{(\gamma)}_{\pi_1} + e(d^{(\gamma)}_{\pi})^T &=\left(I - P^{(\gamma)}_\pi + e(d^{(\gamma)}_{\pi})^T\right)^{-1}\\
&=\left(I - \gamma P_\pi - (1-\gamma)e\mu^T + e(d^{(\gamma)}_{\pi})^T\right)^{-1}.
\end{align*}

 Then
  \begin{align*}
D^{(\gamma)}_{\pi_1} + e(d^{(\gamma)}_{\pi})^T &=\left(I - \gamma P_\pi - (1-\gamma)e\mu^T + e(d^{(\gamma)}_{\pi})^T\right)^{-1}\\
& = \left(I - \gamma P_\pi \right)^{-1} - \frac{\left(I - \gamma P_\pi \right)^{-1}(- (1-\gamma)e\mu^T + e(d^{(\gamma)}_{\pi})^T)\left(I - \gamma P_\pi \right)^{-1}}{1+(-(1-\gamma)\mu + d^{(\gamma)}_{\pi})^T(I-\gamma P_\pi)^{-1}e}\\
& = \left(I - \gamma P_\pi \right)^{-1} - \frac{\left(I - \gamma P_\pi \right)^{-1}e(d^{(\gamma)}_{\pi})^T\left(- I + \left(I - \gamma P_\pi \right)^{-1}\right)}{1+(-(1-\gamma)\mu + d^{(\gamma)}_{\pi})^T(I-\gamma P_\pi)^{-1}e}\\
& = \left(I - \gamma P_\pi \right)^{-1} - \frac{e(d^{(\gamma)}_{\pi})^T\left(- I + \left(I - \gamma P_\pi \right)^{-1}\right)}{1-\gamma+(-(1-\gamma)\mu + d^{(\gamma)}_{\pi})^Te}\\
&= \left(I - \gamma P_\pi \right)^{-1} + e (d^{(\gamma)}_{\pi})^T\Big( I - \left(I - \gamma P_\pi \right)^{-1}\Big),
\end{align*}
 where the second equality is due to Sherman--Morrison formula \cite[Section 2.1.4]{Golub2013}, the third equality follows from (\ref{eq:disc_distr2}), and the fourth equality is held since
\begin{equation*}
(1-\gamma)(I-\gamma P_\pi)^{-1}e = e.
\end{equation*}
\end{proof}

\begin{proof}[\textbf{Proof of Lemma \ref{lem:group}}]

First, let us prove that for any  $\ell\in \Z_+$ the    discounted transition matrix $P^{(\gamma)}_\pi$   satisfies
   \begin{align}\label{eq:ind}
 (P^{(\gamma)}_\pi)^\ell& =  \left(\gamma P_\pi +(1-\gamma)e\mu^T \right)^\ell \widetilde  \geq \gamma^\ell \left(P_\pi\right)^\ell + (1-\gamma) e \mu^T,
  \end{align}
  where matrix inequality $A \widetilde  \geq B$ means $A(x, y)\geq B(x, y)$ for each $x, y\in \X$.

 We prove inequality (\ref{eq:ind}) by induction. The base case $\ell=1$ is obvious. We assume that (\ref{eq:ind}) holds for $\ell-1$. Then
 \begin{align*}
 \left(\gamma P_\pi +(1-\gamma)e\mu^T \right)^\ell &= \left(\gamma P_\pi +(1-\gamma)e\mu^T \right) \left(\gamma P_\pi +(1-\gamma)e\mu^T \right)^{\ell-1}  \\
 &\widetilde \geq \left(\gamma P_\pi +(1-\gamma)e\mu^T \right) \left(  \gamma^{\ell-1} P_\pi^{\ell-1} + (1-\gamma) e \mu^T  \right)\\
 &=\gamma^\ell P_\pi^{\ell} +  \gamma (1-\gamma) e\mu^T + (1-\gamma)\gamma^{\ell-1}e\mu^T P_\pi^{\ell-1} + (1-\gamma)^2e \mu^T\\
 &= \gamma^\ell P_\pi^{\ell} +  (1-\gamma) e\mu^T + (1-\gamma)\gamma^{\ell-1}e\mu^T P_\pi^{\ell-1} \\
 &\widetilde \geq\gamma^\ell P_\pi^{\ell} +  (1-\gamma) e\mu^T.
 \end{align*}

Hence, we have proved (\ref{eq:ind}).

By (\ref{eq:minor}) and (\ref{eq:ind}) the following inequalities hold for $P^{(\gamma)}_\pi$:
    \begin{align}\label{eq:minor_disc}
 (P^{(\gamma)}_\pi)^\ell(x,y) &\geq \gamma^\ell  P_\pi^{\ell}(x,y) +  (1-\gamma) \mu(y) \\
 & \geq \left(\gamma^\ell \delta^{(\mu)}_\pi  + (1-\gamma)\right)\mu(y), \nonumber
  \end{align}
 for each $x,y\in \X$.

We let $d^{(\gamma)}_{\pi}$ define the stationary distribution of $P^{(\gamma)}_\pi$. By (\ref{eq:minor_disc}) and \cite[Lemma 2]{Rosenthal2007}, we have the following convergence rate bound for transition matrix $ P^{(\gamma)}_\pi$:
\begin{align*}
 \max\limits_{x\in \X} \sum\limits_{y\in \X} \Big|(P^{(\gamma)}_\pi)^t(x, y) - d^{(\gamma)}_{\pi}(y) \Big | \leq 2 (\gamma -\gamma^\ell \delta^{(\mu)}_\pi)^{[ t/\ell]},
\end{align*}
 where $[x]$  is the greatest integer not exceeding $x$.

  We are ready to bound the norm of the group inverse matrix:
  \begin{align*}
\left\|D^{(\gamma)}_{\pi }\right\|_\infty   &= \left\| \sum\limits_{t=0}^\infty \left((P^{(\gamma)}_{  \pi})^t- e(D^{(\gamma)}_{\pi})^T\right)\right\|_\infty\\
 &\leq  \sum\limits_{t=0}^\infty\| (P^{(\gamma)}_{  \pi})^t- e(d^{(\gamma)}_{\pi })^T \|_\infty\\
 &\leq  2\sum\limits_{t=0}^\infty (\gamma -\gamma^\ell \delta^{(\mu)}_\pi)^{[ t/\ell]}\\
 &=2\ell \sum\limits_{t=0}^\infty (\gamma -\gamma^\ell \delta^{(\mu)}_\pi)^{t}\\
 & =  \frac{2\ell}{1 - (\gamma -\gamma^\ell \delta_\pi^{(\mu)})}.
 \end{align*}
\end{proof}

  \begin{proof}[\textbf{Proof of Lemma \ref{lem:cond_nums}}]
(a) The fact that $\tau_1[Z] = \tau_1[D]$ follows from (\ref{eq:DZ}) and (\ref{eq:prop}). From \cite[Theorem 4.4.7]{Kemeny1976a}
\begin{equation*}
Z = ee^T I_Z - M I_d,
\end{equation*}
where $I_Z$ is the diagonal matrix whose elements are diagonal elements of matrix $Z$.

The equality $\tau_1[Z] = \tau_1[M I_d]$ follows from (\ref{eq:prop}).

(b) The statement directly follows from definition (\ref{eq:erg_def}) and (a), see \cite[Lemma 4.1(f)]{Cho2001}.

(c) We note that each row of matrix $MI_d$ sums up to $\kappa$ as we noted in (\ref{eq:kemeny_def}), see \cite[Theorem 4.4.10]{Kemeny1976a}. By \cite[equation (6)]{Seneta1991}
\begin{equation*}
\tau_1[M I_d] = \kappa - \min\limits_{x, y\in \X} \sum\limits_{z\in \X} d(z) \min[M(x, z), M(y, z)].
\end{equation*}
Since $d(x), M(x, y)$ take nonnegative values for any $x, y\in \X$,
\begin{equation*}
\tau_1[D] = \tau_1[M I_d] \leq \kappa.
\end{equation*}
A different proof of the fact that $\tau_1[D] \leq \kappa$ can be found in \cite{Seneta1993}.
 \end{proof}

 \section{Proofs of the theorems in Section \ref{sec:PIB_V} }\label{appendix:bounds_V}

  \begin{proof}[\textbf{Proof of Theorem \ref{thm:main_V}}]

  Starting from the performance difference identity we get
\begin{align*}
\eta_{\pi_2} -  \eta_{\pi_1} &=   \underset{\substack{ x\sim d_{\pi_2}\\  a \sim \pi_2(\cdot|x) }  }{\E} \left[A_{\pi_1}(x,a)   \right]\\
&=  \underset{\substack{ x\sim d_{\pi_1}\\  a \sim  \pi_2(\cdot|x) }  }{\E} \left[A_{\pi_1}(x,a)   \right] + \sum\limits_{x\in \X}\left(d_{\pi_2}(x)-d_{\pi_1}(x)\right) \underset{\substack{ a \sim \pi_2(\cdot|x) }  }{\E} \left[A_{\pi_1}(x,a)   \right]\\
&\leq \underset{\substack{ x\sim d_{\pi_1}\\  a \sim  \pi_2(\cdot|x) }  }{\E} \left[A_{\pi_1}(x,a)   \right] + \sum\limits_{x\in \X}\left|d_{\pi_2}(x)-d_{\pi_1}(x)\right|\V(x) \sup\limits_{x\in \X} \frac{\underset{\substack{ a \sim \pi_2(\cdot|x) }  }{\E} \left[A_{\pi_1}(x,a)   \right]}{\V(x)}\\
&= \underset{\substack{ x\sim d_{\pi_1}\\  a \sim \pi_1(\cdot|x) }  }{\E} \left[\frac{\pi_2(a|x)}{\pi_1(a|x)}A_{\pi_1}(x,a)   \right] +\left\|\underset{a\sim  \pi_2(\cdot|x)}{\E}[A_{\pi_1}(x,a)]\right\|_{\infty, \V}  \|d_{\pi_2}-d_{\pi_1}\|_{1, \V}.
\end{align*}

Next,   we use   perturbation identity (\ref{eq:old_new}) with $\gamma=1$
\begin{align*}
d_{\pi_2}^T-d_{\pi_1}^T = d_{   \pi_1}^T (P_{  \pi_1} - P_{  \pi_2})D_{\pi_2}.
\end{align*}
 to get a new perturbation bound:
\begin{align}\label{eq:inv_V}
\|d_{\pi_2}-d_{\pi_1} \|_{1, \V} &= \left\| \left(D_{\pi_2}\right)^T(P_{  \pi_1} - P_{ \pi_2})^Td_{   \pi_1} \right\|_{1, \V}\nonumber  \\
&\leq \tau_{1, \V}^{ } \left[D_{\pi_2}  \right]\left\|(P_{  \pi_1} - P_{  \pi_2})^Td_{   \pi_1} \right\|_{1, \V}
\end{align}

Ergodicity coefficient $\tau_{1, \V}^{ } \left[D_{\pi_2}  \right]$ is finite because
\begin{align*}
\tau_{1, \V}^{ } \left[D_{\pi_2}  \right]&= \tau_{1, \V}^{ } \left[Z_{\pi_2}  \right] \leq \|Z_{\pi_2}\|_{\V} < \infty,
\end{align*}
where the first equality follows from (\ref{eq:prop_V}). Norm $\|Z_{\pi_2}\|_{\V}$ is finite since $P_{\pi_2}$ satisfies the drift condition for Lyapunov function $\V$.

We simplify $\left\|(P_{  \pi_1} - P_{  \pi_2})^Td_{   \pi_1} \right\|_{1, \V}$ term  in (\ref{eq:inv_V})  to make it more explicit:
\begin{align*}
\left\|(P_{  \pi_1} - P_{ \pi_2})^Td_{   \pi_1} \right\|_{1, \V}&=\sum\limits_{y\in \X}\V(y) \left| \sum\limits_{x\in \X}  (P_{\pi_1}(x, y) - P_{\pi_2}(x,y)) d_{\pi_1}(x)   \right|\\
&\leq\sum\limits_{y\in \X}\V(y)  \sum\limits_{x\in \X}  \left|P_{\pi_1}(x, y) - P_{\pi_2}(x,y) \right| d_{\pi_1}(x)  \\
&=\sum\limits_{y\in \X}\V(y)  \sum\limits_{x\in \X}  \left|\sum\limits_{a\in \A} \pi_1(a|x)P(y| x, a) - \sum\limits_{a\in \A}  \pi_2(a|x)P(y| x, a) \right| d_{\pi_1}(x) \\
&\leq\sum\limits_{y\in \X}\V(y)  \sum\limits_{x\in \X} \sum\limits_{a\in \A}  P(y| x, a)   \left| \pi_1(a|x) -  \pi_2(a|x)  \right| d_{\pi_1}(x) \\
&=\sum\limits_{x\in \X} \sum\limits_{a\in \A}  \left| \pi_1(a|x) - \pi_2(a|x)  \right| d_{\pi_1}(x)     \sum\limits_{y\in \X}\V(y)    P(y| x, a)    \\
%&=\sum\limits_{x\in \X} \sum\limits_{a\in \A} \left| \pi_1(a|x) - \pi_2(a|x)  \right| d_{\pi_1}(x)\E_{y\sim P(y|x, a)} \V(y)\\
&=\E_{x\sim d_{\pi_1}} \left[ \sum\limits_{a\in \A} \left| \pi_1(a|x) -  \pi_2(a|x)  \right| \E_{y\sim P(y|x, a)} \V(y)   \right]\\
&=\E_{x\sim d_{\pi_1}} \left[ \sum\limits_{a\in \A} \pi_1(a|x) \left|\frac{ \pi_2(a|x) }{\pi_1(a|x)}-1 \right| \E_{y\sim P(\cdot|x, a)} \V(y)   \right]\\
&=\underset{\substack{ x\sim d_{\pi_1}\\  a \sim  \pi_1(\cdot|x) \\ y\sim P(\cdot|x, a) }  }{\E}\left[ \left|\frac{ \pi_2(a|x) }{\pi_1(a|x)}-1 \right|   \V(y)\right].
\end{align*}

 \end{proof}

\begin{lemma}\label{lem:kartashov}

We consider a  $\V$-uniformly ergodic Markov chain with transition matrix $P$ on  countable state space $\X$ that satisfies the drift condition (\ref{eq:drift_rep}). We let $D$ be  a group inverse matrix of $I-P$, and let $d$ be the stationary distribution of $P$. 

We consider a nonnegative matrix  $\T:\X\times\X\rightarrow \R_+$ 
\begin{align*}
\T(x, y) = P(x, y) - \nu(x) \omega(y) \text{ for each }x,y\in \X,
\end{align*}   
where   $\nu:\X\rightarrow\R_+$ is a  nonnegative vector and   $\omega:\X\rightarrow [0,1]$ is a probability distribution such that   $\omega^Te=1$, $d^T\nu>0$, $\nu^T\omega>0$.

If there exists constant $\lambda<1$  such that 
$
\|\T\|_\V^{ } \leq \lambda,
$
 then 
\begin{itemize}
\item[(a)] the following identity holds
  \begin{align}\label{eq:a}
  D  = e d ^T \left( \left(d ^T\sum\limits_{n=0}^\infty   \mathcal{T} ^n e\right) I - \sum\limits_{n=0}^\infty   \mathcal{T}^n \right) +\sum\limits_{n=0}^\infty   \mathcal{T}^n\left(I-e d^T\right);
  \end{align}

\item[(b)]
the following bound holds
  \begin{align}\label{eq:b}
  \tau_{1, \V}[D ] \leq \frac{  1+ \|e\|_{\infty, \V} \|d \|_{1, \V}  }{1-\lambda}.
  \end{align}

\end{itemize}
\end{lemma}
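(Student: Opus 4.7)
The plan is to prove (a) by a direct verification that the proposed expression satisfies the defining properties of the group inverse, and then to use (a) together with the absorption property (\ref{eq:prop_V}) to reduce (b) to bounds on $S := \sum_{n=0}^\infty \T^n$. Write $\hat D := \alpha e d^T + S - e d^T S - S e d^T$ where $\alpha := d^T S e$; since $\|\T\|_\V \leq \lambda < 1$, the series $S = (I-\T)^{-1}$ converges in $\V$-norm, so $\hat D$ is well-defined. Recall that the group inverse of $I-P$ is the unique matrix $D$ satisfying $(I-P)D = D(I-P) = I - ed^T$ together with $De = 0$ (equivalently $d^T D = 0$).

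For (a), my plan is to first derive four structural identities from the decomposition $P = \T + \nu\omega^T$. From $Pe = e$ and $\omega^T e = 1$, one obtains $\T e = e - \nu$, hence $\nu = (I-\T)e$ and so $S\nu = e$. From $d^T P = d^T$ one gets $d^T \T = d^T - (d^T\nu)\omega^T$, which after right-multiplying by $S$ yields $\omega^T S = d^T/(d^T\nu)$ and also the useful consequence $d^T S \nu = d^T e = 1$. I will then expand $\hat D(I-P)$ and $(I-P)\hat D$ term by term and show that, using these four identities (and the telescoping $S - S\T = I$), the four groups of terms cancel to $I - ed^T$; analogously, $\hat D e$ and $d^T \hat D$ collapse to $0$ using $\hat D e = \alpha e + Se - e d^T Se - Se = \alpha e - \alpha e = 0$ and the symmetric computation for $d^T \hat D$. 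By uniqueness of the group inverse this gives $\hat D = D$.

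For (b), I will exploit the shift invariance $\tau_{1,\V}[D] = \tau_{1,\V}[D - ec^T]$ for any row vector $c^T$, which is the $\V$-weighted version of (\ref{eq:prop_V}). The identity from (a) can be rewritten as $D - e(\alpha d^T - d^T S) = S - Sed^T$, so
\[
\tau_{1,\V}[D] \;=\; \tau_{1,\V}[S - Sed^T] \;\leq\; \|S\|_\V + \|Sed^T\|_\V .
\]
The first term is controlled by $\|S\|_\V \leq \sum_{n \geq 0} \|\T^n\|_\V \leq \sum_{n\geq 0}\lambda^n = 1/(1-\lambda)$. For the rank-one matrix, I will use the elementary identity $\|uv^T\|_\V = \|u\|_{\infty,\V}\|v\|_{1,\V}$ (which follows from the definition of $\|\cdot\|_\V$ as in Lemma~\ref{lem:norms}) together with submultiplicativity $\|Se\|_{\infty,\V} \leq \|S\|_\V \|e\|_{\infty,\V}$ to obtain $\|Sed^T\|_\V \leq \|e\|_{\infty,\V}\|d\|_{1,\V}/(1-\lambda)$. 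Combining gives the claimed bound.

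The main obstacle is the bookkeeping in part (a): the expansion of $\hat D(I-P)$ produces a dozen terms, and each cancellation relies on a different one of the structural identities (in particular $d^T S\nu = 1$, which requires combining $S\nu = e$ with $d^T e = 1$). A helpful check is that $\hat D$ is manifestly the sum of two rank-one corrections of $S$, so the verifications of $\hat D e = 0$ and $d^T \hat D = 0$ can be done first as a sanity check before tackling the longer $\hat D(I-P)$ computation.
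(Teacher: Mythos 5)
Your proposal is correct, and for part (a) it takes a genuinely different route: the paper does not prove the identity \eqref{eq:a} at all — it cites Kartashov (1986) and the analogous statement in Liu (2012) — and only proves \eqref{eq:b}. Your direct verification does go through: writing $S=\sum_{n\ge0}\T^n=(I-\T)^{-1}$, the structural identities $\T e=e-\nu$, $S\nu=e$ and $\omega^TS=d^T/(d^T\nu)$ yield $(I-P)S=I-\nu d^T/(d^T\nu)$ and $S(I-P)=I-e\omega^T$, from which $(I-P)\hat D=\hat D(I-P)=I-ed^T$ and $\hat De=0$, $d^T\hat D=0$ follow by short cancellations, and your characterization of the group inverse via these four conditions is the correct one. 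Two countable-state caveats should be made explicit, though. First, your computation freely reassociates infinite matrix products (e.g.\ $(ed^TS)(I-P)=e\,[d^T(S(I-P))]$); this is legitimate only because every factor has finite $\V$-norm — $\|S\|_\V\le(1-\lambda)^{-1}$ and $\|ed^T\|_\V=\|e\|_{\infty,\V}\|d\|_{1,\V}<\infty$, the latter using $d^T\V<\infty$ from Lemma~\ref{lem:drift} — cf.\ the Remark following Lemma~\ref{lem:Zeq} and \cite[Lemma 2.1]{Jiang2017}. Second, the uniqueness argument requires that every $\V$-bounded harmonic function is constant and every $\V$-integrable invariant row vector is a multiple of $d^T$; both follow from $\V$-uniform ergodicity since $P^n\to ed^T$ in $\V$-norm, but this is where the ergodicity hypothesis actually enters part (a) and it deserves a sentence. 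For part (b) your argument is essentially the paper's: both drop the $e(\cdot)^T$ correction via the shift invariance \eqref{eq:prop_V} and then bound what remains; the paper estimates $\|S(I-ed^T)\|_\V\le\|S\|_\V\,\|I-ed^T\|_\V$ while you split into $\|S\|_\V+\|Sed^T\|_\V$, and the two give the identical constant. What your route buys is a self-contained proof of \eqref{eq:a} rather than an appeal to an external reference; what it costs is the bookkeeping you anticipate plus the two technical points above.
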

\begin{proof}
Identity (\ref{eq:a}) was originally proved in  \cite{Kartashov1986}. The proposed formulation of item (a)  is similar to \cite[Proposition A.2]{Liu2012}.

We show (\ref{eq:b}) using identity (\ref{eq:a}) and  property (\ref{eq:prop_V}):
  \begin{align*}
  \tau_{1, \V}[D] &= \tau_{1, \V}\left[\sum\limits_{n=0}^\infty   \mathcal{T}^n\left(I-e d ^T\right)   \right]\\
                      &\leq \left\|\sum\limits_{n=0}^\infty   \mathcal{T}^n\left(I-e d^T\right) \right\|_\V \\
                      &\leq\sum\limits_{n=0}^\infty\left\|   \mathcal{T}^n\left(I-e d^T\right) \right\|_\V\\
                      &\leq \left\| I-e d^T  \right\|_\V\sum\limits_{n=0}^\infty\left\|   \mathcal{T}\right\|^n\\
                      &\leq \frac{  1+ \|e\|_{\infty, \V} \|d\|_{1, \V} }{1-\lambda}.
  \end{align*}
\end{proof}

 \begin{proof}[\textbf{Proof of Lemma \ref{lem:D_V}}]

  We define matrix $\T$ on state space $\X\times \X$ such that
  \begin{align*}
  \mathcal{T} (x, y) :=
  \begin{cases}
  P (x,y)\quad \text{ if }x\neq x^*,\\
  0, \quad \text{ if }x=x^*.
  \end{cases}
  \end{align*}

From (\ref{eq:drift_single}), 
 \begin{align*}
\sum\limits_{y\in \X}\mathcal{T}(x, y)\V(y) \leq  \begin{cases}
\varepsilon \V(x), \text{ if } x\neq x^*\\
0, \text{otherwise}.
\end{cases}
\end{align*}
Hence, $\|T \|^{ }_\V\leq \varepsilon$. Moreover, $\T$ is nonnegative and $\T  = P - \nu \omega^T$, where $\nu(x) =\I_{x = x^*}$ and $\omega(x) = P(x^*, x)$ for each $x\in \X$. 
  Then, from Lemma \ref{lem:kartashov},
  \begin{align*}
  \tau_{1, \V}[D]  \leq \frac{ 1+ \|e\|_{\infty, \V} \|d\|_{1, \V}  }{1-\varepsilon}.
  \end{align*}

  We multiply  both side of (\ref{eq:drift_single}) by $d $, we get
  \begin{align*}
  d  ^T P  \V \leq \varepsilon  d ^T  \V + b  d(x^*)
  \end{align*}

 Since  $ d ^T P  =  d ^T$, we get
  \begin{align*}
 \|d \|_{1, \V} \leq  \frac{b d(x^*)}{1-\varepsilon}.
 \end{align*}
  \end{proof}

\begin{proof}[\textbf{Proof of the drift inequality in Example \ref{exp:mm1}}]

We want to confirm that the Bernoulli random walk transition probabilities  satisfy the following drift condition with $\V(x) = \rho^{-x/2}$, $x\in \Z_+$, and $C=\{0\}$:
\begin{align*}
\sum\limits_{y\in \Z_+}P(x, y) \V(y)\leq \frac{2\sqrt{\rho}}{1+\rho} \V(x) +\frac{1-\sqrt{\rho}}{1+\rho} I_{x = 0}(x), \text{ for each }x\in \Z_+.
\end{align*}

First, we note that $\mu=1/(1+\rho)$ and $\lambda=\rho/(1+\rho)$. Hence,
\begin{itemize}
\item For $x=0$: 
\begin{align*}
\sum\limits_{y\in \Z_+}P(x, y)\V(y) - \frac{2\sqrt{\rho}}{1+\rho}\V(x)&= \frac{1}{1+\rho}+\frac{\rho}{(1+\rho)\sqrt{\rho}} - \frac{2\sqrt{\rho}}{1+\rho}\\
&=\frac{1-\sqrt{\rho}}{1+\rho}.
\end{align*}

\item For $x\geq 1$:
\begin{align*}
\sum\limits_{y\in \Z_+}P(x, y)\V(y) - \frac{2\sqrt{\rho}}{1+\rho}\V(x)&=\frac{1}{1+\rho}\rho^{-x/2+1/2}+\frac{\rho}{1+\rho}\rho^{-x/2-1/2} - \frac{2\sqrt{\rho}}{1+\rho}\rho^{-x/2}\\
&=\frac{\rho^{-x/2}}{1+\rho}\left(\sqrt{\rho} + \sqrt{\rho} - 2 \sqrt{\rho} \right) = 0.
\end{align*}
\end{itemize}
\end{proof}

  \begin{proof}[\textbf{Proof of Lemma \ref{lem:D_V2}}]

  We define a nonnegative matrix $\mathcal{T} $   as
  \begin{align*}
  \mathcal{T} (x, y) =
  \begin{cases}
  P(x,y) -   P (x^*,y)\quad \text{ if }x\in C,\\
  P(x,y), \quad \text{ otherwise}.
  \end{cases}
  \end{align*}
We note that $\T  = P - \nu \omega^T$, where $\nu(x) =\I_{C}(x)$ and $\omega(y) =   P(x^*, y)$, for each $x, y\in \X$.   For each $x\in \X$
 \begin{align*}
\sum\limits_{y\in \X}\mathcal{T}(x, y)\V(y) = & \sum\limits_{y\in \X} P(x,y) \V(y)- \I_C(x) \sum\limits_{y\in \X} P(x^*, y)\V(y)\\
&\leq  \varepsilon \V(x) +\left(b -  \sum\limits_{y\in \X} P(x^*,y)\V(y)\right)\I_C(x)\\
&\leq\varepsilon \V(x),
\end{align*}
where the first inequality follows from the drift condition (\ref{eq:drift_rep}) and the second inequality follows from (\ref{eq:as1}). Hence, $\|\T\|_\V^{ }\leq\varepsilon $ and,
by Lemma \ref{lem:kartashov},
  \begin{align*}
  \tau_{1, \V}[D ] \leq \frac{\left( 1+ \|e\|_{\infty, \V} \|d \|_{1, \V} \right)}{1-\varepsilon}.
  \end{align*}

  We get a bound on $\|d \|_{1, \V}$ multiplying  both side of (\ref{eq:drift_rep}) by $d$:
  \begin{align*}
    d   ^T P  \V \leq \varepsilon d ^T  \V + b\sum\limits_{x\in C} d (x)
  \end{align*}

As a result, we obtain
  \begin{align*}
 \|d \|_{1, \V}&\leq \frac{b}{1-\varepsilon} \sum\limits_{x\in C}d(x).
  \end{align*}
  \end{proof}

  \begin{proof}[\textbf{Proof of Lemma \ref{lem:D_disc}}]

 \begin{align*}
\sum\limits_{y\in \X}P^{(\gamma)} (x, y) \V(y)&=\sum\limits_{y\in \X}\Big(\gamma P  (x, y)+(1-\gamma)\mu(y)\Big) \V(y)\\
&\leq \gamma \varepsilon\V(x) +\gamma b\I_C(x) +(1-\gamma)\mu^T\V\\
&=\frac{1}{2}(\gamma \varepsilon+1) \V(x) +\gamma b\I_C(x) - \frac{1-\varepsilon\gamma}{2}\V(x) +(1-\gamma)\mu^T\V\\
&\leq \frac{1}{2}(\gamma \varepsilon+1) \V(x) +\gamma b\I_C(x) +(1-\gamma)\mu^T\V\I_{\Omega}(x)\\
&\leq \frac{1}{2}(\gamma \varepsilon+1) \V(x) + \max[\gamma b, (1-\gamma)\mu^T\V] \I_{C\cup\Omega}(x),
\end{align*}
where $\Omega = \left\{ x\in \X:\V(x)<\frac{2(1-\gamma)\mu^T\V}{1-\gamma\varepsilon} \right\}$.

We note that the first inequality follows from the drift condition (\ref{eq:drift_rep}),  the second inequality follows from the fact that
\begin{align*}
- \frac{1-\varepsilon\gamma}{2}\V(x) +(1-\gamma)\mu^T\V\leq
\begin{cases}
0, \quad\text{if } x\not\in \Omega, \text{ i.e. } - \frac{1-\varepsilon\gamma}{2}\V(x) +(1-\gamma)\mu^T \V\leq 0,\\
(1-\gamma)\mu^T\V, \quad \text{otherwise.}
\end{cases}
\end{align*}

  \end{proof}

 \section{Proofs of the theorems in Section \ref{sec:PIB_SMDP} }\label{appendix:bounds_SMDP}

  \begin{proof}[\textbf{Proof of Lemma \ref{lem:perf_iden_SMDP}}]

We define $\tilde d_{\pi_2}:\X\rightarrow [0, 1]$ as the   stationary distribution of the "generator" $\frac{1}{m_{\pi_2}(x)}(P_{\pi_2}(y|x) - \I_{x=y})$:
\begin{align*}
\tilde d_{\pi_2}(x):=\frac{ d_{\pi_2}(x)m_{\pi_2}(x) }{ \sum\limits_{x\in\X}d_{\pi_2}(x)m_{\pi_2}(x) } = \frac{1}{\overline m_{\pi_2}}d_{\pi_2}(x)m_{\pi_2}(x).
\end{align*}

Probability distribution $\tilde d_{\pi_2}$ is called the the   stationary distribution of   "generator" $\frac{1}{m_{\pi_2}(x)}(P_{\pi_2}(y|x) - \I_{x=y})$ since $\tilde d_{\pi_2}^Te = 1$ and
\begin{align*}
\sum\limits_{x\in \X} \tilde d_{\pi_2}(x) \frac{1}{m_{\pi_2}(x)}(P_{\pi_2}(y|x) - \I_{x=y}) = 0, \text{ for each }y\in \X.
\end{align*}

We start with performance identity  \cite[equation (20)]{Cao2003}:
\begin{align}\label{eq:cao}
\eta_{\pi_2} - \eta_{\pi_1} = \E_{x\sim \tilde d_{\pi_2}}\Big[ \frac{g_{\pi_2}(x)}{m_{\pi_2}(x)} &-  \frac{g_{\pi_1}(x)}{m_{\pi_1}(x)}  +\frac{1}{m_{\pi_2}(x)}\sum\limits_{y\in \X}(P_{\pi_2}(y|x) - \I_{x=y} )h_{\pi_1}(y)\nonumber \\
&- \frac{1}{m_{\pi_1}(x)}\sum\limits_{y\in \X}(P_{\pi_1}(y|x) - \I_{x=y} )h_{\pi_1}(y) \Big].
\end{align}

Starting from (\ref{eq:cao}) we get the statement
\begin{align*}
\eta_{\pi_2} - \eta_{\pi_1} & = \sum\limits_{x\in \X} \frac{m_{\pi_2}(x)d_{\pi_2}(x)}{\overline m_{\pi_2}}\Big[ \frac{g_{\pi_2}(x)}{m_{\pi_2}(x)} -  \frac{g_{\pi_1}(x)}{m_{\pi_1}(x)}  +\frac{1}{m_{\pi_2}(x)}\sum\limits_{y\in \X}(P_{\pi_2}(y|x) - \I_{x=y} )h_{\pi_1}(y) \\
& \quad\quad  \quad\quad - \frac{1}{m_{\pi_1}(x)}\sum\limits_{y\in \X}(P_{\pi_1}(y|x) - \I_{x=y} )h_{\pi_1}(y) \Big],\\
&=\frac{1}{\overline m_{\pi_2}} \E_{x\sim  d_{\pi_2}}\Big[ g_{\pi_2}(x) + \sum\limits_{y\in \X}P_{\pi_2}(y|x) h_{\pi_1}(y) - h_{\pi_1}(x)\\
& \quad\quad \quad\quad  - \frac{m_{\pi_2}(x)}{m_{\pi_1}(x)}\Big( g_{\pi_1}(x) + \sum\limits_{y\in \X} P_{\pi_1}(y|x)h_{\pi_1}(y) - h_{\pi_1}(x) \Big) \Big]\\
&=\frac{1}{\overline m_{\pi_2}} \E_{x\sim  d_{\pi_2}}\left[ g_{\pi_2}(x) + \sum\limits_{y\in \X}P_{\pi_2}(y|x) h_{\pi_1}(y) - h_{\pi_1}(x) - m_{\pi_2}(x) \eta_{\pi_1} \right]\\
& = \frac{1}{\overline m_{\pi_2}} \E_{x\sim  d_{\pi_2}}\left[ \sum\limits_{a\in \A}\pi_2(a|x)  \left( g(x, a) + \sum\limits_{y\in \X}P(y|x, a) h_{\pi_1}(y) - h_{\pi_1}(x) - m(x, a) \eta_{\pi_1} \right) \right]\\
&= \frac{1}{\overline m_{\pi_2}} \E_{x\sim  d_{\pi_2}, a\sim \pi_2(\cdot|x)}\left[ A_{\pi_1}(x, a)\right].
\end{align*}

\end{proof}

  \begin{proof}[\textbf{Proof of Theorem \ref{thm:perf_bound_SMDP}}]
 Starting from the performance difference identity from Lemma \ref{lem:perf_iden_SMDP} we get
\begin{align*}
\eta_{\pi_2} -  \eta_{\pi_1} &= \frac{1}{\overline m_{\pi_2} }  \underset{\substack{ x\sim d_{\pi_2}\\  a \sim \pi_2(\cdot|x) }  }{\E} \left[A_{\pi_1}(x,a)   \right]\\
&=  \frac{1}{\overline m_{\pi_2} }  \underset{\substack{ x\sim d_{\pi_1}\\  a \sim  \pi_2(\cdot|x) }  }{\E} \left[A_{\pi_1}(x,a)   \right] + \frac{1}{\overline m_{\pi_2} }  \sum\limits_{x\in \X}\left(d_{\pi_2}(x)-d_{\pi_1}(x)\right) \underset{\substack{ a \sim \pi_2(\cdot|x) }  }{\E} \left[A_{\pi_1}(x,a)   \right]\\
&\leq \frac{1}{\overline m_{\pi_2} } \underset{\substack{ x\sim d_{\pi_1}\\  a \sim  \pi_2(\cdot|x) }  }{\E} \left[A_{\pi_1}(x,a)   \right] + \frac{1}{\overline m_{\pi_2} } \sum\limits_{x\in \X}\left|d_{\pi_2}(x)-d_{\pi_1}(x)\right|  \max\limits_{x\in \X}  \underset{\substack{ a \sim \pi_2(\cdot|x) }  }{\E} \left[A_{\pi_1}(x,a)   \right] \\
&= \frac{1}{\overline m_{\pi_2} }  \underset{\substack{ x\sim d_{\pi_1}\\  a \sim \pi_1(\cdot|x) }  }{\E} \left[\frac{\pi_2(a|x)}{\pi_1(a|x)}A_{\pi_1}(x,a)   \right] +\frac{1}{\overline m_{\pi_2} } \left\|\underset{a\sim  \pi_2(\cdot|x)}{\E}[A_{\pi_1}(x,a)]\right\|_{\infty }  \|d_{\pi_2}-d_{\pi_1}\|_{1 }.
\end{align*}

Next,   we use   perturbation identity (\ref{eq:old_new}) with $\gamma=1$
\begin{align*}
d_{\pi_2}^T-d_{\pi_1}^T = d_{   \pi_1}^T (P_{  \pi_1} - P_{  \pi_2})D_{\pi_2}.
\end{align*}
 to get the following perturbation bound:
\begin{align*}
\|d_{\pi_2}-d_{\pi_1} \|_{1 } & \leq \tau_{1 }^{ } \left[D_{\pi_2}  \right]\left\|(P_{  \pi_1} - P_{  \pi_2})^Td_{   \pi_1} \right\|_{1 }.
\end{align*}

Following the proof of Lemma \ref{lem:perf_bound} we simplify $\left\|(P_{  \pi_1} - P_{  \pi_2})^Td_{   \pi_1} \right\|_{1 }$ term to get
\begin{align*}
\left\| (P_{  \pi_1} - P_{ \pi_2})^Td_{   \pi_1} \right\|_1\leq 2 \underset{  x\sim d_{\pi_1} }{\E} \left[\text{TV}\Big(\pi_2(\cdot|x)~||~\pi_1(\cdot|x)\Big) \right].
\end{align*}

\end{proof}

\bibliography{Thesis, dai20200529}

\end{document}